\newtheorem{lem}{Lemma}
\newtheorem{lemma}[lem]{Lemma}
\newtheorem{prop}{Proposition}
\newtheorem{thm}{Theorem}
\newtheorem{theorem}[thm]{Theorem}
\newtheorem{cor}{Corollary}
\newtheorem{corollary}[cor]{Corollary}
\theoremstyle{definition}
\newtheorem{defi}{Definition}
\newtheorem{definition}[defi]{Definition}
\newtheorem{exam}{Example}
\newtheorem{rem}{Remark}
\def\\{\cr}
\def\({\left(}
\def\){\right)}
\def\[{\left[}
\def\]{\right]}
\def\<{\langle}
\def\>{\rangle}
\def\cA{{\mathcal A}}
\def\cB{{\mathcal B}}
\def\cD{{\mathcal D}}
\def\cF{{\mathcal F}}
\def\cM{{\mathcal M}}
\def\cS{{\mathcal S}}
\def\cP{{\mathcal P}}
\def\Q{{\mathbb Q}}
\def\R{{\mathbb R}}
\DeclareMathOperator{\lcm}{lcm}
\let\@@pmod\pmod
\DeclareRobustCommand{\pmod}{\@ifstar\@pmods\@@pmod}
\def\@pmods#1{\mkern4mu({\operator@font mod}\mkern 6mu#1)}
\begin{document}

\title{On the discriminator of
Lucas sequences. II}

\author{{\sc M.\,Ferrari,}~~{\sc F.\,Luca}~~{\sc and P.\,Moree}
}

\address{
{Politecnico di Torino}\newline
{Corso Duca degli Abruzzi 24, 10129, Torino, Italy}}
\email{\tt matteo.ferrari@polito.it}

\address{{School of Mathematics, University of the Witwatersrand}\newline
{P.\,O.\,Box Wits 2050, South Africa}}
\email{\tt florian.luca@wits.ac.za}

\address{
{Max Planck Institute for Mathematics}\newline
{Vivatsgasse 7, 53111 Bonn, Germany}}
\email{\tt moree@mpim-bonn.mpg.de}

\date{\today}

\pagenumbering{arabic}

\begin{abstract}
\noindent 
The family of Shallit sequences consists of
the Lucas sequences
satisfying the recurrence $U_{n+2}(k)=(4k+2)U_{n+1}(k) -U_n(k),$ with
initial values $U_0(k)=0$
and $U_1(k)=1$ and with $k\ge 1$ arbitrary. 
For every fixed $k$ the integers $\{U_n(k)\}_{n\ge 0}$ are
distinct, and hence for every $n\ge 1$ 
there exists a smallest integer $\cD_k(n)$, 
called \emph{discriminator}, such that
$U_0(k),U_1(k),\ldots,U_{n-1}(k)$ are pairwise incongruent modulo 
$\cD_k(n).$
In part I  it was proved that there exists a constant $n_k$ such that $\cD_{k}(n)$
has a simple characterization for every $n\ge n_k$. Here, 
we study the values not following
this characterization and provide an upper bound for 
$n_k$ using Matveev's 
theorem and the Koksma-Erd\H os-Tur\'an
inequality.
We completely
determine  the
discriminator $\cD_{k}(n)$ for every
$n\ge 1$ and a set of integers $k$ of natural density $68/75$.  
We also correct an omission in
the statement of 
Theorem 3 in part I.
\end{abstract}

\maketitle

\section{Introduction}
\label{intro}
\subsection{Motivation}The {\it discriminator} of a sequence ${\bf a}=\{a_n\}_{n\ge 0}$
of distinct integers is the sequence $\{\cD_{\bf a}(n)\}_{n\ge 0}$
with
$$\cD_{\bf a}(n):=\min\{m\ge 1: a_0,\ldots,a_{n-1}~{\text{\rm
are~pairwise~distinct~modulo}}~m\}.
$$
In other words,
$\cD_{\bf a}(n)$
is the smallest positive
integer $m$ that discriminates (tells
apart) the integers $a_0,\ldots, a_{n-1}$  on reducing them modulo $m$.

Note that 
$n\le \cD_{\bf a}(n)\le \max\{a_0,\ldots,a_{n-1}\}
-\min\{a_0,\ldots,a_{n-1}\}+1.$
Put
$$\cD_{\bf a}:=\{\cD_{\bf a}(n): n\ge 1\}.
$$
The main challenge is to give an easy description or characterization of
$\cD_{\bf a}(n)$. 
Unfortunately for most sequences
${\bf a}$ such a characterization does not seem to exist 
(see part I by Faye, Luca and
Moree \cite{FLM} for
references to the earlier
literature).

Let  $\textbf{U}(k)$ be the 
sequence $\{U_n(k)\}_{n\ge 0}$ with $U_0(k)=0,~U_1(k)=1$ and
$$
U_{n+2}(k)=(4k+2)U_{n+1}(k)-U_n(k).
$$
It has Binet form as in \eqref{Binetform} below.
In this paper we continue the work 
of determining the discriminator
$\cD_{{\bf U}(k)}(n)$ 
(which for brevity we denote by $\cD_{k}(n)$), which
was initiated in part I. A typical example is provided in Tab.\,\ref{tab:16}.
The project is inspired by conjectures made by Jeffrey Shallit.
\begin{table}[h]
\centering
\begin{tabular}{|c|c|c|c|c|c|c|c|}\hline
$n$ & $D_{16}(n)$ & $n$ & $D_{16}(n)$ & $n$ & $D_{16}$ & $n$ & $D_{16}(n)$ \\
\hline \hline
$1$ & $1$ & $257 - 272$ & $2^{4} \cdot 17$ & $2313 - 2400$ & $2^{5} \cdot 5^3$ & $19653 - 32768$ & $2^{15}$ \\
\hline
$2$ & $2$ & $273 - 300$ & $2^{2}\cdot 5^3$ & $2401 - 4096$ & $2^{12}$ & $32769 - 34816$ & $2^{11} \cdot 17$ \\
\hline
$3 - 4$ & $2^{2}$ & $301 - 512$ & $2^{9}$ & $4097 - 4352$ & $2^{8} \cdot 17$ & $34817 - 36992$ & $2^{7} \cdot 17^{2}$ \\
\hline
$5 - 8$ & $2^{3}$ & $513 - 544$ & $2^{5} \cdot 17$ & $4353 - 4624$ & $2^{4} \cdot 17^{2}$ & $36993 - 39304$ & $2^{3} \cdot 17^{3}$ \\
\hline
$9 - 16$ & $2^{4}$ & $545 - 578$ & $2 \cdot 17^{2}$ & $4625 - 4800$ & $2^{6}\cdot 5^3$ & $39305 - 65536$ & $2^{16}$ \\
\hline
$17 - 32$ & $2^{5}$ & $579 - 600$ & $2^{3}\cdot 5^3$ & $4801 - 8192$ & $2^{13}$ & $65537 - 69632$ & $2^{12} \cdot 17$ \\
\hline
$33 - 34$ & $2 \cdot 17$ & $601 - 1024$ & $2^{10}$ & $8193 - 8704$ & $2^{9} \cdot 17$ & $69633 - 73984$ & $2^{8} \cdot 17^{2}$ \\
\hline
$35 - 64$ & $2^{6}$ & $1025 - 1088$ & $2^{6} \cdot 17$ & $8705 - 9248$ & $2^{5} \cdot 17^{2}$ & $73985 - 78608$ & $2^{4} \cdot 17^{3}$ \\
\hline
$65 - 68$ & $2^{2} \cdot 17$ & $1089 - 1156$ & $2^{2} \cdot 17^{2}$ & $9249 - 9826$ & $2 \cdot 17^{3}$ & $78609 - 131072$ & $2^{17}$ \\
\hline
$69 - 128$ & $2^{7}$ & $1157 - 1200$ & $2^{4} \cdot 5^3$ & $9827 - 16384$ & $2^{14}$ & $131073 - 139264$ & $2^{13} \cdot 17$ \\
\hline
$129 - 136$ & $2^{3} \cdot 17$ & $1201 - 2048$ & $2^{11}$ & $16385 - 17408$ & $2^{10} \cdot 17$ & $139265 - 147968$ & $2^{9} \cdot 17^{2}$ \\
\hline
$137 - 150$ & $2\cdot 5^3$ & $2049 - 2176$ & $2^{7} \cdot 17$ & $17409 - 18496$ & $2^{6} \cdot 17^{2}$ & $147969 - 157216$ & $2^{5} \cdot 17^{3}$ \\
\hline
$151 - 256$ & $2^{8}$ & $2177 - 2312$ & $2^{3} \cdot 17^{2}$ & $18497 - 19652$ & $2^{2} \cdot 17^{3}$ & $157217 - 167042$ & $2 \cdot 17^{4}$ \\
\hline
\end{tabular}
\medskip \medskip
\caption{The discriminator for $k=16$ and $1\le n\le 167042$}
\label{tab:16}
\end{table}

We now recall the two main results from part I.
\begin{theorem}
\label{1+2}~
\begin{enumerate}[{\rm a)}]
\item Let $s_n$ be the smallest power of $2$
such that
$s_n\ge n$. Let $t_n$ be the smallest integer
of the form $2^a\cdot 5^b$ satisfying $2^a\cdot 5^b\ge 5n/3$
with $a,b\ge 1$.
Then $$\cD_1(n)=\min\{s_n,t_n\}.$$
\item Let $e\ge 0$ be the smallest integer such that
$2^e\ge n$ and $f\ge 1$ the smallest  integer such that
$3\cdot 2^f\ge n$. Then
$$\cD_2(n)=\min\{2^e,3\cdot 2^f\}.$$
\end{enumerate}
\end{theorem}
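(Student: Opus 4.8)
The plan is to reformulate the problem in terms of a single function of the modulus and then to pin that function down on a sufficient supply of moduli. For $m\ge 1$ let $N(m)$ be the largest $N$ for which $U_0(k),\dots,U_{N-1}(k)$ are pairwise incongruent modulo $m$; then $\cD_k(n)=\min\{m\ge 1:N(m)\ge n\}$, so it suffices to compute $N$ on enough moduli. The tools are the companion sequence $V_n(k)=\alpha^n+\beta^n$ (so $V_0=2$, $V_1=4k+2$, with the same recurrence as $U$) and the identity $U_{r+s}(k)-U_{r-s}(k)=U_s(k)\,V_r(k)$ for integers $r\ge s\ge 0$, immediate from the Binet forms since $\alpha\beta=1$; together with $U_{-n}(k)=-U_n(k)$ it reduces a collision $U_i(k)\equiv U_j(k)\pmod{m}$ with $i\equiv j\pmod{2}$ to the divisibility of $U_{(j-i)/2}(k)\,V_{(i+j)/2}(k)$, while collisions with $i\not\equiv j\pmod{2}$ are controlled by the value of $\alpha^{\rho}$ modulo a prime power, $\rho$ the relevant rank of apparition. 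First I would settle the prime $2$: from $4k+2\equiv 2\pmod{4}$ one gets by induction $U_n(k)\equiv n\pmod{4}$ and $V_n(k)\equiv 2\pmod{4}$, and then, using $U_{2t}(k)=U_t(k)V_t(k)$, that $v_2(U_n(k))=v_2(n)$ and hence $v_2(U_j(k)-U_i(k))=v_2(j-i)$ for all $i\ne j$. Thus $U_i(k)\equiv U_j(k)\pmod{2^a}$ iff $2^a\mid j-i$, so $N(2^a)=2^a$, which already gives $\cD_k(n)\le s_n<2n$.

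Next I would compute $N$ at the ``auxiliary'' moduli. For $k=1$ one checks $\rho(5)=3$, $v_5(V_n(1))=0$ for all $n$, and (lifting the exponent, valid since $5\nmid 16$) that $5^b\mid U_n(1)$ iff $3\cdot 5^{b-1}\mid n$. Fix $a,b\ge 1$ and put $\pi=3\cdot 2^a5^{b-1}=\lcm(2^a,6\cdot 5^{b-1})$. If $0\le i<j<\pi$ and $U_i(1)\equiv U_j(1)\pmod{2^a5^b}$, then $2^a\mid j-i$ by the previous step, so $j-i$ is even and $U_j(1)-U_i(1)=U_{(j-i)/2}(1)\,V_{(i+j)/2}(1)$; since $v_5(V)=0$ this forces $5^b\mid U_{(j-i)/2}(1)$, i.e. $6\cdot 5^{b-1}\mid j-i$, whence $\pi\mid j-i$ --- impossible. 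As $\pi$ is a common multiple of $2^a$ and of $\rho(5^b)=3\cdot 5^{b-1}$ we also get $U_\pi(1)\equiv U_0(1)\pmod{2^a5^b}$, so $N(2^a5^b)=\pi=3\cdot 2^a5^{b-1}$, which is $\ge n$ exactly when $2^a5^b\ge 5n/3$; hence $\cD_1(n)\le t_n$. The same argument with $3$ in place of $5$ gives, for $k=2$, $N(3\cdot 2^f)=3\cdot 2^f$ and $\cD_2(n)\le 3\cdot 2^f$. Call a modulus \emph{admissible} if it is one of those just treated.

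It remains to show the minimum defining $\cD_k(n)$ is attained at an admissible modulus; it is enough to show each inadmissible $m$ is \emph{dominated}, i.e. that some admissible $g<m$ has $N(g)\ge N(m)$, since then $N(m)\ge n$ forces $N(g)\ge n$ with $g<m$. Here one uses the crude bound $N(m)\le\rho(m)=\lcm\{\rho(p^{v_p(m)}):p\mid m\}$, coming from $U_0(k)\equiv U_{\rho(m)}(k)\pmod m$, together with $\rho(2^a)=2^a$ and $\rho(p^e)\le(p+1)p^{e-1}/2$ for odd $p$ (the rank of apparition dividing $p-(\tfrac{16k(k+1)}{p})$). For $k=2$ this yields $N(m)\le 2m/3$ for all inadmissible $m$ beyond a few small cases checked by hand, while the largest admissible $g\le m$ always exceeds $2m/3$, so $g$ dominates $m$. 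For $k=1$ one argues similarly but must be more careful: for $m=5^b$ one exploits $\alpha^{\rho(5^b)}\equiv-1\pmod{5^b}$ --- which holds because $\alpha_1=(1+\sqrt{2})^2$ and $1+\sqrt{2}$ has norm $-1$, giving $\alpha_1^{3}\equiv-1\pmod{5}$ --- to produce a collision $U_n(1)\equiv U_{\rho-n}(1)$ before index $\rho/2$, so $N(5^b)\le(3\cdot 5^{b-1}+1)/2$; and for $m$ with an odd prime factor $p\ne 5$ one analyses $\rho(p^e)$ and the period modulo $p^e$ through the splitting of $p$ in $\Q(\sqrt{2})$ and the order of $\alpha_1$ there (distinguishing ``period $=\rho$'' from ``period $=2\rho$''), the outcome --- again using that $\alpha_1$ is the square of a norm-$(-1)$ unit --- being that $N(m)$ never exceeds $\max\{N(g):g<m\text{ admissible}\}$. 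Assembling the cases, $\cD_k(n)=\min\{m:N(m)\ge n\}$ equals $\min\{s_n,t_n\}$ for $k=1$ and $\min\{2^e,3\cdot 2^f\}$ for $k=2$.

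The first two steps are routine once the identities are in place; the work --- and the main obstacle --- is the last step, the uniform domination of inadmissible moduli. One must control $N(m)$, equivalently ranks of apparition and periods of $\{U_n(k)\}$ modulo odd prime powers, uniformly in $m$, keeping track of whether the period equals the rank or twice the rank and of the parity of the rank (the latter deciding whether the off-diagonal collisions survive reduction of the $2$-part, which is exactly why the auxiliary prime ``heals'' when multiplied by a power of $2$). For $k=1$ this ultimately rests on the arithmetic of $\Z[\sqrt{2}]$ --- that its fundamental unit $1+\sqrt{2}$ has norm $-1$ and $\alpha_1$ is its square --- and it is this kind of input that upgrades the ``eventual'' description of $\cD_k$ from part~I to one valid for every $n\ge 1$ when $k\in\{1,2\}$.
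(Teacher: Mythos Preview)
The present paper does not prove this theorem; it is recalled verbatim from Part~I \cite{FLM}, so there is no argument here to compare against directly.

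Your plan is the natural one and lines up with the machinery of both papers: compute the incongruence index $\iota_k(m)$ (your $N(m)$) exactly on the candidate moduli and then show every non-candidate is dominated by a smaller candidate. The first two steps are correctly executed. The valuation identity $v_2(U_j(k)-U_i(k))=v_2(j-i)$ gives $\iota_k(2^a)=2^a$; the factorisation $U_j-U_i=U_{(j-i)/2}V_{(i+j)/2}$ for $i\equiv j\pmod 2$, together with $5\nmid V_n(1)$ (resp.\ $3\nmid V_n(2)$), gives $\iota_1(2^a5^b)=3\cdot 2^a5^{b-1}$ (resp.\ $\iota_2(3\cdot 2^f)=3\cdot 2^f$). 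This is precisely the content of Lemmas~\ref{15+16} and~\ref{lem:last} specialised to $k=1,2$.

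For $k=2$ your domination argument is essentially complete: apart from $m=3$ (checked directly), every inadmissible $m$ has $z(m)\le\frac{3}{5}m$ --- it has either a prime factor $\ge 5$, or a factor $9$ (and $3$ is special for $k=2$, so $z(9)=3$) --- while consecutive admissibles in $\{2^e,3\cdot 2^f\}_{e\ge 2,\,f\ge 1}$ have ratio at most $3/2$, so the largest admissible $g<m$ exceeds $\frac{2}{3}m>z(m)\ge\iota_2(m)$.

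For $k=1$, however, you do not carry out the domination step; you name the ingredients (splitting of $p$ in $\Q(\sqrt 2)$, sign of $\alpha^{\rho}$, period-vs-rank dichotomy) and assert the conclusion. The difficulty is real. On the one hand there are odd inadmissible $m$ with $z(m)>m/2$ --- e.g.\ $m=3^b$ has $z(m)=\frac{2}{3}m$ --- so the crude bound $\iota(m)\le z(m)$ does not by itself land below the nearest smaller admissible; one must sharpen to $\iota(m)\le \lfloor z(m)/2\rfloor+1$ via $\alpha^{\rho}\equiv -1$, which you do for $5^b$ but not for the other odd prime powers, nor for odd $m$ with several prime factors. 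On the other hand, for even $m=2^cp^b$ with $p\equiv 5\pmod 8$, $p\ne 5$, and $z_1(p)=(p+1)/2$ (e.g.\ $p=13$), one has $z(m)=\frac{p+1}{2p}m>m/2$, and to dominate one must know that the interval $[\frac{p+1}{2p}m,m)$ always contains an even $2^a5^b$ --- a Diophantine statement about the gap structure of $\{2^a5^b\}$ of exactly the type the present paper packages as $n_5(\alpha)$. Either route requires a genuine case analysis that your proposal gestures at but does not perform; this is precisely what Part~I carries out.
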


\indent The second main result shows
that the behavior of the discriminator 
$\cD_k$ 
with $k>2$ is very different from that of
$\cD_1$. 
It corrects Theorem 3 in part I, where
the conditions on $k$ involving $6\pmod*{9}$ in
the definition of $\cB_{k}$ were 
erroneously omitted. For details see 
Sec.\,\ref{proofcorrected}.
\begin{theorem}[Corrected version of Theorem 3 in 
\cite{FLM}]
\label{oldmaincorrected}
Put
$$\cA_k=\begin{cases}
\{m~{\text{\rm odd}}:\text{\rm if~}p\mid m,~{\text{\rm then}}~p\mid k\} & \text{~if~}
k\not\equiv 6\pmod*{9};\cr
\{m~{\text{\rm odd}},~9\nmid m:\text{\rm if~}p\mid m,~{\text{\rm then}}~p\mid k\} &\text{~if~}
k\equiv 6\pmod*{9},
\end{cases}
$$
and $$\cB_k
=\begin{cases}
\{m~{\text{\rm even}}:\text{\rm if~}p\mid m,~{\text{\rm then}}~p\mid k(k+1)\}
& \text{~if~} k \not\equiv 6 \pmod*{9} \text{~and~}
k\not\equiv 2\pmod*{9};\cr
\{m~{\text{\rm even},~9\nmid m}:\text{\rm if~}p\mid m,~{\text{\rm then}}~p\mid k(k+1)\}
& \text{~if~}
k \equiv 6 \pmod*{9}
\text{~or~}
k\equiv 2 \pmod*{9}.
\end{cases}
$$
Let $k>2$ be fixed.
We have 
\begin{equation}
\label{Dkn=n}
\cD_{k}(n)=n\iff n\in \cA_{k}\cup \cB_{k}.
\end{equation}
Furthermore, 
\begin{equation}
\label{dkinequal2}
\cD_{k}(n)\le \min\{m\ge n:
m\in \cA_{k}\cup \cB_{k}\},
\end{equation}
with equality if the interval $[n,3n/2)$ contains an integer 
$m\in \cA_{k}\cup \cB_{k}$.
There are at most 
finitely many $n$ for which in \eqref{dkinequal2} strict inequality holds.
\end{theorem}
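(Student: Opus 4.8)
The plan is to treat the three assertions in turn, with the structural part first and the finiteness part last. First I would analyse the equivalence \eqref{Dkn=n}. The "only if" side amounts to showing that $\cD_k(n)=n$ forces $n$ to belong to $\cA_k\cup\cB_k$: if $n$ has a prime divisor $p$ not dividing $k(k+1)$ when $n$ is even (resp.\ not dividing $k$ when $n$ is odd), or if $9\mid n$ in one of the exceptional residue classes for $k$, then among $U_0(k),\dots,U_{n-1}(k)$ one finds two terms congruent modulo $n$ — this uses the rank-of-apparition machinery for the Lucas sequence ${\bf U}(k)$, i.e.\ the fact that $p\mid U_r(k)$ exactly when $r$ is a multiple of the rank $\rho(p)$, together with explicit computation of $\rho(p)$ for $p\mid k$ and $p\mid k+1$ (for which $U_n(k)$ has a particularly simple shape modulo $p$), and a $2$-adic and $3$-adic analysis to handle the powers of $2$ inside $\cB_k$ and the $9\pmod*{9}$ obstruction. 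The "if" side shows that every $m\in\cA_k\cup\cB_k$ does discriminate $U_0(k),\dots,U_{m-1}(k)$; this is where the $9\nmid m$ restrictions in the corrected definitions of $\cA_k,\cB_k$ are forced, and I would expect this to be essentially the content that was repaired relative to part~I.

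Next, for the upper bound \eqref{dkinequal2}: if $m\in\cA_k\cup\cB_k$ and $m\ge n$, then $m$ discriminates $U_0(k),\dots,U_{m-1}(k)$ by the "if" direction, hence a fortiori it discriminates the first $n$ of them, so $\cD_k(n)\le m$; taking the least such $m$ gives the inequality. For the equality clause, suppose $[n,3n/2)$ contains some $m\in\cA_k\cup\cB_k$; I would argue that no modulus $d<m$ can discriminate $U_0(k),\dots,U_{n-1}(k)$. The point is that any discriminating modulus $d$ must itself, up to a bounded factor, look like an element of $\cA_k\cup\cB_k$ — a $d$ with a "bad" prime factor creates a collision among the first $\lceil 3d/2\rceil$ or so terms, hence among the first $n\le \lceil 3m/2\rceil\le$ (something comparable to $\lceil 3d/2\rceil$) terms when $d\ge n$ — and then the gap structure of $\cA_k\cup\cB_k$ (consecutive elements differing by at most a factor close to $3/2$ in the relevant range, which is exactly why the hypothesis is phrased with $[n,3n/2)$) rules out any admissible $d$ strictly between $n$ and $m$. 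This reduces the equality clause to a clean statement about how sparse $\cA_k\cup\cB_k$ can be.

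Finally, for the finiteness of the set of exceptional $n$: strict inequality in \eqref{dkinequal2} can happen only when $[n,3n/2)$ contains \emph{no} element of $\cA_k\cup\cB_k$, so it suffices to show that the set of $n$ for which the interval $[n,3n/2)$ misses $\cA_k\cup\cB_k$ is finite. Both $\cA_k$ and $\cB_k$ are sets of integers all of whose prime factors lie in a fixed finite set $S$ (namely the primes dividing $2k(k+1)$), i.e.\ $S$-units; and $\cB_k$ contains all integers of the form $2^a$ (or $2^a$ with $9\nmid 2^a$, which is automatic), so already the powers of $2$ give elements of $\cB_k$ in every interval $[n,2n)$ — that alone shows strict inequality can occur only finitely often if we were willing to weaken $3/2$ to $2$. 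To get the genuine $3/2$ one must do better, and this is the part I expect to be the main obstacle: one needs that for all large $n$ the interval $[n,3n/2)$ contains an element of $\cA_k\cup\cB_k$, equivalently that consecutive $S$-units (of the allowed parity and avoiding $9$) have ratio $\to 1$, or at least eventually $<3/2$. That is false for a general finite $S$, so one genuinely uses the presence of \emph{two} multiplicatively independent small primes, e.g.\ $2$ and an odd prime dividing $k$ or $k+1$ (note $k>2$ guarantees $k(k+1)$ has an odd prime factor other than $3$ in most classes, and the $9$-exclusion is precisely calibrated so that enough room remains); the ratios of consecutive integers of the form $2^a p^b$ are controlled by the distribution of $\{b\log p/\log 2\}$ modulo $1$, which is equidistributed, giving gaps with ratio $\to 1$. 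Making this effective is exactly where Matveev's lower bound for linear forms in logarithms and the Koksma–Erd\H os–Tur\'an discrepancy inequality enter, as advertised in the abstract: Matveev bounds from below how close $2^a p^b$ can come to a given target without equalling it, and Koksma–Erd\H os–Tur\'an converts the equidistribution of $\{b\log p/\log 2\}$ into an explicit bound on the largest gap, yielding an explicit $n_k$ beyond which $[n,3n/2)$ always meets $\cA_k\cup\cB_k$; only finitely many $n<n_k$ remain, and these are the possible exceptions.
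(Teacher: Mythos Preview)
Your argument for the equality clause misidentifies the source of the constant $3/2$. You write that it comes from ``the gap structure of $\cA_k\cup\cB_k$ (consecutive elements differing by at most a factor close to $3/2$)'', but this is backwards. The $3/2$ comes from the index of appearance: Lemma~\ref{appearance} (Lemma~14 in part~I) gives $z(m)\le\rho_k m\le \tfrac{2}{3}m$ whenever $z(m)<m$. The actual argument (compare the proof of the sharper Theorem~\ref{thm2refined}) runs as follows. Let $m_0=\min\{m\ge n:m\in\cA_k\cup\cB_k\}$ and suppose $m_0<3n/2$. If $d:=\cD_k(n)<m_0$, then $d\ge n$ and $d\notin\cA_k\cup\cB_k$. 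The analysis behind \eqref{Dkn=n} in part~I shows more than the bare equivalence: any discriminator value outside $\cA_k\cup\cB_k$ must have $z(d)<d$ (odd $d$ with $z(d)=d$ and a prime factor of $k+1$ are eliminated as discriminator values by an incongruence argument, cf.\ Lemma~\ref{15+16}). Hence $n\le z(d)\le\tfrac{2}{3}d<\tfrac{2}{3}m_0<n$, a contradiction. Your phrase ``collision among the first $\lceil 3d/2\rceil$ terms'' has the ratio inverted, and the appeal to gaps in $\cA_k\cup\cB_k$ is beside the point: by definition of $m_0$ there are no elements of $\cA_k\cup\cB_k$ in $[n,m_0)$, so there is nothing about their spacing to invoke.

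Two smaller remarks. Matveev and Koksma--Erd\H os--Tur\'an are not used for Theorem~\ref{oldmaincorrected}: plain Weyl equidistribution of $\{b\log p/\log 2\}$ already shows that for any odd prime $p\mid k(k+1)$ the even integers $2^ap^b$ eventually meet every interval $[n,3n/2)$ (this is \cite[Lemma~19]{FLM}), and that is all the finiteness assertion requires. The heavy machinery enters only for the \emph{effective} bound in Theorem~\ref{largefkbound}. Finally, note that the present paper does not reprove Theorem~\ref{oldmaincorrected} at all: it is Theorem~3 of part~I, and Section~\ref{proofcorrected} merely records that the explicit description of $\cB_k$ there omitted the $9\nmid m$ clause when $k\equiv 6\pmod 9$, while the underlying proof (which works throughout with the characterization $\cB_k=\{m\text{ even}:z(m)=m\}$) stands.
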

\begin{corollary}\label{cor:twopower}
Let $k\ge 1$. Then 
$\cD_k(n)\le \min\{2^b:2^b\ge n\}$.
\end{corollary}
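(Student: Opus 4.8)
The plan is to reduce the statement to the structural results already in hand, splitting according to whether $k\le 2$ or $k>2$.

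For $k=1$ and $k=2$ there is nothing to do beyond quoting Theorem~\ref{1+2}. Part (a) gives $\cD_1(n)=\min\{s_n,t_n\}\le s_n$, and $s_n$ is by definition the smallest power of $2$ with $s_n\ge n$; part (b) gives $\cD_2(n)=\min\{2^e,3\cdot 2^f\}\le 2^e$, with $2^e$ the smallest power of $2$ satisfying $2^e\ge n$. Either way $\cD_k(n)\le\min\{2^b:2^b\ge n\}$.

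For $k>2$ I would invoke the inequality \eqref{dkinequal2} of Theorem~\ref{oldmaincorrected}, namely $\cD_k(n)\le\min\{m\ge n:m\in\cA_k\cup\cB_k\}$, together with the observation that \emph{every} power of $2$ with positive exponent lies in $\cB_k$. Indeed $k(k+1)$ is even for all $k\ge 1$, hence $2\mid k(k+1)$; and a power $2^b$ with $b\ge 1$ is even, has $2$ as its only prime divisor, and is never divisible by $9$, so it satisfies the defining conditions of $\cB_k$ in both of the cases into which that set is split. Consequently, for $n\ge 2$ we may take $2^b$ to be the least power of $2$ that is $\ge n$ (so $b\ge 1$), obtain $2^b\in\cB_k\subseteq\cA_k\cup\cB_k$, and conclude from \eqref{dkinequal2} that $\cD_k(n)\le 2^b=\min\{2^b:2^b\ge n\}$. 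For $n=1$ the asserted bound reads $\cD_k(1)\le 1$, which holds trivially.

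The argument is short and there is essentially no obstacle to overcome; the only point deserving a moment's care is the clause $9\nmid m$ occurring in the second cases of $\cA_k$ and $\cB_k$, but powers of $2$ meet it automatically, so it never interferes.
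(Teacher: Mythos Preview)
Your argument is correct: for $k\le 2$ you read the bound off Theorem~\ref{1+2}, and for $k>2$ you combine inequality~\eqref{dkinequal2} with the easy observation that every $2^b$ with $b\ge 1$ lies in $\cB_k$ (the $9\nmid m$ clause being automatic). This is precisely the route suggested by the corollary's placement immediately after Theorem~\ref{oldmaincorrected}.

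The paper itself does not spell out this derivation; instead it remarks that the result ``can be easily proved directly'' and points to \cite[Lemma~1]{FLM}. That direct argument shows from first principles that $U_0(k),\ldots,U_{n-1}(k)$ are pairwise incongruent modulo any $2^b\ge n$, without invoking the classification of $\cA_k\cup\cB_k$ or the inequality~\eqref{dkinequal2}. Your approach is logically sound but rests on the full strength of the two main theorems; the paper's preferred route is self-contained and elementary, which is why the corollary can be (and is) used freely in later arguments such as Lemma~\ref{zfactor2} without any worry about what went into Theorems~\ref{1+2} and~\ref{oldmaincorrected}.
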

\noindent The corollary is rather trivial and can be easily proved 
directly (\cite[Lemma 1]{FLM}).
\par In \eqref{dkinequal2} equality is more likely to hold if the
set $\cA_k\cup \cB_k$ contains many elements. This 
will happen if $k(k+1)$ is divisible by
a small odd prime. Related to this the following quantity will play a role.
\begin{definition}
\label{def:xpalpha}
Let $\alpha>1$ be a given
real number and
$p\ge 3$ be an arbitrary prime. 
We denote by $n_p(\alpha)$ 
the smallest integer $m$
such
that the interval $[n,n\alpha)$ contains
an \emph{even} integer of the form $2^a\cdot p^b$ for every integer $n\ge m$. If we drop the evenness requirement, we will write $n_p^o(\alpha)$.
\end{definition}
\noindent The existence of $n_p(\alpha)$ is guaranteed by \cite[Lemma 19]{FLM}. 
Note that $n_p^o(\alpha)\le n_p(\alpha)$. 
If $n_1,n_2,\ldots$ is an infinite sequence of integers of the required form with $1<\frac{n_{j+1}}{n_j}<\alpha$ for every $j\ge 1$, then it is
easy to see that $n_p^o(\alpha)\le n_1$ (for further details see 
Sec.\,\ref{sec:interval}). 
\par We do not need more than $n_p^o(\alpha)$ and $n_p(\alpha)$, but the same ideas apply to numbers of the form $p^a\cdot q^b$, with $p$
and $q$ distinct primes. If $m_1,m_2,\ldots$ is the ordered sequence of these
numbers, then it was shown by Tijdeman 
\cite{Tijdeman} that there exist effectively computable constants $c_1$ and $c_2$ such that
$(\log m_j)^{-c_1}\ll \frac{m_{j+1}}{m_j}-1 \ll (\log m_j)^{-c_2}$. Later 
Langevin \cite{Langevin} gave
explicit values for $c_1$ and $c_2$,
which were recently improved by 
Languasco et al.\,\cite{LLMT}.
\par Theorem \ref{main2} is our main result and gives a
complete characterization of
$\cD_k(n)$ for every $n\ge 1$ and 
\begin{equation}
\label{excludedcongruenceclasses}    
k\not \equiv 2,6,7,12,17,18,22\pmod*{25},
\end{equation}
where for brevity we write $k\not\equiv a,b\pmod*{m}$ for
$k\not\equiv a\pmod*{m}$ and $k\not\equiv b\pmod*{m}$,
and $k\equiv a,b\pmod*{m}$ for
$k\equiv a\pmod*{m}$ or $k\equiv b\pmod*{m}$.
It sharpens Theorem \ref{oldmaincorrected} and will
be proved in Sect.\,\ref{sec:mainproof}. The proof strategy is discussed in Sect.\,\ref{proofstrategy}. 
\begin{theorem}
\label{main2}
Let $k\ge 1$ be an arbitrary integer.
Let $\cA_k$ and $\cB_k$ be as in 
Theorem \ref{oldmaincorrected} 
and define  $$\cS_{k,n}:= \{m \in \cA_k \cup \cB_k : m \ge n\}.$$
Then $$\cD_k(n) = \min 
\cS_{k,n} \text{~~~if~~~}k\not\equiv 1\pmod*{3}.$$
Next suppose that 
$k\equiv 1\pmod*{3},$ $k\not\equiv 2\pmod*{5}$ 
and $k\not \equiv 6,18 \pmod*{25}$. Then
\begin{equation} \label{mainres}
\cD_k(n) = \begin{cases} \min 
\cS_{k,n} & \text{if~} k \equiv 0, 4\pmod*{5},  \\
\min \{\cS_{k,n} \cup \{ m \ge 5n/3: m \in \cB_{5,k}\}\} & \text{if~}k \equiv 1\pmod*{5}, \\
\min \{\cS_{k,n} \cup \{m \ge 5n/3:m \in \cA_{5,k}\cup \cB_{5,k} \}\} & \text{if~}k \equiv 3\pmod*{5},
\end{cases}
\end{equation}
with
$$\cA_{5,k}:  =
\{m = a \cdot 5^b : a \in \cA_k \text{~and~} b \ge 1\},~~\cB_{5,k} :=
\{m = a \cdot 5^b : a \in \cB_k \text{~and~} b \ge 1\}.$$
If $k\equiv 1,3\pmod*{5}$, $k\not \equiv 6,18 \pmod*{25}$ and $k>1$,
then  $\cD_k(n)=\min \cS_{k,n}$
if $n\ge n_p(\frac{5}{3})$ for
some odd prime divisor $p$ of $k(k+1)$. 
If $k>1$ is odd and $p$ divides $k$, it suffices to require that $n\ge n_p^{o}(\frac53)$.
\end{theorem}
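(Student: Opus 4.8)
The plan is to work throughout with the quantity $\lambda_k(m)$, defined as the largest $n$ for which $U_0(k),\dots,U_{n-1}(k)$ are pairwise incongruent modulo $m$, so that $\cD_k(n)=\min\{m:\lambda_k(m)\ge n\}$. First I would dispose of $1\le k\le 2$ by checking directly that the asserted formula coincides with Theorem~\ref{1+2} (for $k=1$ one has $\cA_1=\{1\}$, $\cB_1=\{2^a:a\ge1\}$, $\cB_{5,1}=\{2^a5^b:a,b\ge1\}$, whence $\min\cS_{1,n}=s_n$ and the minimum of $\{m\ge5n/3:m\in\cB_{5,1}\}$ is $t_n$; the case $k=2$ falls under $k\not\equiv1\pmod3$), and assume $k>2$ from now on. A pigeonhole argument gives $\lambda_k(m)\le m$, and \eqref{Dkn=n} gives $\lambda_k(m)=m$ exactly for $m\in\cA_k\cup\cB_k$, so $\lambda_k(m)\le m-1$ otherwise. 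Since every power of $2$ lies in $\cB_k$, one has $\min\cS_{k,n}<2n$ and, by \eqref{dkinequal2}, $\cD_k(n)\le\min\cS_{k,n}$; hence $\cD_k(n)<\min\cS_{k,n}$ can occur only through an $m\notin\cA_k\cup\cB_k$ with $n\le\lambda_k(m)\le m-1<\min\cS_{k,n}<2n$, that is, with $\lambda_k(m)>m/2$. Thus the whole theorem reduces to classifying all $m\notin\cA_k\cup\cB_k$ with $\lambda_k(m)>m/2$, evaluating $\lambda_k$ on them, and deciding which of them are record values of $\lambda_k$ (equivalently, equal $\cD_k(n)$ for some $n$).

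This classification, the technical heart, I would carry out prime power by prime power. With $\alpha=2k+1+2\sqrt{k(k+1)}$ (so $\alpha\beta=1$), for $p\nmid k(k+1)$ one has $U_i(k)\equiv U_j(k)\pmod{p^c}$ iff $\alpha^i\equiv\alpha^j$ or $\alpha^{i+j}\equiv-1$ in $\Z[\alpha]/(p^c)$; the order of $\alpha$ divides $p^{c-1}(p-1)$ if $p$ splits and $p^{c-1}(p+1)$ if $p$ is inert (because $\alpha^{p+1}\equiv1$), hence it is $<2p^c$, and the first collision of the second type appears near index $\frac14\operatorname{ord}\alpha$; from this one gets $\lambda_k(p^c)\le p^c/2$ for every odd $p\ne5$ not dividing $k(k+1)$, so these prime powers never yield records. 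If $p\mid k$ then $\alpha\equiv1$ and $U_n(k)\equiv n$ modulo the prime above $p$, so $\mathbf U(k)$ is injective there; and modulo $2^a$ the sequence $\mathbf U(k)$ runs injectively through its full period of length $2^a$, so attaching a factor $2^a$ with $a\ge1$ forces colliding indices to have equal parity, destroying all collisions of the $\alpha^{i+j}\equiv-1$ type. The prime $5$ is the only genuine exception, and only when $5\nmid k(k+1)$ and $4k+2\not\equiv0\pmod5$, i.e.\ $k\equiv1,3\pmod5$: then $5$ is inert, $\operatorname{ord}_5\alpha$ is $3$ (odd) if $k\equiv3\pmod5$ and $6$ if $k\equiv1\pmod5$, and $\operatorname{ord}_{5^b}\alpha=5^{b-1}\operatorname{ord}_5\alpha$ precisely because $k\not\equiv6,18\pmod{25}$. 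Combining these facts with the Chinese Remainder Theorem, the only $m\notin\cA_k\cup\cB_k$ with $\lambda_k(m)>m/2$ are the $m=2^a5^b$ with $b\ge1$, they arise only when in addition $k\equiv1\pmod3$ (equivalently $3\nmid k(k+1)$), and — calling such an $m$ \emph{admissible} when $m\in\cB_{5,k}$ (case $k\equiv1\pmod5$) or $m\in\cA_{5,k}\cup\cB_{5,k}$ (case $k\equiv3\pmod5$) — one computes $\lambda_k(m)=\fl{3m/5}$, arising as the least common multiple of the period of $\mathbf U(k)$ modulo the prime-to-$5$ part of $m$ with $5^{b-1}\operatorname{ord}_5\alpha$; hence an admissible $m$ discriminates $U_0(k),\dots,U_{n-1}(k)$ iff $m\ge5n/3$. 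When $k\equiv0,4\pmod5$ one has $5\mid k(k+1)$ and every $2^a5^b$ already lies in $\cA_k\cup\cB_k$, so nothing new occurs. The main obstacle I anticipate is exactly this step: the uniform bound $\lambda_k(p^c)\le p^c/2$ over all odd $p\ne5$ not dividing $k(k+1)$ (with care about split versus inert primes and about $\Z[\alpha]/(p^c)$ not being a domain in the split case), together with the exact value of $\lambda_k(2^a5^b)$ and the $5$-to-$25$ lifting of $\operatorname{ord}\alpha$.

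With the classification in hand the theorem assembles cleanly. If $k\not\equiv1\pmod3$ there is no exceptional modulus, so $\cD_k(n)=\min\cS_{k,n}$ for every $n$. If $k\equiv1\pmod3$, $k\not\equiv2\pmod5$ and $k\not\equiv6,18\pmod{25}$, then $\cD_k(n)$ is the least of $\min\cS_{k,n}$ and the admissible $m=2^a5^b$ with $\lambda_k(m)\ge n$, i.e.\ with $m\ge5n/3$; distinguishing $k\equiv0,4\pmod5$ (no admissible $m$), $k\equiv1\pmod5$ (admissible $m$ are those in $\cB_{5,k}$) and $k\equiv3\pmod5$ (admissible $m$ are those in $\cA_{5,k}\cup\cB_{5,k}$) gives exactly \eqref{mainres}, the minima being finite since $\min\cS_{k,n}<2n$.

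For the final assertion one may assume $k\equiv1\pmod3$, the complementary case being already settled. Then $3\nmid k(k+1)$, and since $k\equiv1,3\pmod5$ also $5\nmid k(k+1)$, so every odd prime divisor $p$ of $k(k+1)$ is $\ge7$; consequently all $2^ap^b$ with $a,b\ge1$ lie in $\cB_k$, and all $p^b$ lie in $\cA_k$ when $k$ is odd and $p\mid k$. By Definition~\ref{def:xpalpha}, once $n\ge n_p(\tfrac53)$ the interval $[n,5n/3)$ contains a number $2^ap^b\in\cB_k\subseteq\cA_k\cup\cB_k$ — and already once $n\ge n_p^o(\tfrac53)$ a number $2^ap^b\in\cA_k\cup\cB_k$ in the case $k$ odd, $p\mid k$ — so $\min\cS_{k,n}<5n/3$. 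Since every admissible exceptional modulus is $\ge5n/3>\min\cS_{k,n}$, it cannot lower the minimum, whence $\cD_k(n)=\min\cS_{k,n}$.
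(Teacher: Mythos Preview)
Your reduction to classifying those $m\notin\cA_k\cup\cB_k$ with $\lambda_k(m)>m/2$ is sound, but the classification itself fails at the key step. You assert that $\lambda_k(p^c)\le p^c/2$ for every odd $p\ne 5$ with $p\nmid k(k+1)$, arguing that ``the first collision of the second type appears near index $\tfrac14\operatorname{ord}\alpha$''. This overlooks the possibility that $\operatorname{ord}_p\alpha$ is \emph{odd}, in which case $-1\notin\langle\alpha\rangle$ and there is no second-type collision at all; then $\lambda_k(p^c)=z_k(p^c)=p^{c-1}\operatorname{ord}_p\alpha$, which can exceed $p^c/2$. Concretely, take $p=13$: the norm-one subgroup of $\mathbb F_{169}^{\times}$ has order $14$, and for three residue classes of $k\bmod 13$ one has $\operatorname{ord}_{13}\alpha=7$, giving $\lambda_k(13^c)=7\cdot 13^{c-1}>13^c/2$ for every $c\ge 1$ (provided $z_k(169)=91$). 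The same phenomenon recurs for every $p\equiv 1\pmod 4$, since then $(p+1)/2$ is odd. The paper itself flags this obstruction after Corollary~\ref{incongruence3mod4}: ``We expect that this corollary also holds for the primes $p\equiv 1\pmod 4$, but is more difficult to prove.''

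The paper therefore does \emph{not} bound $\lambda_k(p^c)$ absolutely for $p\ge 13$. Instead it argues by comparison: Theorem~\ref{pequals5} shows that if $m=p^e\cdot m_1$ were a discriminator value with $p\ge 13$, one can produce an even integer $2^a\cdot 5^b\cdot m_1<m$ with $\lambda_k(2^a\cdot 5^b\cdot m_1)\ge\lambda_k(m)$, via the Diophantine result Proposition~\ref{prop:nowildpowersgeneral} guaranteeing an integer $2^a\cdot 5^b$ in the interval $\bigl(\tfrac{5(p+1)}{6p}p^e,\,p^e\bigr)$. Only the small primes $p\in\{3,7,11\}$ (all $\equiv 3\pmod 4$, where your argument does go through) are handled by a direct incongruence bound. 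Your approach would need a replacement for this comparison step; a uniform absolute bound on $\lambda_k(p^c)$ is not available.

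A secondary point: you write that the exceptional moduli are ``$m=2^a5^b$'', but the sets $\cA_{5,k},\cB_{5,k}$ you then invoke contain $m=m_1\cdot 5^b$ with arbitrary $m_1\in\cA_k\cup\cB_k$ (e.g.\ $7\cdot 5^b$ when $7\mid k$). Lemma~\ref{laatsteloodje} covers exactly this broader family and also weeds out the odd $m_1$ when $k\equiv 1\pmod 5$ --- a step your sketch conflates with the prime-power analysis.
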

\noindent Observe that on taking $k=1$ and $k=2$
we recover Theorem \ref{1+2}. 
Theorem \ref{main2} shows that the case  
$k\equiv 1\pmod*{3}$ is considerably 
more subtle than $k\not\equiv 1\pmod*{3}$. 
However, if $k\equiv 1\pmod*{3}$ then
$\cA_k$ and $\cB_k$ take the 
simpler form
$$\cA_k=\{m~{\text{odd}}:\text{if~}p\mid m,~{\text{then}}~p\mid
k\},\,\,\cB_k=\{m~{\text{even}}:\text{if~}p\mid m,~{\text{then}}~p\mid
k(k+1)\}.$$
Further using \eqref{excludedcongruenceclasses}
we deduce that Theorem \ref{main2} gives
a complete characterization of the discriminator for a set of integers $k$
having density $1-\frac{1}{3}\cdot
\frac{7}{25}=\frac{68}{75}$.
\par If $k$ is a power of two, then $\cA_k\cup \cB_k$ only contains $1$ as an odd number. It is thus natural to wonder about the parity of $\cD_{2^e}(n)$ for $n>1$. In this direction Theorem \ref{main2} leads to the following corollary.
\begin{cor}
If $n>1$, $e\ge 0$ with $e\not\equiv 4\pmod*{10}$, then
$\cD_{4^e}(n)$ is even.
\end{cor}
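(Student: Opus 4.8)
The plan is to feed $k=4^e$ into Theorem \ref{main2} and observe that, under the hypothesis $e\not\equiv 4\pmod{10}$, the discriminator is expressed as a minimum over sets consisting entirely of even integers. To begin, since $4\equiv 1\pmod 3$ we have $4^e\equiv 1\pmod 3$ for every $e\ge 0$; hence $k=4^e$ always falls in the ``$k\equiv 1\pmod 3$'' branch of Theorem \ref{main2}, and $\cA_k,\cB_k$ take the simpler form displayed there. Next I would check the remaining admissibility conditions of \eqref{mainres}. Because $4\equiv-1\pmod 5$, we get $4^e\equiv(-1)^e\pmod 5$, so $k\not\equiv 2\pmod 5$ automatically (indeed $k\equiv 4\pmod 5$ when $e$ is odd and $k\equiv 1\pmod 5$ when $e$ is even). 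Modulo $25$ the element $4$ has order $10$, with successive powers $4,16,14,6,24,21,9,11,19,1$; reading off this cycle shows that $4^e\equiv 6\pmod{25}$ precisely when $e\equiv 4\pmod{10}$, while $4^e\equiv 18\pmod{25}$ never occurs. Thus the hypothesis $e\not\equiv 4\pmod{10}$ is exactly what is needed to apply \eqref{mainres} to $k=4^e$.

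With this in hand I would identify $\cA_k$ explicitly. As $k=4^e$ is a power of $2$ (and $k=1$ when $e=0$), the only prime that can divide $k$ is $2$, so the only odd integer all of whose prime factors divide $k$ is $1$; that is, $\cA_k=\{1\}$. Since $n>1$, the element $1$ does not lie in $[n,\infty)$, so $\cS_{k,n}=\{m\in\cB_k:m\ge n\}$. This set is nonempty, because $\cB_k$ contains every power of $2$ (note $2\mid k$, or $2\mid k(k+1)$ when $e=0$), and every one of its elements is even by the definition of $\cB_k$.

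Finally I would split on the parity of $e$, which by the computation above coincides with the value of $k\bmod 5$. If $e$ is odd then $k\equiv 4\pmod 5$, and the first line of \eqref{mainres} gives $\cD_{4^e}(n)=\min\cS_{k,n}$, a minimum over a nonempty set of even integers, hence even. If $e$ is even (and $e\not\equiv 4\pmod{10}$), then $k\equiv 1\pmod 5$, and the second line of \eqref{mainres} gives $\cD_{4^e}(n)=\min\bigl(\cS_{k,n}\cup\{m\ge 5n/3:m\in\cB_{5,k}\}\bigr)$; every $m\in\cB_{5,k}$ equals $a\cdot 5^b$ for some even $a\in\cB_k$ and $b\ge 1$, so is itself even, and again the minimum is taken over a nonempty set of even integers. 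In both cases $\cD_{4^e}(n)$ is even, and these two cases together exhaust all $e\ge 0$ with $e\not\equiv 4\pmod{10}$.

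I do not anticipate a real obstacle: the corollary is essentially a bookkeeping consequence of Theorem \ref{main2} once one notices that $\cA_{4^e}$ degenerates to $\{1\}$, so that the only odd candidate for the discriminator, namely $n=1$, is excluded by the hypothesis $n>1$. The only steps demanding a little care are the congruence computation modulo $25$ pinning down $e\equiv 4\pmod{10}$ as the forbidden class, and a quick verification that the degenerate case $e=0$ (i.e. $k=1$) is handled uniformly: there $\cB_1=\{2^a:a\ge 1\}$, $\cB_{5,1}=\{2^a5^b:a,b\ge 1\}$, and the asserted evenness is already visible from Theorem \ref{1+2}(a).
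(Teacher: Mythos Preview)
Your proof is correct and follows exactly the route the paper intends: the corollary is stated immediately after Theorem \ref{main2} as a consequence of it, and your argument---checking that $4^e\equiv 1\pmod 3$, that $4^e\not\equiv 2\pmod 5$, that $4^e\equiv 6\pmod{25}$ iff $e\equiv 4\pmod{10}$ (and never $\equiv 18$), and that $\cA_{4^e}=\{1\}$ so the minimum in \eqref{mainres} is taken over even integers when $n>1$---is precisely the bookkeeping needed to extract the corollary.
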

Finally, we observe that the interval $[n,3n/2)$ in Theorem \ref{oldmaincorrected} can often be replaced by a larger one. Theorem 
\ref{thm2refined} gives the details.
\subsection{The exceptional set $\cF_k$}
By \eqref{Dkn=n} we know that $\cA_k\cup\cB_k\subseteq \cD_k$.
Inequality \eqref{dkinequal2} suggests to consider
the \emph{exceptional set}
$$\cF_k:=\cD_k\setminus (\cA_k\cup\cB_k),$$ that is,
\begin{equation}
\label{ABF}
\cD_{k}=
\cA_k\cup
\cB_k\cup \cF_k,
\end{equation}
with $ \cF_k $ disjoint from both $ \cA_k $ and $ \cB_k$. 
\begin{lemma} 
\label{fktrivial}
Let $k>1$ be an integer.\hfil\break
{\rm a)} The set $\cF_k$ is finite.\hfil\break
{\rm b)} There are infinitely many $k$ for which the set $\cF_k$  is non-empty.\hfil\break
{\rm c)} The cardinality of the
set $\cF_k$ can be larger than any given bound.
\end{lemma}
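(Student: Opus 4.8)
\emph{Proof plan.}
For part a) the crux is the identity $\cF_k=\{\cD_k(n):n\ge 1,\ \cD_k(n)<\min\cS_{k,n}\}$, where $\cS_{k,n}:=\{m\in\cA_k\cup\cB_k:m\ge n\}$, a set that is non-empty because it contains every sufficiently large power of $2$. One inclusion is immediate: if $\cD_k(n)<\min\cS_{k,n}$ then $\cD_k(n)\notin\cA_k\cup\cB_k$, since otherwise $\cD_k(n)\in\cS_{k,n}$ (recall $\cD_k(n)\ge n$) and hence $\cD_k(n)\ge\min\cS_{k,n}$. Conversely, if $\cD_k(n)\notin\cA_k\cup\cB_k$ then $\cD_k(n)\ne\min\cS_{k,n}$, and \eqref{dkinequal2} then forces $\cD_k(n)<\min\cS_{k,n}$. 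Since, by Theorem \ref{oldmaincorrected}, strict inequality in \eqref{dkinequal2} occurs for only finitely many $n$, the set $\cF_k$ is the image of a finite set under $n\mapsto\cD_k(n)$, and therefore finite.

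For parts b) and c), given any $N\ge1$ I would construct a single integer $k$ with $|\cF_k|\ge N$; part b) then follows, because the $k$ we build can be taken arbitrarily large. The plan is to choose $k\equiv 1\pmod*{75}$, which on the one hand makes Theorem \ref{main2} applicable in the form $\cD_k(n)=\min\{\cS_{k,n}\cup\{m\ge 5n/3:m\in\cB_{5,k}\}\}$ and on the other hand gives $5\nmid k(k+1)$, so that $2\cdot5^{b}\in\cB_{5,k}\setminus(\cA_k\cup\cB_k)$ for every $b\ge1$. Taking $n_0=6\cdot5^{b-1}$, so that $5n_0/3=2\cdot5^{b}$, the displayed formula yields $\cD_k(n_0)=2\cdot5^{b}$ — an element of $\cF_k$ — as soon as $[6\cdot5^{b-1},2\cdot5^{b}]\cap(\cA_k\cup\cB_k)=\emptyset$: then $\min\cS_{k,n_0}>2\cdot5^{b}$, while $2\cdot5^{b}$ already lies in the second set in the minimum. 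So it suffices to exhibit $k$ and exponents $b_1<\cdots<b_N$ for which all $N$ intervals $[6\cdot5^{b_i-1},2\cdot5^{b_i}]$ are disjoint from $\cA_k\cup\cB_k$: this puts the $N$ distinct numbers $2\cdot5^{b_1},\dots,2\cdot5^{b_N}$ in $\cF_k$.

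Two facts make this possible. First, $[6\cdot5^{b-1},2\cdot5^{b}]$ contains a power of $2$ precisely when the fractional part $\{b\log_2 5\}$ is $\le\log_2(5/3)$; since $\log_2 5$ is irrational, by equidistribution of $(b\log_2 5)_{b\ge1}$ modulo $1$ there are infinitely many $b$ — for example $b=3$, giving the interval $[150,250]$ — for which it contains no power of $2$, and I would pick $b_1<\cdots<b_N$ among these. Second, any element of $\cA_k\cup\cB_k$ lying in such an interval that is not a power of $2$ must be divisible by an odd prime factor of $k(k+1)$; hence it is enough that every odd prime factor of $k(k+1)$ exceed $2\cdot5^{b_N}$. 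I would secure this by taking $k=q$, a sufficiently large prime with $q\equiv 1\pmod*{75}$ and with $q+1$ divisible by no prime in $(5,2\cdot5^{b_N}]$: then the odd prime factors of $q(q+1)$ are $q$ together with those of $q+1$ (note $3\nmid q+1$ and $5\nmid q+1$ automatically), all exceeding $2\cdot5^{b_N}$. Infinitely many such $q$ exist by Dirichlet's theorem.

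The step I expect to be the main obstacle is this last one: one has to check that the conditions $q\equiv 1\pmod*{75}$ and $q\not\equiv -1\pmod*{p}$ for every prime $p\in(5,2\cdot5^{b_N}]$ can be satisfied simultaneously by a residue class coprime to the combined modulus $75\prod_{5<p\le 2\cdot5^{b_N}}p$, so that Dirichlet's theorem genuinely applies. This is a routine Chinese Remainder Theorem verification, but it is the point where all the constraints must be reconciled; everything else reduces to bookkeeping with the explicit descriptions of $\cA_k$, $\cB_k$ and $\cB_{5,k}$ and to the elementary count of powers of $2$ in $[6\cdot5^{b-1},2\cdot5^{b}]$.
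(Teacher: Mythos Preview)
Your argument is correct. For part a) you use Theorem \ref{oldmaincorrected} exactly as the paper does; note only that Theorem \ref{oldmaincorrected} is stated for $k>2$, so the case $k=2$ needs Theorem \ref{1+2}b separately (from which one sees at once that $\cF_2=\emptyset$).

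For parts b) and c) you take a genuinely different route. The paper's argument is considerably shorter and uses only results from part I: if $k\equiv 1\pmod*{N!}$ then $U_n(k)\equiv U_n(1)\pmod*{m}$ for every $m\le N$, so $\cD_k(n)=\cD_1(n)$ for all small $n$; since $5\nmid k(k+1)$ while infinitely many values of $\cD_1$ are divisible by $5$ (Theorem \ref{1+2}a), those values land in $\cF_k$, and taking $N$ large yields arbitrarily many of them. Your approach instead invokes the main Theorem \ref{main2} of the present paper and then engineers, via equidistribution of $(b\log_2 5)$ and Dirichlet's theorem, primes $k=q\equiv 1\pmod*{75}$ with no small odd prime factors in $q(q+1)$. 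The CRT step you flag as the main obstacle is indeed routine: for each prime $p\in(5,2\cdot 5^{b_N}]$ one may take, say, $q\equiv 1\pmod*{p}$, giving a residue class coprime to the full modulus. Both routes ultimately rest on the same elementary fact---that $[6\cdot5^{b-1},2\cdot5^{b}]$ misses all powers of $2$ for infinitely many $b$---but the paper's reduction to $k=1$ avoids the forward reference to Theorem \ref{main2} and the Dirichlet construction. The payoff of your approach is that it pins down explicitly which elements ($2\cdot5^{b_i}$) enter $\cF_k$, and it showcases a direct application of Theorem \ref{main2}.
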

\begin{proof}
a) For $k>2$ this is a direct consequence of Theorem \ref{oldmaincorrected} and
the definition of $\cF_k$. For $k=2$ it follows from 
Theorem \ref{1+2}b.\hfil\break
b)+c). The idea is to take
$k\equiv 1\pmod*{N!}$ with $N\ge 5$ large enough.
Then $\cD_k(n)=\cD_1(n)$ for
$n=1,\ldots,N,$
and thus the values involving $5$ will appear (cf. 
part a) of Theorem  \ref{1+2}). As
$5\nmid k(k+1),$ these 
are not in $\cA_k\cup
\cB_k,$ and so they must be 
in $\cF_k$. Since it can
be shown that infinitely many values of
$\cD_1$ are divisible by $5$, the proof
is completed. 
\end{proof}
\begin{table}[h]
\begin{tabular}{|c|c|c|c|c|}
\hline
$k$ & mod $25$ & $\cF_k$ ($a=125$) & $k(k+1)$ &  $n_p(5/3)$\\
\hline\hline
$16$ &  $16$ & $\{2a,4a,8a,16a,32a,64a\}$ & $2^4 \cdot \textbf{17}$ & $78644$\\
\hline
$73$ & $23$  &$\{a, 2a, 4a, 8a, 16a\}$ & $2 \cdot \textbf{37} \cdot 73$ & $1229$\\
\hline
$136$ & $11$ &$\{2a, 4a, 8a, 16a, 32a\}$ & $2^3 \cdot \textbf{17} \cdot 137$ & $78644$\\
\hline
$148$ &  $23$ & $\{a, 2a, 4a, 8a\}$ & $2^2 \cdot \textbf{37} \cdot 149$ & $1229$\\
\hline
$271$ &   $21$ & $\{2a ,4a,8a,16a,32a,64a\}$ & $2^4 \cdot \textbf{17} \cdot 271$ 
& $78644$\\ 
\hline
$283$ & $8$  & $\{a,2a,4a,8a,16a,32a\}$ & $2^2\cdot \textbf{71}\cdot 283$ & $4916$\\
\hline
$313$ & $13$ & $\{a,2a\}$ & $2\cdot \textbf{157}\cdot 313$ & $154$\\
\hline
    \end{tabular}
    \medskip \medskip
\caption{Some non-empty exceptional sets $\cF_k$}
\label{tab:125}
\end{table}
Tab.\,\ref{tab:125} demonstrates Lemma \ref{fktrivial}a. 
Every number 
appearing in it is of the form $2^b\cdot 5^3$ and 
explained by
Theorem \ref{main2} (which covers all the congruence 
classes 
mod $25$ appearing in the table).
The final column gives $n_p(\frac{5}{3})$ for
a prime $p$ indicated in bold in the column headed
$k(k+1)$. The number $\frac{5}{3}n_p(\frac{5}{3})$ is an upper bound for the largest number
in $\cF_k$. It is of crucial importance here to choose the right $p$, if for $k=136$ for example we would choose $p=137$, then we 
end up with
$n_{137}(5/3)=2516583$, whereas
for $p=17$ we obtain $n_{17}(5/3)=78644$.
The set given in the first row of Tab.\,\ref{tab:125} is 
certainly a subset of
$\cF_{16}$ by Tab.\,\ref{tab:16}.

 In 
Sec.\,\ref{effective} we
establish an effective, but 
unfortunately huge, upper bound for $\max \cF_k$. 
\begin{thm}
\label{largefkbound}
For $k>3$ we have
$$\max\cF_k\le 2^{k^{10^{10}\log\log k}}.
$$
\end{thm}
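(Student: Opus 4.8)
The plan is to trace through the structure already established: by Theorem~\ref{oldmaincorrected} together with \eqref{dkinequal2}, every element of $\cF_k$ is an integer $m$ for which $\cD_k(n)=m$ holds with strict inequality in \eqref{dkinequal2} for some $n\le m$; and by Lemma~\ref{fktrivial}a the set $\cF_k$ is finite. So it suffices to produce an explicit $M=M(k)$ such that no such exceptional $m$ can exceed $M$. The key mechanism making \eqref{dkinequal2} an equality is the existence of an element of $\cA_k\cup\cB_k$ in the short interval $[n,3n/2)$. Since $k>3$, the product $k(k+1)$ has an odd prime divisor $p$ (if $k$ is a power of $2$ then $k+1$ is odd and $>1$; otherwise $k$ itself has an odd prime factor), and then $\cB_k$ (or $\cA_k$, if $p\mid k$) contains all integers of the form $2^a p^b$ (resp.\ $p^b$) subject to the mod-$9$ restriction. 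Thus the quantity $n_p(5/3)$ of Definition~\ref{def:xpalpha} controls things: once $n\ge n_p(5/3)$ the interval $[n,5n/3)\supseteq[n,3n/2)$ contains a suitable element of $\cA_k\cup\cB_k$, so equality holds in \eqref{dkinequal2} and $\cD_k(n)\in\cA_k\cup\cB_k$, i.e.\ $\cD_k(n)\notin\cF_k$. Hence
$$\max\cF_k\ \le\ \frac{5}{3}\,n_p\!\left(\tfrac{5}{3}\right)$$
for any odd prime $p\mid k(k+1)$ (the factor $5/3$ because the discriminator value is at most the right endpoint of the first good interval); in particular we may take $p$ to be the least odd prime factor of $k(k+1)$.

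It therefore remains to bound $n_p(5/3)$ effectively in terms of $p$, uniformly, and then to bound $p$ in terms of $k$. For the second of these, $p\le k+1$ trivially (taking $p\mid k+1$ when $k$ is a power of two, or any prime factor of $k$ otherwise), so it is enough to give an effective upper bound for $n_p(5/3)$ that is at most, say, $2^{p^{c\log\log p}}$ for an absolute constant $c$; feeding $p\le k+1$ into this and simplifying the double-exponential then yields the stated bound $2^{k^{10^{10}\log\log k}}$ with room to spare. The bound on $n_p(\alpha)$ is exactly the kind of statement the abstract advertises as being obtained via Matveev's linear-forms-in-logarithms estimate together with the Koksma--Erd\H{o}s--Tur\'an inequality: one wants to show that for all $n\ge n_p(\alpha)$ the interval $[n,n\alpha)$ contains an even integer $2^a p^b$, equivalently that the fractional parts of $\{b\log p/\log 2\}$ are sufficiently equidistributed that consecutive values of $a\log 2+b\log p$ never leave a gap larger than $\log\alpha=\log(5/3)$ in the relevant range. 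Matveev's theorem gives an effective lower bound for $|a\log 2 - b\log p|$ (hence an irrationality-measure-type estimate for $\log p/\log 2$), and Koksma--Erd\H{o}s--Tur\'an converts this into a quantitative discrepancy bound for the sequence $\{b\,\log p/\log 2\}_{b\le B}$; choosing $B$ just large enough that the discrepancy beats the target gap length produces an explicit (doubly exponential in $p$) value of $n_p(\alpha)$.

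The main obstacle is the last step: extracting a clean, fully explicit, and \emph{uniform-in-$p$} numerical bound for $n_p(5/3)$ from the Matveev/Koksma--Erd\H{o}s--Tur\'an machinery. Matveev's constants depend on the heights of the logarithms involved, so the bound on the irrationality measure of $\log p/\log 2$ degrades (polynomially) with $p$; tracking how this propagates through the discrepancy estimate and then through the choice of cutoff $B$ is where the enormous exponent $10^{10}\log\log k$ comes from, and where care is needed to make every constant effective rather than merely "effectively computable." The remaining steps --- reducing $\max\cF_k$ to $\frac53 n_p(\frac53)$ via Theorem~\ref{oldmaincorrected}, and bounding $p$ by $k+1$ --- are routine, and the final simplification of the tower of exponents is a matter of absorbing lower-order terms. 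One should also double-check the small cases and the mod-$9$ exception in the definitions of $\cA_k,\cB_k$ (when $k\equiv 6\pmod 9$ one may need $9\nmid 2^ap^b$, i.e.\ one restricts to $b\le 1$ when $p=3$, or simply chooses $p\neq 3$), but this only affects the argument when $3$ is the \emph{only} odd prime factor of $k(k+1)$, a situation easily handled separately or by noting $n_3(5/3)$ is small.
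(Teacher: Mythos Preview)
Your reduction $\max\cF_k\le C\cdot n_p(\alpha)$ for an odd prime $p\mid k(k+1)$ is a valid and in fact cleaner route than the paper's, but there is a genuine slip in the choice of $\alpha$. Theorem~\ref{oldmaincorrected} gives equality in \eqref{dkinequal2} only when $[n,3n/2)$ contains an element of $\cA_k\cup\cB_k$; the inclusion $[n,5n/3)\supseteq[n,3n/2)$ points the wrong way, so locating a $2^ap^b$ in the \emph{larger} interval does not place it in the smaller one. You must work with $n_p(3/2)$ rather than $n_p(5/3)$ (invoking Theorem~\ref{thm2refined} does not help in general since $\sigma_k=2/3$ when $k\equiv 1,7\pmod 9$). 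Likewise the constant $C$ should be $2$, not $5/3$: from $n<n_p(3/2)$ one gets $\cD_k(n)<2n$ via Corollary~\ref{cor:twopower}, and nothing sharper. With these two fixes your outline is sound; the mod-$9$ caveat is indeed harmless for $k>3$, since the only $k$ with $k(k+1)$ supported on $\{2,3\}$ are $k=2,3,8$, and $8\not\equiv 2,6\pmod 9$.

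The paper proceeds differently. Instead of invoking a global quantity like $n_p(3/2)$, it writes each $m\in\cF_k$ as $p_1^{e_1}m_1$ with $p_1\nmid k(k+1)$ and $m_1\in\cP(k(k+1))$, and bounds the wild and tame parts separately (Lemmas~\ref{lem:disc} and~\ref{lemma:finitelydiscseven}). The bound on $p_1^{e_1}$ comes from a single containment $\{(u{+}1)\log 2/\log p\}\in(\log(4/3)/\log p,\log(3/2)/\log p)$, established effectively via Koksma--Erd\H{o}s--Tur\'an plus Matveev (Lemma~\ref{lem:10}); the argument is then \emph{iterated} over every odd prime $q\mid m_1$, and once more for the power of $2$ when $m$ is even, bringing in a factor $\omega(k(k+1))$. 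Your approach packages all of this into bounding $n_p(3/2)$, which by the recipe of Sec.~\ref{sec:interval} needs two containments (one for each substitution $2^e\to p^f$ and $p^g\to 2^h$) but avoids the iteration entirely. Both routes feed the Matveev constants through the discrepancy bound to produce $2^{p^{O(\log\log p)}}$ and then substitute $p\le k+1$; you correctly identify that extracting the explicit exponent $10^{10}\log\log k$ from this is the part requiring care, and that step is not carried out in your sketch.
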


\subsection{Outline of the proof of Theorem \ref{main2}}
\label{proofstrategy}
For $m$ to be a potential discriminator value its \emph{rank of appearance} $z(m)$ (see 
Def.\,\ref{def:appear}) has to be large. The idea is now to first identify those values of $m$. This 
is the object of Secs.\,\ref{sec:powerappear}--\ref{sec:zlarge}, with basic
properties of $z(m)$ being recalled in Sec.\,\ref{sec:appearol}.
\par If $m$ is in $\cF_k,$ then there is a unique
prime power $p^e$ with $p\nmid k(k+1)$ such that $p^e$ 
exactly divides $m.$ 
We call $p^e$ a \emph{wild prime power}\footnote{This terminology is inspired 
by the novel ``The Wild Numbers: a Novel" by Philibert Schogt.} for $k$ (the smallest one being
$125$, cf.\,Tab.\,\ref{tab:125}). The major part of the proof of Theorem \ref{main2} consists of showing that $p=5$. 
This is the content of Theorem \ref{pequals5}.
The proof idea is to replace a wild prime power by a suitable number of the form
$2^a\cdot 5^b$ and thus get a smaller, but still discriminating, number. For this we need to ensure the existence of numbers of
the form $2^a\cdot 5^b$ in small enough intervals, a problem in the realm of Diophantine approximation. This is studied in 
Sec. \ref{sec:interval} (and in greater generality in 
Languasco et al.\,\cite{LLMT}).
\par Once we know that $p=5$ we are left with a very restricted set of potential discriminator values. The even
ones have good discriminating properties, but in general not the odd ones. To weed these further
out we use a more refined quantity, \emph{the incongruence index}, which unfortunately is
more awkward to work with than $z(m)$. It is studied
in Secs.\,\ref{sec:incong}--\ref{sec:discriminatory}, culminating in Lemma \ref{laatsteloodje}.
The proof of Theorem \ref{main2} now follows (in essence) on combining this lemma with Theorem \ref{pequals5}.
\begin{rem}
The congruence classes modulo $25$ covered by Theorem \ref{main2} are precisely the congruence classes for 
which $z(25)=15$ (see Tab.\,\ref{zkp2}) or $z(25)=25$.
\end{rem}

\subsection{Outline of the proof of Theorem 
\ref{largefkbound}}
Let $m\in {\mathcal F}_k$. Write $m=p_1^{e_1}\cdot m_1$, where $p_1\nmid k(k+1)$, $p_1^{e_1}$ exactly divides $m$, and $m_1$ consists only of prime factors of $k(k+1)$. We need to bound $p_1^{e_1}$ and $m_1$. We explain only the case 
when $m_1$ is odd as the even case is similar. Let $p$ be any odd prime factor of $k(k+1)$. It follows from uniform distribution theory that there exists $u$ such that 
 $$
 \left\{(u+1)\frac{\log 2}{\log p}\right\}\in \left(\frac{\log(4/3)}{\log p}, \frac{\log(3/2)}{\log p}\right). 
 $$
 This is containment \eqref{eq:u}. We show by an elementary argument that $p_1^{e_1}<2^{u+1}$. Thus, it suffices to bound $u$. This we do using the Koksma-Erd\H os-Tur\'an inequality which bounds the discrepancy of a sequence modulo $1$ by an exponential sum involving the distances to nearest integers of the members of our sequence. In our case, the members of our sequence are the multiples of $\log 2/\log p$, so we can bound the distances to the nearest integer using a version of Baker's lower bounds for linear forms in logarithms due to 
 Matveev (Theorem \ref{Matveev11}). Putting everything together gives a bound on $u$ in terms of $p$ which is exponential in $(\log p)(\log\log p)$ (see Lemma \ref{lem:10}). The argument can be iterated. Namely one writes 
 $m_1=\prod_{i=1}^s q_i^{a_i}$, with $q_1,\ldots,q_s$ divisors of $k(k+1)$,
 %where $q_i\mid k(k+1)$ for $i=1,\ldots,s$ 
 and one uses a similar argument to bound the exponents $a_1,\ldots,a_s$. A similar extra step is needed to bound the exponent of $2$ in case $m_1$ is even.

\subsection{Related work on other discriminators}
Apart from the infinite family of recurrence discriminators dealt
with here, only one other infinite family has
been studied, namely by Ciolan and Moree \cite{CM}.
Also in this case the associated discriminators $\cD$ have the property that
$\cD(2^b)=2^b$ for every $b\ge 1.$ De Clercq and his (many!)\,coauthors \cite{manyinterns} have classified all
binary linear recurrences for which the discriminator has
this property. It is expected that for all of them a rather
simple characterization of the discriminator should be possible. 
However, very little is known for
second order linear recurrences not of this form.

The work of de Clercq et al.\,\cite{manyinterns} was partly generalized by Ferrari
\cite{MatteoFerrari}, who, given any fixed odd prime $p$, found
a large class of binary recurrences 
$\bf a$ for which $\cD_{\bf a}(p^k)=p^k$ for every $k\ge 1.$

\section{Preliminaries}
\subsection{Notation}\label{subsec:notation}
The characteristic equation of the Shallit recurrence ${\bf U}(k)$ is
$$
x^2-(4k+2)x+1=0.
$$
Its roots are $\alpha(k)$ and $\alpha(k)^{-1},$ where
$$
\alpha(k)=2k+1+2{\sqrt{k(k+1)}},\quad \alpha^{-1}(k) = 2k + 1 - 2\sqrt{k(k+1)}.
$$
Note that
$$\alpha(k)= \beta^2(k),\quad 
\text{with}\quad\beta(k)= \sqrt{k+1} + \sqrt{k}.$$
The discriminant of the Shallit sequence is
\begin{equation*}
\Delta(k) := (\alpha(k)-\alpha(k)^{-1})^2=16k(k+1), 
\end{equation*}
and we easily verify that
\begin{equation}
\label{Binetform}
U_n(k) = \frac{\alpha^n(k) - \alpha^{-n}(k)}{\alpha(k) - \alpha^{-1}(k)} =
\frac{\beta^{2n}(k) - \beta^{-2n}(k)}{\beta^2(k) - \beta^{-2}(k)}.
\end{equation}
Given a prime $p$, we define
\begin{equation}
\label{epk}
e_p(k) := \left(\frac{k(k+1)}{p}\right),
\end{equation}
where $(\frac{\cdot}{p})$ is the Legendre symbol.

\subsection{The index of appearance}
\label{sec:appearol}
A crucial role in our considerations is played
by the \textit{index of appearance}.
\begin{definition}[Index of appearance]
\label{def:appear}
Let $k\ge 1$ be fixed. Given $m$, the smallest $n\ge 1$ such
that $m$ divides $U_n(k)$
exists and is called
the \textit{index of appearance} of $m$ in ${\bf U}(k)$ and
is denoted by $z_k(m)$.
\end{definition}
For notational convenience we suppress
the dependence of $z_k(m)$ on $k$ and, when there is no danger of
confusion, we denote it simply by $ z(m). $ 
The following result is trivial, but we will use it time and again.
\begin{lemma}
\label{zfactor2}
If $m=\cD_k(n),$ then $z(m)\ge n$ and $z(m)>m/2.$
\end{lemma}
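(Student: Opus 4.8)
The plan is to obtain both inequalities from two elementary facts: the definition of the index of appearance together with $U_0(k)=0$, and the crude upper bound $\cD_k(n)<2n$ provided by Corollary~\ref{cor:twopower}.

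First I would prove $z(m)\ge n$. Since $m=\cD_k(n)$, the integers $U_0(k),\dots,U_{n-1}(k)$ are pairwise incongruent modulo $m$ by the definition of the discriminator; as $U_0(k)=0$, this forces $m\nmid U_j(k)$ for every $j$ with $1\le j\le n-1$. By Definition~\ref{def:appear}, the least $j\ge 1$ with $m\mid U_j(k)$ is then at least $n$, i.e. $z(m)\ge n$. (For $n=1$ there is nothing to check, and $z(m)\ge 1$ holds trivially.) Next I would invoke Corollary~\ref{cor:twopower}, which gives $m=\cD_k(n)\le\min\{2^b:2^b\ge n\}$; since the smallest power of $2$ not below $n$ is strictly less than $2n$, we obtain $m<2n$. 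Combining the two steps yields $2z(m)\ge 2n>m$, which is precisely $z(m)>m/2$.

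I do not anticipate any genuine difficulty here — this is exactly the \emph{trivial} lemma announced in the text, and the two inequalities follow at once from the pieces above. The only points worth a line of care are the boundary case $n=1$ (where $m=\cD_k(1)=1$ and $z(1)=1>1/2$) and the observation that the appeal to Corollary~\ref{cor:twopower} introduces no circularity, since that corollary is established directly in part~I (\cite[Lemma~1]{FLM}) and does not rely on the present lemma.
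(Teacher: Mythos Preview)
Your proof is correct. The first inequality is argued exactly as in the paper. For the second, the paper proceeds by contradiction---assuming $z(m)\le m/2$, it locates a power of two $2^b$ in $[z(m),2z(m))\subseteq[n,m)$ and observes that $2^b$ would then be a smaller valid modulus---whereas you simply chain $z(m)\ge n$ with the bound $m<2n$ coming from Corollary~\ref{cor:twopower}. Both routes rest on the same corollary; yours is marginally more direct, while the paper's phrasing makes the role of the competing power of two explicit (a viewpoint reused later, e.g.\ in Lemma~\ref{neverwild}).
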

\begin{proof} 
Since $U_{z(m)}\equiv U_0\pmod*{m}$ it follows that $z(m)\ge n$.
The second assertion we prove by contradiction and so
suppose that  $z(m)\le m/2.$ 
The interval $[z(m),2z(m))$ contains a power
of two, say $z(m)\le 2^b< 2z(m)\le m$. 
Since $2^b\ge z(m)\ge n$, it follows from
Corollary \ref{cor:twopower} that $U_0(k),\ldots,U_{n-1}(k)$ are  pairwise distinct modulo $2^b.$ As $2^b<m$, this contradicts the definition of the discriminator.
\end{proof}
Thus a way to characterize discriminator values would be
to first characterize those integers $m$ for which $z(m)>m/2$ (note that in part I we
already determined the integers $m$ for which $z(m)=m$, cf.\,Lemma 
\ref{appearance}).
This we address in Sec.\,\ref{sec:halfm}. The next step is then to investigate the discriminatory properties of
these $m$ (see Sec.\,\ref{sec:discriminatory}).
\par If $p$ divides $k(k+1)$ we have $z(p)=p$. This follows from the following
trivial lemma.
\begin{lem}
If $p\mid k$, then $U_n(k)\equiv n\bmod{p}$. If $p\mid (k+1)$, then 
$U_n(k)\equiv (-1)^{n+2}n\bmod{p}$.
\end{lem}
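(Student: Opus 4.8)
The final statement is the last claim of the lemma: if $p \mid k$, then $U_n(k) \equiv n \pmod p$, and if $p \mid (k+1)$, then $U_n(k) \equiv (-1)^{n+2} n \pmod p$. Since $(-1)^{n+2} = (-1)^n$, the second congruence says $U_n(k) \equiv (-1)^n n \pmod p$. The plan is a straightforward induction on $n$ using the defining recurrence $U_{n+2}(k) = (4k+2)U_{n+1}(k) - U_n(k)$, after reducing the recurrence coefficient modulo $p$ in each of the two cases.

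First I would treat the case $p \mid k$. Modulo $p$ we have $4k + 2 \equiv 2 \pmod p$, so the recurrence collapses to $U_{n+2} \equiv 2U_{n+1} - U_n \pmod p$, which is exactly the recurrence satisfied by the linear sequence $n \mapsto n$. Checking the base cases $U_0(k) = 0 \equiv 0$ and $U_1(k) = 1 \equiv 1$, an immediate induction gives $U_n(k) \equiv n \pmod p$ for all $n \ge 0$. Second, for $p \mid (k+1)$ we have $k \equiv -1 \pmod p$, hence $4k + 2 \equiv -2 \pmod p$, and the recurrence becomes $U_{n+2} \equiv -2U_{n+1} - U_n \pmod p$. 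Substituting the ansatz $U_n(k) \equiv (-1)^n n \pmod p$: the right-hand side is $-2(-1)^{n+1}(n+1) - (-1)^n n = (-1)^n\bigl(2(n+1) - n\bigr) = (-1)^n(n+2) = (-1)^{n+2}(n+2)$, which matches the left-hand side. Again the base cases $U_0(k) = 0$ and $U_1(k) = 1 = (-1)^1 \cdot 1 \cdot (-1)$, wait — here one should be careful: $(-1)^{1+2}\cdot 1 = -1$, so one checks instead against $(-1)^n n$ directly, giving $(-1)^0 \cdot 0 = 0$ and $(-1)^1 \cdot 1 = -1 \equiv$ the value $1$ only if... so in fact the natural normalization is $U_n(k) \equiv (-1)^{n} n \pmod p$ fails at $n=1$; the correct form is $U_n(k) \equiv -(-1)^{n} n = (-1)^{n+1} n \pmod p$. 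One simply verifies $U_0 \equiv 0$, $U_1 \equiv 1 = (-1)^{1+1}\cdot 1$, and the induction as above, which is consistent with the stated $(-1)^{n+2} n = (-1)^n n$ up to the sign convention one fixes at the outset; I would state the identity in whichever normalized form makes the two base cases $U_0$ and $U_1$ check, and note the exponent parity is the only content.

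There is essentially no obstacle here: the only point requiring the slightest care is bookkeeping the sign in the $p \mid (k+1)$ case so that the claimed closed form agrees at $n = 0$ and $n = 1$, after which the two-term linear recurrence propagates it. As a sanity check one notes that either congruence immediately yields $z(p) = p$ when $p \mid k(k+1)$: the first nonzero index $n \ge 1$ with $p \mid U_n(k)$ is $n = p$, since $U_n(k) \equiv \pm n \pmod p$ vanishes exactly when $p \mid n$. This is precisely the use to which the lemma is put in the surrounding text.
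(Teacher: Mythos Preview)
Your approach---induction on the recurrence after reducing $4k+2$ modulo $p$---is exactly the trivial argument the paper has in mind (it gives no proof). The case $p\mid k$ is handled cleanly.

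In the case $p\mid (k+1)$ you correctly diagnose a typo in the paper's statement: since $U_1(k)=1$ but $(-1)^{1+2}\cdot 1=-1$, the exponent should be $n+1$ (or $n-1$), not $n+2$; the correct closed form is $U_n(k)\equiv (-1)^{n+1}n\pmod p$. Your induction step is right once this is fixed, and nothing downstream in the paper is affected since only $U_n\equiv \pm n\pmod p$ is ever used. However, your write-up of this case is meandering: you assert $(-1)^{n+2}=(-1)^n$, then verify the recurrence for that form, then realise the base case fails, then correct to $(-1)^{n+1}$, then muddy things again with ``up to the sign convention one fixes at the outset'' (there is no convention---$(-1)^{n+2}$ equals $(-1)^n$, full stop). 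Just state the corrected formula $U_n(k)\equiv (-1)^{n+1}n\pmod p$, check $n=0,1$, and do the one-line induction; the detour adds nothing.
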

\begin{cor}
If $p\mid (k+1)$ is odd, then $U_{\frac{p-1}2}(k)\equiv U_{\frac{p+1}2}(k)\pmod*{p}$ and for $0\le i<j\le (p-1)/2$ we have 
$U_{i}(k)\not\equiv U_{j}(k)\pmod*{p}$.
\end{cor}
\subsection{The index of appearance in prime 
powers} 
\label{sec:powerappear}
The index of appearance in a prime power $p^b$ is
related to the multiplicative order of 
$\alpha$ modulo $p^b$.
\begin{lemma}
\label{zpb}
Let $p$ be odd such that $e_p(k)=-1$ and
let $b\ge 1$ be an integer. Then $z(p^b)$ is the minimal $m\ge 1$ such that $\alpha^m\equiv \pm 1\pmod*{p^b}$.
\end{lemma}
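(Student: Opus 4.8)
The statement to prove is Lemma~\ref{zpb}: for an odd prime $p$ with $e_p(k)=-1$ and any $b\ge 1$, the index of appearance $z(p^b)$ equals the least $m\ge 1$ with $\alpha^m\equiv\pm1\pmod{p^b}$.

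\medskip

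The plan is to work directly with the Binet form \eqref{Binetform}. Since $e_p(k)=\left(\tfrac{k(k+1)}{p}\right)=-1$, the discriminant $\Delta(k)=16k(k+1)$ is a non-square mod $p$ (here $16$ is a unit and a square since $p$ is odd), so $\alpha=\alpha(k)$ does not lie in $\mathbb{F}_p$ but in the quadratic extension $\mathbb{F}_{p^2}$, with $\alpha^{-1}=\alpha(k)^{-1}$ its Galois conjugate; moreover $\alpha\alpha^{-1}=1$, so $\alpha$ is a unit in the ring $\mathbb{Z}[\alpha]$ and reduces to a unit mod $p^b$. First I would record that $p\nmid U_1(k)=1$ and, more importantly, that $p$ does not divide $\alpha-\alpha^{-1}$: indeed $(\alpha-\alpha^{-1})^2=\Delta(k)=16k(k+1)$, and $p\mid k(k+1)$ would force $e_p(k)=0$, contradicting $e_p(k)=-1$. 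Hence in the localization at $p$ (or working in $\mathbb{Z}[\alpha]/p^b\mathbb{Z}[\alpha]$, a free $\mathbb{Z}/p^b$-module of rank $2$ in which $\alpha-\alpha^{-1}$ is invertible), the divisibility $p^b\mid U_n(k)$ is equivalent to $p^b\mid \alpha^n-\alpha^{-n}$, i.e. to $\alpha^n\equiv\alpha^{-n}\pmod{p^b}$, i.e. (multiplying by the unit $\alpha^n$) to $\alpha^{2n}\equiv1\pmod{p^b}$.

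\medskip

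So the task reduces to comparing two quantities: the least $n\ge1$ with $\alpha^{2n}\equiv1\pmod{p^b}$, call it $z$, and the least $m\ge1$ with $\alpha^m\equiv\pm1\pmod{p^b}$, call it $m_0$. One direction is immediate: if $\alpha^{z}$ satisfies $\alpha^{2z}\equiv1$, then $\alpha^{z}$ is a square root of $1$ in $(\mathbb{Z}[\alpha]/p^b)^\times$; since $p$ is odd and the relevant unit group modulo a prime power has cyclic $p$-part, the only square roots of $1$ are... here one must be slightly careful, because $(\mathbb{Z}[\alpha]/p^b)^\times$ need not be cyclic (it has order $p^{2b-2}(p^2-1)$ or $p^{2b-2}(p-1)^2$ depending on whether $p$ splits, but here it is inert so the group is $\mathbb{Z}/p^{2(b-1)}\times\mathbb{Z}/(p^2-1)$). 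Nevertheless the $2$-torsion of an abelian group of the form $\mathbb{Z}/p^{2(b-1)}\times\mathbb{Z}/(p^2-1)$ with $p$ odd is exactly $\{\pm1\}$ (order $2$), so $\alpha^{z}\equiv\pm1$, giving $m_0\le z$. Conversely, if $\alpha^{m_0}\equiv\pm1\pmod{p^b}$ then $\alpha^{2m_0}\equiv1\pmod{p^b}$, so $z\le m_0$. Combining, $z=m_0$, which is exactly the assertion $z(p^b)=m_0$.

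\medskip

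The main obstacle — really the only nontrivial point — is justifying that the $2$-torsion of the unit group $(\mathbb{Z}[\alpha]/p^b\mathbb{Z}[\alpha])^\times$ is precisely $\{\pm1\}$ when $p$ is odd and inert. The clean way is to note that this unit group is abelian of order not divisible by $4$ beyond the factor coming from $p^2-1$; more precisely its Sylow $2$-subgroup is cyclic, because $\mathbb{Z}[\alpha]/p^b$ is a finite local ring with residue field $\mathbb{F}_{p^2}$, so its unit group is a direct product of the cyclic group $\mathbb{F}_{p^2}^\times$ (order $p^2-1$) and a $p$-group (the principal units), and a cyclic group has a unique element of order $2$. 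Hence its elements of order dividing $2$ form a group of order $2$, necessarily $\{1,-1\}$ since $-1\not\equiv1$ as $p$ is odd. One should also dispose of a harmless degenerate possibility: if $\alpha\equiv\alpha^{-1}\pmod{p}$ then $\alpha^2\equiv1$ already, but $\alpha\equiv\alpha^{-1}$ means $p\mid\alpha-\alpha^{-1}$, excluded above; so $\alpha$ has even order $>2$ in some cases and that is fine. No use of Matveev or the Koksma–Erdős–Turán inequality is needed here; this lemma is purely algebraic and is the base case for the prime-power analysis that follows in Sec.\,\ref{sec:powerappear}.
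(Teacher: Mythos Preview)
Your argument is correct and follows the same route the paper points to (it merely cites \cite[Lemma 5]{FLM} and says to replace $32$ by $\Delta(k)$ and $\mathbb{Z}[\sqrt{2}]$ by $\mathbb{Z}[\sqrt{k(k+1)}]$): use the Binet form, note $\alpha-\alpha^{-1}$ is a unit mod $p^b$, reduce $p^b\mid U_n$ to $\alpha^{2n}\equiv 1$, and finish by identifying the $2$-torsion of the unit group with $\{\pm1\}$. One small inaccuracy worth fixing: in the inert case the principal units $1+p\mathcal{O}/p^b\mathcal{O}$ form $(\mathbb{Z}/p^{b-1})^2$, not $\mathbb{Z}/p^{2(b-1)}$, so the unit group is $(\mathbb{Z}/p^{b-1})^2\times\mathbb{Z}/(p^2-1)$; your subsequent ``clean'' argument (local ring, residue field $\mathbb{F}_{p^2}$, units $=\mathbb{F}_{p^2}^\times\times(p\text{-group})$, hence cyclic Sylow $2$-subgroup) is the right one and does not use that misstated structure.
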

\begin{proof}
The proof of \cite[Lemma 5]{FLM} applies here verbatim, but
with $32$ replaced by $\Delta(k),$ and $\mathbb Z[\sqrt{2}]$ by
$\mathbb Z[\sqrt{k(k+1)}]$.
\end{proof}

The following lemma is basic and will be taken for granted in all 
our arguments involving the index of appearance.

\begin{lemma}[{\cite[Lemma 2]{FLM}}]
\label{lemma2FLM}
The index of appearance $z$ of the sequence $\mathbf{U}(k)$ has the following properties. 
\begin{enumerate}[{\rm(1)}]
\item If $p \mid U_m(k)$, then $z(p) | m;$ 
\item  If $p\mid k(k+1)$, then $z(p) = p;$ 
\item  If $p \nmid k(k+1)$, then $z(p) | p - e_p(k);$
\item $z(p^b) = p^{\max\{b-\nu_p(U_{z(p)}(k)), 0\}}z(p)$. 
%Since $\nu_p(U_{z(p)}(k)) \ge 1,$ then 
In particular, $z(p^b) \mid p^{b-1}z(p);$
\item If $n=m_1  \cdots m_s$ with $m_1, \ldots, m_s$ pairwise coprime, then 
\begin{equation*}
    z(m_1 \cdots m_s) = \lcm[z(m_1), \ldots, z(m_s)].
\end{equation*}
\end{enumerate}
\end{lemma}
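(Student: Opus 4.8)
\subsection*{A proof proposal for Lemma \ref{lemma2FLM}}

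The plan is to treat all five assertions as instances of the classical ``law of apparition'' for the Lucas sequence $U_n(k)=U_n(4k+2,1)$, the unifying tools being the Binet form \eqref{Binetform}, the companion sequence $V_n(k):=\alpha^n(k)+\alpha^{-n}(k)$, the addition formula $2U_{a+b}=U_aV_b+V_aU_b$, the duplication formula $U_{2a}=U_aV_a$, and the Pell-type identity $V_n^2-\Delta(k)U_n^2=4$, all of which follow at once from \eqref{Binetform} using $\alpha(k)\alpha^{-1}(k)=1$. Two preliminary facts are extracted first. (i) \emph{$U_a\mid U_{ab}$ for all $a,b\ge 1$}: substituting $\alpha\mapsto\alpha^a$ shows that $U_{ab}/U_a=(\alpha^{ab}-\alpha^{-ab})/(\alpha^a-\alpha^{-a})$ is the $b$-th term of the Lucas sequence with integer parameters $P'=V_a(k)$ and $Q'=1$, hence an integer. (ii) \emph{$\gcd(U_t(k),V_t(k))\mid 2$}: a prime dividing both divides $V_t^2-\Delta(k)U_t^2=4$.

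For (1), write $m=qz(p)+r$ with $0\le r<z(p)$; the addition formula gives $2U_m=U_{qz(p)}V_r+V_{qz(p)}U_r$, and by (i), $p\mid U_{qz(p)}$, so $p\mid V_{qz(p)}U_r$. If $p$ is odd then (ii) forces $p\nmid V_{qz(p)}$, hence $p\mid U_r$, hence $r=0$ by minimality of $z(p)$; the case $p=2$ is settled directly from $U_{n+2}\equiv U_n\pmod 2$ with $U_0=0$, $U_1=1$, giving $z(2)=2$ and $2\mid U_m\iff 2\mid m$. Assertion (2) is immediate from the trivial lemma preceding the statement: if $p\mid k(k+1)$ then $U_n(k)\equiv\pm n\pmod p$, so the least $n\ge 1$ with $p\mid U_n(k)$ is $n=p$. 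For (3), take $p$ odd with $p\nmid k(k+1)$, so $\Delta(k)=16k(k+1)$ is a unit mod $p$; if $e_p(k)=1$ then $\alpha\in\mathbb{F}_p^{\times}$ and $\alpha^{p-1}\equiv 1$, while if $e_p(k)=-1$ then $\alpha\in\mathbb{F}_{p^2}\setminus\mathbb{F}_p$ and $\alpha^p$ equals the conjugate root $\alpha^{-1}$, so $\alpha^{p+1}=\alpha\cdot\alpha^p\equiv 1$; either way $p\mid U_{p-e_p(k)}(k)$, and (1) yields $z(p)\mid p-e_p(k)$.

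For (4), the crux is the valuation identity $\nu_p\bigl(U_{t\,z(p)}(k)\bigr)=\nu_p(t)+\nu_p\bigl(U_{z(p)}(k)\bigr)$. For odd $p$ this is a lifting-the-exponent computation: writing $\alpha^{z(p)}=\eta$, one has $\eta\equiv\pm 1$ modulo the prime(s) above $p$ in $\mathbb{Z}[\sqrt{k(k+1)}]$ at exactly the level $\nu_p(U_{z(p)}(k))$, and expanding $\eta^{p^{j}}$ by the binomial theorem raises the valuation by exactly one at each step; when $e_p(k)=-1$ one may instead read this off Lemma \ref{zpb} through the structure of $(\mathbb{Z}/p^b)^{\times}$. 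Granting the identity, $z(p^b)$ is the least $m$ with $\nu_p(U_m(k))\ge b$; by (1) such $m$ is a multiple $m=t\,z(p)$, and $\nu_p(U_m(k))\ge b$ iff $p^{\max\{b-\nu_p(U_{z(p)}(k)),0\}}\mid t$, so the least admissible $m$ is $p^{\max\{b-\nu_p(U_{z(p)}(k)),0\}}z(p)$, whence also $z(p^b)\mid p^{b-1}z(p)$. Finally (5) is CRT combined with the composite analogue of (1): as the $m_i$ are pairwise coprime, $n\mid U_\ell(k)\iff m_i\mid U_\ell(k)$ for every $i$, and running the argument of (1) with $m_i$ in place of $p$ (using (i) and (ii), and isolating the $2$-part separately) gives $m_i\mid U_\ell(k)\iff z(m_i)\mid\ell$; hence $z(n)\mid\ell\iff\lcm[z(m_1),\ldots,z(m_s)]\mid\ell$ for all $\ell$, forcing equality. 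The only real obstacle is the $p=2$ and even-modulus bookkeeping: in (1) and (5) the bound $\gcd(U_t,V_t)\mid 2$ is too weak at the prime $2$ and must be supplemented by the explicit fact $\nu_2(V_{2^j}(k))=1$, and the valuation identity in (4) likewise needs a separate direct check at $p=2$ (where $z(2)=2$ and $\nu_2(U_2(k))=1$); the odd-prime content of all five parts is routine.
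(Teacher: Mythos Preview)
The paper does not give its own proof of this lemma: it is quoted verbatim from part I \cite[Lemma 2]{FLM} and used as a black box. So there is no ``paper's proof'' to compare against; what you have written is a self-contained classical derivation of the law of apparition for the Lucas sequence $U_n(4k+2,1)$, and your outline is essentially correct.

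Two small points of exposition. First, in (5) you cannot literally ``run the argument of (1) with $m_i$ in place of $p$'': the key step there was $p\nmid V_{qz(p)}$, which came from $\gcd(U_t,V_t)\mid 2$, and this does not directly give $\gcd(m_i,V_{qz(m_i)})=1$ for composite $m_i$. The clean route is the one you already have the ingredients for: use (1) and the valuation identity from (4) to get $p^b\mid U_\ell\iff z(p^b)\mid \ell$ for every prime power, then apply CRT to each $m_i$ separately. Second, in (4) the valuation identity $\nu_p(U_{t\,z(p)})=\nu_p(t)+\nu_p(U_{z(p)})$ is the one place where real content is being invoked; your lifting-the-exponent sketch is right in spirit, but if you want a fully written proof you should spell out the inductive step $\nu_p(U_{pm})=\nu_p(U_m)+1$ for odd $p\mid U_m$ (via $U_{pm}/U_m=\sum_{j=0}^{p-1}\alpha^{(p-1-2j)m}$ reduced modulo the prime above $p$), and do the $p=2$ case by the direct computation $\nu_2(U_{2^b})=b$ that you already flag. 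With these two clarifications the argument is complete.
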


In part 4 we mostly have $z(p^b)=p^{b-1}z(p)$. In order to 
determine whether there can be exceptions to this, we introduce the notion of 
\emph{special prime}.
\begin{defi}[special prime]
A prime $p$ is said to be special if
$p\mid k(k+1)$ and $p^2\mid U_p(k)$.
\end{defi}
If $p$ is special, then $z(p^b)\mid p^{b-2}z(p)$ for every $b\ge 2$, otherwise
$z(p^b)=p^{b-1}z(p)$.
In part I (Lemma 3) it is shown that if $p$ is special, then we must have $p=3$.
It is easy to check that 3 is special if and only
if $k\equiv 2\pmod*{9}$ or $k\equiv 6\pmod*{9}.$
If 3 is special and 9 divides $m,$ then $z(m)<m.$
This together with Theorem \ref{oldmaincorrected} 
shows that 
\begin{equation}
\label{setequality}
\cA_k=\{m~{\text{odd}}:z(m)=m,
~m\in \cP(k)\}
\quad\text{and}\quad
\cB_k=\{m~\text{even}:z(m)=m\},
\end{equation}
where
$$\cP(k):=\{m\ge 1:p\mid m\Rightarrow p\mid k\}$$
is the set of
positive integers $m$ composed only of prime
factors dividing $k$.
\par The next lemma is formulated 
and proved in \cite[Sec.\,6.2.2]{FLM}, but not stated as 
a lemma there.
\begin{lemma} \label{Lemma2.8}
Let $p \nmid k(k+1)$ and $b\ge 1.$ Then
\begin{equation}
\label{divi}
z(p^b) \mid p^{b-1}(p - e_p(k))/2;
\end{equation}
moreover if $p \equiv 3\pmod*{4}$ and we assume
\begin{equation*}
\biggl( \frac{k+1}{p} \biggr) = 1 \quad \text{and} \quad \biggl( \frac{k}{p} \biggr) = -1,
\end{equation*}
then
\begin{equation*}
z(p^b) \mid p^{b-1}(p + 1)/4.
\end{equation*}
\end{lemma}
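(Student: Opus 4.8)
The plan is to reduce everything to the case $b=1$ and then exploit that $\alpha=\alpha(k)$ has norm $1$. By part (4) of Lemma \ref{lemma2FLM} we have $z(p^b)\mid p^{b-1}z(p)$, so it suffices to prove $z(p)\mid (p-e_p(k))/2$ in general and $z(p)\mid (p+1)/4$ under the hypotheses of the ``moreover'' part; the asserted divisibilities for $z(p^b)$ then follow at once, using that $p-e_p(k)$ is even.

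To control $z(p)$ I would work in the residue ring $R:=\mathbb{Z}[\sqrt{k(k+1)}]/p\,\mathbb{Z}[\sqrt{k(k+1)}]$. Since $p$ is odd and $p\nmid k(k+1)$, the prime $p$ does not divide the discriminant $4k(k+1)$ of the order $\mathbb{Z}[\sqrt{k(k+1)}]$, so $R\cong\mathbb{F}_p\times\mathbb{F}_p$ when $e_p(k)=1$ and $R\cong\mathbb{F}_{p^2}$ when $e_p(k)=-1$; in both cases $R$ carries the nontrivial $\mathbb{F}_p$-involution $\sigma$ induced by $\sqrt{k(k+1)}\mapsto-\sqrt{k(k+1)}$. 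Writing $\bar a$ for the image of $a$ in $R$, the element $\alpha-\alpha^{-1}=4\sqrt{k(k+1)}$ reduces to a unit of $R$, so reduction of the identity $U_n(k)(\alpha-\alpha^{-1})=\alpha^n-\alpha^{-n}$ gives $\overline{U_n(k)}=(\bar\alpha^n-\bar\alpha^{-n})/(\bar\alpha-\bar\alpha^{-1})$ in $R$. Since $\sigma(\alpha)=\alpha^{-1}$, the unit $\bar\alpha$ lies in the kernel of the norm $N\colon R^{\times}\to\mathbb{F}_p^{\times}$, $x\mapsto x\,\sigma(x)$, which is cyclic of order $p-e_p(k)$; in particular $\bar\alpha^{\,p-e_p(k)}=1$. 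Putting $N_0:=(p-e_p(k))/2\in\mathbb{Z}$, the element $\bar\alpha^{N_0}$ is a square root of $1$ inside $\ker N$, and since that group is cyclic of even order its only square roots of $1$ are $\pm1$. Hence $\bar\alpha^{N_0}=\bar\alpha^{-N_0}$, so $\overline{U_{N_0}(k)}=0$, i.e.\ $p\mid U_{N_0}(k)$, and then $z(p)\mid N_0$ by part (1) of Lemma \ref{lemma2FLM}. This proves the first divisibility.

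For the sharper bound, assume $p\equiv3\pmod*{4}$, $\bigl(\tfrac{k+1}{p}\bigr)=1$ and $\bigl(\tfrac{k}{p}\bigr)=-1$. Then $e_p(k)=-1$, so $R=\mathbb{F}_{p^2}$ and $4\mid p+1$, while $\sqrt{k+1}$ reduces into $\mathbb{F}_p$ and $\sqrt{k}$ into $\mathbb{F}_{p^2}\setminus\mathbb{F}_p$. Recalling $\alpha=\beta^2$ with $\beta=\sqrt{k+1}+\sqrt{k}$, the reduction $\bar\beta$ then satisfies $\sigma(\bar\beta)=\overline{\sqrt{k+1}}-\overline{\sqrt{k}}=\bar\beta^{-1}$, so $\bar\beta$ too lies in $\ker N$, whence $\bar\beta^{\,p+1}=1$. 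Since $\bar\alpha=\bar\beta^2$, the element $\bar\alpha^{(p+1)/4}$ squares to $\bar\beta^{\,p+1}=1$, so $\bar\alpha^{(p+1)/4}=\pm1=\bar\alpha^{-(p+1)/4}$; exactly as before this gives $p\mid U_{(p+1)/4}(k)$, hence $z(p)\mid (p+1)/4$ by Lemma \ref{lemma2FLM}(1), and the claimed bound for $z(p^b)$ follows.

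I expect the only real obstacle to be setting up the reduction carefully: identifying the right residue ring, checking that divisibility of $U_n(k)$ by $p$ is detected exactly by vanishing of the reduced Binet expression (which amounts to $\alpha-\alpha^{-1}$ reducing to a unit), and, for the ``moreover'' part, tracking which of $\sqrt{k}$, $\sqrt{k+1}$ lands in $\mathbb{F}_p$. Once this is in place, the norm-one property of $\bar\alpha$ supplies the factor $2$, and the extra factor $2$ in the ``moreover'' part is precisely the observation that there $\bar\beta$---and not merely $\bar\alpha=\bar\beta^2$---has norm $1$.
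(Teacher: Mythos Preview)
Your proof is correct and follows the same line the paper (via \cite{FLM}) takes: reduce to $b=1$ by Lemma~\ref{lemma2FLM}(4), then use that $\bar\alpha$ lies in the cyclic norm-one subgroup of order $p-e_p(k)$, and for the sharper bound use that $\bar\beta$ itself has norm one so that $\bar\alpha^{(p+1)/4}=\bar\beta^{(p+1)/2}=\pm1$. The only cosmetic point is that $\beta=\sqrt{k+1}+\sqrt{k}$ does not literally lie in $\mathbb{Z}[\sqrt{k(k+1)}]$, so ``the reduction $\bar\beta$'' really means a choice of square roots of $k$ and $k+1$ in $\mathbb{F}_{p^2}$ whose product matches the image of $\sqrt{k(k+1)}$; this is harmless, and the paper treats $\beta$ with the same informality (see the computation in the proof of Lemma~\ref{incongruence1}).
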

The next proposition shows that \eqref{divi} is sometimes sharp.
The various congruence classes
of $k$ counted in parts b,c and 
d are
explicitly worked out in 
Tab.\,\ref{zkp2} for the primes $3\le p\le 17$ (with 
a few exceptions where the table margins would 
be too small).
\begin{prop} \label{Florianapril2023} Let $p\ge 5$ be a prime not
dividing $k(k+1)$.
Put $f=\varphi(\frac{p+1}{2})$, with $\varphi$ Euler's totient function.
\begin{itemize}
\item[(a)] For $k$ there are exactly $f$ classes modulo $p$ such that
$z_k(p) =(p+1)/2$;
\item[(b)] For $k$ there are exactly $f$ congruence classes modulo $p^2$ such that $z_k(p^2)=(p+1)/2$;
\item[(c)] For $k$ there are exactly $(p-1)f$ congruence classes modulo $p^2$ such that $z_k(p^2)=p(p+1)/2$;
\item[(d)] For $k$ there are exactly $p-f-2$ congruence classes modulo $p$ such that $z_k(p)<(p+1)/2$.
\end{itemize}
\end{prop}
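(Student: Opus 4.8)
The plan is to work prime by prime and reduce everything to counting the $k$ for which $\alpha(k)=2k+1+2\sqrt{k(k+1)}$ has a prescribed multiplicative order modulo $p$ (resp. $p^2$) in the ring $R_p:=\mathbb{Z}[\sqrt{k(k+1)}]/(p)\cong\mathbb{F}_{p^2}$ when $e_p(k)=-1$, invoking Lemma \ref{zpb}. First I would fix an odd prime $p\ge 5$ and parametrise: as $k$ ranges over residues mod $p$ with $e_p(k)=(\tfrac{k(k+1)}{p})=-1$, the element $\theta:=\sqrt{k(k+1)}$ generates $\mathbb{F}_{p^2}$ over $\mathbb{F}_p$, and $\alpha=2k+1+2\theta$ is a unit of norm $\alpha\alpha^{-1}=1$, hence lies in the norm-one subgroup $C$ of $\mathbb{F}_{p^2}^\times$, which is cyclic of order $p+1$. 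The key computation is to show that the map $k\mapsto\alpha(k)$ is a bijection (or a controlled-to-one map) from $\{k:\ e_p(k)=-1\}$ onto a well-understood subset of $C$. Concretely, solving $\alpha+\alpha^{-1}=2(2k+1)$ recovers $k$ from $\alpha$, and the condition $e_p(k)=-1$ translates into $\alpha$ being a non-square in $C$ (equivalently $\alpha\notin C^2$, i.e. $z(p)\nmid (p+1)/2$ fails in a way one must track carefully). I expect the cleanest bookkeeping to go through the substitution $\alpha=\beta^2$ with $\beta=\sqrt{k+1}+\sqrt{k}$, so that $\beta$ ranges over elements of $\mathbb{F}_{p^2}$ with $\beta-\beta^{-1}=2\sqrt{k}$, $\beta+\beta^{-1}=2\sqrt{k+1}$; the order of $\alpha$ divides $(p+1)/2$ exactly when $\beta^{(p+1)}=1$ together with a square condition, i.e. $\beta$ lies in the subgroup of order $p+1$ inside the norm-one part of the larger torus.

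For part (a): $z_k(p)=(p+1)/2$ means $\alpha$ has order exactly $(p+1)/2$ in $C\cong\mathbb{Z}/(p+1)$ under the interpretation $\alpha^m\equiv\pm1$, i.e. the image of $\alpha$ in $C/\{\pm1\}\cong\mathbb{Z}/\tfrac{p+1}{2}$ is a generator. The number of generators of a cyclic group of order $\tfrac{p+1}{2}$ is $\varphi(\tfrac{p+1}{2})=f$. So the heart of (a) is proving that $k\mapsto(\text{image of }\alpha\text{ in }C/\{\pm1\})$ is a bijection between the admissible residues $k$ mod $p$ (those with $e_p(k)=-1$ — there are exactly $(p-1)/2$ of them, and one checks $k\equiv 0,-1$ and the $(p-3)/2$ squares $k(k+1)$ are excluded) and $C/\{\pm1\}\setminus\{1\}$, which has $(p-1)/2$ elements. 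Injectivity follows because $\alpha$ determines $k+\tfrac12$ up to sign via the trace, and $\pm(k+\tfrac12)$ gives back a single $k$ mod $p$ (since $-(k+\tfrac12)\equiv (p-1-k)+\tfrac12$ need not equal $k$, one must instead argue via $\alpha\leftrightarrow\alpha^{-1}$ both giving the same $k$, which is automatic, and the $\{\pm1\}$ quotient absorbs the remaining ambiguity); surjectivity is then a counting match. Once this bijection is in hand, (a) is immediate, and (d) follows by complementation: among the $(p-1)/2$ admissible $k$, exactly $f$ give order $(p+1)/2$, and the remaining $(p-1)/2-f$ give smaller order, but one must add back the $k$ with $e_p(k)=+1$ (for which $z(p)\mid (p-1)/2<(p+1)/2$ always) — there are $(p-3)/2$ of those, minus the two classes $k\equiv 0,-1$ already handled separately as $z(p)=p$, giving $p-f-2$ in total as claimed.

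For parts (b) and (c) I would lift to $p^2$ using Lemma \ref{lemma2FLM}(4): for $p\nmid k(k+1)$ we have $z(p^2)=p\cdot z(p)$ unless $p\mid U_{z(p)}(k)$ to a higher power, and by the discussion of special primes (only $p=3$ can be special, so for $p\ge 5$ this never happens in the relevant way), we get $z(p^2)\in\{z(p),\,p\,z(p)\}$ with $z(p^2)=z(p)$ precisely when $p^2\mid U_{z(p)}(k)$, a single congruence condition on $k$ modulo $p^2$ refining the condition $z(p)=(p+1)/2$ modulo $p$. Each of the $f$ residues mod $p$ from part (a) lifts to $p$ residues mod $p^2$; a standard Hensel/derivative nonvanishing argument shows exactly one of these $p$ lifts satisfies the extra condition $p^2\mid U_{(p+1)/2}(k)$ (hence $z(p^2)=(p+1)/2$, giving (b) with $f$ classes), and the other $p-1$ have $z(p^2)=p(p+1)/2$ (giving (c) with $(p-1)f$ classes). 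The main obstacle, and where I would spend the most care, is this Hensel step: proving that the relevant polynomial (essentially $U_{(p+1)/2}(k)\bmod p^2$ as a function of $k$) has nonzero derivative mod $p$ at each of the $f$ roots, equivalently that $\nu_p(U_{z(p)}(k))=1$ generically and the jump to $\ge 2$ is cut out by a transverse condition — this is precisely the kind of non-degeneracy that can fail and needs the hypothesis $p\ge 5$ and $p\nmid k(k+1)$. I would isolate this as a lemma, proving it by differentiating the Binet form \eqref{Binetform} with respect to the parameter and reducing to showing a certain linear form in $\alpha,\alpha^{-1},\log$-type derivative is a unit mod $p$.
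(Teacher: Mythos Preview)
Your overall strategy matches the paper's: for (a) you parametrise via the norm-one torus $C\subset\mathbb F_{p^2}^\times$ and count generators of $C/\{\pm1\}$; for (b)--(c) you lift via Hensel; (d) is complementation. Two points are worth flagging.

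First, your claimed bijection in (a), ``$k\mapsto(\text{image of }\alpha\text{ in }C/\{\pm1\})$ is a bijection onto $C/\{\pm1\}\setminus\{1\}$'', is not well-defined as stated: $\alpha$ is only determined up to $\alpha\leftrightarrow\alpha^{-1}$ (choice of square root of $k(k+1)$), and in $C/\{\pm1\}$ the images of $\alpha$ and $\alpha^{-1}$ generally differ. The paper sidesteps this by writing $\alpha=\rho^{(p-1)w}$ with $\rho$ a primitive root of $\mathbb F_{p^2}^\times$ and $w\in\mathbb Z/(p+1)$, observing that the Frobenius conjugate $\alpha^p=\alpha^{-1}$ corresponds to $w\mapsto p+1-w$, and then counting the $2f$ admissible $w$'s (those coprime to $(p+1)/2$) in $f$ unordered pairs $\{w,\,p+1-w\}$, each pair giving a single $k$ via $4k+2\equiv\alpha+\alpha^{-1}$. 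Your $\beta=\sqrt{k+1}+\sqrt{k}$ detour is unnecessary and risks moving to $\mathbb F_{p^4}$; the paper works entirely with $\alpha$.

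Second, and more substantively, your Hensel step in (b) is where the paper's argument is sharper. You propose to verify the derivative nonvanishing by ``differentiating the Binet form'' and checking a unit condition by hand. The paper instead recognises that $U_n(k)$, viewed as a polynomial in $x=2k+1$, is exactly the Chebyshev polynomial of the second kind, whose discriminant is known in closed form to be $2^{(n-1)^2}n^{n-2}$ (Dilcher--Stolarsky). For $n=(p+1)/2$ this equals $2^{((p-1)/2)^2}((p+1)/2)^{(p-5)/2}$, visibly coprime to $p$ for $p\ge5$, so every simple root mod $p$ lifts uniquely mod $p^2$. This replaces your proposed case-by-case derivative computation with a one-line appeal to a classical discriminant formula, and is the cleanest way to close the argument.
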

\begin{proof}
\par\noindent (a) We observe that if $k(k+1)$ is not a square modulo $p$, then $\alpha(k) = 2k+1 + 2\sqrt{k(k+1)}$ is quadratic modulo $p$. Here, by $\sqrt{}$ we mean any fixed determination of the square root. Thus, $\alpha(k)\in {\mathbb F}_{p^2}\backslash {\mathbb F}_p$, 
where 
${\mathbb F}_{p^2}$ is the unique  quadratic field over $p$ with $p^2$ elements. 
The Frobenius automorphism sends $\alpha(k)$ into its conjugate $2k+1-2{\sqrt{k(k+1)}}=\alpha(k)^{-1}$. Hence, $\alpha(k)^p=\alpha(k)^{-1}$ 
in ${\mathbb F}_{p^2}$, and so $\alpha(k)^{p+1}=1$ in ${\mathbb F}_{p^2}$. In particular, $\alpha(k)^{(p+1)/2}=\pm 1$ in ${\mathbb F}_{p^2}$. 
Further, by Lemma \ref{zpb}, $(p+1)/2$ must be the minimal $m$ such that $\alpha(k)^m=\pm 1$ in ${\mathbb F}_{p^2}$. Let $\rho$ be a primitive root modulo $p$. Write $\alpha(k)=\rho^d$ for some integer $d$. 
Then 
$\alpha(k)^{(p+1)/2}=\pm 1$ implies $\rho^{d(p+1)/2}=\pm 1$. Since $\rho$ is a primitive root, it follows that $p-1\mid d$. Thus, $d=(p-1)w$. Since $(p+1)/2$ is minimal such that $\alpha(k)^{(p+1)/2}=\pm 1$, 
it follows that $w$ is coprime to $(p+1)/2$. But $w\in [0,p+1]$. 
Each of the intervals $[0,(p+1)/2-1]$ and $[(p+1)/2, p+1]$ contains exactly $\phi((p+1)/2)$ numbers of the form $w$ which are coprime 
to $(p+1)/2$. For each one of these, $\alpha=\rho^{(p-1)w}$ is an element of ${\mathbb F}_{p^2}$. Then, keeping in mind
that $\alpha^{p^2-1}=1$, we see that $\alpha^p=\alpha^{p(p-1)w}=\alpha^{(p-1)(p+1-w)}$ and $p+1-w$ is also coprime to $p+1$. Thus, the $2\phi((p+1)/2)$ numbers get grouped 

into $\phi((p+1)/2)$ non-overlapping unordered pairs $\{\alpha,\alpha^p\}$. Let $t=\alpha+\alpha^p$. Then $t\in {\mathbb F}_p$ and  $(\alpha,\alpha^p)$ are roots of 
$$x^2-tx+1=0.
$$
It remains to see that we can choose $k$ such that  
$4k+2=t\pmod*{p}$, 
which is clear since $2$ is invertible modulo $p$.  
This gives the statement. 

\par\noindent (b) If $k$ is such that $z_k(p^2)=(p+1)/2$, then certainly $z_k(p)=(p+1)/2$. Thus, $k\pmod*{p}$ is one of the classes counted at part (a). It remains to prove that each such class can be lifted uniquely to a class modulo $p^2$ such that $z_k(p^2)=(p+1)/2$. But with a fixed $k$, putting $x:=2k+1$, we have
\begin{eqnarray*}
U_1(k) & = & 1;\\
U_2(k) & = & 2x;\\
U_3(k) & = & 4x^2-1;\\
U_4(k) & = & 8x^3-4x;\\
U_{n+2}(k) & = & 2x U_{n+1}(k)-U_n(k)\qquad {\text{\rm for ~all}}\qquad n\ge 3.
\end{eqnarray*}
We recognize that $U_{n}(x)$ is the Chebyshev polynomial $\sin(n\theta)/\sin(\theta)$ as a polynomial in $\cos(\theta)$,
which has discriminant $2^{(n-1)^2}n^{n-2}$, 
cf.~Dilcher and
Stolarsky \cite{DS}.
So, for us we have that $x=2k+1$ is a solution of $U_{(p+1)/2}(x)\equiv 0\pmod*{p}$, and we would like to extend it to a unique solution of the above congruence modulo ${p^2}$. This is possible via Hensel's lemma provided that $p$ does not divide the discriminant of $U_{(p+1)/2}(x)$ as a polynomial, which is the   case since this discriminant is 
$2^{((p-1)/2)^2} ((p+1)/2)^{(p-5)/2}$. This proves (b).

(c) is also immediate. By part (a), there are $f$ classes $k$ modulo $p$ for which $z_k(p)=(p+1)/2$. These $f$ classes give $pf$ lifts to classes modulo $p^2$. Exactly $f$ of them have the property that $z_k(p^2)=(p+1)/2$. Thus, for the 
remaining  
$(p-1)f$ classes, it must be the case  that $z_k(p^2)=p(p+1)/2$. 

(d) is also immediate. There are $p-2$ classes for $k$ modulo $p$ as we need to exclude $k\equiv 0,-1\pmod*{p}$ for which $k(k+1)$ is a multiple of $p$. By (a), there are $f$ of them for which $z_k(p)=(p+1)/2$. So there are $p-f-2$ of them for which 
$z_k(p)<(p+1)/2$. 
\end{proof} 
\begin{cor}
\label{freechoice}
For every odd prime $p$ there exists at least one congruence class modulo $p^2$ for $k$ such that $z_k(p^2)=(p+1)/2$ and one such that $z_k(p^2)=p(p+1)/2$.
\end{cor}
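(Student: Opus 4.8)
The plan is to obtain the corollary almost immediately from Proposition \ref{Florianapril2023}, and then to dispose of the prime $p=3$, which that proposition excludes, by a direct short computation with the index of appearance.

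First I would treat $p\ge 5$. Writing $f=\varphi(\tfrac{p+1}{2})$ as in Proposition \ref{Florianapril2023}, one has $f\ge 1$ since $\varphi$ of a positive integer is at least $1$, and likewise $(p-1)f\ge 1$. Hence part (b) of that proposition yields exactly $f\ge 1$ congruence classes of $k$ modulo $p^2$ with $z_k(p^2)=(p+1)/2$, and part (c) yields exactly $(p-1)f\ge 1$ congruence classes with $z_k(p^2)=p(p+1)/2$. One should note in passing that any such class automatically satisfies $p\nmid k(k+1)$: otherwise Lemma \ref{lemma2FLM}(2) gives $z_k(p)=p$, and since $z_k(p)\mid z_k(p^2)$ by Lemma \ref{lemma2FLM}(1), this forces $p\mid z_k(p^2)$, which is impossible as $0<z_k(p^2)\le p(p+1)/2<p^2$ and $p\nmid(p+1)/2$. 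So there is no inconsistency with the standing hypothesis of the proposition, and the case $p\ge 5$ is complete.

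Next I would handle $p=3$, where $(p+1)/2=2$ and $p(p+1)/2=6$. The point is that $z_k(3)=2$ holds precisely when $k\equiv 1\pmod*{3}$: if $3\mid k(k+1)$ then $z_k(3)=3$ by Lemma \ref{lemma2FLM}(2), while for $k\equiv 1\pmod*{3}$ one has $3\mid U_2(k)=4k+2$ and $3\nmid U_1(k)=1$, so $z_k(3)=2$. For $k\equiv 1\pmod*{3}$ the prime $3$ is not special (that would require $k\equiv 2,6\pmod*{9}$), so Lemma \ref{lemma2FLM}(4) applies and gives
\[
z_k(9)=3^{\max\{2-\nu_3(U_2(k)),\,0\}}\cdot z_k(3)=3^{\max\{2-\nu_3(2k+1),\,0\}}\cdot 2 .
\]
Thus $z_k(9)=2$ exactly when $9\mid 2k+1$, i.e.\ $k\equiv 4\pmod*{9}$, and $z_k(9)=6$ when $3\,\|\,2k+1$, i.e.\ $k\equiv 1,7\pmod*{9}$. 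Hence both target values are attained by suitable residue classes modulo $9=p^2$, finishing the case $p=3$.

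I do not anticipate a genuine obstacle: the substance is already packaged in Proposition \ref{Florianapril2023}. The only things requiring attention are (i) remembering that that proposition is stated only for $p\ge 5$, so $p=3$ must be dealt with separately, and (ii) verifying that $3$ is not special in the relevant range $k\equiv 1\pmod*{3}$, which is exactly what licenses the clean form of Lemma \ref{lemma2FLM}(4) used in the displayed formula above.
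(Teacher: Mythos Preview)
Your proof is correct and follows the same route the paper intends: for $p\ge 5$ invoke Proposition~\ref{Florianapril2023}(b),(c) together with $f\ge 1$, and for $p=3$ do the direct computation (which the paper records in Table~\ref{zkp2} rather than spelling out). One small wrinkle: in your ``note in passing'' the clause ``$p\mid z_k(p^2)$, which is impossible as $\ldots$ and $p\nmid(p+1)/2$'' only rules out $z_k(p^2)=(p+1)/2$, not $z_k(p^2)=p(p+1)/2$; for the latter you would instead use that $p\mid k(k+1)$ forces $z_k(p^2)\in\{p,p^2\}$ by Lemma~\ref{lemma2FLM}(4). This remark is inessential to the argument, though, since Proposition~\ref{Florianapril2023} already works under the hypothesis $p\nmid k(k+1)$ and the corollary asks only for existence.
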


\begin{table}[h]
\begin{tabular}{|c|c|c|c|}
\hline
$p$ & $k$ & \text{congruence classes}  & \text{mod}\\
\hline\hline
$3$ & b & 4 & $9$\\
\hline
 & c &  $1,7$ & $9$\\
\hline
 & d &  none & $3$\\
\hline\hline 
$5$ & b & $6,18$ & $25$\\
\hline
 & c &  $1,3,8,11,13,16,21,23$ & $25$\\
\hline
 & d &  $2$ & $5$\\
\hline\hline 
$7$ & b & $2,46$ & $49$\\
\hline
 & c & $4,9,10,16,18,23,25,30,32,37,39,44$ & $49$\\
\hline
 & d & $1,3,5$ & $7$\\
\hline\hline
$11$ & b & $23,97$ & $121$\\
\hline
 & c &  $c_1,\ldots,c_{20}$ & $121$\\
\hline
 & d &  $2,3,4,5,6,7,8$ & $11$\\
\hline\hline
$13$ & b & $1,8,49,119,160,167$ & $169$\\
\hline
 & c &  $c_1,\ldots,c_{72}$ & $169$\\
\hline
 & d &  $3,5,6,7,9$ & $13$\\
 \hline\hline
$17$ & b & $20,53,111,177,235,268$ & $289$\\
\hline
 & c &  $c_1,\ldots,c_{196}$ & $289$\\
\hline
 & d &  $1,4,5,6,8,10,11,12,15$ & $17$\\
\hline 
    \end{tabular}
    \medskip \medskip
\caption{Congruence classes related to $z_k(p^2)$ 
(cf.\,Proposition \ref{Florianapril2023})}
\label{zkp2}
\end{table}

\begin{rem}
For $p\ge 5$ we have $p-\varphi((p+1)/2)-2\ge p-(p-1)/2-2\ge (p-3)/2\ge 1$ and so 
by Proposition \ref{Florianapril2023} there is
at least one congruence class modulo $p$ for $k$ such that $z_k(p)<(p+1)/2$.
\end{rem}
\begin{rem}
Proposition \ref{Florianapril2023} suggests 
considering Artin primitive root type problems such
as whether given $k$ the
set of primes $p$ such
that $z_k(p)=(p+1)/2$ has 
a natural density. Likely these questions can be answered
assuming the Generalized Riemann Hypothesis. These issues also play a role in understanding the behavior of $\rho_k,\sigma_k$ and $\tau_k$ (see Sec.\,\ref{subsub:closeup}).
We might come back
to this in a sequel to this paper.
\end{rem}
\subsection{Integers for which the index of appearance $z(m)$ satisfies $z(m)>m/2$}
In this section we characterize the integers
$m$ for which $z(m)>m/2$.
\label{sec:halfm}
\begin{lemma}
\label{lem:withp}
If $m/2<z(m)<m,$ then there exists a prime $p\nmid k(k+1),$ such that
\begin{equation}
\label{withp}
z(m)=\frac{m(p+1)}{2p}.
\end{equation}
Further, $z(p)=(p+1)/2$ and $e_p(k)=-1.$ 
The integer $m$ can be written as $m = a \cdot p^b$ with 
$a \in \cP(k(k+1)),$ $z(a)=a,$ $(a,p(p+1)/2)=1$ and $b\ge 1.$  If $b\ge 2,$ then
$z(p^2)=p(p+1)/2.$
\end{lemma}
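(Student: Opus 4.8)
The plan is to exploit the multiplicativity of $z$ together with the known structure of integers $a$ with $z(a)=a$. First I would factor $m = a \cdot n$, where $a \in \cP(k(k+1))$ collects all prime powers $p^e \parallel m$ with $p \mid k(k+1)$, and $n$ collects the prime powers with $p \nmid k(k+1)$. By Lemma \ref{lemma2FLM}(5), $z(m) = \operatorname{lcm}[z(a), z(n)]$. The hypothesis $z(m) < m$ already rules out $m \in \cA_k \cup \cB_k$, so by \eqref{setequality} we cannot have $z(m)=m$; more to the point, I want to show $n > 1$, i.e.\ at least one ``wild'' prime occurs. If $n = 1$ then $m = a$ with every prime of $m$ dividing $k(k+1)$; I claim such an $m$ with $z(m) > m/2$ must actually satisfy $z(m) = m$, contradicting $z(m) < m$. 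This is where the special-prime analysis enters: for $p \mid k(k+1)$ with $p$ not special one has $z(p^b) = p^{b-1} z(p) = p^b$ (since $z(p)=p$ by Lemma \ref{lemma2FLM}(2)), so an LCM of such terms over distinct primes is again $m$; the only way to lose a factor is via $p=3$ special with $9 \mid m$, but then by the remark preceding \eqref{setequality} one gets $z(m) \le m/3 < m/2$. So $z(m) > m/2$ with all primes dividing $k(k+1)$ forces $z(m)=m$, a contradiction. Hence $n > 1$.

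Next I would pin down $n$. Write $n = \prod_i p_i^{b_i}$ over the wild primes. Using Lemma \ref{Lemma2.8}, $z(p_i^{b_i}) \mid p_i^{b_i - 1}(p_i - e_{p_i}(k))/2 \le p_i^{b_i - 1}(p_i+1)/2 < p_i^{b_i}$, so $z(p_i^{b_i}) \le p_i^{b_i}(p_i+1)/(2p_i) \le \tfrac{3}{5}\,p_i^{b_i}$ (the factor $(p_i+1)/(2p_i)$ being largest, namely $3/5$, at $p_i = 5$, and $\le 3/5$ for all $p_i \ge 5$; for $p_i=3$ one must be a little more careful but $(3+1)/6 = 2/3$, still $<1$). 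Meanwhile $z(a) \le a$ always. Therefore
\begin{equation*}
\frac{m}{2} < z(m) = \operatorname{lcm}[z(a), z(n)] \le z(a)\cdot z(n) \le a \cdot \prod_i z(p_i^{b_i}).
\end{equation*}
If two or more wild primes occurred, or one wild prime with $z(p^b) \le \tfrac35 p^b$ appearing with $a$ too small, the right-hand side would be at most (roughly) $\tfrac35 \cdot \tfrac23 \, m < m/2$ — more precisely I would check that a single wild prime $p$ with $z(p^b) = p^{b-1}(p+1)/2$ is forced, since any further loss of a factor $\ge 2$ anywhere pushes $z(m)$ below $m/2$. Concretely: $z(a) = a$ is forced (else lose a factor $\ge 3$, contradiction), exactly one wild prime power $p^b \parallel m$ is allowed, $z(p^b) = p^{b-1}(p+1)/2$ exactly (not a proper divisor, else extra loss), and then necessarily $e_p(k) = -1$ (so that $p - e_p(k) = p+1$), $z(p) = (p+1)/2$, and $(a, p(p+1)/2) = 1$ (coprimality of the two factors is needed for the LCM to be the product, and since $z(a)=a$ and $z(p^b) = p^{b-1}(p+1)/2$, their being coprime is equivalent to $\gcd(a, p(p+1)/2)=1$ after checking $p \nmid a$, which holds as $p \nmid k(k+1)$). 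Substituting gives $z(m) = a \cdot p^{b-1}(p+1)/2 = m(p+1)/(2p)$, which is \eqref{withp}; and for $b \ge 2$, Lemma \ref{Lemma2.8} combined with $z(p^b) = p^{b-1}(p+1)/2 = p \cdot \big(p^{b-2}(p+1)/2\big)$ and Lemma \ref{lemma2FLM}(4) forces $z(p^2) = p(p+1)/2$ (i.e.\ $p$ is not ``special-like'' for this bound, which is automatic since $p \ge 5 > 3$).

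The main obstacle I expect is the bookkeeping of the inequality $\operatorname{lcm}[z(a),z(n)] \le z(a)z(n) < m/2$ in a way that is tight enough to force \emph{exactly one} wild prime and \emph{exactly} the value $p^{b-1}(p+1)/2$: one has to rule out, e.g., $a$ being very small while $n$ contributes several primes, and one must treat the possibility $p_i = 3$ (where $(p+1)/(2p) = 2/3$) separately and note it cannot be the \emph{only} wild prime together with a loss elsewhere. A clean way to organize this is to observe that for any prime power $q^c \parallel m$, the ratio $z(q^c)/q^c$ is $1$ if $q \mid k(k+1)$ and $q$ is not special with $9\nmid \cdots$, and is $\le 2/3$ otherwise (with $\le 3/5$ when $q \ge 5$ is wild), so $z(m)/m = \prod z(q^c)/q^c$ divided by the LCM-vs-product correction; since this correction only makes things smaller, $z(m)/m > 1/2$ already forces at most one ``bad'' prime power and forces its ratio to exceed $1/2$ — leaving only the wild case with ratio exactly $(p+1)/(2p)$. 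I would then read off all the asserted structural conclusions from this single surviving scenario.
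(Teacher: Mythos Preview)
Your plan is essentially the same as the paper's proof: factor $m=a\cdot\prod p_i^{b_i}$ into its $\cP(k(k+1))$-part and its wild part, force $z(a)=a$ (else $z(m)\le m/3$), force a single wild prime via the product bound $\frac{z(m)}{m}\le\prod\frac{p_i+1}{2p_i}\le\frac23\cdot\frac35<\frac12$, and then read off $e_p(k)=-1$, $z(p)=(p+1)/2$, the coprimality $(a,(p+1)/2)=1$, and $z(p^2)=p(p+1)/2$ for $b\ge2$ by checking that any failure costs an extra factor $\ge2$ in $z(m)/m$.

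One small slip: your parenthetical ``which is automatic since $p\ge 5>3$'' is wrong---the wild prime can perfectly well be $p=3$ whenever $3\nmid k(k+1)$ (i.e.\ $k\equiv 1\pmod 3$), and indeed this case is important elsewhere in the paper. Fortunately nothing in your argument actually uses $p\ge5$: the ``extra loss'' reasoning (if $z(p^2)=(p+1)/2$ then $z(p^b)\le p^{b-2}(p+1)/2$ and $z(m)\le m(p+1)/(2p^2)<m/2$) works for $p=3$ too, so just delete that parenthetical.
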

\begin{proof}
Write $m = a \cdot p_1^{b_1}\cdots p_r^{b_r}$ with $a \in \cP(k(k+1))$ and $p_1, \ldots, p_r \nmid k(k+1)$ distinct 
primes. 
Note that the $p_i$ are odd primes. 
We have either $z(a)=a$ or $z(a)=a/3.$ In the latter
case $z(m)\le m/3$, and so $z(a)=a$ and hence $r\ge 1.$
Assume first that $r\ge 2$. 
By Lemma \ref{lemma2FLM} we have
$
z(m)\le z(a)z(p_1^{b_1})\cdots z(p_r^{b_r}),
$
and so on invoking Lemma  \ref{Lemma2.8}
we obtain the inequality
\begin{equation}
\label{zmm2}
\frac{z(m)}{m}\le  \left(\frac{p_1+1}{2p_1}\right)\cdots \left(\frac{p_r+1}{2p_r}\right)\le  \frac{2}{3} \cdot \frac{3}{5} <\frac{1}{2}.
\end{equation}
It follows that $m = a \cdot p^b$ with $a \in \cP(k(k+1))$, 
$p\nmid k(k+1)$ a prime, and $b \ge 1.$
This implies that $p\nmid a$.
If $e_p(k)=1,$ then $$z(m)=z(a\cdot p^b)\le a\cdot p^{b-1}(p-1)/2<m/2,$$
by 
Lemma \ref{Lemma2.8},
contradicting our assumption on
$z(m)$. If $e_p(k)=-1$ and $z(p)$ is a proper divisor of $(p+1)/2,$ then $$z(m)\le a \cdot p^{b-1}z(p)\le a\cdot p^{b-1} (p+1)/4\le m/2,$$ again 
contradicting
our assumption on $z(m),$
and hence  $e_p(k)=-1$ and $z(p) = (p+1)/2$.
We have $(a,(p+1)/2)=1$, since otherwise 
$$z(m)=\lcm(z(a),z(p^b))=\lcm(a,z(p^b))
\le p^{b-1}\lcm(a,(p+1)/2)\le a\cdot p^{b-1}(p+1)/4\le m/2.$$
Finally, we either have $z(p^2)=p(p+1)/2$ or 
$z(p^2)=(p+1)/2.$ For $b\ge 2,$ the latter case cannot occur
as then $z(m)\le m/p<m/2.$
\end{proof}
\begin{cor}
\label{cor:wppmotivation}
If $m$ is a discriminator value, then either
$m\in \cP(k(k+1))$, or $m=a\cdot p^b$ with $a\in \cP(k(k+1))$ and $p\nmid k(k+1)$ a prime satisfying $e_k(p)=-1$. Furthermore, 
if $m$ is even, then $p\equiv 1\pmod*{4}.$  
\end{cor}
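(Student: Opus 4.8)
The plan is to derive everything from Lemma \ref{zfactor2} and Lemma \ref{lem:withp}, treating the three ranges of $z(m)$ separately. Suppose $m=\cD_k(n)$ for some $n\ge 1$. By Lemma \ref{zfactor2} we have $z(m)>m/2$, so exactly one of two cases occurs: either $z(m)=m$, or $m/2<z(m)<m$. In the first case I would invoke the characterization from part I (namely \eqref{setequality}, together with Theorem \ref{oldmaincorrected}, which tells us that $z(m)=m$ forces $m\in\cA_k\cup\cB_k$, and every element of $\cA_k\cup\cB_k$ lies in $\cP(k(k+1))$). Hence $m\in\cP(k(k+1))$ and we are in the first alternative of the corollary, with no prime $p$ to worry about. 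In the second case, Lemma \ref{lem:withp} applies directly and gives $m=a\cdot p^b$ with $a\in\cP(k(k+1))$, $b\ge 1$, $p\nmid k(k+1)$, $z(p)=(p+1)/2$ and $e_p(k)=-1$; this is precisely the second alternative (writing $e_k(p)$ for what the earlier part of the paper calls $e_p(k)$).

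The remaining point is the parity claim: if $m$ is even, then $p\equiv 1\pmod*{4}$. If $m\in\cP(k(k+1))$ this is vacuous unless we are in the $m=a\cdot p^b$ case, so assume $m=a\cdot p^b$ as above. Since $p$ is an odd prime (it does not divide $k(k+1)$, and $2\mid m$ would otherwise be impossible to reconcile — more precisely $p$ is odd because $e_p(k)$ is defined via a Legendre symbol, so $p\ne 2$), the factor $p^b$ is odd, and $m$ even forces $2\mid a$. I would then use Lemma \ref{Lemma2.8}: when $p\equiv 3\pmod*{4}$ and $e_p(k)=-1$, one has (under the splitting of the Legendre symbol as in that lemma) $z(p^b)\mid p^{b-1}(p+1)/4$, which would give $z(m)\le a\cdot p^{b-1}(p+1)/4\le m/2$, contradicting $z(m)>m/2$. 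The subtlety here is that Lemma \ref{Lemma2.8}'s sharper bound requires the specific sign pattern $(\frac{k+1}{p})=1$, $(\frac{k}{p})=-1$; I need to check that $p\equiv 3\pmod 4$ together with $e_p(k)=(\frac{k(k+1)}{p})=-1$ forces exactly this pattern. Indeed $e_p(k)=-1$ means $(\frac{k}{p})$ and $(\frac{k+1}{p})$ have opposite signs, and then a short argument using $p\equiv 3\pmod 4$ (so that $-1$ is a non-residue, hence $(\frac{-1}{p})=-1$) pins down which of $k$, $k+1$ is the residue — essentially, the factor $16k(k+1)=\Delta(k)$ and the theory of $z(p)$ when $p\equiv3\pmod4$ force $(\frac{k+1}{p})=1$.

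I expect the main obstacle to be exactly this last parity step: making rigorous the claim that $p\equiv 3\pmod 4$ is incompatible with $m$ even by correctly identifying the Legendre-symbol configuration so that Lemma \ref{Lemma2.8} is applicable. The case $z(m)=m$ and the reduction of the generic case to Lemma \ref{lem:withp} are essentially bookkeeping, once one is careful that "$m$ is a discriminator value" means $m=\cD_k(n)$ for some $n$ and hence Lemma \ref{zfactor2} is available. I would also remark, for completeness, that $p$ is automatically odd since $e_p(k)$ being a Legendre symbol presupposes $p\ge 3$, so there is no separate $p=2$ subcase to dispose of.
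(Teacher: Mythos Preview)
Your overall strategy matches the paper's: invoke Lemma~\ref{zfactor2} to get $z(m)>m/2$ and then feed this into Lemma~\ref{lem:withp}. The bookkeeping in the case $z(m)=m$ is slightly off---for odd $m$, $z(m)=m$ does \emph{not} force $m\in\cA_k\cup\cB_k$, since $\cA_k$ demands $m\in\cP(k)$ rather than merely $m\in\cP(k(k+1))$---but the conclusion $m\in\cP(k(k+1))$ follows directly from Lemma~\ref{appearance}, so this is harmless.

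The parity argument, however, has a genuine gap. You want to invoke the sharper bound $z(p^b)\mid p^{b-1}(p+1)/4$ of Lemma~\ref{Lemma2.8}, which requires $\bigl(\frac{k+1}{p}\bigr)=1$ and $\bigl(\frac{k}{p}\bigr)=-1$. But this sign pattern is \emph{incompatible} with what you already know: Lemma~\ref{lem:withp} gives $z(p)=(p+1)/2$, whereas that pattern together with $p\equiv 3\pmod{4}$ would force $z(p)\mid (p+1)/4$. Thus when $p\equiv 3\pmod{4}$ and $z(p)=(p+1)/2$ one necessarily has $\bigl(\frac{k+1}{p}\bigr)=-1$, $\bigl(\frac{k}{p}\bigr)=1$, and the sharper bound is unavailable. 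The appeal to $\bigl(\frac{-1}{p}\bigr)=-1$ cannot pin down the configuration in the direction you want.

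The fix is already sitting inside Lemma~\ref{lem:withp}: it records that $(a,\,p(p+1)/2)=1$. If $m$ is even then $a$ is even (since $p$ is odd), so coprimality forces $(p+1)/2$ to be odd, i.e.\ $p\equiv 1\pmod{4}$. Equivalently, if $p\equiv 3\pmod{4}$ then both $a$ and $z(p^b)=p^{b-1}(p+1)/2$ are even, whence $z(m)=\lcm\bigl(a,\,z(p^b)\bigr)\le a\cdot p^{b-1}(p+1)/4<m/2$, contradicting $z(m)>m/2$.
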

\begin{proof}
This is an immediate consequence on recalling that if $m$ 
is a discriminator value, then $z(m)>m/2$ by Lemma \ref{zfactor2}.
\end{proof}
\begin{lemma} 
\label{lem:abc}
Let $p\nmid k(k+1)$ be a prime.\newline
{\rm a)} If $z(p^2)=(p+1)/2$, then
\begin{equation*}
z(m) = \frac{(p+1)m}{2p} \quad \iff \quad m = p \cdot a, \, \,  \, (a,p(p+1)/2) = 1,  \, \, \,  z(a)=a.
\end{equation*}
{\rm b)} If $z(p^2)=p(p+1)/2$, then
\begin{equation*}
z(m) = \frac{(p+1)m}{2p} \quad \iff \quad m = p^b \cdot a, \, \,  \,  \, \, \, \,  \, b \ge 1, \, \, (a,p(p+1)/2) = 1, \, \, z(a)=a.
\end{equation*}
{\rm c)} If $z(p)\ne (p+1)/2,$ then it never happens that $z(m) = m(p+1)/(2p)$.
\end{lemma}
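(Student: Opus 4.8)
The plan is to derive all three parts from Lemma~\ref{lem:withp} together with the structural identity $z(p^b)=p^{\max\{b-\nu,0\}}z(p)$ of Lemma~\ref{lemma2FLM}(4), where throughout I write $\nu:=\nu_p(U_{z(p)}(k))\ge 1$ and note that $p\ge 3$ since $2\mid k(k+1)$. The unifying remark is that $\frac{p+1}{2p}=\tfrac12+\tfrac{1}{2p}$ lies in $(\tfrac12,1)$, so the hypothesis $z(m)=\frac{(p+1)m}{2p}$ always places us in the range $m/2<z(m)<m$ covered by Lemma~\ref{lem:withp}. That lemma yields \emph{some} prime $q\nmid k(k+1)$ with $z(m)=\frac{(q+1)m}{2q}$; since $t\mapsto\tfrac12+\tfrac{1}{2t}$ is injective, $q=p$, and so all conclusions of Lemma~\ref{lem:withp} hold with our $p$: $z(p)=(p+1)/2$, $m=a\cdot p^b$ with $a\in\cP(k(k+1))$, $z(a)=a$, $(a,p(p+1)/2)=1$, $b\ge 1$, and moreover $b\ge 2$ forces $z(p^2)=p(p+1)/2$.

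Before the case analysis I would record a short auxiliary fact pinning down $z(p^b)$ in terms of $z(p^2)$. Since $z(p)\mid p-e_p(k)\in\{p-1,p+1\}$, it is coprime to $p$, so in $z(p^2)=p^{\max\{2-\nu,0\}}z(p)$ one may read off the $p$-part and the prime-to-$p$ part separately. Consequently $z(p^2)=p(p+1)/2$ holds precisely when $z(p)=(p+1)/2$ and $\nu=1$, in which case $z(p^b)=p^{\,b-1}(p+1)/2$ for every $b\ge 1$; and $z(p^2)=(p+1)/2$ holds precisely when $z(p)=(p+1)/2$ and $\nu\ge 2$. In particular $z(p)=(p+1)/2$ in both regimes.

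The forward implications then fall out at once. For~(c): if $z(m)=\frac{(p+1)m}{2p}$ then the first paragraph forces $z(p)=(p+1)/2$, contradicting the hypothesis of~(c). For the ``$\Rightarrow$'' of~(a): Lemma~\ref{lem:withp} gives $m=a\cdot p^b$, and since here $z(p^2)=(p+1)/2\ne p(p+1)/2$ the clause ``$b\ge 2\Rightarrow z(p^2)=p(p+1)/2$'' rules out $b\ge 2$, whence $b=1$ and $m=p\cdot a$. For the ``$\Rightarrow$'' of~(b): Lemma~\ref{lem:withp} already delivers $m=a\cdot p^b$ with $b\ge 1$ and the stated coprimality, which is exactly the asserted form.

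The converse implications are a direct computation via the multiplicativity of $z$ over coprime factors (Lemma~\ref{lemma2FLM}(5)) and the auxiliary fact. In~(a), $z(p^2)=(p+1)/2$ gives $z(p)=(p+1)/2$; writing $m=p\cdot a$ with $(a,p(p+1)/2)=1$ and $z(a)=a$, the factors $p$ and $a$ are coprime, so $z(m)=\lcm\!\big((p+1)/2,\,a\big)=\tfrac{p+1}{2}\,a=\frac{(p+1)m}{2p}$, using $(a,(p+1)/2)=1$. In~(b), $z(p^2)=p(p+1)/2$ gives $z(p^b)=p^{\,b-1}(p+1)/2$; writing $m=p^b\cdot a$ with $b\ge 1$, $(a,p(p+1)/2)=1$, $z(a)=a$, again $p^b$ and $a$ are coprime and $(a,p^{\,b-1}(p+1)/2)=1$, so $z(m)=\lcm\!\big(p^{\,b-1}(p+1)/2,\,a\big)=p^{\,b-1}\tfrac{p+1}{2}\,a=\frac{(p+1)m}{2p}$. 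The only point that is not purely mechanical is the auxiliary fact, i.e.\ the observation that $z(p^2)$ alone determines $z(p)$ and $\nu$, and hence $z(p^b)$ for all $b$; once this is isolated the rest is bookkeeping, and I do not anticipate a serious obstacle.
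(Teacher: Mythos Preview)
Your proof is correct and follows essentially the same approach as the paper: both invoke Lemma~\ref{lem:withp} to obtain part~(c) and the forward implications of (a) and (b), and then handle the converse directions via $z(m)=\lcm(z(p^b),z(a))$ together with the observation that the hypothesis on $z(p^2)$ pins down $z(p^b)$ for all $b$. Your write-up is simply more explicit than the paper's---you spell out the injectivity of $t\mapsto (t+1)/(2t)$ to identify the prime produced by Lemma~\ref{lem:withp} with $p$, and you derive the auxiliary fact about $z(p^b)$ from Lemma~\ref{lemma2FLM}(4)---where the paper leaves these steps to the reader.
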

\begin{proof}
Part c is a corollary of Lemma \ref{lem:withp} and
so are the $\Rightarrow$ directions of parts a and b. 
Now let us prove the $\Leftarrow$ direction for part b (the 
proof for part a being very similar and easier). By assumption $a$ and $p^b$
are coprime and so $z(m)=\lcm(z(a),z(p^b)).$ The assumption on $z(p^2)$ ensures that
$z(p^b)=p^{b-1}(p+1)/2.$ Since by 
assumption $z(a)=a$ and $(a,(p+1)/2)=1,$ we conclude that $z(m)=\lcm(a,p^{b-1}(p+1)/2)=m(p+1)/(2p).$ 
\end{proof}
Note that the value of $z_k(p^b)$ only depends on the congruence class of $k$ modulo $p^b.$
For a given odd prime $p$ it is thus a finite computation to
determine the corresponding 
congruence classes in each of the three cases (with the number 
of congruence classes already given in Proposition \ref{Florianapril2023}).
The results are recorded for the first few primes in 
Tab.\,\ref{zkp2}. Using this table, Lemma 
\ref{appearance} and Lemma \ref{lem:abc}, we
can then write down results similar to the lemma below (for
$p=5$).
\begin{lemma} Suppose that $5\nmid k(k+1).$\hfil\break
{\rm a)} If $k \equiv 6, 18\pmod*{25}$, then
\begin{equation*}
z(m) = \frac{3m}{5} \quad \iff \quad m = 5\cdot a, \, \,  \,  
(15,a)=1, \, \, \, 
a \in \cP(k(k+1)).
\end{equation*}
{\rm b)} If $k \equiv 1, 3, 8, 11, 13, 16, 21, 23\pmod*{25}$, then
\begin{equation*}
z(m) = \frac{3m}{5} \quad \iff \quad m = 5^b \cdot a, \, \,  \,   b \ge 1, \, \, \,  (15,a)=1, \, \, \,a \in \cP(k(k+1)). 
\end{equation*}
{\rm c)} If $k \equiv 2\pmod*{5}$, then it never happens that $z(m) = 3m/5$.
\end{lemma}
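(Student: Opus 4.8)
The plan is to derive all three parts by specialising Lemma~\ref{lem:abc} to the prime $p=5$; the hypothesis $5\nmid k(k+1)$ is precisely the standing assumption made there. For $p=5$ one has $(p+1)/2=3$, $p(p+1)/2=15$ and $m(p+1)/(2p)=3m/5$, so the three alternatives $z_k(25)=(p+1)/2$, $z_k(25)=p(p+1)/2$, $z_k(5)<(p+1)/2$ that distinguish parts a), b), c) of Lemma~\ref{lem:abc} become $z_k(25)=3$, $z_k(25)=15$, $z_k(5)<3$, and the relevant value of $z_k(5)$ or $z_k(25)$ is read off directly from the $p=5$ block of Table~\ref{zkp2}. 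Concretely: $k\equiv 6,18\pmod*{25}$ is the row marked b, hence $z_k(25)=3$ and Lemma~\ref{lem:abc}\,a) applies; $k\equiv 1,3,8,11,13,16,21,23\pmod*{25}$ is the row marked c, hence $z_k(25)=15$ and Lemma~\ref{lem:abc}\,b) applies; and $k\equiv 2\pmod*{5}$ is the row marked d, hence $z_k(5)<3=(5+1)/2$, so $z(5)\ne (5+1)/2$ and Lemma~\ref{lem:abc}\,c) applies and rules out $z(m)=3m/5$ for every $m$. Since these residues together exhaust the classes of $k$ modulo $25$ with $5\nmid k(k+1)$, this covers every admissible $k$.

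It then remains only to rewrite the conclusion of Lemma~\ref{lem:abc} in the form stated here, i.e.\ to replace, for $a$ with $(15,a)=1$, the condition ``$z(a)=a$'' by ``$a\in\cP(k(k+1))$''. The implication $a\in\cP(k(k+1))\Rightarrow z(a)=a$ is elementary: writing $a=\prod_i p_i^{b_i}$ with each $p_i\mid k(k+1)$, the coprimality $(15,a)=1$ forces $p_i\notin\{3,5\}$, so Lemma~\ref{lemma2FLM}(2) gives $z(p_i)=p_i$, the absence of special primes other than $3$ together with Lemma~\ref{lemma2FLM}(4) gives $z(p_i^{b_i})=p_i^{b_i}$, and multiplicativity (Lemma~\ref{lemma2FLM}(5)) yields $z(a)=a$. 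For the converse one invokes the classification of the integers $m$ with $z(m)=m$ established in part~I (Lemma~\ref{appearance}); since $(15,a)=1$ gives $3\nmid a$ and $5\nmid a$ (in particular $9\nmid a$), all the exceptional cases in that classification are excluded and it collapses to $a\in\cP(k(k+1))$. Substituting this equivalence into parts a), b), c) of Lemma~\ref{lem:abc} gives precisely the three displayed statements.

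The only delicate point — the ``main obstacle'', such as it is — is this last translation: one must record the condition with $\cP(k(k+1))$ rather than $\cP(k)$, because an odd prime dividing $k+1$ also has index of appearance equal to itself by Lemma~\ref{lemma2FLM}(2); and one must check that the special prime $3$ causes no trouble, which is guaranteed by $3\nmid a$. Granting the part~I classification of $\{m:z(m)=m\}$, everything else is a mechanical read-off from Table~\ref{zkp2} combined with Lemma~\ref{lem:abc}.
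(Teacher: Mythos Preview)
Your argument is correct and follows exactly the route the paper intends: specialize Lemma~\ref{lem:abc} to $p=5$, read off the relevant values of $z_k(5)$ and $z_k(25)$ from Table~\ref{zkp2}, and use Lemma~\ref{appearance} together with $3\nmid a$ to convert the condition $z(a)=a$ into $a\in\cP(k(k+1))$. The paper gives no separate proof for this lemma beyond precisely that recipe (the paragraph preceding and following the statement), so your write-up simply spells out what the paper leaves implicit.
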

Here, and in general if $p\equiv 5\pmod*{6},$ we require that $3\nmid a$ (as $a$ and $(p+1)/2$ have to 
be coprime), and for such $a$ we have $z(a)=a$ if and only if $a \in \cP(k(k+1))$ by Lemma \ref{appearance}, and so a distinction of cases depending on whether 3 is a special prime or not is
unnecessary.

\subsection{Integers for which the index of appearance is large}
\label{sec:zlarge}
In this section we consider
how large $\frac{z(m)}{m}$ can be for 
$m$ with $z(m)<m$, something
quite relevant for us. The smaller $\frac{z(m)}{m}$ is, the less likely it is that $m$ occurs as a discriminator value (if $\frac{z(m)}{m}\le \frac12$, then certainly $m$ does not occur as a discriminator value). 
The following quantities 
(considered in detail in Sec.\,\ref{subsub:closeup}) will play a main role. 
Some sample values are given in Tab.\,\ref{tab:rhosigmatau}.
\begin{defi}[$\rho_k,\sigma_k,\tau_k$]
\label{def:three}
The supremum
\begin{equation*}
\sup_{p\text{~prime}}\Big\{\frac{z_k(f(p))}{f(p)}:
z_k(f(p))<f(p)\Big\},
\end{equation*}
we denote by 
$\rho_k$ if $f(p)=p$,
by $\sigma_k$ if $f(p)=p^2$,
and by $\tau_k$ if $f(p)=2p^2$.
\end{defi}
If $z(f(p))<f(p)$, then
$z(f(p))/f(p)\le (p+1)/2p$, where the upper bound is decreasing as a function of $p$. This implies that in order to verify that, say, $\rho_k=(q+1)/2q$, it suffices to show that $z(q)=(q+1)/2$ and that either $z(p)=p$ or $z(p)<(p+1)/2$ for every prime $3\le p<q$.
\par As $z(2^n)=2^n$,
it is enough to take the supremum over the odd primes only.
Since $z(p)/p\ge z(p^2)/p^2\ge z(2p^2)/2p^2$ we have
$$\rho_k\ge \sigma_k\ge \tau_k.$$

Using these quantities Theorem 
\ref{oldmaincorrected} can
be improved. In part I it was
already noted that the interval $[n,3n/2)$ occurring there can be replaced by the potentially larger interval $[n,n/\rho_k)$. 
We will show that $\rho_k$ can be replaced by $\sigma_k$. Since $\sigma_k<\rho_k$ for infinitely many $k$ (see Sec.\,\ref{subsub:closeup}), this is an improvement.
\begin{theorem}
\label{thm2refined}
Let $k>2$ be fixed. We have
\begin{equation*}
\cD_{k}(n)\le \min\{m\ge n:
m\in \cA_{k}\cup \cB_{k}\},
\end{equation*}
with equality if the interval $[n,n/\sigma_k)$ contains an integer 
$m\in \cA_{k}\cup \cB_{k}$. We have
$\sigma_k=2/3$ if $k\equiv 1,7\bmod{9}$, 
and $\sigma_k\le 3/5$ otherwise.
\par If $\cD_k(n)$ is even, then
$$\cD_{k}(n)\le \min\{m\ge n:
m\in \cB_{k}\},$$
with equality if the interval $[n,n/\tau_k)$ contains an integer 
$m\in \cB_{k}$.  If 
$k\equiv 0,2,4\pmod*{5}$, then 
$\tau_k\le 7/13$, and $\tau_k\le 3/5$ for general $k$. 
\end{theorem}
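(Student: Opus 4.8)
The plan is to prove Theorem~\ref{thm2refined} in two parallel halves (the general bound with its sharpening, then the ``even'' version), since both rest on the same mechanism: an element $m\in\cA_k\cup\cB_k$ lying in a short interval above $n$ is automatically a discriminator value, because the only way a smaller modulus $m'$ could discriminate $U_0(k),\dots,U_{n-1}(k)$ is if $z(m')\ge n$, and by the structure theory of Sec.~\ref{sec:halfm} such $m'$ with $m'<m$ and $z(m')>m'/2$ are tightly constrained. First I would record the easy inequality $\cD_k(n)\le\min\{m\ge n:m\in\cA_k\cup\cB_k\}$: by \eqref{Dkn=n} any such $m$ satisfies $\cD_k(m)=m$, hence $U_0(k),\dots,U_{m-1}(k)$ are pairwise incongruent mod $m$, and truncating to the first $n\le m$ terms shows $m$ discriminates them. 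This is just a restatement of the corresponding step in Theorem~\ref{oldmaincorrected} and needs no new idea.

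The substance is the equality clause. Suppose $m\in\cA_k\cup\cB_k$ lies in $[n,n/\sigma_k)$ and let $m'=\cD_k(n)\le m$; I want $m'=m$, equivalently $m'\ge n$ forces $m'\notin[n,m)$, i.e. there is no discriminating modulus strictly between $n$ and $m$. By Lemma~\ref{zfactor2}, $z(m')>m'/2$. If $z(m')=m'$ then $m'\in\cA_k\cup\cB_k$ by \eqref{setequality}, contradicting minimality of $m$ unless $m'=m$. If $m'/2<z(m')<m'$, then Lemma~\ref{lem:withp} gives $m'=a\cdot p^b$ with $a\in\cP(k(k+1))$, $z(a)=a$, $p\nmid k(k+1)$, $z(p)=(p+1)/2$, and $z(m')=\tfrac{(p+1)m'}{2p}$; moreover if $b\ge2$ then $z(p^2)=p(p+1)/2$. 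Now the point is that $z(m')/m'\le\sigma_k$: if $b=1$ this is the definition of $\rho_k$ restricted\,---\,actually one must be slightly careful, since $z(m')/m'=(p+1)/(2p)=z(p)/p$, which is bounded by $\rho_k$, not obviously by $\sigma_k$. This is exactly the improvement being claimed, so here is where the $z(p^2)=p(p+1)/2$ information for $b\ge2$, together with a separate treatment of $b=1$, must be leveraged: when $b\ge 2$ one has $z(p^2)/p^2=(p+1)/(2p)\le\sigma_k$ directly; when $b=1$ one needs that the prime $p$ realizing a near-maximal $z(p)/p$ for which additionally $z(p^2)=(p+1)/2$ cannot actually be used, because then $m'=pa$ forces $z(m')\ge n$ to compete against the two-power bound via Corollary~\ref{cor:twopower}. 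I expect the cleanest route is: since $z(m')\ge n$ and $z(m')=(p+1)m'/(2p)< m'/\!\cdot$, combined with $m'<m<n/\sigma_k$, one gets $n\le z(m')\le \sigma_k m' < \sigma_k\cdot n/\sigma_k=n$ once $z(m')/m'\le\sigma_k$ is established for \emph{all} relevant $m'$, a contradiction. So the whole equality clause reduces to the uniform bound $z(m')/m'\le\sigma_k$ for every $m'$ with $m'/2<z(m')<m'$ of the shape dictated by Lemma~\ref{lem:withp}, and the one subtle case ($b=1$, $z(p^2)=(p+1)/2$) must be handled by showing that such $m'=pa$ of size $<m$ with $z(pa)\ge n$ would contradict the \emph{existing} part~I bound $\cD_k(n)\le\min\{2^c:2^c\ge n\}$ or would itself force $m'\in\cA_k\cup\cB_k$ after absorbing $p$.

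For the explicit bounds on $\sigma_k$: $\sigma_k$ is a supremum over odd primes $p\nmid k(k+1)$ of $z(p^2)/p^2$ when $z(p^2)<p^2$, and by Lemma~\ref{Lemma2.8} and Proposition~\ref{Florianapril2023} this is at most $(p+1)/(2p)$, maximized at the smallest admissible $p$. The prime $p=3$ contributes $z(9)/9$; from Tab.~\ref{zkp2} (cases b,c for $p=3$, which are governed by $k\bmod 9$) one reads off that $z_3(9)=9$ exactly when $3$ is special, i.e. $k\equiv2,6\pmod 9$, in which case $3$ is excluded from the supremum; when $k\equiv1,7\pmod9$ one has $z_3(3)=3$ (so $3\nmid k(k+1)$ and $z(3)=(3+1)/2=2$) — wait, rather $z_3(3)=2$ and $z_3(9)=6$, giving $z(9)/9=2/3$, hence $\sigma_k=2/3$; in the remaining classes $3\mid k(k+1)$ so $z(9)=9$ is excluded and the supremum is attained at $p=5$ at the latest, giving $\sigma_k\le 3/5$. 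The ``even'' statement is the same argument with $\cB_k$ in place of $\cA_k\cup\cB_k$, $\tau_k$ in place of $\sigma_k$, and Corollary~\ref{cor:wppmotivation} restricting to $p\equiv1\pmod4$: then $p=3$ is unavailable (it is $\equiv3\pmod4$) and $p=5$ may be unavailable when $5\mid k(k+1)$ or when $z_5(25)=25$, i.e. when $k\equiv0,2,4\pmod5$ forces exclusion of $5$, pushing the supremum to $p=13$ (the next prime $\equiv1\pmod4$ not automatically killed) and giving $\tau_k\le(13+1)/(2\cdot13)=7/13$; for general $k$ the bound $\tau_k\le3/5$ comes from $p=5$. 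The main obstacle, as flagged above, is the $b=1$ borderline case in the equality argument: ruling out an odd $m'=pa<m$ with $z(p)=(p+1)/2$, $z(p^2)=(p+1)/2$ and $z(m')\ge n$ requires showing such an $m'$ cannot beat the chosen $m$, and the natural tool is to note $z(m')=(p+1)m'/(2p)$ still exceeds $m'/2$ but the interval hypothesis $m<n/\sigma_k$ together with $\sigma_k\ge(p+1)/(2p)$ (which holds because $z(p^2)/p^2=(p+1)/(2p)$ when $z(p^2)<p^2$, and here $z(p^2)=(p+1)/2<p^2$) closes the gap — so in fact this case is \emph{not} exceptional and folds into the uniform bound after all, which is the key simplification to verify carefully.
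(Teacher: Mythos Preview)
Your overall architecture is right, and in fact it matches the paper's proof up to the point where you split on $b=1$ versus $b\ge 2$. The problem is the resolution of the $b=1$ subcase in your last paragraph. You write that when $z(p^2)=(p+1)/2$ one has ``$z(p^2)/p^2=(p+1)/(2p)$'', and hence $\sigma_k\ge (p+1)/(2p)$, which would close the gap. That arithmetic is wrong: if $z(p^2)=(p+1)/2$ then $z(p^2)/p^2=(p+1)/(2p^2)$, not $(p+1)/(2p)$. So in this subcase the prime $p$ contributes only a tiny amount to $\sigma_k$, and the uniform bound $z(m')/m'\le\sigma_k$ that you want is \emph{false} in general. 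Concretely, take $k\equiv 4\pmod 9$ (Tab.~\ref{zkp2}, row $p=3$ case~b): then $3\nmid k(k+1)$, $z(3)=2$, $z(9)=2$, so $z(9)/9=2/9$ and Lemma~\ref{zqqmaxsigma}c gives $\sigma_k\le 3/5$. Yet $m'=3a$ with $z(a)=a$ has $z(m')/m'=2/3>\sigma_k$. Your inequality breaks.

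What actually kills the $b=1$ case is not a $\sigma_k$-bound at all but Lemma~\ref{neverwild}: if $m'=\cD_k(n)$ and $p\nmid k(k+1)$ exactly divides $m'$ to the first power, then $p$ is a wild prime, which is impossible. You gestured at this earlier (``compete against the two-power bound via Corollary~\ref{cor:twopower}'', which is essentially the proof of Lemma~\ref{neverwild}) but then abandoned it for the flawed arithmetic route. The paper's proof is simply: if $m'=\cD_k(n)\notin\cA_k\cup\cB_k$ then $z(m')<m'$, and by Lemma~\ref{neverwild} the wild prime power has exponent $\ge 2$, so $m'\in\cM$; now Lemma~\ref{appearance} gives $z(m')\le\sigma_k m'$ directly (and $\le\tau_k m'$ when $m'$ is even), whence $m'\ge n/\sigma_k$, contradicting the interval hypothesis. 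Replacing your last paragraph with a one-line appeal to Lemma~\ref{neverwild} fixes the argument and makes it coincide with the paper's.
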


\begin{table}[h]
\begin{tabular}{|c|c|c|c|}
\hline
$k$ & ${\rho}_k$ & $\sigma_k$ & $\tau_k$\\
\hline\hline
1 & 2/3 & 2/3 & 3/5 \\
\hline
2 & 4/7 & 7/13 & 7/13 \\
\hline
3 & 3/5 & 3/5 & 3/5\\
\hline
4 & 2/3 & 4/7 & 7/13 \\
\hline
6 & 3/5 & 12/23 & 19/37 \\
\hline
23 & 3/5 & 3/5 & 3/5 \\
\hline
24 & 7/13 & 7/13 & 7/13 \\
\hline
31 & 2/3 & 6/11 & 9/17 \\
\hline
93 & 3/5 & 4/7 & 7/13 \\
\hline
$3202$ & $2/3$ & $2/3$ &  
$15/29$  \\\hline
    \end{tabular}
    \medskip \medskip
\caption{Some sample values of $\rho_k,\sigma_k$ and $\tau_k$}
\label{tab:rhosigmatau}
\end{table}
If $z(p)=(p+1)/2$ for some prime $p$, then we can replace
$\sup$ by $\max$ in Definition \ref{def:three}. If $z(p)$ is never equal to $(p+1)/2$, but
infinitely often to $z(p)=(p-1)/2$, then $\rho_k=1/2.$ If
$z(p)$ is never equal to $(p+1)/2$ and at most finitely often to
$(p-1)/2$, then $\rho_k<1/2$. The
same remarks 
hold, mutatis mutandis, for $\sigma_k$ and $\tau_k$,
cf.\,the next three lemmas.
\begin{lemma}[\cite{FLM}]
\label{zqqmaxrho}
Let $k\ge 1$ be fixed.\hfil\break
{\rm a)} Suppose that $z(q_1)=(q_1+1)/2$ for some prime $q_1.$ Let $q$ be
the smallest prime such that $z(q)=(q+1)/2.$ Then 
\begin{equation*}
\rho_k=\max\left\{\frac{z(p)}{p}:z(p)<p\right\}=\frac{q+1}{2q}.
\end{equation*}
{\rm b)} If there is no prime $q_1$ such that $z(q_1)=(q_1+1)/2$, then $\rho_k\le 1/2$.\hfil\break
{\rm c)} We have 
$\rho_k=2/3$ if $k\equiv 1\pmod*{3}$ and
$\rho_k\le 3/5$ otherwise.
\end{lemma}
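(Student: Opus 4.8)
The plan is to prove the three parts in the order (a), (b), (c), reducing (b) and (c) to (a) and to Proposition \ref{Florianapril2023}. First recall from the discussion preceding the lemma that whenever $z(p) < p$ one has $z(p) \mid p - e_p(k)$ by Lemma \ref{lemma2FLM}(3), and in fact if $e_p(k) = -1$ then $z(p) \mid (p+1)/2$ while if $e_p(k) = 1$ then $z(p) \mid (p-1)/2$ (using Lemma \ref{Lemma2.8} with $b=1$, which gives $z(p) \mid (p - e_p(k))/2$). In particular, if $z(p) < p$ then either $z(p) \le (p+1)/2$, so $z(p)/p \le (p+1)/(2p)$, and this bound is strictly decreasing in $p$. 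This monotonicity is the key structural observation and is exactly the remark made just before Definition \ref{def:three}.

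For part (a): assume $z(q_1) = (q_1+1)/2$ for some prime $q_1$, and let $q$ be the smallest such prime. Then $z(q) = (q+1)/2 < q$, so $(q+1)/(2q)$ is one of the values in the set $\{z(p)/p : z(p) < p\}$, giving $\rho_k \ge (q+1)/(2q)$. For the reverse inequality, take any odd prime $p$ with $z(p) < p$. If $p \ge q$, then $z(p)/p \le (p+1)/(2p) \le (q+1)/(2q)$ by monotonicity. If $p < q$, then by minimality of $q$ we have $z(p) \ne (p+1)/2$; combined with $z(p) < p$ and the divisibility constraints above, this forces $z(p) \le (p-1)/2$, hence $z(p)/p \le (p-1)/(2p) < (q+1)/(2q)$. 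Since $z(2^n) = 2^n$, only odd primes contribute, and we conclude $\rho_k = (q+1)/(2q)$, and that the supremum is attained (so it is a maximum). This also yields the displayed equality $\rho_k = \max\{z(p)/p : z(p) < p\} = (q+1)/(2q)$.

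Part (b) is the contrapositive-style case: if no prime $q_1$ satisfies $z(q_1) = (q_1+1)/2$, then for every odd prime $p$ with $z(p) < p$ the divisibility constraints force $z(p) \le (p-1)/2$, so $z(p)/p \le (p-1)/(2p) < 1/2$; taking the supremum gives $\rho_k \le 1/2$. For part (c), one invokes Proposition \ref{Florianapril2023}(a) (or rather the case $p = 3$, where $f = \varphi(2) = 1$): there is exactly one congruence class of $k$ modulo $3$ — necessarily $k \equiv 1 \pmod 3$, since $k \equiv 0, 2 \pmod 3$ give $3 \mid k(k+1)$ hence $z(3) = 3$ — for which $z_k(3) = (3+1)/2 = 2$. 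So if $k \equiv 1 \pmod 3$ then $q = 3$ is the smallest prime with $z(q) = (q+1)/2$ (there is no smaller odd prime), and part (a) gives $\rho_k = 4/6 = 2/3$. If $k \not\equiv 1 \pmod 3$, then $z(3) \ne 2$; the smallest prime $q$ (if any) with $z(q) = (q+1)/2$ is then $\ge 5$, so by part (a) $\rho_k \le (5+1)/(2 \cdot 5) = 3/5$, while if no such prime exists part (b) gives $\rho_k \le 1/2 < 3/5$; either way $\rho_k \le 3/5$.

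The only mild subtlety — and the step I would be most careful about — is the case analysis for small primes $p < q$ in part (a): one must be sure that $z(p) < p$ together with $z(p) \ne (p+1)/2$ genuinely forces $z(p) \le (p-1)/2$ rather than some intermediate value. This is where one uses that $z(p)$ divides $(p - e_p(k))/2 \in \{(p-1)/2, (p+1)/2\}$, so $z(p) < (p+1)/2$ implies $z(p) \le (p+1)/4 < (p-1)/2$ for $p \ge 5$, and $p = 3$ is handled separately (there $z(3) \in \{1, 2, 3\}$ and $z(3) < 3$, $z(3) \ne 2$ gives $z(3) = 1$, i.e. $3 \mid U_1(k) = 1$, impossible, so in fact $z(3) \ne 2 \Rightarrow z(3) = 3$ when $3 \mid k(k+1)$). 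Everything else is routine bookkeeping with the monotonicity of $x \mapsto (x+1)/(2x)$.
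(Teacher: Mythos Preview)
Your proof is correct and follows exactly the template the paper uses for the analogous Lemma~\ref{zqqmaxtau} (the $\tau_k$ version): split into $p>q$, where monotonicity of $(p+1)/(2p)$ does the work, and $p<q$, where the divisibility $z(p)\mid (p-e_p(k))/2$ together with $z(p)\ne (p+1)/2$ forces $z(p)/p<1/2$. The paper itself does not reprove Lemma~\ref{zqqmaxrho} --- it is quoted from part~I --- so there is nothing further to compare. One cosmetic remark: Proposition~\ref{Florianapril2023} is stated for $p\ge 5$, so for part~(c) you should rely on the direct computation $U_2(k)=4k+2\equiv 0\pmod*{3}$ when $k\equiv 1\pmod*{3}$ (which you do give) rather than citing that proposition for $p=3$.
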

\begin{lemma}
\label{zqqmaxsigma}
Let $k\ge 1$ be fixed.\hfil\break
{\rm a)} Suppose that $z(q_1^2)=q_1(q_1+1)/2$ for some prime $q_1.$ Let $q$ be
the smallest prime such that $z(q^2)=q(q+1)/2.$ 
Then 
\begin{equation*}
\sigma_k=\max\left\{\frac{z(p^2)}{p^2}:z(p^2)<p^2\right\}=\frac{q+1}{2q}.
\end{equation*}
{\rm b)} If there is no prime $q_1$ such that $z(2q_1^2)=q_1(q_1+1)/2$, then $\tau_k\le 1/2$.\hfil\break
{\rm c)} 
We have
$\sigma_k=2/3$ if $k\equiv 1,7\bmod{9}$, 
and $\sigma_k\le 3/5$ otherwise.
\end{lemma}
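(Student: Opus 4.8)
The plan is to follow the template of the proof of Lemma \ref{zqqmaxrho}, replacing $p$ by $p^2$ (and by $2p^2$ for $\tau_k$), with Lemma \ref{Lemma2.8}, Lemma \ref{lemma2FLM}(4) and the fact that $3$ is the only possible special prime as the new inputs.

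\emph{Step 1: classify the quotients $z(p^2)/p^2$.} Since $z(2^n)=2^n$, the prime $2$ is irrelevant, so let $p$ be an odd prime. If $p\mid k(k+1)$, then $z(p)=p$, and Lemma \ref{lemma2FLM}(4) gives $z(p^2)=p^2$ unless $p$ is special; as only $p=3$ can be special, and then $z(9)=3$, this contributes only the quotient $\tfrac13$. If $p\nmid k(k+1)$, Lemma \ref{Lemma2.8} gives $z(p^2)\mid p(p-e_p(k))/2$: for $e_p(k)=1$ this forces $z(p^2)\le p(p-1)/2$, so $z(p^2)/p^2<\tfrac12$; for $e_p(k)=-1$ we have $z(p^2)\mid p(p+1)/2$, hence either $z(p^2)=p(p+1)/2$ with quotient $(p+1)/(2p)$, or $z(p^2)$ is a proper divisor of $p(p+1)/2$, so that $z(p^2)\le p(p+1)/4<p^2/2$. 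The conclusion of this step is the dichotomy: whenever $z(p^2)<p^2$, either $z(p^2)/p^2\le\tfrac12$, or $z(p^2)=p(p+1)/2$ and $z(p^2)/p^2=(p+1)/(2p)$.

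\emph{Step 2: parts (a) and (b).} Since $p\mapsto(p+1)/(2p)$ is strictly decreasing and always exceeds $\tfrac12$, part (a) follows at once: if $q$ is the least prime with $z(q^2)=q(q+1)/2$, then by Step 1 every admissible quotient is at most $(q+1)/(2q)$, with equality at $p=q$, so $\sigma_k=(q+1)/(2q)$ and the supremum is attained. For $\tau_k$ I would run the same analysis for $z(2p^2)=\lcm(z(2),z(p^2))=\lcm(2,z(p^2))$, using $z(2)=2$ (which holds since $2\mid k(k+1)$); a short parity bookkeeping then shows that $z(2p^2)/(2p^2)$ can exceed $\tfrac12$ only when $p\equiv1\pmod 4$, $e_p(k)=-1$ and $z(p^2)=p(p+1)/2$, equivalently only when $z(2p^2)=p(p+1)$. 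Under the hypothesis of part (b) no such prime exists, so $\tau_k\le\tfrac12$.

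\emph{Step 3: part (c).} Here I would combine Step 1 with the explicit determination of $z_k(9)$ recorded in Proposition \ref{Florianapril2023}(b),(c) and Tab.\,\ref{zkp2}, which shows that $z_k(9)=6=3\cdot4/2$ precisely for $k\equiv1,7\pmod 9$, while $z_k(9)\in\{2,3,9\}$ in every other residue class modulo $9$. In the first case Step 1 gives $\sigma_k\ge(3+1)/(2\cdot3)=\tfrac23$, and since $(p+1)/(2p)\le\tfrac35$ for every prime $p\ge5$, we obtain $\sigma_k=\tfrac23$; in the remaining cases the $p=3$ term contributes at most $\tfrac12$, so $\sigma_k$ is governed by the primes $p\ge5$ and hence $\sigma_k\le\tfrac35$. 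The main obstacle I anticipate is the sharp half of Step 1 — showing that no quotient lands strictly between $\tfrac12$ and $(p+1)/(2p)$, which requires ruling out the intermediate divisors of $p(p+1)/2$ — together with the $\lcm$-with-$2$ parity count for $\tau_k$ in part (b); the remainder is bookkeeping already carried out in the proof of Lemma \ref{zqqmaxrho}.
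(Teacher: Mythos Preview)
Your approach is correct and matches the paper's: the paper's own proof simply says ``left to the reader, cf.\ the very similar proof of Lemma~\ref{zqqmaxtau}'', with part~(c) handled via Proposition~\ref{Florianapril2023} and Tab.~\ref{zkp2}, which is precisely your Step~1--Step~3 template. One caveat: the stated hypothesis in part~(b) appears to be a typo (it should read $z(q_1^2)=q_1(q_1+1)/2$ and conclude $\sigma_k\le 1/2$, to parallel Lemmas~\ref{zqqmaxrho}(b) and~\ref{zqqmaxtau}(b)); your parity argument in Step~2 actually proves Lemma~\ref{zqqmaxtau}(b) rather than the literal statement here, but that is the intended content.
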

\begin{proof}
We leave this to the reader, cf.\,the very similar (but more complicated) proof of Lemma \ref{zqqmaxtau}. Only part c needs special attention, here we use Proposition \ref{Florianapril2023} and  Tab.\,\ref{zkp2}.
\end{proof}
\begin{lemma}
\label{zqqmaxtau}
Let $k\ge 1$ be fixed.\hfil\break
{\rm a)} Suppose that $z(2q_1^2)=q_1(q_1+1)$ for some prime $q_1.$ Let $q$ be
the smallest prime such that $z(2q^2)=q(q+1).$ 
Then $q\equiv 1\pmod*{4}$ and 
\begin{equation*}
\tau_k=\max\left\{\frac{z(2p^2)}{2p^2}:z(2p^2)<2p^2\right\}=\frac{q+1}{2q}.
\end{equation*}
{\rm b)} If there is no prime $q_1$ such that $z(2q_1^2)=q_1(q_1+1)$, then $\tau_k\le 1/2$.\hfil\break
{\rm c)} We have $\tau_k\le 3/5$. If 
$k\equiv 0,2,4\pmod*{5}$, then 
$\tau_k\le 7/13$.
\end{lemma}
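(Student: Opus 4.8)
## Proof plan for Lemma \ref{zqqmaxtau}

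The plan is to mimic the structure of Lemma \ref{zqqmaxrho} and Lemma \ref{zqqmaxsigma}, with the extra complication that the modulus here is $2p^2$ rather than $p$ or $p^2$. First I would fix notation: for an odd prime $p\nmid k(k+1)$ one has $z(2p^2)=\lcm(z(2),z(p^2))$ by Lemma \ref{lemma2FLM}(5), and $z(2)\in\{1,2\}$. The key point for part a) is that $z(2p^2)<2p^2$ forces, via Corollary \ref{cor:wppmotivation} (the even case), that $p\equiv 1\pmod 4$, since an even discriminator-type value $m=2p^2$ with $z(m)>m/2$ requires $e_p(k)=-1$ and $p\equiv 1\pmod 4$; and a short computation using Lemma \ref{Lemma2.8} shows that if $z(2p^2)<2p^2$ but $z(2p^2)>p^2$ then necessarily $z(p^2)=p(p+1)/2$ and $z(2)\mid 2$ contributes nothing new, so $z(2p^2)=p(p+1)\cdot(z(2)/\text{(common factor)})$; the upshot is $z(2p^2)/2p^2=(p+1)/2p$ exactly when $z(2p^2)=q(q+1)$ in the notation of the statement. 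For the general upper bound $z(2p^2)/2p^2\le(p+1)/2p$ I would simply invoke Lemma \ref{Lemma2.8}: $z(p^2)\mid p(p-e_p(k))/2\le p(p+1)/2$, and $z(2)\le 2$, so $z(2p^2)\le p(p+1)$. Since $(p+1)/2p$ is strictly decreasing in $p$, the supremum is attained at the smallest such prime $q$, which gives part a), and part b) is the contrapositive: if no prime realizes $z(2q_1^2)=q_1(q_1+1)$ then every term in the defining supremum is $\le(p-1)/2p\cdot 2 = \dots$, more precisely $z(2p^2)\le p(p-1)$ or $z(2p^2)\le 2p^2/p$, hence $\tau_k\le 1/2$. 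I would need to be slightly careful here that "$z(2q_1^2)=q_1(q_1+1)$ for no prime" really does exclude the value $(p+1)/2p$ and not merely the maximal case; this follows because $z(2p^2)/2p^2=(p+1)/2p$ with $z(2p^2)<2p^2$ is equivalent to $z(2p^2)=p(p+1)$ — the denominator $2p^2$ and the fraction $(p+1)/(2p)$ pin down the numerator.

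For part c) the strategy is the same as in Lemma \ref{zqqmaxsigma}(c): reduce to checking small primes using Proposition \ref{Florianapril2023} and Tab.\,\ref{zkp2}. By part a), if $\tau_k>7/13$ then the smallest prime $q$ with $z(2q^2)=q(q+1)$ satisfies $(q+1)/2q>7/13$, i.e. $q<13$, and $q\equiv 1\pmod 4$, leaving $q=5$ as the only candidate (giving $\tau_k=3/5$); similarly $\tau_k>3/5$ would force $q<5$ with $q\equiv1\pmod 4$, which is impossible, so $\tau_k\le 3/5$ always. To upgrade $\tau_k\le 7/13$ under $k\equiv 0,2,4\pmod 5$ I must rule out $q=5$, i.e. show $z(2\cdot 5^2)<5\cdot 6=30$ in these classes. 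Here I use that $z(50)=\lcm(z(2),z(25))$ and consult Tab.\,\ref{zkp2}: for $k\equiv 2\pmod 5$ part (d) gives $z_k(5)<3$, so $z(25)\mid 5\cdot z(5)$ is small; and for $k\equiv 0,4\pmod 5$ we have $5\mid k(k+1)$, so actually $5\mid k(k+1)$ and $z(5)=5$ by Lemma \ref{lemma2FLM}(2), whence $z(25)=25$ or $z(25)=5\cdot 25/\dots$, in any case $z(2\cdot 5^2)/(2\cdot 5^2)$ is not of the relevant form — I need to double-check the case $5\mid k(k+1)$ separately since then $5$ is not a "wild" prime at all and $2\cdot 5^2$ behaves differently, but in that regime $z(50)$ divides something of size $<30$ is false; rather the point is $z(50)\in\cB_k$-type behaviour is irrelevant because $e_5(k)$ is undefined, so such $p$ simply do not enter the supremum in Definition \ref{def:three} (the supremum is implicitly over $p$ with $z_k(f(p))<f(p)$, and I should confirm $z(2\cdot 5^2)=2\cdot 5^2$ when $5\mid k(k+1)$, which holds iff $z(25)=25$, true unless $5$ is special — but $5$ is never special since only $3$ can be). After this case analysis the next prime $q\equiv 1\pmod 4$ to worry about is $q=13$, giving the bound $7/13$.

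The main obstacle I anticipate is bookkeeping around the prime $5$ and the modulus $2$: one has to keep straight (i) whether $5$ divides $k(k+1)$, in which case $5$ contributes $z(5)=5$ and is not part of the wild-prime supremum, versus (ii) $5\nmid k(k+1)$ with $e_5(k)=-1$ and $z(5)=(5+1)/2=3$, which is the case that can produce $\tau_k=3/5$, versus (iii) $e_5(k)=1$, where $z(5)\mid 2$. Sorting the congruence classes $k\equiv 0,1,2,3,4\pmod 5$ into these buckets — with the class $k\equiv 2\pmod 5$ being the subtle one where $e_5(k)=-1$ yet $z(5)<3$ by Proposition \ref{Florianapril2023}(d) and Tab.\,\ref{zkp2} — is exactly the computation that makes the refinement $\tau_k\le 7/13$ work for $k\equiv 0,2,4\pmod 5$ but not for $k\equiv 1,3\pmod 5$. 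The parallel discussion for $z(2)$ (equivalently, whether $2\mid k(k+1)$, which is automatic, versus the behaviour of $z(2^b)=2^b$) is comparatively harmless since $z(2^b)=2^b$ always, so the factor $2$ in $2p^2$ never reduces the index of appearance; I would state this explicitly at the outset to avoid repeatedly re-deriving it. With these preliminaries in place, parts a), b) are formal and part c) is a finite table check, so I would present a) and b) in full and then treat c) as in the proof of Lemma \ref{zqqmaxsigma}(c), pointing to the relevant rows of Tab.\,\ref{zkp2}.
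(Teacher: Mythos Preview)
Your approach is essentially the same as the paper's, and the overall architecture is right: bound $z(2p^2)/(2p^2)$ by $(p+1)/(2p)$, use monotonicity in $p$, and handle part~c) by a finite check at $p=5$. Two points deserve attention, though.

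First, your argument for part~a) has a gap. From ``$(p+1)/(2p)$ is strictly decreasing in $p$'' you conclude that the supremum is attained at the smallest $q$ with $z(2q^2)=q(q+1)$. But this only controls primes $p\ge q$; it does not by itself rule out a prime $p<q$ with $\rho(p):=z(2p^2)/(2p^2)$ lying strictly between $(q+1)/(2q)$ and $(p+1)/(2p)$. The paper closes this gap by checking that for any $p<q$ with $\rho(p)<1$ one has $z(2p^2)\mid p(p-1)$, or $z(2p^2)\mid p(p+1)/2$, or $z(2p^2)\mid p+1$, each of which forces $\rho(p)<1/2<(q+1)/(2q)$. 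You do sketch a version of this case split in your discussion of part~b), but the two cases you list (``$z(2p^2)\le p(p-1)$ or $z(2p^2)\le 2p^2/p$'') omit the possibilities $z(p^2)=p(p+1)/2$ with $p\equiv 3\pmod 4$ (giving $z(2p^2)=p(p+1)/2$) and $z(p^2)\mid (p+1)/2$ (giving $z(2p^2)\mid p+1$); you should make this trichotomy explicit and invoke it in part~a) as well.

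Second, invoking Corollary~\ref{cor:wppmotivation} for the claim $q\equiv 1\pmod 4$ is not quite legitimate: that corollary is about actual discriminator values, whereas here you only know $z(2q^2)>q^2$. The underlying Lemma~\ref{lem:withp} (via the coprimality condition $(a,(p+1)/2)=1$ with $a=2$) does give what you want, but the paper's direct argument is cleaner: $z(2q^2)=\lcm(2,z(q^2))$ with $z(q^2)=q(q+1)/2$, and if $q\equiv 3\pmod 4$ then $z(q^2)$ is even, so the $\lcm$ collapses to $q(q+1)/2\neq q(q+1)$. Your treatment of part~c) is correct though more laboured than necessary; the paper simply records that $\rho(5)\in\{1/5,1\}$ for $k\equiv 0,4\pmod 5$ and $\rho(5)\le 2/5$ for $k\equiv 2\pmod 5$, then appeals to parts~a) and~b).
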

\begin{proof} 
Put $\rho(p)=z(2p^2)/(2p^2).$
If $p>q$ and $\rho(p)<1,$ then
$$\rho(p)\le\frac{p+1}{2p}<\frac{q+1}{2q}=\rho(q),$$ 
If $p<q$ and $\rho(p)<1,$ then $z(2p^2)\mid p(p-1),$  
$z(2p^2)\mid p(p+1)/2$ or $z(2p^2)\mid p+1.$
Since, respectively,
\begin{equation}
\label{threecases}    
\rho(p)\le\frac{p-1}{2p}<\frac12,\quad
\rho(p)\le\frac{p+1}{4p}<\frac12\quad\text{or}\quad\rho(p)\le\frac{p+1}{2p^2}<\frac12
\end{equation}
and $(q+1)/(2q)>1/2,$ we have established 
that $\tau_k=(q+1)/(2q).$ In case $q\equiv 3\pmod*{4},$
then $z(2)$ and $z(q^2)$ are both even and so $\rho(q)<1/2.$ Thus
$q\equiv 1\pmod*{4}.$\hfil\break
b) In this case, cf.\,\eqref{threecases}, we have $\rho(p)<\frac12$ for every prime $p$ and so the supremum is $\le \frac12$.\hfil\break
c) We apply parts a and b together with the observation that $\rho(50)\in \{\frac15,1\}$ if $k\equiv 0,4\bmod{5}$ (by 
Lemma \ref{lemma2FLM}) and
$\rho(50)\le \frac25$ if $k\equiv 2\bmod{5}$ (by Lemma \ref{Lemma2.8}).
\end{proof}
\par The following extends 
\cite[Lemma 14]{FLM} with some extra statements
involving the set $\cM$.
\begin{lemma}
\label{appearance}  Let $k\ge 1$.
We have $z(m)=m$ if and only if
$$
\begin{cases}
m\in \cP(k(k+1)),~9\nmid m;\cr
m\in \cP(k(k+1)),~9\mid m,~
\text{and~}3{~is~not~special.}
\end{cases}
$$
The remaining integers $m$ 
satisfy
$z(m)\le \rho_k m,$
with $\rho_k=2/3$ if $k\equiv 1\pmod*{3}$ and
$\rho_k\le 3/5$ otherwise.
Let $\mathcal M$ be the set of integers that 
are divisible by some prime square $p^2$ with $p\nmid k(k+1)$ a prime.
The integers $m$ in $\mathcal M$ 
satisfy
$z(m)\le \sigma_k m,$
with $\sigma_k=2/3$ if $k\equiv 1,7\bmod{9}$, 
and $\sigma_k\le 3/5$ otherwise. The
even integers $m$ in $\mathcal M$ satisfy $z(m)\le \tau_k m\le 3m/5$. If 
$k\equiv 0,2,4\pmod*{5}$, then 
$\tau_k\le 7/13$. 
\end{lemma}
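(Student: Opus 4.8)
The plan is to reduce each bound to the behaviour of a single prime power of $m$ and then read off the conclusion from the definitions of $\rho_k,\sigma_k,\tau_k$. The equivalence characterising the $m$ with $z(m)=m$, together with the inequality $z(m)\le\rho_k m$ for the remaining $m$, is \cite[Lemma 14]{FLM}, and the three numerical bounds on $\rho_k$, $\sigma_k$, $\tau_k$ are exactly parts (c) of Lemmas \ref{zqqmaxrho}, \ref{zqqmaxsigma} and \ref{zqqmaxtau}. So the only new content to prove is $z(m)\le\sigma_k m$ for $m\in\mathcal M$ and $z(m)\le\tau_k m$ for even $m\in\mathcal M$. The workhorse in both cases is that, by Lemma \ref{lemma2FLM}(4), for a prime $p\nmid k(k+1)$ one has $z(p^b)=p^{\max\{b-\nu,0\}}z(p)$ with $\nu:=\nu_p(U_{z(p)}(k))\ge 1$, so $z(p^b)/p^b=p^{-\min\{\nu,b\}}z(p)$ is non-increasing in $b$, while $z(p)\le(p+1)/2<p$ by Lemma \ref{Lemma2.8}.

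For $m\in\mathcal M$ I would pick a prime $p\nmid k(k+1)$ with $p^2\mid m$; such a $p$ is automatically odd since $k(k+1)$ is even. Write $m=p^c m'$ with $c\ge 2$ and $p\nmid m'$. By Lemma \ref{lemma2FLM}(5) and $z(m')\le m'$ we get $z(m)=\lcm(z(p^c),z(m'))\le z(p^c)m'$, hence $z(m)/m\le z(p^c)/p^c$. The monotonicity above gives $z(p^c)/p^c\le z(p^2)/p^2$, and $z(p^2)\le p\,z(p)\le p(p+1)/2<p^2$, so $z(p^2)/p^2$ is one of the ratios over which the supremum defining $\sigma_k$ is taken; therefore $z(m)/m\le\sigma_k$.

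For even $m\in\mathcal M$ keep the same $p$ and write $m=2^a p^c m_0$ with $a\ge 1$, $c\ge 2$ and $\gcd(2p,m_0)=1$. Using $z(2^a)=2^a$, one has $z(m)=\lcm(2^a,z(p^c),z(m_0))\le\lcm(2^a,z(p^c))m_0$. Writing $z(p)=2^s u$ with $u$ odd gives $z(p^c)=2^s u\,p^{\max\{c-\nu,0\}}$, hence $\lcm(2^a,z(p^c))=2^{\max\{a,s\}}u\,p^{\max\{c-\nu,0\}}$ and $z(m)/m\le 2^{\max\{a,s\}-a}\,p^{-\min\{\nu,c\}}\,u$. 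The same computation for $2p^2$ yields $z(2p^2)/(2p^2)=2^{\max\{1,s\}-1}\,p^{-\min\{\nu,2\}}\,u$, and $z(2p^2)<2p^2$ since $2^{\max\{1,s\}-1}u\le z(p)\le(p+1)/2<p$ and $p^{-\min\{\nu,2\}}\le 1/p$. Since $a\ge 1$, $\max\{a,s\}-a=\max\{0,s-a\}\le\max\{0,s-1\}=\max\{1,s\}-1$, and since $c\ge 2$, $\min\{\nu,c\}\ge\min\{\nu,2\}$; hence $z(m)/m\le z(2p^2)/(2p^2)\le\tau_k$, with $\tau_k\le 3/5$ in general and $\tau_k\le 7/13$ for $k\equiv 0,2,4\pmod 5$ by Lemma \ref{zqqmaxtau}(c).

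The delicate point, and the only real obstacle, is that $z(p^b)$ need not equal the generic value $p^{b-1}z(p)$: one may have $\nu_p(U_{z(p)}(k))\ge 2$, already for $p=3$ (for instance $z(9)=z(3)$ when $k\equiv 4\pmod 9$). This is why I phrase everything via the exponent $\max\{b-\nu,0\}$, the underlying mechanism being the monotonicity of $b\mapsto p^{-\min\{\nu,b\}}z(p)$; once this is isolated, both inequalities are formal. In the even case the extra subtlety is that $z(p)$ itself can be even, so that $\lcm(2^a,z(p^c))$ is not simply $2^a z(p^c)$; peeling off the $2$-part $2^s$ of $z(p)$ handles this, and the comparison lands on $z(2p^2)/(2p^2)$ rather than on $z(p^2)/p^2$ precisely because $a\ge 1$.
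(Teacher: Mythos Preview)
Your argument is correct and shares the paper's core idea: for $m\in\mathcal M$ isolate a prime $p\nmid k(k+1)$ with $p^2\mid m$ and bound $z(m)/m$ by $z(p^2)/p^2$ (respectively $z(2p^2)/(2p^2)$ in the even case), which is one of the ratios entering the supremum defining $\sigma_k$ (respectively $\tau_k$).

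The implementations differ. The paper first restricts the supremum over $\mathcal M$ to the subset $\mathcal M_1$ of integers with exactly one such prime square (appealing to the estimate in the proof of Lemma~\ref{lem:withp}), and then writes $m=p^e m_1$ with $z(m_1)=m_1$ before displaying the chain $z(m)/m\le \frac{z(p^2)}{p^2}\cdot\frac{z(m_1)}{m_1}$. You bypass both reductions: using only the trivial $z(m')\le m'$ for the cofactor and the monotonicity $b\mapsto z(p^b)/p^b$ (via $z(p^b)=p^{\max\{b-\nu,0\}}z(p)$), you go straight from an arbitrary $m\in\mathcal M$ to $z(m)/m\le z(p^2)/p^2$. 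In the even case you are also more explicit than the paper, splitting off the $2$-part $2^s$ of $z(p)$ and verifying $\max\{a,s\}-a\le\max\{1,s\}-1$ and $\min\{\nu,c\}\ge\min\{\nu,2\}$; the paper's chain $z(m)/m\le\frac{z(2p^2)}{2p^2}\cdot\frac{p^{e-2}}{p^{e-2}}\cdot\frac{2^{f-1}}{2^{f-1}}$ encodes the same inequality but leaves this verification implicit. Your route is a bit more self-contained; the paper's is terser but leans on the earlier structural reduction.
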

\begin{cor}
\label{ktimeskplus1}
Suppose that $k\equiv 1\pmod*{3}.$ Then $z(m)=m$ if and only
if $m\in \cP(k(k+1)).$
\end{cor}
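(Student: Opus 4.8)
The plan is to derive this directly from Lemma \ref{appearance} together with the characterization of special primes recalled in Section \ref{sec:powerappear}. The point is that the only way $m \in \cP(k(k+1))$ can fail $z(m) = m$, according to Lemma \ref{appearance}, is if $9 \mid m$ and $3$ is special; so it suffices to check that $3$ cannot be special when $k \equiv 1 \pmod 3$.

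First I would recall that, as noted after Definition of \emph{special prime}, the prime $3$ is special precisely when $k \equiv 2 \pmod 9$ or $k \equiv 6 \pmod 9$. Both of these congruence classes satisfy $k \equiv 2 \pmod 3$, hence are incompatible with the hypothesis $k \equiv 1 \pmod 3$. (Also, since $3 \mid k(k+1)$ forces $k \equiv 0$ or $2 \pmod 3$, the prime $3$ does not even divide $k(k+1)$ when $k \equiv 1 \pmod 3$, so a fortiori $3$ is not special in that case.) Consequently the second branch in the displayed criterion of Lemma \ref{appearance} is vacuous, and the criterion collapses to: $z(m) = m$ if and only if $m \in \cP(k(k+1))$.

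For the converse direction nothing new is needed: Lemma \ref{appearance} already asserts that $z(m) = m$ implies $m \in \cP(k(k+1))$ (both branches of its condition require $m \in \cP(k(k+1))$), so that half is immediate regardless of the value of $k \bmod 3$. Thus the equivalence $z(m) = m \iff m \in \cP(k(k+1))$ holds under the hypothesis, which is exactly the statement of the corollary.

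There is essentially no obstacle here: the corollary is a transparent specialization of Lemma \ref{appearance}, the entire content being the observation that the exceptional case ``$3$ special'' is ruled out by $k \equiv 1 \pmod 3$. The only mild care required is to cite correctly the fact (from part I, Lemma 3, recalled in Section \ref{sec:powerappear}) that $3$ is the only prime that can be special and that it is special exactly on the two residue classes $2, 6 \pmod 9$.
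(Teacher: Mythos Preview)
Your proposal is correct and follows the same route the paper (implicitly) intends: the corollary is stated immediately after Lemma \ref{appearance} without a separate proof, precisely because it is the transparent specialization you describe. Your observation that $k\equiv 1\pmod*{3}$ forces $3\nmid k(k+1)$, so no $m\in\cP(k(k+1))$ is divisible by $9$ and the exceptional branch of Lemma \ref{appearance} is vacuous, is exactly the point; the detour through the residue classes $2,6\pmod*{9}$ is not wrong but is unnecessary once you have noted this.
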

\begin{proof}[Proof of Lemma \ref{appearance}]
Only the statements involving $\cM$ need
to be proved. Let $b\in \cM$. We have
$$\frac{z(b)}b\le \sup_{m\in \cM}\Big\{\frac{z(m)}m\Big\}=\sup_{m\in \cM_1}\Big\{\frac{z(m)}m\Big\},$$
where $\cM_1$ is the set of integers divisible by at most one prime square $p^2$ with
$p\nmid k(k+1)$ (cf.\,the beginning of the proof of Lemma \ref{lem:withp}). 
Thus every
$m\in \cM_1$ is of the form 
$m=p^e\cdot m_1$, with $p\nmid k(k+1)$ some prime, 
$e\ge 2$ and $z(m_1)=m_1$. Since
$$\frac{z(m)}m\le \frac{z(p^2)}{p^2}\,\frac{p^{e-2}}{p^{e-2}}\,\frac{z(m_1)}{m_1}\le \frac{z(p^2)}{p^2},$$
we obtain that $z(b)/b\le \sigma_k$. If $m$ is even, then $m=p^e\cdot 2^f\cdot m_1$, with $p\nmid k(k+1)$, 
$e\ge 2$, $f\ge 1$ and $z(m_1)=m_1$, and
we have
$$\frac{z(m)}m\le \frac{z(2p^2)}{2p^2}\,\frac{p^{e-2}}{p^{e-2}}
\,\frac{2^{f-1}}{2^{f-1}}\,\frac{z(m_1)}{m_1}\le \frac{z(2p^2)}{2p^2},$$
and hence $z(b)/b\le \tau_k$. The proof
is concluded on invoking Lemmas 
\ref{zqqmaxrho}c, \ref{zqqmaxsigma}c and 
\ref{zqqmaxtau}c.
\end{proof}
\subsubsection{The numbers $\rho_k,\sigma_k$ and $\tau_k$: a close-up}
\label{subsub:closeup}
We investigate when there exists 
an integer $k$ such that
\begin{equation}
\label{image}
(\rho_k,\sigma_k,\tau_k)=\Big(\frac{p_1+1}{2p_1},\frac{p_2+1}{2p_2},\frac{p_3+1}{2p_3}\Big),\quad \text{~with~}p_1,p_2,p_3\text{~prescribed~primes}.   
\end{equation}
The primes $p_1,p_2$ and $p_3$ are not required to be distinct. Our main tool is Corollary \ref{freechoice}, which we will take for granted in the remainder of this section.
\begin{lemma}
\label{possibletriples}
The equation \eqref{image} has a solution $k$ if and only if $3\le p_1\le p_2\le p_3$
and 
$p_3\equiv 1\bmod{4}$. If $p_2\equiv 1\bmod{4}$, then we require in addition that  
$p_3=p_2$. If \eqref{image} is satisfied for some $k$, then it is satisfied for a positive density of integers $k$.
\end{lemma}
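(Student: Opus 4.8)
The plan is to reduce the statement to a sequence of ``free lifting'' arguments based on Corollary \ref{freechoice}, handling the necessity of the conditions first and then their sufficiency. For \textbf{necessity}: the inequalities $\rho_k\ge\sigma_k\ge\tau_k$ already force $p_1\le p_2\le p_3$ (since $\frac{x+1}{2x}$ is strictly decreasing), and $p_1\ge 3$ is automatic because $z(2^n)=2^n$ always. The condition $p_3\equiv 1\bmod 4$ is exactly Lemma \ref{zqqmaxtau}a, which says the smallest prime $q$ with $z(2q^2)=q(q+1)$ must satisfy $q\equiv 1\bmod 4$; here $q=p_3$. The extra constraint ``if $p_2\equiv 1\bmod 4$ then $p_3=p_2$'' I would derive by comparing $z(p_2^2)$ and $z(2p_2^2)$: when $p\equiv 1\bmod 4$ and $z(p^2)=p(p+1)/2$, one checks that $z(2)$ is odd (indeed $z(2)$ depends on $k\bmod$ powers of $2$, and for $p\equiv 1\bmod 4$ the relevant $2$-adic valuations line up) so that $\lcm(z(2),z(p^2))=z(2p^2)=p(p+1)/2$, i.e. $\tau_k\ge (p_2+1)/(2p_2)=\sigma_k$, hence $\tau_k=\sigma_k$, which by Lemma \ref{zqqmaxtau}a forces the minimal prime realizing $\tau_k$ to be $p_2$ itself, i.e. $p_3=p_2$.

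For \textbf{sufficiency}, assume $3\le p_1\le p_2\le p_3$ with $p_3\equiv 1\bmod 4$, and (if $p_2\equiv 1\bmod 4$) $p_3=p_2$. The strategy is to prescribe $k$ modulo a product of prime powers so that the three suprema are pinned down simultaneously. Concretely: using Corollary \ref{freechoice} choose the class of $k$ modulo $p_1^2$ so that $z_k(p_1^2)=(p_1+1)/2$ (forcing $z_k(p_1)=(p_1+1)/2$ as well), choose $k$ modulo $p_2^2$ so that $z_k(p_2^2)=p_2(p_2+1)/2$, and choose $k$ modulo $p_3^2$ (with additionally a constraint modulo a small power of $2$, via Lemma \ref{zqqmaxtau}a/the computation above) so that $z_k(2p_3^2)=p_3(p_3+1)$. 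For every prime $q$ strictly below the relevant threshold we must \emph{kill} the large value, i.e. arrange $z_k(q)=q$ or $z_k(q)<(q+1)/2$ — and here Proposition \ref{Florianapril2023}(d) together with the remark after Table \ref{zkp2} guarantees that for each such $q$ there is at least one admissible congruence class modulo $q$, so we simply include those finitely many constraints. By CRT all these congruence conditions (a finite set, on pairwise coprime moduli or on prime powers that can be merged) are simultaneously satisfiable, and then Lemmas \ref{zqqmaxrho}a, \ref{zqqmaxsigma}a, \ref{zqqmaxtau}a read off $\rho_k=(p_1+1)/(2p_1)$, $\sigma_k=(p_2+1)/(2p_2)$, $\tau_k=(p_3+1)/(2p_3)$. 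Since the constraints only fix $k$ in a fixed residue class modulo a fixed integer $M$, the set of valid $k$ has positive density $1/M$, giving the last sentence.

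The \textbf{main obstacle} I anticipate is the bookkeeping needed to make the three prescriptions mutually consistent, especially the interplay between $p_2$ and $p_3$. When $p_2<p_3$ we must ensure that choosing $z_k(p_2^2)=p_2(p_2+1)/2$ does not accidentally force $z_k(2p_2^2)=p_2(p_2+1)$ (which would make $\tau_k\ge\sigma_k$ and contradict $p_3>p_2$); this is where the hypothesis $p_2\equiv 3\bmod 4$ (the only case where $p_2<p_3$ is allowed) is used — for such $p_2$ one has $z_k(2)$ and $z_k(p_2^2)$ sharing a factor $2$ (or $z(2p^2)\mid p+1$, etc., as in the three cases of \eqref{threecases}), so $z_k(2p_2^2)\le p_2(p_2+1)/2<2p_2\cdot\tfrac12$, keeping $\rho(p_2)<1/2$ for the $\tau$-supremum and leaving $p_3$ free to be the genuine minimizer. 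The rest — checking that the primes below each threshold can all be simultaneously ``turned off'' — is routine once Proposition \ref{Florianapril2023}(d) is invoked, so I do not expect difficulties there beyond careful enumeration.
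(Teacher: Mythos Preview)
Your overall strategy matches the paper's, but there are two concrete errors in the execution.

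\textbf{Necessity, the case $p_2\equiv 1\bmod 4$.} You write that ``$z(2)$ is odd'' and conclude $z(2p_2^2)=p_2(p_2+1)/2$. Both statements are wrong: one has $z(2)=2$ for every $k$ (since $U_1=1$ and $U_2=4k+2$), and what is actually relevant is that $z(p_2^2)=p_2(p_2+1)/2$ is \emph{odd} when $p_2\equiv 1\bmod 4$. Hence $z(2p_2^2)=\lcm\bigl(2,\,p_2(p_2+1)/2\bigr)=p_2(p_2+1)$, giving $z(2p_2^2)/(2p_2^2)=(p_2+1)/(2p_2)$; together with $z(2p^2)/(2p^2)\le z(p^2)/p^2<(p_2+1)/(2p_2)$ for primes $p<p_2$ contributing to $\tau_k$, this yields $\tau_k=(p_2+1)/(2p_2)$ and hence $p_3=p_2$. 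Your displayed value $z(2p_2^2)=p_2(p_2+1)/2$ would contribute only $(p_2+1)/(4p_2)<\tfrac12$ to the supremum defining $\tau_k$, from which the inequality $\tau_k\ge\sigma_k$ that you assert does \emph{not} follow.

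\textbf{Sufficiency, the case $p_1=p_2$.} Your recipe imposes simultaneously $z_k(p_1^2)=(p_1+1)/2$ and $z_k(p_2^2)=p_2(p_2+1)/2$; when $p_1=p_2$ these two conditions on the residue of $k$ modulo $p_1^2$ are incompatible, so the CRT step cannot be carried out. The paper resolves this by a case split: if $p_1=p_2$ one chooses the class modulo $p_1^2$ with $z_k(p_1^2)=p_1(p_1+1)/2$ (which already forces $z_k(p_1)=(p_1+1)/2$, pinning $\rho_k$ as well), and only when $p_1<p_2$ does one take $z_k(p_1^2)=(p_1+1)/2$. Also, no ``constraint modulo a small power of $2$'' is ever needed, since $z(2)=2$ independently of $k$; the construction uses only odd-prime moduli. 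Finally, for the primes below each threshold the paper simply takes $k\equiv 0\bmod q$ (so $z(q)=q$), which is a bit cleaner than invoking Proposition~\ref{Florianapril2023}(d), though your route there is also valid.
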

\begin{proof}
Since $z(2^n)=2^n$ the supremum is assumed in an odd prime and so $p_1,p_2$ and $p_3$ are odd. 
Since $\rho_k\ge \sigma_k\ge \tau_k$ we have 
$p_1\le p_2\le p_3$. 
\par The density assertion follows on noting that if $k_1\equiv k\bmod{p^2}$ for every odd prime $p\le p_3$, then 
$(\rho_{k_1},\sigma_{k_1},\tau_{k_1})=(\rho_k,\sigma_k,\tau_k)$ (observe that $z_k(p^2)$ only depends on the residue class of $k$ modulo $p^2$).
\par Note that \eqref{image} entails that $z(p_1)=(p_1+1)/2$, $z(p_2^2)=p_2(p_2+1)/2$
and $z(2p_3^2)=p_3(p_3+1)$. 
The latter identity forces $p_3$ to be 
congruent to $1\bmod{4}$ by Lemma \ref{zqqmaxtau}a.
If $p_2\equiv 1\bmod{4}$, then 
$z(2p_2^2)=p_2(p_2+1)$.
We have $z(2p^2)/2p^2\le z(p^2)/p^2<z(p_2^2)/p_2^2$ for 
$3\le p<p_2$. We conclude that
$\tau_k=(p_2+1)/2p_2$ and hence
$p_3=p_2$. 
\par It remains to prove 
that if the conditions on the primes $p_1,p_2$ and $p_3$ are satisfied, there exists a $k$ solving \eqref{image}. We take $k\equiv 0\bmod{p}$ for the odd primes $p<p_1$. If $p_2=p_1$ we choose $k$ to be in a residue class modulo $p_1^2$ such that $z(p_1^2)=p_1(p_1+1)/2$.
If $p_2>p_1$, we choose $k$ to be in a residue class modulo $p_1^2$ such that $z(p_1^2)=(p_1+1)/2$, 
$k\equiv 0\bmod{p}$ for the primes $p_1<p<p_2$ and $k$ in a residue class modulo $p_2^2$ such that 
$z(p_2^2)=p_2(p_2+1)/2$. If $p_2\equiv 1\bmod{4}$, 
then $p_3=p_2$ and we are done. Otherwise we take
$k\equiv 0\bmod{p}$ for the primes $p_2<p<p_3$ with
$p\equiv 1\bmod{4}$ and take $k$ in a residue class modulo $p_3^2$ for which $z(p_3^2)=p_3(p_3+1)/2$.
\end{proof}
\begin{exam}
By Lemma \ref{possibletriples} there exists a solution to 
\eqref{image} with $(p_1,p_2,p_3)=(3,7,13)$. 
We will now find such a solution. We take $k\equiv 0\bmod{3}$ and $k\equiv 18\bmod{25}$. This ensures that $\rho_k=\frac35$ and 
$\sigma_k\ge \frac47$. On requiring that $k\equiv 44\bmod{49}$, it follows that $\sigma_k=\frac47$. 
As $\tau_k\ne \frac47$ and $\tau_k\ne \frac6{11}$, we have $\tau_k\ge \frac7{13}$. We choose $k\equiv 2\bmod{13}$ and $k\not\equiv 119\bmod{169}$ to ensure that 
$\tau_k=\frac{7}{13}$. Finally, one checks that $k=93$ satisfies all the requirements, and we conclude that
$(\rho_{k},\sigma_{k},\tau_{k})=(\frac35,\frac47,\frac7{13}).$ 
By computer calcuation one can verify that $368, 431, 543$ and $606$ are the only other $k<1000$ having this property.
\end{exam}

\subsection{The incongruence index}
\label{sec:incong}
Apart from $z_k$ we will also make use of
the \emph{incongruence index} $\iota_k$, which was
introduced in Moree and 
Zumalac\'arregui \cite{PA}. It will allow us to rule out many odd values of $m$ with $z(m)=m$ as discriminator values (cf.\,Lemma \ref{laatsteloodje}).
\begin{defi}[incongruence index]
Given 
an integer $m\ge 1$, the incongruence index $\iota_k(m)$ is the largest integer $j$ such that
$U_0(k),\ldots,U_{j-1}(k)$ 
are pairwise distinct modulo $m.$ 
\end{defi}
Note that $\iota_k(m)\le z_k(m)$. 
In practice frequently $\iota_k(m)<z_k(m)$, which shows that the following, easy to prove, variant of Lemma 
\ref{zfactor2} is often stronger.
\begin{lemma}
\label{ifactor}
If $m=\cD_k(n),$ then $\iota_k(m)\ge n$ and $\iota_k(m)>m/2.$
\end{lemma}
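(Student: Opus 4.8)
The plan is to imitate the proof of Lemma \ref{zfactor2} almost verbatim, with $z_k(m)$ replaced throughout by the (possibly smaller) quantity $\iota_k(m)$. Since $\iota_k(m)\le z_k(m)$, the two assertions are formally weaker than, but logically independent from, the ones about $z(m)$, so a separate (short) argument is needed; it is the very same one.

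First I would settle $\iota_k(m)\ge n$, which is essentially a matter of unwinding definitions. If $m=\cD_k(n)$, then by the definition of the discriminator the residues $U_0(k),\ldots,U_{n-1}(k)$ are pairwise distinct modulo $m$. Hence $j=n$ is one of the integers over which the maximum defining $\iota_k(m)$ is taken, and therefore $\iota_k(m)\ge n$.

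Next, for $\iota_k(m)>m/2$ I would argue by contradiction, exactly as in Lemma \ref{zfactor2}. Suppose $\iota_k(m)\le m/2$. Since $\iota_k(m)\ge n\ge 1$, the half-open interval $[\iota_k(m),2\iota_k(m))$ has length $\iota_k(m)\ge 1$ and hence contains a power of two, say $\iota_k(m)\le 2^b<2\iota_k(m)\le m$. From the first part of the lemma we have $2^b\ge \iota_k(m)\ge n$, so Corollary \ref{cor:twopower} (used as in the proof of Lemma \ref{zfactor2}) guarantees that $U_0(k),\ldots,U_{n-1}(k)$ are pairwise distinct modulo $2^b$. But $2^b<m=\cD_k(n)$, contradicting the minimality in the definition of the discriminator. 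Therefore $\iota_k(m)>m/2$.

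I do not anticipate any genuine obstacle here; the argument is routine. The only points worth a word are that the $\ge n$ bound must be established first so that Corollary \ref{cor:twopower} can be applied to the power of two just produced, and that the interval $[\iota_k(m),2\iota_k(m))$ is non-empty because $\iota_k(m)\ge 1$ — both of which are immediate.
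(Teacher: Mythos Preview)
Your proposal is correct and is exactly the approach the paper intends: the text simply calls Lemma~\ref{ifactor} an ``easy to prove'' variant of Lemma~\ref{zfactor2} and gives no separate proof, so mirroring that argument with $\iota_k(m)$ in place of $z_k(m)$ is precisely what is expected. One cosmetic remark: the clause ``has length $\iota_k(m)\ge 1$ and hence contains a power of two'' is not the real reason (an interval of length $1$ need not contain a power of two); the correct justification, as in the paper, is simply that any interval $[x,2x)$ with $x\ge 1$ contains a power of two by the multiplicative spacing of powers of $2$.
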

The general idea is to use
$z(m)$ whenever possible and if it proves itself too weak a tool, then try to work with $\iota(m)$.
\begin{lem}
\label{iota5b}
Let $b\ge 1$ be an integer. Then
$$\iota_k(5^b)=
\begin{cases}
(3 \cdot 5^{b-1}+1)/2 & \text{~if~}k\equiv 1\pmod*{5}\text{~and~}k\not\equiv 6\pmod*{25};\cr
3\cdot 5^{b-1} & \text{~if~}k\equiv 3\pmod*{5}\text{~and~}k\not\equiv 18\pmod*{25}.
\end{cases}
$$
\end{lem}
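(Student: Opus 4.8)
The plan is to compute $\iota_k(5^b)$ directly from the structure of $U_n(k)$ modulo $5^b$, using the congruence hypotheses to pin down $z_k(5^b)$ and the sign pattern of the Binet form. First I would record that, since $5\nmid k(k+1)$ and $k\equiv 1,3\pmod 5$ means $e_5(k)=\bigl(\tfrac{k(k+1)}{5}\bigr)=-1$ (one checks this on the finitely many residue classes), Lemma \ref{zpb} applies: $z_k(5^b)$ is the least $m$ with $\alpha^m\equiv\pm 1\pmod{5^b}$. The extra hypotheses $k\not\equiv 6\pmod{25}$ (resp.\ $k\not\equiv 18\pmod{25}$) place $k$ in the classes of Lemma 2.15(b) (via Tab.\,\ref{zkp2}), so $z_k(5^2)=5\cdot 3=15$, hence by Lemma \ref{lemma2FLM}(4) we get $z_k(5^b)=5^{b-1}\cdot 3=3\cdot 5^{b-1}$ for all $b\ge 1$ in both cases. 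This is the common backbone; the two cases then diverge according to whether $\alpha^{z/2}\equiv -1$ with $z$ even (giving the $k\equiv 1\pmod 5$ formula) or $\alpha^{z}\equiv 1$ with no earlier sign change (giving $k\equiv 3\pmod 5$).

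Next I would translate "$U_i\equiv U_j\pmod{5^b}$'' into a statement about $\alpha$. From $U_n=(\alpha^n-\alpha^{-n})/(\alpha-\alpha^{-1})$ one has $U_i\equiv U_j$ iff $\alpha^{i+j}(\alpha^{i}-\alpha^{j})\cdot(\text{unit})\equiv \alpha^{i}-\alpha^{j}$ after clearing denominators — more cleanly, $U_i-U_j$ is, up to the unit $(\alpha-\alpha^{-1})^{-1}$, equal to $(\alpha^{-j}-\alpha^{-i})(\,1-\alpha^{i+j}\,)$ kind of factorization; carrying this out gives that $U_i\equiv U_j\pmod{5^b}$ with $i\ne j$ forces either $\alpha^{i-j}\equiv 1$ or $\alpha^{i+j}\equiv 1\pmod{5^b}$. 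So the first collision occurs at the smallest $j$ for which there is an $i<j$ with $i+j$ equal to the multiplicative "order-type'' parameter of $\alpha$, i.e.\ $i+j=z_k(5^b)=3\cdot 5^{b-1}$ in the $k\equiv 3$ case, or $i+j=$ the least exponent with $\alpha^{i+j}\equiv -1$ in the $k\equiv 1$ case. In the case $k\equiv 3\pmod 5$, the least exponent $e$ with $\alpha^e\equiv 1\pmod{5^b}$ is exactly $3\cdot 5^{b-1}$ (no intermediate $-1$, because $z_k(5)=3$ is odd so $\alpha$ has odd order mod $5$, hence odd order mod $5^b$), so the first collision is $U_0\equiv U_{3\cdot 5^{b-1}}$, whence $\iota_k(5^b)=3\cdot 5^{b-1}$. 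In the case $k\equiv 1\pmod 5$, one has $\alpha^{3\cdot 5^{b-1}}\equiv -1\pmod{5^b}$ (this is what distinguishes $k\equiv 1$ from $k\equiv 3$ and must be verified on the residue classes $1,11,16,21\pmod{25}$ via Tab.\,\ref{zkp2}, which list $z_k(25)$ with the relevant sign), hence $\alpha^{i+j}\equiv\pm1$ first happens with $i+j=3\cdot 5^{b-1}$ as well; the earliest pair with $0\le i<j$ and $i+j=3\cdot 5^{b-1}$ is $i=(3\cdot 5^{b-1}-1)/2$, $j=(3\cdot 5^{b-1}+1)/2$, giving $\iota_k(5^b)=(3\cdot 5^{b-1}+1)/2$.

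Finally I would check the two boundary issues: that no smaller collision can come from the "$\alpha^{i-j}\equiv 1$'' alternative (this needs $|i-j|\ge z_k(5^b)$, impossible for $i,j$ below the claimed $\iota$), and that with $j$ at the claimed value there genuinely is a collision and not a near-miss — i.e.\ that the unit $(\alpha-\alpha^{-1})^{-1}$ causes no cancellation, which holds since $\alpha-\alpha^{-1}=4\sqrt{k(k+1)}$ is a unit mod $5^b$ as $5\nmid k(k+1)$. The main obstacle is the bookkeeping in the $k\equiv 1\pmod 5$ case: one must be sure that $\alpha^{3\cdot 5^{b-1}}\equiv -1$ rather than $+1$, which is precisely the content of knowing $z_k(5^b)=3\cdot 5^{b-1}$ \emph{together with} the fact that the $\pm 1$ reached at the half-way point of the $\alpha^{p+1}=1$ relation mod $5^2$ is $-1$ for these classes — a finite but delicate check against Tab.\,\ref{zkp2}, after which the lift to $5^b$ is automatic because the $5$-part of the order of $\alpha$ grows by exactly one factor of $5$ per step (Lemma \ref{lemma2FLM}(4)) and cannot introduce a new sign.
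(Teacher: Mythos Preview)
Your overall strategy matches the paper's: factor $U_i-U_j$ via the Binet form, determine $z_k(5^b)=3\cdot 5^{b-1}$, and locate the first collision. But there are two genuine gaps.

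First, the factorization is misstated. Clearing denominators in $U_i-U_j$ gives $(\alpha^i-\alpha^j)(\alpha^{i+j}+1)\equiv 0\pmod{5^b}$, so the second alternative is $\alpha^{i+j}\equiv -1$, not $\alpha^{i+j}\equiv 1$. You silently switch to $-1$ later for $k\equiv 1$, but with the stated factorization your $k\equiv 3$ argument would collapse: if the condition were $\alpha^{i+j}\equiv 1$ and the order of $\alpha$ is $3\cdot 5^{b-1}$, then $i=(3\cdot 5^{b-1}-1)/2,\ j=(3\cdot 5^{b-1}+1)/2$ already collide, contradicting your claimed $\iota=3\cdot 5^{b-1}$.

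Second, and more seriously, your justification ``$z_k(5)=3$ is odd so $\alpha$ has odd order mod~$5$'' is wrong. By Lemma~\ref{zpb}, $z_k(5)=3$ only says $\alpha^3\equiv\pm 1\pmod 5$; it does not tell you which sign. Indeed $z_k(5)=3$ holds for \emph{both} $k\equiv 1$ and $k\equiv 3\pmod 5$, yet the orders of $\alpha$ are $6$ and $3$ respectively. The paper settles this by direct computation: for $k\equiv 3$, $\alpha\equiv 2\pm\sqrt{3}$ and one checks $\alpha^3\equiv 1$, so $-1$ is never a power of $\alpha$ and the factor $\alpha^{i+j}+1$ is always a $5$-adic unit; for $k\equiv 1$, $\alpha\equiv 3\pm 2\sqrt{2}$ and $\alpha^3\equiv -1$. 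Table~\ref{zkp2} does not record this sign information, so your appeal to it does not suffice.

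Finally, you never address why $(\alpha^i-\alpha^j)(\alpha^{i+j}+1)\equiv 0\pmod{5^b}$ forces one factor to be $\equiv 0\pmod{5^b}$. Since $5$ is inert, one factor is divisible by $5$; but to push all of $5^b$ into a single factor you need them coprime mod~$5$. For $k\equiv 3$ this follows once you know $\alpha^{i+j}+1$ is a unit. For $k\equiv 1$ the paper uses a parity argument: if both were divisible by $5$ then $i-j\equiv 0$ and $i+j\equiv 3\pmod 6$, giving $2i\equiv 3\pmod 6$, impossible. Without this step the $k\equiv 1$ case is incomplete.
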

\begin{proof}
If $k\equiv 1,3\pmod*{5}$, then 
$e_5(k)=-1$ and so $z(5)=3$. 
A trivial computation gives 
$U_3(k)=16k^2+16k+3$.
This is a multiple of $5$ for $k\equiv 1,3\pmod*{5}$, but not of $25$ since the classes $k\equiv 6,18\pmod*{25}$ are excluded. So, $5\mid U_3(k)$, but $25\nmid U_3(k)$
and by Lemma 10b it
follows that $z(5^{b})=3\cdot 5^{b-1}$, cf.\,Table \ref{zkp2}.
\par Now let us investigate when $U_i(k)\equiv U_j(k) \pmod*{5^b}$. 
Writing $\alpha, \alpha^{-1}$ for the roots of the 
characteristic equation
$x^2-(4k+2)x+1$,
by \eqref{Binetform} we need $\alpha^i-\alpha^{-i}\equiv \alpha^j-
\alpha^{-j} \pmod*{5^b}$, which
on multiplication by $\alpha^{i+j}$
yields 
$(\alpha^i-\alpha^j)(\alpha^{i+j}+1)\equiv 0\pmod*{5^{b}}$.
So, $5$ divides either $\alpha^i-\alpha^j$ or 
$\alpha^{i+j}+1$ ($5$ is inert in 
$\mathbb Z[\alpha]$ as $e_5(k)=-1$). When $k\equiv 3\pmod*{5}$ the second case doesn't happen. That is, for $k\equiv 3\pmod*{5}$ we have that $\alpha$ is one of $2\pm {\sqrt{3}}\pmod*{5}$ (the characteristic equation only depends on $k$ modulo $5$).  
Then $\alpha^2=2\pm {\sqrt{3}}$ and $\alpha^3\equiv 1\pmod*{5}$. So, we see that $-1\pmod*{5}$ is not in the multiplicative group generated by $\alpha\pmod*{5}$ 
when $\alpha=3\pmod*{5}$. Thus, $U_i(k)\equiv U_j(k)\pmod*{5^b}$
forces $\alpha^i\equiv \alpha^j \pmod*{5^b}$, so 
$\alpha^{i-j}\equiv 1\pmod*{5^b}$, so $U_{i-j}\equiv 0
\pmod*{5^b}$ (assuming say $i>j$), so $z(5^b)=3\cdot 5^{b-1}$ divides $i-j$. This takes care of $\iota_k(5^b)$ in 
case $k\equiv 3\pmod*{5}$.

In case $k\equiv 1\pmod*{5}$, there is no 
$i,j$ such that both $\alpha^{i}-\alpha^j\equiv 0 
\pmod*{5}$ and $\alpha^{i+j}+1\equiv 0\pmod*{5}$. To see 
why, assume there are such. Then $\alpha^{i+j}\equiv 
-1\pmod*{5}$ and $\alpha^{i-j}\equiv 1\pmod*{5}$. But when 
$k\equiv 1\pmod*{5}$, then $\alpha=3\pm 2{\sqrt{2}}$. Now 
$\alpha^2\equiv 2\pm 2{\sqrt{2}}\pmod*{5}$ and $\alpha^3\equiv -1\pmod*{5}$. So, the order of $\alpha$ modulo 5 is exactly 6 so asking of $i,j$ such that 
$\alpha^{i-j}\equiv 1\pmod*{5}$ and $\alpha^{i+j}\equiv -1\pmod*{5}$ gives
$i-j\equiv 0 \pmod*{6}$ and $i+j\equiv 3 \pmod*{6}$. Summing them we get $2i\equiv 3 \pmod*{6}$, which is false (there is no such 
$i$ with $2i\equiv 3\pmod*{6}$).

So, when $k\equiv 1\pmod*{5}$ and $k\not\equiv 6\pmod*{25}$, either $5^b$ divides $\alpha^{i-j}-1$ or 
$5^b$ divides $\alpha^{i+j}+1$. 
In the first case $i-j$ is a multiple of $z(5^b)$, so at least $3\cdot 5^{b-1}$. The second case gives 
$\alpha^{i+j}\equiv -1 \pmod*{5^b}$ so $i+j$ is an odd 
multiple of $3\cdot 5^{b-1}$. The extreme case is 
$i+j=3\cdot 5^{b-1}$ and we see that if $i>j$, 
then $i\ge 3\cdot 5^{b-1}/2$, so $i\ge (3\cdot 5^{b-1}+1)/2$.
\end{proof}
The next lemma studies to what extent a 
relatively small incongruence index remains relatively small after lifting to a larger modulus. Recall the definition \eqref{epk} of $e_p(k)$.
\begin{lem}
\label{incongruence1}
Suppose that $p\nmid k(k+1)$ and $\left(\frac{k+1}{p}\right)=-1$.
If $\iota_k(p^a)<p^a/2$ for some $a\ge 1$, then
$\iota_k(p^b)<p^b/2$ for all $b\ge a$. Furthermore, if $m$ is odd and $z_k(m)=m$, then 
$\iota_k(p^a\cdot m)<p^a\cdot m/2$.
\end{lem}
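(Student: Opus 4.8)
\emph{Plan.} The engine will be the Binet criterion. Since $p\nmid k(k+1)$, the prime $p$ is odd and coprime to $\Delta(k)=16k(k+1)$, so $\alpha-\alpha^{-1}$ is a unit modulo $p$ in the order $\mathbb{Z}[\alpha]=\mathbb{Z}[\sqrt{k(k+1)}]$; multiplying \eqref{Binetform} through by $\alpha^{i+j}$ then gives, for $0\le i<j$ and $c\ge 1$,
\begin{equation*}
U_i(k)\equiv U_j(k)\pmod{p^c}\quad\Longleftrightarrow\quad p^c\mid(\alpha^i-\alpha^j)(\alpha^{i+j}+1)\ \text{ in }\ \mathbb{Z}[\alpha].
\end{equation*}
Hence every such congruence is of \emph{$z$-type} ($v_p(\alpha^j-\alpha^i)\ge 1$, i.e.\ $z_k(p)\mid j-i$), of \emph{$w$-type} ($v_p(\alpha^{i+j}+1)\ge 1$, i.e.\ $i+j$ is an odd multiple of the least $w(p)$ with $\alpha^{w(p)}\equiv-1\pmod p$), or a superposition of the two. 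The hypothesis $\left(\frac{k+1}{p}\right)=-1$ enters to make the $w$-type available: a short Frobenius computation for $\beta=\sqrt{k+1}+\sqrt{k}$ (using $\beta\cdot(\sqrt{k+1}-\sqrt k)=1$ and $\sqrt{k+1}\notin\mathbb{F}_p$) yields $\beta^{\,p-e_p(k)}\equiv-1$, whence $\alpha^{(p-e_p(k))/2}\equiv-1\pmod p$; consequently the least $w(p^c)$ with $\alpha^{w(p^c)}\equiv-1\pmod{p^c}$ exists and divides $p^{\,c-1}(p-e_p(k))/2$.

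\emph{First assertion.} I would fix a witness to $\iota_k(p^a)<p^a/2$, namely integers $0\le r<s<p^a/2$ with $U_r\equiv U_s\pmod{p^a}$, and put $i=p^{\,b-a}r$, $j=p^{\,b-a}s$, so that $j<p^b/2$. Write $v_1=v_p(\alpha^s-\alpha^r)$ and $v_2=v_p(\alpha^{r+s}+1)$, so $v_1+v_2\ge a$. If $v_1\ge1$ then $\alpha^{s-r}\equiv1\pmod{p^{v_1}}$, and since $p$ and $p^{\,b-a}$ are odd the lifting‑the‑exponent identity gives $v_p(\alpha^{j}-\alpha^{i})=v_p\big(\alpha^{(s-r)p^{b-a}}-1\big)=v_1+(b-a)$; if $v_2\ge1$ then, again because $p^{\,b-a}$ is odd, $(\alpha^{r+s})^{p^{\,b-a}}\equiv-1\pmod{p^{\,v_2+(b-a)}}$, so $v_p(\alpha^{i+j}+1)\ge v_2+(b-a)$. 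Adding these, $v_p\big((\alpha^i-\alpha^j)(\alpha^{i+j}+1)\big)\ge(v_1+v_2)+(b-a)\ge b$, hence $U_i\equiv U_j\pmod{p^b}$ and $\iota_k(p^b)\le j<p^b/2$. (In the split case $e_p(k)=1$ this is in any event automatic, since then $z_k(p^c)<p^c/2$ for all $c$ by Lemma \ref{lemma2FLM}.)

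\emph{Second assertion and the main obstacle.} Write $N=p^am$; as $\gcd(p,m)=1$, a congruence $U_i\equiv U_j\pmod N$ is the conjunction of the congruences modulo $p^a$ and modulo $m$. Since $z_k(m)=m$, one has $m\mid U_n(k)$ exactly when $m\mid n$ (Lemma \ref{lemma2FLM}(1)), the primes of $m$ divide $k(k+1)$ (Lemma \ref{appearance}), and $(U_n(k)\bmod m)$ is purely periodic with period $\pi_m\mid 2m$; thus $U_i\equiv U_j\pmod m$ whenever $\pi_m\mid j-i$, and also whenever $i+j$ lies in the relevant progression when $\alpha$ has even order modulo $m$. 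The idea is to transport the witness $0\le r<s<p^a/2$ modulo $p^a$ to a collision modulo $N$ by replacing $(r,s)$ with a pair $(i,j)$ solving by CRT a system that (i) keeps the $z$‑type or $w$‑type structure modulo $p^a$ (constraining $j-i$ or $i+j$ modulo a divisor of $2p^{\,a-1}(p-e_p(k))$, with the freedom of passing to larger multiples) and (ii) forces $j-i\in\pi_m\mathbb Z$, or $i+j$ into the mod‑$m$ progression. Because $\left(\frac{k+1}{p}\right)=-1$ makes \emph{both} flavours available modulo $p^a$, one can pick the flavour whose parity is compatible with the mod‑$m$ condition, and a counting argument then produces a solution with $j<N/2$, using $m\ge 3$ (the case $m=1$ being the first assertion) and the fact that $p^a=3$ is excluded by $\iota_k(p^a)<p^a/2$, so $p^a\ge 5$ unless $p=3$, in which case $a\ge 2$ and $p^a\ge 9$. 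I expect this last bookkeeping to be the real obstacle: the admissible window for $j$ and the CRT modulus of the system are both of order $N$, so $j<N/2$ is tight; the delicate points are (a) making the window beat the modulus, which forces the small‑modulus distinctions above, and (b) the case of $m$ divisible by a prime of $k+1$, where $\pi_m=2m$ and a parity mismatch must be absorbed either by switching flavours modulo $p^a$ or by passing to the $m$‑decimated subsequence $t\mapsto U_{mt}(k)$, itself a Lucas sequence with root $\alpha^m$, to which the first assertion applies once $z$ and $w$ of $\alpha^m$ are bounded in terms of those of $\alpha$.
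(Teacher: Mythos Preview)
Your argument for the first assertion is correct and is a genuinely different route from the paper's. The paper does not lift a given witness; instead it uses the hypothesis $\left(\frac{k+1}{p}\right)=-1$ directly (after disposing of the split case $e_p(k)=1$) to compute $\alpha^{(p+1)/2}\equiv -1\pmod p$ via Frobenius on $\beta=\sqrt{k+1}+\sqrt{k}$, lift to $\alpha^{p^{b-1}(p+1)/2}\equiv -1\pmod{p^b}$, and read off $\iota_k(p^b)\le\lfloor p^{b-1}(p+1)/4\rfloor<p^b/2$ for \emph{every} $b\ge 1$. So the paper in fact proves the stronger statement that the premise $\iota_k(p^a)<p^a/2$ is automatic under the Legendre‑symbol hypothesis. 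Your LTE lift, by contrast, consumes the premise but does not touch the symbol hypothesis (beyond reducing to the inert case); that is a pleasant complementarity, and your argument is clean.

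For the second assertion, however, your CRT‑and‑counting plan is both incomplete and unnecessary; this is where the real gap lies. The ``tight window'' obstacle you anticipate disappears once you notice the following one‑line trick, which is exactly what the paper does: given any witness $0\le r<s<p^a/2$ with $U_r\equiv U_s\pmod{p^a}$, take $(i,j)=(mr,ms)$. Then $j<p^a m/2$, and $U_{mr}\equiv U_{ms}\pmod{p^a m}$ because
\begin{itemize}
\item modulo $m$: both $U_{mr}$ and $U_{ms}$ vanish, since $z_k(m)=m$ gives $m\mid U_m\mid\gcd(U_{mr},U_{ms})$;
\item modulo $p^a$: from $(\alpha^r-\alpha^s)\mid(\alpha^{mr}-\alpha^{ms})$ and, since $m$ is odd, $(\alpha^{-(r+s)}+1)\mid(\alpha^{-m(r+s)}+1)$, the factorised Binet congruence $(\alpha^r-\alpha^s)(\alpha^{-(r+s)}+1)\equiv 0\pmod{p^a}$ is inherited by $(mr,ms)$.
\end{itemize}
No CRT, no parity bookkeeping, no switching of flavours is needed; the oddness of $m$ is precisely what makes the $+1$ factor survive. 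This trick grafts directly onto your proof of the first assertion, so you can keep your LTE argument and replace the entire ``Second assertion'' paragraph with these three lines.
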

\begin{proof}
If $e_p(k)=1$,
then $z(p^{a})\le p^{a-1}(p-1)/2<p^a/2$ for all $a\ge 1$ by Lemma \ref{Lemma2.8}. So
\begin{equation}
\label{eq:1}
\left(\frac{k}{p}\right)=1,\quad \left(\frac{k+1}{p}\right)=-1.
\end{equation}
We show that $z_k(p^a)\le \lfloor p^{a-1}(p+1)/4\rfloor<p^a/2$ for all $a\ge 2$. Indeed, write $U_i\equiv U_j\pmod*{m}$ with $m$ coprime to $k(k+1)$ as 
$$
\alpha^i-\alpha^{-i}\equiv \alpha^j-\alpha^{-j}\pmod*{m},
$$
which is equivalent to 
$$
(\alpha^i-\alpha^j)(\alpha^{-(i+j)}+1)\equiv 0\pmod*{m}.
$$
It suffices that $\alpha^{i+j}\equiv -1\pmod*{m}$. Using \eqref{eq:1} we note
that
\begin{eqnarray*}
\alpha^{(p+1)/2} &=& \left(({\sqrt{k+1}}+{\sqrt{k}})^2\right)^{(p+1)/2}=({\sqrt{k+1}}+{\sqrt{k}})^{p+1}\\
& =& ({\sqrt{k+1}}+{\sqrt{k}})^p({\sqrt{k+1}}+{\sqrt{k}})\equiv ({\sqrt{k+1}}^p+{\sqrt{k}}^p)({\sqrt{k+1}}+{\sqrt{k}})\pmod*{p}\\
& \equiv & (-{\sqrt{k+1}}+{\sqrt{k}})({\sqrt{k+1}}+{\sqrt{k}})\equiv k-(k+1)\equiv -1\pmod*{p}.
\end{eqnarray*}
So, if we choose $m=p$ and $i+j=(p+1)/2$, then we have that $U_i\equiv U_j\pmod*{p}$. More generally, we can choose $m=p^a$. Since 
$$
\alpha^{(p+1)/2}\equiv -1\pmod*{p},
$$
we get that
$$
\alpha^{p^{a-1}(p+1)/2}\equiv -1\pmod*{p^a},
$$
and we can see that $i_k(p^a)\le p^{a-1}(p+1)/4$. Indeed, if $p^{a-1}(p+1)/2=2\ell+1$, then $U_{\ell}\equiv U_{\ell+1}\pmod*{p^a}$, so $i_k(p^a)\le \ell=\lfloor p^{a-1}(p+1)/4\rfloor$. If $p^{a-1}(p+1)/2=2\ell$, then we have 
$U_{\ell-1}\equiv U_{\ell+1}\pmod*{p^a}$, so $i_k(p^a)\le \ell=\lfloor p^{a-1}(p+1)/4\rfloor$. So, at any rate in this case $i_k(p^a)<p^a/2$ for all $a\ge 1$.
\par We now turn our attention to the final assertion. 
Let $i,j$ be such that $i<j<p^{\alpha-1}(p+1)/4+1$ and $U_i\equiv U_j\pmod*{p^{\alpha}}$. Then $U_{mi}\equiv U_{mj}\pmod*{p^{\alpha}m}$. To see this, note that both sides are $0$ modulo $m$ since $m$ divides
$U_{z(m)}=U_m$ and 
$U_m$ divides $\gcd(U_{mi},U_{mj})$. As for the divisibility by $p^{\alpha}$, the congruence 
$$
U_{mi}\equiv U_{mj}\pmod*{p^{\alpha}}
$$
is implied by 
$$
(\alpha^{mi}-\alpha^{mj})(\alpha^{-m(i+j)}+1)\equiv 0\pmod*{p^{\alpha}}
$$
which holds because 
$$\alpha^i-\alpha^j\mid \alpha^{mi}-\alpha^{mj}, \qquad \alpha^{-(i+j)}+1\mid \alpha^{-m(i+j)}+1
$$ 
($m$ is odd) and 
$$
(\alpha^i-\alpha^j)(\alpha^{-(i+j)}+1)\equiv 0\pmod*{p^{\alpha}}.
$$
Since certainly $im<jm\le \lfloor p^{\alpha-1}(p+1)/4\rfloor+1\rfloor m<p^{\alpha} m/2$, the proof is finished. 
\end{proof}
\begin{cor}
\label{incongruence3mod4}
Suppose that $p\nmid k(k+1)$ and $p\equiv 3\pmod*{4}.$
If $\iota_k(p^a)<p^a/2$ for some $a\ge 1$, then
$\iota_k(p^b)<p^b/2$ for all $b\ge a$. Furthermore, if $m$ is odd and $z_k(m)=m$, then 
$\iota_k(p^a\cdot m)<p^a\cdot m/2$.
\end{cor}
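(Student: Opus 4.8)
The plan is to reduce everything to Lemma~\ref{incongruence1}, invoking the extra hypothesis $p\equiv 3\pmod*{4}$ only to dispose of the one configuration of Legendre symbols that falls outside the scope of that lemma. Since $p\nmid k(k+1)$, both $\left(\frac{k}{p}\right)$ and $\left(\frac{k+1}{p}\right)$ are $\pm1$ and $e_p(k)=\left(\frac{k}{p}\right)\left(\frac{k+1}{p}\right)$, so I would split on the value of $\left(\frac{k+1}{p}\right)$.

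If $\left(\frac{k+1}{p}\right)=-1$, the hypotheses of Lemma~\ref{incongruence1} hold verbatim, and its two conclusions are precisely the two assertions of the corollary; nothing further is needed.

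If $\left(\frac{k+1}{p}\right)=1$, I claim that $z_k(p^b)<p^b/2$ for \emph{every} $b\ge1$, which makes both assertions immediate. The inequality $\iota_k(p^b)\le z_k(p^b)<p^b/2$ settles the first; for the second, $z_k(m)=m$ forces $m\in\cP(k(k+1))$ by Lemma~\ref{appearance}, hence $p\nmid m$ (as $p\nmid k(k+1)$), so $p^a$ and $m$ are coprime and Lemma~\ref{lemma2FLM}(5) gives
$$\iota_k(p^a\cdot m)\le z_k(p^a\cdot m)=\lcm\bigl(z_k(p^a),m\bigr)\le z_k(p^a)\cdot m<\frac{p^a m}{2}.$$
To prove the claim, I would subdivide once more. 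If $\left(\frac{k}{p}\right)=1$, then $e_p(k)=1$ and equation~\eqref{divi} of Lemma~\ref{Lemma2.8} gives $z_k(p^b)\mid p^{b-1}(p-1)/2<p^b/2$. If $\left(\frac{k}{p}\right)=-1$, then $e_p(k)=-1$ and, since $p\equiv3\pmod*{4}$ together with $\left(\frac{k+1}{p}\right)=1$ and $\left(\frac{k}{p}\right)=-1$, the second half of Lemma~\ref{Lemma2.8} applies and yields $z_k(p^b)\mid p^{b-1}(p+1)/4<p^b/2$ (note $(p+1)/4\in\Z$ because $p\equiv3\pmod*{4}$). This establishes the claim, and with it the corollary.

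I do not expect any genuine obstacle; the argument is essentially bookkeeping built on the two cited lemmas. The two points deserving attention are that the case split on $\left(\frac{k}{p}\right)$ and $\left(\frac{k+1}{p}\right)$ is exhaustive, and that $p\nmid m$ so that the index of appearance is multiplicative across $p^a$ and $m$ — the latter being where Lemma~\ref{appearance} enters.
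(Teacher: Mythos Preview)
Your proof is correct and follows essentially the same route as the paper: split on $\left(\frac{k+1}{p}\right)$, invoke Lemma~\ref{incongruence1} when it equals $-1$, and use Lemma~\ref{Lemma2.8} to bound $z_k(p^b)$ when it equals $+1$. Your write-up is in fact more complete than the paper's: the paper's proof asserts ``$\left(\frac{k}{p}\right)=-1$ (as $e_p(k)=-1$)'' without that hypothesis being stated in the corollary, and it leaves the second assertion implicit in the case $\left(\frac{k+1}{p}\right)=1$; you handle the sub-case $e_p(k)=1$ explicitly via \eqref{divi} and spell out the $\lcm$ argument for the lift to $p^a m$.
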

\begin{proof}
If $\left(\frac{k+1}{p}\right)=1$,
then $\left(\frac{k}{p}\right)=-1$ (as $e_p(k)=-1$). Then by Lemma \ref{Lemma2.8} we
have $\iota_k(p^a)\le z_k(p^a)\le p^{a-1} (p+1)/4<p^a/2$ for 
every $a\ge 1$.
\end{proof}
We expect that this corollary also holds for the primes
$p\equiv 1\pmod*{4}$, but is more difficult to prove.
As we do not need this 
generalization, we leave it to a possible
sequel to this paper.

\subsection{The discriminatory properties
of $m$ with large $z(m)$}
\label{sec:discriminatory}
The goal of this section is to prove Lemma 
\ref{laatsteloodje}. To this end we need the
next fundamental lemma and Lemma \ref{lem:last}
\begin{lem}[Lemmas 15 and 16 of \cite{FLM}]
\label{15+16}
Let $p$ be an odd prime and $b\ge 1$ be arbitrary.\hfil\break
If $p$ divides $k$, then $U_i(k)\equiv U_j(k)\pmod*{p^b}$ if and only if $i\equiv j\pmod*{z_k(p^b)}$.\hfil\break
If $p$ divides $k+1$, then $U_i(k)\equiv U_j(k)\pmod*{p^b}$ is equivalent to 
one of the following:
\begin{itemize}
\item If $i\equiv j\pmod*{2}$, then $i\equiv j\pmod*{z_k(p^b)}$; 
\item If $i\not\equiv j\pmod*{2}$, then $i\equiv -j\pmod*{z_k(p^b)}$.
\end{itemize}
\end{lem}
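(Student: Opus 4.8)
The plan is to settle the case $p\mid k$ by an elementary factorisation of $U_i(k)-U_j(k)$, and then to deduce the case $p\mid k+1$ from it via the involution $k\mapsto -1-k$. For the reduction I would record, using the Binet form \eqref{Binetform} together with $\alpha(k)=\beta(k)^2$ and $\beta(k)=\sqrt{k+1}+\sqrt k$, two identities that are polynomial in $k$. First, for $i\equiv j\pmod*{2}$,
$$U_i(k)-U_j(k)=V_{(i+j)/2}(k)\,U_{(i-j)/2}(k),\qquad V_n(k):=\alpha(k)^n+\alpha(k)^{-n},$$
which comes from $\beta^{2i}-\beta^{-2i}-\beta^{2j}+\beta^{-2j}=(\beta^{i+j}+\beta^{-(i+j)})(\beta^{i-j}-\beta^{-(i-j)})$ after dividing by $\beta^2-\beta^{-2}$. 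Second, for odd $m$ introduce $P_m(k),Q_m(k)$ by $\beta^m+\beta^{-m}=2\sqrt{k+1}\,P_m(k)$ and $\beta^m-\beta^{-m}=2\sqrt k\,Q_m(k)$; one checks $P_m,Q_m\in\Z[k]$ (they obey $P_{m+2}=(4k+2)P_m-P_{m-2}$, and $Q$ the same recurrence), that $U_m(k)=P_m(k)Q_m(k)$ for odd $m$, and, for $i\not\equiv j\pmod*{2}$, that $U_i(k)-U_j(k)=\pm P_{i+j}(k)\,Q_{i-j}(k)$, again from the factorisation of $\beta^{2i}-\beta^{-2i}-\beta^{2j}+\beta^{-2j}$.

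\emph{The case $p\mid k$.} Since $p\mid k(k+1)$, Lemma \ref{lemma2FLM} gives $z(p)=p$ and $z(p^b)=p^{\,1+\max\{b-\nu_p(U_p(k)),0\}}$, a power of the odd prime $p$, hence odd; moreover $p^b\mid U_m(k)\iff z(p^b)\mid m$ (rank of apparition, together with the strong divisibility $\gcd(U_m,U_n)=|U_{\gcd(m,n)}|$). Specialising the recurrences above at $k=0$ gives $V_m(0)=2$ and $P_m(0)=1$, so $p\mid k$ forces $V_m(k)\equiv2\pmod*{p}$ and $P_m(k)\equiv1\pmod*{p}$; in particular $p\nmid V_m(k)$ and $p\nmid P_m(k)$ for all $m$. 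If $i\equiv j\pmod*{2}$ this gives $p^b\mid U_i-U_j\iff p^b\mid U_{(i-j)/2}\iff z(p^b)\mid (i-j)/2\iff z(p^b)\mid i-j$, the last step because $z(p^b)$ is odd while $i-j$ is even. If $i\not\equiv j\pmod*{2}$ then $p^b\mid U_i-U_j\iff p^b\mid Q_{i-j}\iff p^b\mid U_{i-j}$ (using $U_{i-j}=P_{i-j}Q_{i-j}$ with $p\nmid P_{i-j}$) $\iff z(p^b)\mid i-j$. In either case $U_i(k)\equiv U_j(k)\pmod*{p^b}\iff i\equiv j\pmod*{z(p^b)}$, which is the first assertion.

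\emph{The case $p\mid k+1$.} Here one uses $U_n(-1-k)=(-1)^{n-1}U_n(k)$, which holds because both sides solve $x_{n+2}=-(4k+2)x_{n+1}-x_n$ with initial values $0,1$ (equivalently $\alpha(-1-k)=-\alpha(k)^{-1}$), and one notes $z_{-1-k}(p^b)=z_k(p^b)$ since $p^b\mid U_n(-1-k)\iff p^b\mid U_n(k)$. As $p\mid k+1$ means $p\mid(-1-k)$, the previous case applies to ${\bf U}(-1-k)$. Rewriting $U_i(k)\equiv U_j(k)\pmod*{p^b}$ as $(-1)^{i-1}U_i(-1-k)\equiv(-1)^{j-1}U_j(-1-k)\pmod*{p^b}$: if $i\equiv j\pmod*{2}$ the unit factors cancel and we get $i\equiv j\pmod*{z(p^b)}$; if $i\not\equiv j\pmod*{2}$ we get $U_i(-1-k)\equiv-U_j(-1-k)=U_{-j}(-1-k)\pmod*{p^b}$, hence $i\equiv -j\pmod*{z(p^b)}$. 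This is exactly the stated dichotomy.

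\emph{Main obstacle.} The only real friction lies in the opposite-parity branch of the $p\mid k$ case: one has to set up the half-index sequences $P_m,Q_m$, verify their integrality and the identity $U_m=P_mQ_m$ for odd $m$, and compute $P_m(0)=1$. An alternative is to run all divisibility statements inside the quadratic order $\Z[\alpha(k)]$ localised at a prime above $p$, but then one must track the ramification of $p$ (which divides the discriminant $16k(k+1)$), which is less transparent than the polynomial bookkeeping above.
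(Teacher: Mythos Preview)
Your proof is correct. Note that the present paper does not supply its own proof of this lemma; it simply quotes it as Lemmas 15 and 16 of part I \cite{FLM}, so there is no in-paper argument to compare against.

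Your approach is self-contained and clean: the factorisations $U_i-U_j=V_{(i+j)/2}\,U_{(i-j)/2}$ for $i\equiv j\pmod*{2}$ and $U_i-U_j=P_{i+j}\,Q_{i-j}$ for $i\not\equiv j\pmod*{2}$, combined with the evaluations $V_m(0)=2$ and $P_m(0)=1$, dispose of the case $p\mid k$ directly; that $z(p^b)$ is a power of $p$ (hence odd) is exactly Lemma~\ref{lemma2FLM}(2),(4). The reduction of the case $p\mid k+1$ to the case $p\mid k$ via the polynomial identity $U_n(-1-k)=(-1)^{n-1}U_n(k)$ is elegant: since the first-case argument is a polynomial congruence in $k$, it applies verbatim with $-1-k$ in place of $k$, and the required oddness of $z_{-1-k}(p^b)=z_k(p^b)$ again follows from Lemma~\ref{lemma2FLM} (now using $p\mid k+1$). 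One cosmetic remark: in your opposite-parity identity the sign $\pm$ is in fact always $+$, since $\alpha-\alpha^{-1}=4\sqrt{k(k+1)}=2\sqrt{k+1}\cdot 2\sqrt{k}$ cancels exactly against the product $2\sqrt{k+1}\,P_{i+j}\cdot 2\sqrt{k}\,Q_{i-j}$.
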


\begin{lemma}
\label{lem:last}
Assume that $m=2^ap^e$ with 
$a,e\ge 1$ is such that 
$e_p(k)=-1$, $p\equiv 1\pmod*{4}$ and $z(p)=(p+1)/2$. Then
$
U_i\equiv U_j\pmod*{m}
$
holds if and only if
$i\equiv j\pmod*{z(m)}$.
\end{lemma}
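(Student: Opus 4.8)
The plan is to isolate the $2$-part via the Chinese Remainder Theorem and to carry out the substantive work modulo $p^{e}$ inside the ring $R=\Z[\sqrt{k(k+1)}]$, in which $p$ is inert because $e_{p}(k)=-1$; write $\nu_{p}$ for the associated valuation, so that $R_{(p)}$ is a discrete valuation ring with residue field $\mathbb{F}_{p^{2}}$, and note that $\alpha$ and $\alpha-\alpha^{-1}=4\sqrt{k(k+1)}$ are units modulo $p^{e}$. First the bookkeeping. Put $Z:=z_{k}(p^{e})$. Since $p\nmid k(k+1)$ the prime $p$ is not special, so $Z=p^{e-1}z(p)=p^{e-1}(p+1)/2$ by Lemma \ref{lemma2FLM}(4); as $p\equiv 1\pmod{4}$ the integer $(p+1)/2$, hence $Z$, is odd. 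Also $z(2^{a})=2^{a}$ by Lemma \ref{lemma2FLM}(4) (using $\nu_{2}(U_{2}(k))=\nu_{2}(4k+2)=1$), so $z(m)=\lcm(2^{a},Z)=2^{a}Z$ by Lemma \ref{lemma2FLM}(5). By the Chinese Remainder Theorem, $U_{i}\equiv U_{j}\pmod{m}$ is equivalent to having both $U_{i}\equiv U_{j}\pmod{2^{a}}$ and $U_{i}\equiv U_{j}\pmod{p^{e}}$, and by \cite[Lemma~1]{FLM}, together with $z(2^{a})=2^{a}$, the former is equivalent to $i\equiv j\pmod{2^{a}}$. So the task reduces to analysing $U_{i}\equiv U_{j}\pmod{p^{e}}$, and I may and do assume throughout that $i\equiv j\pmod{2}$.

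For the implication $z(m)\mid i-j\ \Rightarrow\ U_{i}\equiv U_{j}\pmod{m}$: from $2^{a}Z\mid i-j$ we get $2^{a}\mid i-j$, which settles the modulus $2^{a}$, and $2Z\mid i-j$ because $a\ge 1$. By Lemma \ref{zpb}, $\alpha^{Z}\equiv\pm1\pmod{p^{e}}$, so $\alpha^{2Z}\equiv 1\pmod{p^{e}}$, so $\alpha^{i-j}\equiv 1\pmod{p^{e}}$; hence $\alpha^{i}\equiv\alpha^{j}$ and $\alpha^{-i}\equiv\alpha^{-j}\pmod{p^{e}}$, and dividing \eqref{Binetform} by the unit $\alpha-\alpha^{-1}$ gives $U_{i}\equiv U_{j}\pmod{p^{e}}$. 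Together with the $2^{a}$-part this gives $U_{i}\equiv U_{j}\pmod{m}$.

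For the converse, assume $U_{i}\equiv U_{j}\pmod{m}$. Then $i\equiv j\pmod{2^{a}}$, hence $i\equiv j\pmod{2}$, and multiplying $U_{i}\equiv U_{j}\pmod{p^{e}}$ by the units $\alpha-\alpha^{-1}$ and $\alpha^{i+j}$ and using \eqref{Binetform} gives
\begin{equation*}
(\alpha^{i}-\alpha^{j})(\alpha^{i+j}+1)\equiv 0\pmod{p^{e}}.
\end{equation*}
Reducing mod $p$ and recalling (as in the proof of Proposition \ref{Florianapril2023}) that the Frobenius sends $\alpha$ to $\alpha^{-1}$ in $\mathbb{F}_{p^{2}}$, so $\alpha^{p+1}=1$ there, while $z(p)=(p+1)/2$ is by Lemma \ref{zpb} the least $t$ with $\alpha^{t}=\pm1$ in $\mathbb{F}_{p^{2}}$, one sees (using that $\mathbb{F}_{p^{2}}^{\times}$ is cyclic and $\nu_{2}(p+1)=1$) that $\alpha$ has order either $(p+1)/2$ in $\mathbb{F}_{p^{2}}^{\times}$ --- an odd number, so $-1\notin\langle\alpha\rangle$ --- or $p+1$, in which case $\alpha^{t}=-1$ exactly for $t\equiv(p+1)/2\pmod{p+1}$. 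This yields two facts. \emph{Fact A}: one cannot have both $\alpha^{i}\equiv\alpha^{j}\pmod{p}$ and $\alpha^{i+j}\equiv -1\pmod{p}$; in the first case the second congruence is impossible, and in the second case the two congruences would force $2i\equiv(p+1)/2\pmod{p+1}$, which has no solution since $\gcd(2,p+1)=2$ does not divide the odd number $(p+1)/2$. \emph{Fact B}: if $\alpha^{i+j}\equiv -1\pmod{p}$, then $i+j$ is odd, because then $i+j\equiv(p+1)/2\pmod{p+1}$ with $(p+1)/2$ odd and $p+1$ even. Now, by Fact A, at most one of $\nu_{p}(\alpha^{i}-\alpha^{j})$ and $\nu_{p}(\alpha^{i+j}+1)$ is positive, and since their sum is $\ge e\ge 1$, exactly one of them is, and it is $\ge e$. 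It cannot be $\nu_{p}(\alpha^{i+j}+1)\ge e$: that would give $\alpha^{i+j}\equiv -1\pmod{p}$, hence $i+j$ odd by Fact B, contradicting $i\equiv j\pmod{2}$. So $\nu_{p}(\alpha^{i}-\alpha^{j})\ge e$, i.e. $\alpha^{i-j}\equiv 1\pmod{p^{e}}$, hence $\alpha^{i-j}\equiv\pm1\pmod{p^{e}}$, hence $Z=z(p^{e})$ divides $i-j$ (the set of $t$ with $\alpha^{t}\equiv\pm1\pmod{p^{e}}$ being, by Lemma \ref{zpb}, the subgroup $Z\Z$). With $2^{a}\mid i-j$ and $\gcd(2^{a},Z)=1$ this yields $z(m)=2^{a}Z\mid i-j$, completing the converse.

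The step I expect to be the main obstacle is exactly the last one: for $e\ge 2$ the ring $R/p^{e}R$ is not a domain, so the product $(\alpha^{i}-\alpha^{j})(\alpha^{i+j}+1)$ cannot be split off directly, and one must argue with the valuation and rule out the ``mixed'' contribution. That exclusion is where both hypotheses are consumed: $p\equiv 1\pmod{4}$ forces $(p+1)/2$ (hence $Z$) odd, which is what makes Facts A and B go through, and $a\ge 1$ forces $i\equiv j\pmod{2}$, which is what lets Fact B produce a contradiction. The remaining ingredients --- the $2$-adic part via \cite[Lemma~1]{FLM} and the identities for $z$ --- are routine.
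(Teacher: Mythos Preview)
Your proof is correct and follows essentially the same route as the paper's (which defers to \cite[Lemma~9]{FLM}): split off the $2$-part via CRT, factor the congruence modulo $p^{e}$ as $(\alpha^{i}-\alpha^{j})(\alpha^{i+j}+1)\equiv 0$, and use the structure of $\langle\alpha\rangle$ in $\mathbb{F}_{p^{2}}^{\times}$ together with $p\equiv 1\pmod 4$ to rule out the mixed case. One small point worth flagging: your claim that $Z=z(p^{e})=p^{e-1}(p+1)/2$ because ``$p$ is not special'' is not justified---the notion of special prime is only defined for $p\mid k(k+1)$, and for $p\nmid k(k+1)$ one may have $\nu_{p}(U_{z(p)}(k))\ge 2$ (this is exactly the gap the paper itself corrects in Sec.~\ref{proofcorrected}). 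However, your argument never actually uses that exact value of $Z$; you only use that $Z$ is odd, and this holds in any case since $Z=p^{c}\cdot(p+1)/2$ for some $c\ge 0$ with both $p$ and $(p+1)/2$ odd. So the proof stands, but you should replace that sentence by the weaker (and sufficient) observation that $Z$ is odd.
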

\begin{proof} 
A minor variation of the proof of \cite[Lemma 9]{FLM}. It
rests only on \cite[Lemma 4]{FLM} and \cite[Lemma 5]{FLM},
where now we should take Lemmas \ref{zpb} and \ref{Lemma2.8} 
from this paper. In addition a minor correction in the
argument has to be made, as sketched in Sec.\,\ref{proofcorrected}.
\end{proof}

\begin{lem}
\label{laatsteloodje}
Assume that 
$k\equiv 1\pmod*{3}$ and $z_k(25)=15$. Suppose that 
$m=5^e\cdot m_1$ with $e\ge 1$ and 
$z(m_1)=m_1$.
Then $\iota(m)=z(m)=3m/5$ if
\begin{itemize}
    \item $m_1\in \cB_k$; or
    \item $m_1\in \cA_k$ and $k\equiv 3\pmod*{5}$,
\end{itemize}
and otherwise $m$ is not a discriminator value assumed by $\cD_k$.
\end{lem}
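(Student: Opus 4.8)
The plan is to treat $m=5^e\cdot m_1$ with $z(m_1)=m_1$ (so $m_1\in\cP(k(k+1))$ by Corollary \ref{ktimeskplus1}, since $k\equiv 1\pmod3$) by writing $m=5^e\cdot 2^a\cdot m_1'$ with $m_1'$ odd, $z(m_1')=m_1'$, $a\ge 0$, and then computing $\iota(m)$ and $z(m)$ from the factorisation using Lemma \ref{lemma2FLM}(5). First I would record that under the hypothesis $z_k(25)=15$ together with $k\equiv 1\pmod3$ we have $5\nmid k(k+1)$, $e_5(k)=-1$, $z(5)=3$ and $z(5^b)=3\cdot 5^{b-1}$; hence by Lemma \ref{lemma2FLM}(5), $z(m)=\operatorname{lcm}(3\cdot 5^{e-1},z(m_1))=\operatorname{lcm}(3\cdot 5^{e-1},m_1)$. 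Since $(m_1,5)=1$ and $(m_1,3)$ is either $1$ or $3$, this is $3m_1\cdot 5^{e-1}=3m/5$ exactly when $3\mid m_1$ is NOT forced to cause a drop — and in fact $z(m)=3m/5$ holds precisely when $\operatorname{lcm}(3,m_1)=3m_1/\gcd(3,m_1)$ matches, which, tracking the factor of $3$ carefully, always gives $z(m)=3m/5$ here (the $3$ in $z(5)=3$ merges with any $3\mid m_1$). So $z(m)=3m/5>m/2$ for every such $m$, i.e. $z(m)$ alone never rules these out.

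Next I would compute $\iota(m)$, which is the genuinely discriminating quantity. The key input is Lemma \ref{iota5b}, which gives $\iota_k(5^e)=3\cdot 5^{e-1}$ when $k\equiv 3\pmod5$ (and $k\not\equiv 18\pmod{25}$, which is automatic from $z_k(25)=15$) and $\iota_k(5^e)=(3\cdot 5^{e-1}+1)/2<3\cdot 5^{e-1}/2$ when $k\equiv 1\pmod5$. For $m_1\in\cB_k$, i.e. $m_1$ even, $m=2^ap^{\cdots}$-type structure lets me apply Lemma \ref{lem:last}-style reasoning to the full modulus: the prime $2$ is present, and I would argue (combining Lemma \ref{15+16} for the odd prime part with the even-modulus behaviour) that $U_i\equiv U_j\pmod m$ forces $i\equiv j\pmod{z(m)}$, whence $\iota(m)=z(m)=3m/5$. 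The point is that the presence of an even factor kills the "$i\equiv -j$" branch that otherwise shrinks $\iota$. For $m_1\in\cA_k$ (so $m_1$ odd) and $k\equiv 3\pmod5$: here $\iota_k(5^e)$ is already equal to $z_k(5^e)=3\cdot 5^{e-1}$, and since $5\equiv 1\pmod4$ is excluded but the relevant mechanism is that $-1$ is not in $\langle\alpha\bmod5\rangle$ (as in the $k\equiv3$ case of Lemma \ref{iota5b}), the congruence $U_i\equiv U_j\pmod m$ again forces $i\equiv j\pmod{z(m)}$, giving $\iota(m)=z(m)=3m/5$.

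For the "otherwise" cases I need to show $m$ is not a discriminator value. The remaining cases are: $m_1\in\cA_k$ with $k\equiv 1\pmod5$ (equivalently $k\equiv 1\pmod{15}$, with $25\nmid\cdots$), or more precisely $m_1$ odd and $k\equiv 1\pmod5$. Here $\iota_k(5^e)<3\cdot 5^{e-1}/2$, and I would show $\iota(m)<m/2$, then invoke Lemma \ref{ifactor}: if $m=\cD_k(n)$ then $\iota(m)>m/2$, a contradiction. To get $\iota(m)<m/2$ I would use Corollary \ref{incongruence3mod4} or an analogue: writing $m=5^e\cdot m_1$ with $z(m_1)=m_1$ odd, the fact that $\iota_k(5^e)<5^e/2$ lifts to $\iota_k(5^e\cdot m_1)<5^e m_1/2 = m/2$. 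This is exactly the shape of the "furthermore" clauses in Lemma \ref{incongruence1} / Corollary \ref{incongruence3mod4}, applied with $p=5$; I may need the $p\equiv1\pmod4$ version of that furthermore-clause, which is the case $k\equiv1\pmod5$ worked out explicitly inside the proof of Lemma \ref{iota5b} (the order-$6$ computation showing the $\alpha^{i+j}\equiv-1$ branch contributes $i\ge(3\cdot5^{e-1}+1)/2$), so I would extract the lifted bound from there rather than from Corollary \ref{incongruence3mod4} directly.

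The main obstacle I anticipate is the lifting step for the odd case $k\equiv1\pmod5$: Lemma \ref{incongruence1} and Corollary \ref{incongruence3mod4} are stated for $p\equiv3\pmod4$ or for $\left(\tfrac{k+1}{p}\right)=-1$, and $p=5\equiv1\pmod4$ with $e_5(k)=-1$ may split the Legendre symbols the "wrong" way, so I cannot cite those verbatim. I would instead redo the argument: take $i<j<\iota_k(5^e)+1$ with $U_i\equiv U_j\pmod{5^e}$ (existence guaranteed by Lemma \ref{iota5b}, with $j-i$ or $i+j$ controlled), then show $U_{m_1 i}\equiv U_{m_1 j}\pmod{5^e m_1}$ using that $m_1$ is odd, $z(m_1)=m_1\mid U_{m_1}$, and the divisibility relations $\alpha^i-\alpha^j\mid\alpha^{m_1 i}-\alpha^{m_1 j}$, $\alpha^{-(i+j)}+1\mid\alpha^{-m_1(i+j)}+1$ — exactly the mechanism in the final paragraph of the proof of Lemma \ref{incongruence1}. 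Since $m_1 j<5^e m_1/2 = m/2$, this yields $\iota(m)<m/2$ and closes the case. A secondary bookkeeping obstacle is making sure the factor-of-$3$ arithmetic in $z(m)=\operatorname{lcm}(3\cdot5^{e-1},m_1)$ genuinely equals $3m/5$ in all sub-cases ($3\mid m_1$ or not), but this is routine.
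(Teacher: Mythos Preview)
Your plan covers three of the four sub-cases correctly and along the same lines as the paper, but you have missed one of the ``otherwise'' cases entirely. When you write that the remaining cases are ``$m_1$ odd and $k\equiv 1\pmod 5$'', you are implicitly assuming that $m_1$ odd with $z(m_1)=m_1$ forces $m_1\in\cA_k$. It does not: by \eqref{setequality} one has $\cA_k=\{m\text{ odd}:z(m)=m,\ m\in\cP(k)\}$, whereas $z(m_1)=m_1$ only gives $m_1\in\cP(k(k+1))$. So you must also treat the case $k\equiv 3\pmod 5$, $m_1$ odd, and $m_1$ has an odd prime factor $p\mid k+1$ with $p\nmid k$. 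Concretely, take $k=13$ and $m_1=7$.

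Your lifting mechanism does not reach this case: for $k\equiv 3\pmod 5$ one has $\iota_k(5^e)=3\cdot 5^{e-1}>5^e/2$, so there is no small $\iota_k(5^e)$ to lift; and Lemma~\ref{incongruence1} requires $p\nmid k(k+1)$, so it cannot be applied to the offending prime $p\mid k+1$. The obstruction here comes instead from the $i\equiv -j$ branch of Lemma~\ref{15+16} for primes dividing $k+1$. The paper handles it by exhibiting explicit indices $i=(p^a-1)m_2\cdot 3\cdot 5^{e-1}/2$ and $j=(p^a+1)m_2\cdot 3\cdot 5^{e-1}/2$ (where $m_1=p^a m_2$, $p\nmid m_2$) with $U_i\equiv U_j\pmod m$ and $j<m/2$, then replacing $p^a$ by a power of $2$ in $[(p^a+1)/2,p^a)$ to get a smaller discriminating modulus.

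Two minor points: your concern about $3\mid m_1$ is moot, since $k\equiv 1\pmod 3$ gives $3\nmid k(k+1)$ and hence $3\nmid m_1$; and your worry that Lemma~\ref{incongruence1} might not apply with $p=5$ in the $k\equiv 1\pmod 5$ case is unfounded, since then $k+1\equiv 2\pmod 5$ and $\bigl(\tfrac{2}{5}\bigr)=-1$, so the hypothesis $\bigl(\tfrac{k+1}{p}\bigr)=-1$ is satisfied and the lemma applies directly (as the paper in fact does).
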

\begin{proof}
Note that $z(m)=3m/5$. We first consider the case where $m_1$ is even. 
Note that this is equivalent with
$m_1\in \cB_k$, as
$\cB_k=\{m_1~\text{even}:z(m_1)=m_1\}$ by \eqref{setequality}.
Write $m_1=2^a\cdot m_2$ with $m_2$ odd.
Since $z(2^a\cdot 5^e)=2^a\cdot 3\cdot 5^{e-1}$, we have 
$U_i(k)\equiv U_j(k)\pmod*{2^a\cdot 5^e}$ if and only if 
 $i\equiv j\pmod*{2^a\cdot 3\cdot 5^{e-1}}$ by Lemma \ref{lem:last}.
As $m_2\in {\mathcal P}(k(k+1))$,
it follows by Lemma \ref{15+16} that
$U_i(k)\equiv U_j(k)\pmod*{m_1}$ 
if and only if $i\equiv j\pmod*{m_1}$.
Taken together these two 
equivalences show that 
$U_i(k)\equiv U_j(k)\pmod*{m}$ if and only if $i\equiv j\pmod*{3m/5}$.
Thus in case $m_1$ is even, we conclude that $\iota(m)=3m/5$. 
\par It remains to deal with the case where $m_1$ is odd. Since by assumption $z(25)=15$, we have $k\equiv 1,3\bmod{5}$ (see Table \ref{tab:125}).
\par First case: $k\equiv 1\pmod*{5}$. The assumption $z(25)=15$ ensures that 
$k\not\equiv 6\pmod*{25}$. By 
Lemma \ref{iota5b} and 
Lemma \ref{incongruence1} (with 
$p=5$) it then follows that $\iota(m)\le m/2$ and so $m$ is not a discriminator value.
\par Second case: $k\equiv 3\pmod*{5}$. Suppose that $m_1$ has an odd prime divisor that also divides $k+1$. Now write $m_1=p^a\cdot m_2$ with $p\nmid m_2$. Clearly $z(p^a)=p^a$. 
Set $i=(p^a-1)\cdot m_2\cdot 3\cdot5^{e-1}/2$ and $j=(p^a+1)\cdot m_2\cdot 3\cdot5^{e-1}/2$. Then $i\not\equiv j\pmod*{2}$ and 
$p^{a}\mid (i+j)$. Thus, $U_i(k)\equiv U_j(k)\pmod*{p^{a}}$
by Lemma \ref{15+16}. This lemma also implies 
that $U_i(k)\equiv U_j(k)\equiv U_0(k)\equiv 0\pmod*{m_2}$ as $i\equiv j\equiv 0\pmod*{m_2}$ and
$m_2\in {\mathcal P}(k(k+1))$.
The proof of Lemma 
\ref{iota5b} shows that 
if $i\equiv j\pmod*{3\cdot 5^{e-1}}$,
then $U_i(k)\equiv U_j(k)\pmod*{5^e}$.
We infer that $U_i(k)\equiv U_j(k)\pmod*{p^a\cdot m_2\cdot 5^e}$,
and hence
if $m$ discriminates the 
numbers $U_0(k),\ldots,U_{n-1}(k),$ then
$n\le (p^{a}+1)m_2\cdot 3\cdot 5^{e-1}/2.$
The interval $[(p^a+1)/2,p^{a})$ contains a power of $2$, say $2^b$. Then 
$2^b\cdot m_2\cdot 5^e$ is a better discriminator than $p^{a}\cdot m_2\cdot 5^e=m$. 
We conclude that if $m_1\not\in \cA_k$,
then $m$ is not a discriminator 
value. 
\par It remains to deal with the
case where $m_1\in \cA_k$.
The proof of Lemma 
\ref{iota5b} shows that 
$U_i(k)\equiv U_j(k)\pmod*{5^e}$
if and only
if $i\equiv j\pmod*{3\cdot 5^{e-1}}$. 
This in combination with Lemma 
\ref{15+16} shows
that $U_i(k)\equiv U_j(k)\pmod*{m}$ if and only if $i\equiv j\pmod*{3m/5}$. Hence $\iota(m)=3m/5$.
\end{proof}

\subsection{Intervals containing special integers}
\label{sec:interval}
We will first discuss how to compute $n_p(\alpha)$ (cf.\,Def.\,\ref{def:xpalpha}).
From basic Diophantine approximation we know
there exist $e,f,g$ and $h$ such that 
$$1<\frac{p^f}{2^e}<\alpha\text{~~and~~}1<\frac{2^h}
{p^g}<\alpha.$$
We claim that $n_p(\alpha)\le 2^{e+1}p^g.$ In order to see this observe
that any integer $n:=2^k p^\ell\ge 2^{e+1}p^g$ satisfies either
$k\ge e+1$, or $\ell\ge g.$ In case 
$k\ge e+1,$ we note that the
number $2^{k-e}p^{\ell+f}$ is even and lies in $[n,n\alpha).$
In case $\ell\ge g,$ we have $2^{k+h}p^{\ell-g}\in [n,n\alpha).$ Next one tries to to find
an even integer 
$n_{\text{new}}:=2^k p^\ell\in [[2^{e+1}p^g/\alpha],2^{e+1}p^g)$, where
$[x]$ denotes the entier of $x$. If successful, we continue until we fail, each time considering the interval
$[[n_{\text{new}}/\alpha],n_{\text{new}})$. 
\begin{exam}
We determine $n_7(5/3)$. 
Starting from $64\cdot 7=448$ we can make either the substitution $32\to 49$ or 
$7\to 8$ with ratio $<5/3$.
Going down from $448$
via $392,256,196,128,98,64,56$, we obtain $n_7(5/3)=34$.
  From $32$ we can go down in several steps to $2$. Thus the integers $n\ge 1$ for which $[n,5n/3)$ does not contain an even number of the form $2^a\cdot 7^b$, are precisely $n=1$ and $n=33$. 
\end{exam}
For some further examples see Tab.\,\ref{tab:125} and \ref{tab:np}. The very large values 
appearing there were determined using more sophisticated techniques involving continued fractions, see Languasco et al.\,\cite{LLMT}.
\begin{table}[h]
\begin{tabular}{|c|c|c|}
\hline
$p$ & $n_p(3/2)$ & $n_p(5/3)$ \\
\hline\hline
$3$ & $2$ & $2$  \\
\hline
$5$ & $22$ & $2$  \\
\hline
$7$ &  $262$ & $34$  \\
\hline
$11$ & $11$ & $10$  \\
\hline
$13$ & $139$ & $10$  \\
\hline
$17$ & $1398102$ & $78644$  \\
\hline
$19$ & $342$ & $308$  \\
\hline
$23$ & $22$ & $20$  \\
\hline
$137$ & $45812984491$ & $2516583$  \\
\hline
$149$ & $21846$ & $19661$  \\
\hline
$271$ & $375299968947542$ & $5153960756$  \\
\hline
\end{tabular}
    \medskip \medskip
\caption{Some values of
$n_p(3/2)$ and $n_p(5/3)$}
\label{tab:np}
\end{table}

\begin{lem}
\label{twointervals}~
\begin{enumerate}[{\rm a)}]
 \item For $n\ge 2^7\cdot 5^6 \, (=2\cdot 10^6)$ the interval $[\frac{380}{453}n,n]$ contains an even integer of
 the form $2^a\cdot 5^b$.
   \item For $n\ge 2^7\cdot 5^{15} \,(=3.90625\cdot 10^{13})$ the interval $[\frac{35}{39}n,n]$ contains an even integer of 
   the form $2^a\cdot 5^b$.
\end{enumerate}
\end{lem}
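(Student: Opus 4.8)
The statement is a purely Diophantine claim about the distribution of integers of the form $2^a\cdot 5^b$, so the plan is to reduce it to a statement about the real numbers $\log 2$ and $\log 5$ and then verify that statement by an explicit (finite) computation combined with a monotonicity argument. Taking logarithms, an even integer $2^a\cdot 5^b$ (so $a\ge 1$) lies in $[\theta n, n]$ precisely when $a\log 2 + b\log 5 \in [\log n + \log\theta,\log n]$. Dividing through by $\log 2$ and writing $\gamma := \log 5/\log 2$, this becomes the requirement that the lattice $\{a + b\gamma : a\ge 1,\ b\ge 0\}$ meet the interval $[\,(\log n + \log\theta)/\log 2,\ (\log n)/\log 2\,]$, an interval of length $|\log\theta|/\log 2$. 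For part a) this length is $\log(453/380)/\log 2$, for part b) it is $\log(39/35)/\log 2$.

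\textbf{Key steps.} First I would set up the correspondence above and reduce to showing that consecutive numbers of the form $2^a\cdot 5^b$ with $a\ge 1$, listed in increasing order as $M_1 < M_2 < \cdots$, satisfy $M_{j+1}/M_j \le 453/380$ once $M_j \ge 2\cdot 10^6$ (respectively $M_{j+1}/M_j \le 39/35$ once $M_j \ge 2^7\cdot 5^{15}$): indeed, if every gap ratio beyond a threshold $N_0$ is at most $\theta^{-1}$, then for $n\ge N_0$ the largest such $M_j \le n$ satisfies $M_j \ge \theta n$, which is exactly the claim (and one checks the threshold values $2^7\cdot 5^6$ and $2^7\cdot 5^{15}$ are themselves of the required form, so they occur as some $M_j$). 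Second, I would invoke the machinery of Section \ref{sec:interval}: the sequence of ratios $M_{j+1}/M_j$ is governed by the continued fraction expansion of $\gamma = \log 5/\log 2$, and the "worst" (largest) ratios occurring for $M_j$ beyond a given bound shrink as the bound grows; concretely one produces, via good rational approximations $p/q$ to $\gamma$, substitution moves $2^{p}\mapsto 5^{q}$ (or the reverse) of ratio just above $1$, and these moves certify that every large enough gap is small. The numbers $453/380$ and $35/39$ (note $\tfrac{35}{39}=\tfrac{5^{?}}{\cdots}$ style near-equalities, coming from $2^{a}5^{b}$ vs.\ $2^{a'}5^{b'}$ with small $|a-a'|,|b-b'|$) are precisely the relevant convergent-driven ratios; I would exhibit the explicit pair of exponent vectors realizing each ratio and then check, by a finite search below the stated threshold, that no larger gap survives. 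Third, I would double-check the two threshold constants $2\cdot 10^6 = 2^7\cdot 5^6$ and $3.90625\cdot 10^{13} = 2^7\cdot 5^{15}$ by direct enumeration of all $2^a5^b$ up to a slightly larger bound, confirming the last "bad" gap occurs just before the threshold.

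\textbf{Main obstacle.} The conceptual content is light — it is the same three-distance / continued-fraction phenomenon underlying Definition \ref{def:xpalpha} and the existence of $n_p(\alpha)$ (cf.\,\cite[Lemma 19]{FLM}) — so the real work, and the only place an error can creep in, is the \emph{explicit constants}: one must pin down exactly which rational approximations to $\log 5/\log 2$ produce the ratios $453/380$ and $39/35$, verify these are genuinely the extremal gap ratios above the respective thresholds (not merely upper bounds that happen to be loose), and make sure the evenness constraint $a\ge 1$ has been correctly propagated (it only forbids the pure powers of $5$, which are sparse and easily excluded by hand). I expect to lean on the continued-fraction computations already carried out in Languasco et al.\,\cite{LLMT}, quoting from there the list of record gaps of $\{2^a5^b\}$ together with the indices at which they last occur, and then it is just bookkeeping to read off parts a) and b).
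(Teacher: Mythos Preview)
Your approach is correct in outline and would succeed, but it is considerably heavier than what the paper does, and it contains one misconception worth flagging. The ratios $453/380$ and $39/35$ are \emph{not} ``convergent-driven'' in the sense you suggest: there is no pair of integers $2^a5^b$, $2^{a'}5^{b'}$ whose ratio equals either of them. They are externally imposed targets coming from the next proposition (they equal $6p/(5(p+1))$ for $p=151$ and $p=13$ respectively), so your plan to ``exhibit the explicit pair of exponent vectors realizing each ratio'' and to ``verify these are genuinely the extremal gap ratios'' cannot be carried out as stated --- and is in any case unnecessary, since only an upper bound on the gap ratios is needed.

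The paper's proof bypasses continued fractions, the full enumeration of gaps, and any appeal to \cite{LLMT}. For part~a) it simply records the two numerical inequalities
\[
1<\frac{2^7}{5^3}<\frac{5^7}{2^{16}}<\frac{453}{380},
\]
which say that the substitution moves $5^3\to 2^7$ and $2^{16}\to 5^7$ each increase an even $2^a5^b$ by a factor in $(1,453/380)$. From any even $2^a5^b$ with $a\ge 17$ or $b\ge 3$ at least one move applies, so an unbounded chain with the desired ratio bound exists from $2^{17}\cdot 5^2$ onward; a short explicit list of six numbers bridges the gap from $2^7\cdot 5^6$ up to $2^{17}\cdot 5^2$. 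Part~b) is identical with $5^{16}/2^{37}$ in place of $5^7/2^{16}$. What the paper's approach buys is brevity and complete self-containment; what yours would buy is a systematic explanation of \emph{why} those particular substitution moves exist and how the thresholds were found, but at the cost of importing machinery the lemma itself does not require.
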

\begin{proof}
a) Put $\alpha=\frac{453}{380}$. We start by noticing 
that $1<\frac{2^7}{5^3}<\frac{5^7}{2^{16}}<\alpha$. This shows that by making
either the substitution $2^{16}\to 5^7$ or $5^3\to 2^7$, we can
increase the even number $n=2\cdot 2^{16}\cdot 5^2$ in such a way to a further
number of the same format with ratio in $(1,\alpha)$. The so produced sequence of integers is unbounded.
The string of consecutive integers $2m$ with $m=2^6\cdot 5^6,2^{20},2^4\cdot 5^7,2^{18}\cdot 5,2^2\cdot 5^8,2^{16}\cdot 5^2$ also have the property that the ratio of consecutive terms is
in $(1,\alpha)$.

\noindent b) Put $\beta=\frac{39}{35}$. We start by noticing 
that $1<\frac{2^7}{5^3}<\frac{5^{16}}{2^{37}}<\beta$. 
This shows that by making
either the substitution $2^{37}\to 5^{16}$ or $5^{3}\to 2^{7}$, we can
increase the even number $n=2\cdot 2^{37}\cdot 5^2$ in such a way to a further
number of the same format with ratio in $(1,\beta)$. As in the proof of part
a we can lower $n$ to obtain the indicated starting value.
\end{proof}

\begin{prop}
\label{prop:nowildpowersgeneral}
Let $p\ge 13$ be a prime.
If $p^e$ be a potential wild prime power,
then there exist 
integers $a\ge 1$
and $b\ge 0$ such that
\begin{equation}
\label{5over6}
\frac{5(p+1)}{6p}p^e< 2^a\cdot 5^b<p^e.
\end{equation}
\end{prop}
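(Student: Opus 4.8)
The plan is to reformulate the statement as an interval–covering problem: we must exhibit an \emph{even} integer of the form $2^a\cdot 5^b$ lying in the open interval $I_{p,e}:=\bigl(\tfrac{5(p+1)}{6p}\,p^e,\,p^e\bigr)$. Its endpoints have ratio $\tfrac{6p}{5(p+1)}=\tfrac65\bigl(1-\tfrac1{p+1}\bigr)$, which increases with $p$ and equals $\tfrac{39}{35}$ at $p=13$, so that $\tfrac{5(p+1)}{6p}\le\tfrac{35}{39}$ for every prime $p\ge 13$; it is therefore enough to find an even $2^a\cdot 5^b$ in the closed interval $\bigl[\tfrac{35}{39}p^e,\,p^e\bigr]$. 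The endpoints cause no trouble on their own: $p^e$ is odd, hence not of this form, and $\tfrac{35}{39}p^e$ is never an integer (as $39=3\cdot 13$ while $3\nmid 35p^e$ for $p\ge 13$), so any even $N=2^a5^b$ with $\tfrac{35}{39}p^e\le N\le p^e$ automatically satisfies $\tfrac{5(p+1)}{6p}p^e\le\tfrac{35}{39}p^e<N<p^e$.

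First I would clear all sufficiently large prime powers. By Lemma~\ref{twointervals}b, whenever $n\ge 2^7\cdot 5^{15}$ the interval $[\tfrac{35}{39}n,n]$ contains an even integer of the form $2^a\cdot 5^b$; applied with $n=p^e$ and combined with the endpoint remarks above, this produces the desired $2^a\cdot 5^b\in I_{p,e}$, settling every potential wild prime power $p^e\ge 2^7\cdot 5^{15}$. For smaller $p^e$ one can instead invoke Lemma~\ref{twointervals}a: with $n=p^e$, its interval $[\tfrac{380}{453}p^e,p^e]$ is contained in $[\tfrac{5(p+1)}{6p}p^e,p^e]$ precisely when $\tfrac{5(p+1)}{6p}\le\tfrac{380}{453}$, that is, when $p\ge 151$, so the proposition is also settled for all $p\ge 151$ with $p^e\ge 2^7\cdot 5^{6}$.

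What then remains is the finite list of potential wild prime powers $p^e$ with $p\ge 13$ lying below these thresholds, and this is where the real work sits. Here I would use the structural constraints carried by a potential wild prime power — that $p\nmid k(k+1)$, $e_p(k)=-1$, $z_k(p)=(p+1)/2$, and, crucially, the lower bound on $p^e$ forced by $p^e$ being the wild part of a genuine discriminator value — to reduce to an explicit finite set of prime powers, and for each of those produce a suitable $2^a5^b\in I_{p,e}$ by inspection. I expect this residual step to be the main obstacle, since $I_{p,e}$ is narrowest exactly for the smallest prime powers: for $p=13$, $e=1$ it equals $\bigl(\tfrac{35}{3},13\bigr)$ and contains no $2^a\cdot 5^b$ at all, so the argument genuinely depends on those constraints ruling such degenerate $p^e$ out, and certifying that only finitely many pairs $(p,e)$ survive requires careful bookkeeping.
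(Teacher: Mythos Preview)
Your global strategy coincides with the paper's: invoke Lemma~\ref{twointervals}b for $p^e\ge 2^7\cdot 5^{15}$, then Lemma~\ref{twointervals}a for $p\ge 151$ with $p^e\ge 2^7\cdot 5^6$, and treat the finitely many survivors by hand. The endpoint remarks and the reduction to $[\tfrac{35}{39}p^e,p^e]$ are fine.

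The gap is in your plan for the residual list. You propose to use constraints such as $p\nmid k(k+1)$, $e_p(k)=-1$, $z_k(p)=(p+1)/2$, or that $p^e$ divide an actual discriminator value. Those are properties of \emph{wild} prime powers, not \emph{potential} ones: by Definition~\ref{defi:pot} a potential wild prime power is simply a $p^e$ with $e\in\cM_p$, a purely Diophantine condition on $(p,e)$ with no reference to $k$ whatsoever, equivalent by Proposition~\ref{allowedexponent} to the non-existence of a power of $2$ in $[p^{e-1}(p+1)/2,\,p^e)$. That is the only constraint available here, and it is what the paper actually uses. In particular $1\notin\cM_p$ always (there is a power of $2$ in $[(p+1)/2,p)$), so $e\ge 2$; your troubling case $(p,e)=(13,1)$ is therefore excluded at the outset, not by anything involving $k$. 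For $p\ge 151$ the bound $e\ge 2$ forces $p\le 1409$, and among such $p$ only $181^2$ lies below $2^7\cdot 5^6$ with exponent in $\cM_p$; for $13\le p<151$ one reads off from Tables~\ref{exponent1mod4}--\ref{exponent3mod4} that only $13^7,\,19^4,\,19^8,\,43^7,\,97^5$ lie below $2^7\cdot 5^{15}$. Each of these six is then dispatched by exhibiting an explicit $2^a\cdot 5^b\in I_{p,e}$. Without the correct definition of ``potential wild'' your finite-check step cannot get off the ground.
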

\begin{proof}
For the purposes of this proof we say that $p^e$ is
\emph{approachable} if there are integers $a\ge 1$ and $b\ge 0$ for 
which \eqref{5over6} holds.
We put $\alpha_p=5(p+1)/(6p)$. Note that
$\alpha_{13}=\frac{35}{39}$ and $\alpha_p<\alpha_{151}=\frac{380}{453}$ for $p>151$.

We first suppose that $p\ge 151$. We have
$(\frac{380}{435}p^e,p^e)\subseteq (\alpha_p p^e,p^e)$ and hence $p^e$ is
approachable by Lemma \ref{twointervals}a if $p^e > 2^7\cdot 5^6$. If $p^e<2^7\cdot 5^6$, we
conclude that $151\le p\le 1409$ as $e\ge 2$ by Lemma \ref{neverwild}. 
This leaves only one potential wild prime power, namely $181^2$, 
which turns out to be approachable by $\{2 \cdot 5^6$, $2^8 \cdot 5^3\}$.
\par It remains to deal with the primes $13\le p\le 151$. We have
$(\frac{35}{39}p^e,p^e)\subseteq (\alpha_p p^e,p^e)$ and hence $p^e$ is
approachable by Lemma \ref{twointervals}b if $p^e>2^7\cdot 5^{15}$. This leaves the five potential wild prime 
powers $\{13^7,19^4,19^8,43^7,97^5$\}. These are approachable by $2^5 \cdot 5^9$, $2^3 \cdot 5^6$, $2^6 \cdot 5^{12}$, $2^3 \cdot 5^{15}$, $\{2^5 \cdot 5^{12}, 2^{12} \cdot 5^9, 2^{19} \cdot 5^6, 2^{26} \cdot 5^3\}$, respectively.
\end{proof}
 
\section{Corrections to part I}
\label{proofcorrected}
In part I the conditions on $k$ involving $6\pmod*{9}$ in
the definition of $\cB_{k}$ were 
erroneously omitted. However, the proofs are
only based on the definition 
$\cB_k=\{m~\text{even}:z(m)=m\}.$
Using Lemma 14 (Lemma \ref{appearance} above),
$\cB_k$ was not quite correctly made explicit. 
The upshot is that if one replaces the definition of
$\cB_k$ in part I by the one used here, as far
as we are aware only one
further mathematical correction to part I is needed\footnote{
We use the amended definition for 
results quoted from part I involving 
$\cB_k$.}.
\par In the proof of Lemma 9 around line 12 at p.\,61 it is implicitly 
assumed that $z(p_1^{b_1})=p_1^{b_1-1}(p_1+1)/2,$ 
which is not always guaranteed by our assumption that $z(p_1)=(p_1+1)/2.$
However, by replacing the two lines there by the following ones, 
the proof is effortlessly fixed.
``As for the divisibility by $p_1^{b_1}$, note that since $z(p_1^{b_1})\mid (i-j)$ and $i-j$ is even, it follows that
$$i-j=2z(p_1^{b_1})\ell,$$ for some positive integer $\ell$. Since $\alpha^{z(p_1^{b_1})}\equiv -1\pmod*{p_1^{b_1}}$, it follows that $\alpha^{i-j}\equiv 1\pmod*{p_1^{b_1}}$."
\par We finish this section by pointing out some typos in part I:

\noindent p.\,56, l.\,3. For ``$b\ge 1$" read ``$b=1$".

\noindent p.\,56, l.\,-10. For ``$\Delta(1)=8$" read ``$\Delta(1)=32$".

\noindent p.\,62, l.\,2.
Replace by ``$19m/37\ge z(m)=2^a\cdot p^{b-1}(p+1)/(2k)\ge n.$"

\noindent p.\,63, l.\,-9.
For ``$5/6<2^{a-\alpha-1}<1$" read 
``$5/6<2^{a-\alpha-1}\cdot 5^b<1$".

\noindent p.\,65, l.\,10. The number field $\mathbb K$ is not
defined. It is $\mathbb Q(\sqrt{k(k+1)}).$

\noindent p.\,65. Lemma 14. One should read ``sup" instead of ``lim sup".

\noindent p.\,70, l.\,6. For 
``$p^{a}m_1=m1$" read ``$p^{a}m_1=m$".

\section{Wild prime powers}
Motivated by Corollary \ref{cor:wppmotivation} we make the
following definition.
\begin{defi}[wild prime power]
A prime power $p^e$ with $p\nmid k(k+1)$ such that $p^e$ exactly divides $\cD_k(n)$ for some integer $n,$
we call a wild prime power for $k$.
\end{defi}
Obviously any wild prime power is odd. 
By Corollary \ref{cor:wppmotivation} any discriminator value is divisible by at most one wild prime power.
\begin{lemma}
\label{neverwild}
A prime number is never wild. 
\end{lemma}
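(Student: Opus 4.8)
The plan is to show that a single prime $p$ (with $p\nmid k(k+1)$) can never be the unique wild prime power exactly dividing some discriminator value. Suppose for contradiction that $p$ is wild, so $p^1 \,\|\, \cD_k(n)=:m$ for some $n$. Since $m$ is a discriminator value, by Lemma \ref{zfactor2} we have $z(m)>m/2$, so in fact $m/2<z(m)<m$ (the case $z(m)=m$ is impossible since $p\nmid k(k+1)$ means $m\notin\cP(k(k+1))$, and by Lemma \ref{appearance} one needs $m\in\cP(k(k+1))$ for $z(m)=m$). Hence Lemma \ref{lem:withp} applies with this prime $p$: we get $e_p(k)=-1$, $z(p)=(p+1)/2$, and $m=a\cdot p$ with $a\in\cP(k(k+1))$, $z(a)=a$, $(a,p(p+1)/2)=1$, and moreover $z(m)=m(p+1)/(2p)$.

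The key idea is then to replace the factor $p$ by a power of two and thereby build a strictly smaller integer that still discriminates $U_0(k),\ldots,U_{n-1}(k)$. Concretely: we have $z(m)=a(p+1)/2 \ge n$ by Lemma \ref{zfactor2}. The interval $[(p+1)/2,\,p)$ contains a power of two, say $(p+1)/2\le 2^b<p$. I would argue that $2^b\cdot a$ is a discriminator for these $n$ values: since $z(a)=a$, $a\in\cP(k(k+1))$, and $a$ is coprime to $2$, by Lemma \ref{15+16} (or directly Lemma \ref{lemma2FLM}(5)) one has that $U_i(k)\equiv U_j(k)\pmod{2^b a}$ forces $i\equiv j$ modulo $\operatorname{lcm}(z(2^b),a)$; since $z(2^b)=2^b\ge (p+1)/2$ and $z(a)=a$ with $(a,(p+1)/2)=1$ hence $(a,2^b)=1$ (as $2^b \le$ something coprime to $a$—more precisely $a$ is odd so automatically coprime to $2^b$), we get $\operatorname{lcm}(z(2^b),a) = 2^b a \ge a(p+1)/2 = z(m)\ge n$. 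Thus $U_0(k),\ldots,U_{n-1}(k)$ are pairwise incongruent modulo $2^b a$. But $2^b a < p\cdot a = m = \cD_k(n)$, contradicting minimality of the discriminator.

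The one point requiring a little care is the incongruence claim modulo $2^b\cdot a$: I need that distinct indices $i<j<n$ really do stay distinct. The cleanest route is to note $n \le z(m) = a(p+1)/2 \le a\cdot 2^b = 2^b a$, and then invoke that modulo $2^b a$ the indices repeat only with period $\operatorname{lcm}(z(2^b),z(a))$; since this $\operatorname{lcm}$ equals $2^b a$ (coprimality of the odd number $a$ with $2^b$, plus $z(2^b)=2^b$), any two indices in $[0,n)$ with $n\le 2^b a$ are incongruent unless equal. This is exactly the mechanism already used in the proof of Lemma \ref{zfactor2} with $a=1$; here we carry the extra coprime odd factor $a$ along. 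The main (very mild) obstacle is simply making sure Lemma \ref{15+16} is applied correctly when $a$ has prime factors dividing $k+1$ rather than $k$, but since $i,j<n\le z(m)$ and we only need the ``$i\equiv j$'' branch to fail, the parity subtlety there does not create any genuine difficulty: if $i\not\equiv j\pmod 2$ one uses $i\equiv -j\pmod{z(a)}$, and combined with $i\equiv j\pmod{2^b}$ one still forces $|i-j|$ or $i+j$ to be at least $2^b \ge (p+1)/2$, which again pushes beyond the range. I would phrase the argument to cover both cases uniformly, or simply reduce to the bound $n\le z(m)$ and cite the already-established discriminating property of numbers of the form $2^b\cdot a$ with $z(2^b a)=2^b a$.
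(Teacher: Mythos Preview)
Your approach is essentially the same as the paper's: write $m=a\cdot p$ with $z(a)=a$, pick a power $2^b\in[(p+1)/2,p)$, and observe that $a\cdot 2^b$ is a strictly smaller modulus that still discriminates $U_0(k),\ldots,U_{n-1}(k)$. The paper does this in four lines by noting directly that $a\cdot 2^b$ is even with $z(a\cdot 2^b)=a\cdot 2^b$, hence lies in $\cB_k$ (cf.\ \eqref{setequality}), hence $\cD_k(a\cdot 2^b)=a\cdot 2^b$ by \eqref{Dkn=n}; this is precisely the ``clean route'' you mention at the end of your proposal.

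One small caution: your intermediate claim that ``$a$ is odd'' is not justified by Lemma~\ref{lem:withp} (when $p\equiv 1\pmod 4$ the condition $(a,(p+1)/2)=1$ does not force $a$ odd), so your $\lcm(z(2^b),z(a))=2^b a$ step as written is not quite right if $a$ happens to be even. This is harmless, however, since writing $a=2^c a'$ with $a'$ odd one still gets $z(2^b a)=z(2^{b+c}a')=2^{b+c}a'=2^b a$ from $z(a')=a'$; equivalently, just take the $\cB_k$ route you yourself suggest and the issue disappears. Your worry about the parity branch of Lemma~\ref{15+16} is therefore unnecessary.
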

\begin{proof}
Suppose that $p$ is a wild prime.
Then $p>2$ and $\cD_k(n)=ap$
for some $k,n$ and an integer $a$ coprime to $p$ 
satisfying $z(a)=a$. In addition, we have
$z(p)=(p+1)/2$. It follows that
$n\le z(ap)\le z(a)z(p)\le a(p+1)/2$. 
Clearly there is a power $2^b$ in
the interval $[(p+1)/2,p).$ 
As $a2^b\ge a(p+1)/2$ is even and satisfies
$z(a2^b)=a2^b$, it
discriminates $U_0(k),\ldots,U_{n-1}(k)$. 
Since $a2^b<ap$, this contradicts the minimality of
$\cD_k(n)$.
\end{proof}
Next we study when $p^e$ with $e>1$ is
wild. This involves the exponent set
$\cM_p$.
\begin{defi}[Exponent set]
Given any odd prime $p$, the exponent set is defined as
$$
\cM_p:=\left\{e\ge 1: \left\{e \frac{\log p}{\log 2}\right\}>1-\frac{\log(1+1/p)}{\log 2}\right\}.
$$
\end{defi}
A simple application of
Weyl's criterion (cf.\,the proof of
\cite[Proposition 1]{PA} or \cite[Proposition 1]{CM}) gives
$$\lim_{x\rightarrow \infty}\frac{\#\{m\in \cM_p:m\le x\}}{x}=\frac{\log(1+1/p)}{\log 2}.$$
In particular, $\cM_p$ is an infinite set.
\begin{prop}
\label{allowedexponent}
Let $p\ge 3$ be a prime. 
The set of integers $e\ge 1$ for which there is no integer
$a$ such that
\begin{equation}
\label{weetniet}
p^e\frac{(p+1)}{2p}\le 2^a<p^e
\end{equation}
equals 
$\cM_p$.
\end{prop}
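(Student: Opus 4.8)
The plan is to unwind both conditions into a statement about the fractional part of $e\log p/\log 2$ and observe that they are literally the same condition. First I would rewrite the inequality \eqref{weetniet}: the claim ``there is no integer $a$ with $p^e\frac{p+1}{2p}\le 2^a<p^e$'' is equivalent to saying that the half-open interval $[\frac{p+1}{2p}p^e,\,p^e)$ contains no power of $2$. Taking logarithms to base $2$, this says that the interval $[\,e\log_2 p+\log_2\frac{p+1}{2p},\ e\log_2 p\,)$ contains no integer. Since $\log_2\frac{p+1}{2p}=\log_2(1+1/p)-1$, the left endpoint is $e\log_2 p-1+\log_2(1+1/p)$, and the interval has length $1-\log_2(1+1/p)$, which lies strictly in $(0,1)$ for $p\ge 3$. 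An interval of length $<1$ of the form $[x-\delta,x)$ with $\delta=1-\log_2(1+1/p)\in(0,1)$ contains no integer precisely when $\lceil x-\delta\rceil\ge x$, equivalently when $\{x\}\in[0,\delta)$ is false in the right way — more carefully, $[x-\delta,x)$ misses $\mathbb Z$ iff there is no integer in $(x-\delta-?,\,\cdot)$; the clean way is to note it misses $\mathbb Z$ iff $\lfloor x\rfloor\le x-\delta$, i.e. iff $\{x\}\ge\delta$, i.e. iff $\{x\}\in[\delta,1)$. (The endpoint $x$ is excluded and $x-\delta$ included, but $x-\delta$ can be an integer only in a measure-zero situation; I would handle the boundary case $p^e$ a power of $2$ separately, which is impossible for $p$ odd, and for $p=2$ the statement is vacuous since then $e\log_2 p=e\in\mathbb Z$ and one checks directly.)

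Next I would set $x=e\log_2 p = e\frac{\log p}{\log 2}$ and $\delta=1-\log_2(1+1/p)=1-\frac{\log(1+1/p)}{\log 2}$. By the previous paragraph, \eqref{weetniet} has no solution $a$ exactly when $\{e\frac{\log p}{\log 2}\}\ge 1-\frac{\log(1+1/p)}{\log 2}$. Comparing with the definition of $\cM_p$, which requires $\{e\frac{\log p}{\log 2}\}>1-\frac{\log(1+1/p)}{\log 2}$, the two differ only on whether equality is allowed. But equality $\{e\frac{\log p}{\log 2}\}=1-\frac{\log(1+1/p)}{\log 2}$ would force $e\log p-\log 2\cdot(\text{integer})=\log 2-\log(1+1/p)$, i.e. a nontrivial $\mathbb Z$-linear relation among $\log p$, $\log 2$ and $\log(p+1)$; since $p$, $2$, $p+1$ are multiplicatively independent integers (they are pairwise coprime and none is a perfect power equal to a product of the others — indeed $2$, $p$, $p+1$ generate a rank-$3$ subgroup of $\mathbb Q^\times$ for $p\ge 3$), their logarithms are $\mathbb Q$-linearly independent, so equality never occurs. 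Hence the condition $\ge$ coincides with the condition $>$, and the set of $e$ for which \eqref{weetniet} is unsolvable is exactly $\cM_p$.

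For the write-up I would organize it as: (i) the logarithmic reformulation of \eqref{weetniet}; (ii) the elementary lemma that $[x-\delta,x)$ with $0<\delta<1$ contains no integer iff $\{x\}\ge\delta$; (iii) identification of $x$ and $\delta$; (iv) the linear-independence remark eliminating the boundary case. The main obstacle, such as it is, is purely bookkeeping: making sure the half-open interval endpoints are tracked correctly so that the final condition comes out as a strict inequality matching $\cM_p$ rather than a non-strict one — the multiplicative independence of $2$, $p$, $p+1$ is what guarantees the match, and I would state it explicitly (it is elementary: a relation $2^{x}p^{y}(p+1)^{z}=1$ with integers $x,y,z$ forces $y=z=0$ by comparing the exponent of $p$, then $x=0$, using that $\gcd(p,p+1)=1$ and $p$ is odd). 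Everything else is a one-line computation with logarithms.
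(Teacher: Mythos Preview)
Your approach is the same as the paper's---take logarithms and reduce to a condition on $\{e\log p/\log 2\}$---but your boundary bookkeeping is off, and the patch you apply is false.

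The error is in step (ii). With $0<\delta<1$ and $x\notin\mathbb Z$, the half-open interval $[x-\delta,x)$ contains an integer iff $\lfloor x\rfloor\ge x-\delta$, i.e.\ iff $\{x\}\le\delta$; hence it \emph{misses} $\mathbb Z$ iff $\{x\}>\delta$ (strict). You wrote $\lfloor x\rfloor\le x-\delta$, i.e.\ $\{x\}\ge\delta$, which is wrong precisely at equality: if $\{x\}=\delta$ then $x-\delta=\lfloor x\rfloor$ is the left endpoint and \emph{is} in the interval. With the correct strict inequality you land directly on the defining condition of $\cM_p$ and no further argument is needed---this is exactly what the paper does.

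Your attempted fix via multiplicative independence of $2,p,p+1$ is false whenever $p$ is a Mersenne prime: for $p=3$ one has $2^{-2}\cdot 3^{0}\cdot 4^{1}=1$, and your parenthetical argument (``comparing the exponent of $p$ forces $y=z=0$'') only yields $y=0$; it says nothing about $z$ when $p+1$ is a power of $2$. Indeed equality $\{x\}=\delta$ is equivalent to $p^{e-1}(p+1)=2^{a+1}$ for some integer $a$, which occurs exactly when $e=1$ and $p$ is Mersenne (this is the remark the paper makes right after the proposition). In that case the lower bound in \eqref{weetniet} is attained by $2^{a}=(p+1)/2$, so a solution \emph{does} exist and $e=1\notin\cM_p$, consistent with the strict inequality. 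So: fix the inequality in (ii) to be strict, delete (iv) entirely, and the proof is correct and matches the paper's.
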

\begin{proof} 
Put $\rho=(p+1)/(2p)$ and assume 
\eqref{weetniet} does hold for some integers
$a$ and $e$.
By taking logarithms and after some
easy manipulations
\eqref{weetniet} is seen to be equivalent with
$$\frac{\log\rho}{\log 2}\le a-e\frac{\log p}{\log 2}<0.$$
It follows that
$a=\lfloor e\log p/\log 2\rfloor$, 
and we are left with 
$$\left\{e\frac{\log p}{\log 2}\right\}\le -\frac{\log\rho}{\log 2}=1-\frac{\log(1+1/p)}{\log 2}.$$
Hence $\cM_p$ is precisely the
set of $e$ for which \eqref{weetniet} has
no solution.
\end{proof}
Note that the lower bound 
in \eqref{weetniet} is assumed if and only
if $e=1$ and $p$ is a Mersenne prime. 
\begin{table}[h]
\begin{tabular}{|c|c|}
\hline
$p$ & exponent   \\
\hline\hline
$13$ & $7,17,27,37$ \\
\hline
$17$ & $11,22,34$ \\
\hline
$37$ & $19$ \\
\hline
$73$ & $21$ \\
\hline
$97$ & $5,10,15,20,25$ \\
\hline
$181$ & $2,4,\ldots,20,22$ \\
\hline
$1933$ & $12$ \\
\hline
$2389$ &$9$\\
\hline
$4993$ &$7$\\
\hline
$10321$ &$3$\\
\hline
$11290229$ &$7$\\
\hline
    \end{tabular}
    \medskip \medskip
\caption{Potential wild prime powers 
$p^e\le 10^{50}$ with $5<p \le 10^{10}$ 
and $p\equiv 1\pmod*{4}$}
\label{exponent1mod4}
\end{table}

\begin{table}[h]
\begin{tabular}{|c|c|}
\hline
$p$ & exponent   \\
\hline\hline
$3$ & $3,5,8,10,13,15,17,20,22,25,\ldots, 97,99,102,104$ \\
\hline
$7$ & $6,11,16,21,26,32,37,42,47,52,58$ \\
\hline
$11$ & $2,13,15,26,37,39$ \\
\hline
$19$ & $4,8,12,16,20,24,28,32$ \\
\hline
$23$ & $19,21$ \\
\hline
$31$ & $22$ \\
\hline
$43$ & $7,9,14,18,27$ \\
\hline
$67$ & $15$ \\
\hline
$71$ &$20$\\
\hline
$79$ &$23$\\
\hline
$49667$ &$5,10$\\
\hline
    \end{tabular}
    \medskip \medskip
\caption{Potential wild prime powers 
$p^e\le 10^{50}$ with $p \le 10^{7}$ 
and $p\equiv 3\pmod*{4}$}
\label{exponent3mod4}
\end{table}
\begin{defi}[potentially wild prime power]
\label{defi:pot}
A prime power $p^e$ with $e\in \cM_p$ is said to be \emph{potentially wild}.
\end{defi}
Tables \ref{exponent1mod4} and \ref{exponent3mod4} list
smallest potentially wild prime powers.
\begin{lem}
\label{wildtopotentiallywild}
If $p^e$ is a wild prime power, then it
is also potentially wild.
\end{lem}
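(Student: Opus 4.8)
The statement to prove is Lemma \ref{wildtopotentiallywild}: if $p^e$ is a wild prime power, then $e \in \cM_p$, i.e. $p^e$ is potentially wild.

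The plan is to argue by contraposition: suppose $e \notin \cM_p$, and show that $p^e$ cannot be a wild prime power. By Proposition \ref{allowedexponent}, the condition $e \notin \cM_p$ means precisely that there exists an integer $a$ with
$$p^e \cdot \frac{p+1}{2p} \le 2^a < p^e.$$
The idea is then to exhibit, for every $n$ for which $p^e \mid \cD_k(n)$, a strictly smaller modulus that still discriminates $U_0(k),\dots,U_{n-1}(k)$, contradicting the minimality in the definition of the discriminator.

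Concretely, suppose for contradiction that $p^e$ is wild, so $p^e$ exactly divides $m := \cD_k(n)$ for some $n$, with $p \nmid k(k+1)$. By Lemma \ref{neverwild} we have $e \ge 2$. By Corollary \ref{cor:wppmotivation} (and Lemma \ref{lem:withp}), $m = a \cdot p^e$ with $a \in \cP(k(k+1))$, $z(a) = a$, $\gcd(a, p(p+1)/2)=1$, $e_p(k) = -1$, $z(p) = (p+1)/2$, and $z(p^e) = p^{e-1}(p+1)/2$. Hence by Lemma \ref{zfactor2},
$$n \le z(m) = \operatorname{lcm}(z(a), z(p^e)) = a \cdot p^{e-1}\frac{p+1}{2} = \frac{p+1}{2p}\, m.$$
Now let $a^*$ be the integer furnished by Proposition \ref{allowedexponent}, so that $\frac{p+1}{2p} p^e \le 2^{a^*} < p^e$. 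Consider the modulus $m' := a \cdot 2^{a^*}$. Then $m'$ is even, composed only of the prime $2$ and primes dividing $k(k+1)$, so $z(m') = m'$ (using Lemma \ref{appearance}; note $a$ is odd, $z(a)=a$, and $2^{a^*}$ contributes $z(2^{a^*})=2^{a^*}$, and these are coprime). Therefore $m'$ discriminates $U_0(k),\dots,U_{n-1}(k)$ as soon as $m' \ge n$, and indeed
$$m' = a \cdot 2^{a^*} \ge a \cdot \frac{p+1}{2p} p^e = \frac{p+1}{2p} m \ge n.$$
On the other hand $m' = a \cdot 2^{a^*} < a \cdot p^e = m$. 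This contradicts the minimality of $\cD_k(n) = m$. Hence $p^e$ is not wild, completing the contrapositive.

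The main obstacle — really the only subtle point — is making sure that the auxiliary modulus $m' = a\cdot 2^{a^*}$ genuinely satisfies $z(m') = m'$, so that Corollary \ref{cor:twopower}-type reasoning applies and $m'$ actually discriminates the first $n$ terms. This requires that $a$ be built only from primes dividing $k$ resp. $k(k+1)$ with $z(a) = a$ (guaranteed by Lemma \ref{lem:withp}, possibly using that $9 \nmid a$ when $3$ is special, which is automatic since $m$ is a discriminator value), together with the observation that adjoining the power of $2$ does not spoil this because $z(2^{a^*}) = 2^{a^*}$ and $\gcd(a, 2^{a^*}) = 1$, so $z(m') = \operatorname{lcm}(z(a), 2^{a^*}) = m'$. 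Everything else is a direct combination of Proposition \ref{allowedexponent} with the structural description of discriminator values and the minimality built into the definition of $\cD_k$.
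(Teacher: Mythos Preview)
Your argument is correct and follows the same route as the paper's proof: assume $e\notin\cM_p$, use Proposition~\ref{allowedexponent} to locate a power of $2$ in $[\tfrac{p+1}{2p}p^e,\,p^e)$, and replace the wild factor $p^e$ by this power of $2$ to obtain a strictly smaller modulus that still discriminates. The one step you should tighten is the passage from $z(m')=m'$ to ``$m'$ discriminates the first $m'$ terms'': this implication is not automatic in general (it can fail for odd moduli), but since $m'$ is even with $z(m')=m'$ you have $m'\in\cB_k$ by \eqref{setequality}, and then \eqref{Dkn=n} gives $\cD_k(m')=m'$; the paper phrases the same point by invoking Lemma~\ref{15+16} to get $\iota_k(m')=m'$ directly.
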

\begin{proof}
Suppose that $p^e$ is a wild 
prime power for $k$.
Then $\cD_k(n)=p^em$, with 
$p\nmid k(k+1)$ and $m$ composed of only prime factors dividing $k(k+1)$. We assume that
$e\not\in \cM_p$ and derive a contradiction.
We must have $z(p^e)=p^{e-1}(p+1)/2$ and
$z(m)=m$. It follows
that $n\le z(p^em)\le p^{e-1}(p+1)m/2$. Since by assumption $e\not\in \cM_p$, there exists an integer $a$ such that \eqref{weetniet} is satisfied. The number $2^am$ is even, and by Lemma \ref{15+16} we have $\iota_k(2^am)=2^am$. 
Since $2^am\ge p^{e-1}(p+1)/2m\ge n$
we conclude that the numbers
$U_0(k),\ldots,U_{n-1}(k)$ are pairwise distinct modulo $2^am$. Now since $2^am<p^em$, it follows that $p^em$ cannot be a discriminator value, 
a contradiction showing that
$e\in \cM_p$ and hence the potential wildness of $p^e$.
\end{proof}
\begin{rem}
An alternative proof of Lemma 
\ref{neverwild} is obtained on noting that $1\not\in \cM_p$. Thus $p$ is not potentially wild and so not wild.
\end{rem}

The following result represents an important
milepost on our way towards a proof of Theorem \ref{main2}.
\begin{theorem}
\label{pequals5}
Suppose that $k\equiv 1\bmod{3}$.
If $z_k(25)=15$ and $p^e$ is a wild prime power for $k$, then
$p=5$ and $e$ is in $\cM_5$.
\end{theorem}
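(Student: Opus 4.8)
The plan is to show that if $p\ne 5$, then every potentially wild prime power $p^e$ can be ``approached'' by a number of the form $2^a\cdot 5^b$ that is strictly smaller but still discriminates, which contradicts minimality of the discriminator. First I would reduce to the case $p\ge 13$: by Lemma \ref{neverwild} a prime is never wild, so $e\ge 2$; and by inspecting Tables \ref{exponent1mod4} and \ref{exponent3mod4} (or directly via Proposition \ref{allowedexponent} and Definition \ref{defi:pot}), the only primes $p<13$ with $p\ne 5$ admitting an exponent $e\ge 2$ in $\cM_p$ are $p=3,7,11$, and these finitely many small potentially wild prime powers must be handled separately. For $p=3$ note that $k\equiv 1\bmod 3$ forces $3\mid k(k+1)$ is false only when $3\nmid k(k+1)$; but $k\equiv 1\bmod 3$ means $3\nmid k$ and $3\nmid k+1$, so actually $3\nmid k(k+1)$ and $3$ could a priori be wild — here one uses that $e_3(k)=-1$ gives $z(3)=2<(3+1)/2$... wait, $(3+1)/2=2$, so one needs the finer incongruence-index argument; I would instead invoke that Table \ref{exponent3mod4} lists $3^3$ etc.\ and dispatch these few cases by exhibiting explicit $2^a\cdot 5^b$ in the relevant interval, exactly as in the proof of Proposition \ref{prop:nowildpowersgeneral}.

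The heart of the argument is Proposition \ref{prop:nowildpowersgeneral}: for $p\ge 13$, any potential wild prime power $p^e$ satisfies
$$
\frac{5(p+1)}{6p}\,p^e< 2^a\cdot 5^b<p^e
$$
for suitable $a\ge 1$, $b\ge 0$. So here is the reduction. Suppose $p^e$ is wild for $k$ with $p\ge 13$ and $p\ne 5$. Then $\cD_k(n)=p^e\cdot m$ for some $n$, with $m\in\cP(k(k+1))$ and $z(m)=m$, and necessarily $z(p^e)=p^{e-1}(p+1)/2$, $e_p(k)=-1$, $z(p)=(p+1)/2$. Hence $n\le z(p^e m)\le p^{e-1}(p+1)m/2=\tfrac{p+1}{2p}\,p^e m$. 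Now pick $2^a\cdot 5^b$ as in Proposition \ref{prop:nowildpowersgeneral} with $a\ge 1$; then $N:=2^a\cdot 5^b\cdot m$ is even, satisfies $N<p^e m=\cD_k(n)$, and
$$
N=2^a\cdot 5^b\cdot m>\frac{5(p+1)}{6p}\,p^e\,m\ge \frac{p+1}{2p}\,p^e\,m\ge n,
$$
using $5/6\ge 1/2$... more precisely $\frac{5(p+1)}{6p}\ge\frac{p+1}{2p}$ since $5/6\ge 1/2$. It remains to check that $N$ discriminates $U_0(k),\dots,U_{n-1}(k)$, i.e.\ that $\iota_k(N)\ge n$: since $k\equiv 1\bmod 3$, the number $m$ has only prime factors dividing $k(k+1)$; if $m$ is even or $5^b m$ is handled by the factor $2^a$, one has $\iota_k(2^a\cdot 5^b\cdot m)=2^a\cdot 5^b\cdot m=N$ by Lemma \ref{15+16} (the modulus $2^a\cdot 5^b\cdot m$ is built from primes dividing $k(k+1)$ together with the power of $2$, so it is either in $\cB_k$ or of the shape covered there, giving $z=\iota=N$). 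Since $N\ge n$, the integers $U_0(k),\dots,U_{n-1}(k)$ are pairwise incongruent modulo $N$, contradicting $N<\cD_k(n)$. Therefore $p=5$, and then $e\in\cM_5$ by Lemma \ref{wildtopotentiallywild}.

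The main obstacle I anticipate is the bookkeeping in the Diophantine step already packaged in Proposition \ref{prop:nowildpowersgeneral}: verifying that the sporadic potential wild prime powers not covered by Lemma \ref{twointervals} (those with $p^e$ below the respective thresholds $2^7\cdot 5^6$ or $2^7\cdot 5^{15}$, namely $13^7,19^4,19^8,43^7,97^5$ and $181^2$) are each approachable by an explicit $2^a\cdot 5^b$ — but this is exactly what Proposition \ref{prop:nowildpowersgeneral} asserts, so in the present proof I may cite it directly. A secondary subtlety is confirming that for the approaching number we genuinely have $a\ge 1$ (evenness), which is guaranteed by the statement of Proposition \ref{prop:nowildpowersgeneral}, and that the factor $m$ does not interfere with the incongruence-index computation; this is where the hypothesis $k\equiv 1\bmod 3$ (so that $\cA_k,\cB_k$ take their simple form and $9\nmid$ issues vanish) and Lemma \ref{15+16} do the work. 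Finally, the hypothesis $z_k(25)=15$ is used precisely to ensure we are in a congruence class mod $25$ for which $5$ itself behaves well (so that replacing a wild $p^e$, $p\ne5$, by $2^a5^bm$ is legitimate and so that the conclusion ``$e\in\cM_5$'' is the right one); in the write-up I would note that $z_k(25)=15$ forces $k\equiv 1,3\bmod 5$, matching Lemma \ref{laatsteloodje} and Table \ref{tab:125}.
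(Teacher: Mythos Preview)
Your overall architecture matches the paper's, but there is a genuine gap at the key replacement step, and your handling of the small primes $3,7,11$ does not work.

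\medskip
\textbf{The main error.} Under the hypotheses $k\equiv 1\pmod 3$ and $z_k(25)=15$ one has $k\equiv 1$ or $3\pmod 5$, so $5\nmid k(k+1)$. Consequently the replacement modulus $N=2^a\cdot 5^b\cdot m$ is \emph{not} built only from primes dividing $k(k+1)$; the factor $5^b$ is itself ``wild'' for $k$. Hence Lemma \ref{15+16} does not apply to the $5$-part, $N\notin\cB_k$, and your claim $\iota_k(N)=N$ is false. What actually holds (using $z_k(25)=15$, so $z(5^b)=3\cdot 5^{b-1}$, together with Lemma \ref{lem:last} or the argument of Lemma \ref{laatsteloodje} with $m_1$ replaced by the even number $2^a m$) is
\[
\iota_k(N)=z(N)=\tfrac{3}{5}\,N=3\cdot 2^a\cdot 5^{b-1}\cdot m.
\]
So the inequality you must verify is $\tfrac{3}{5}N> n$, i.e.\ $\tfrac{3}{5}\cdot 2^a\cdot 5^b>\tfrac{p+1}{2p}\,p^e$, which rearranges to
\[
2^a\cdot 5^b>\frac{5(p+1)}{6p}\,p^e.
\]
This is precisely the lower bound in Proposition \ref{prop:nowildpowersgeneral}, and the factor $5/6$ is used in full, not as slack; your remark ``using $5/6\ge 1/2$'' misreads the role of that constant. (Incidentally, $b\ge 1$ is forced here: if $b=0$ then $2^a\in[p^{e-1}(p+1)/2,p^e)$, contradicting $e\in\cM_p$.) This is also where the hypothesis $z_k(25)=15$ is actually used.

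\medskip
\textbf{The small primes.} Your plan to dispatch $p\in\{3,7,11\}$ by ``exhibiting explicit $2^a\cdot 5^b$ in the relevant interval'' cannot work. For $p=3$ one has $\frac{5(p+1)}{6p}=\frac{10}{9}>1$, so the interval $\bigl(\frac{5(p+1)}{6p}p^e,\,p^e\bigr)$ is empty; for $p=7,11$ the intervals have relative length $1/21$ and $1/11$ and are not always hit by a number $2^a5^b$. The paper instead uses the incongruence index: one checks numerically that $\iota_k(9)\le 4$, $\iota_k(49)\le 21$, $\iota_k(121)\le 55$ whenever $p\nmid k(k+1)$, and then Corollary \ref{incongruence3mod4} (applicable since $3,7,11\equiv 3\pmod 4$) propagates this to $\iota_k(p^e\cdot m_1)<p^e m_1/2$ for every $e\ge 2$ and every odd $m_1$ with $z(m_1)=m_1$. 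Oddness of $m_1$ is automatic here, since $p\equiv 3\pmod 4$ forces $z(2p^e\cdot m_2)\le p^e m_2$, ruling out even $m_1$. This eliminates $p\in\{3,7,11\}$ directly, with no Diophantine approximation needed.
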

\begin{proof}
The conditions on $k$ ensure that $(k(k+1),15)=1$. Let $m=\cD_k(n)$ be a discriminator value.
Suppose that a wild prime power
$p^e$ with $p\nmid k(k+1)$ occurs in $m$. 
Then $m=p^e\cdot m_1$ with $z(m_1)=m_1$.
Since we must have
$z(m)>m/2$, it follows that $z(p^e)=p^{e-1}(p+1)/2$. 
The number $m$ discriminates the numbers
$n$ up to at most $(p+1)p^{e-1}m_1/2$. 
\par We want to show that 
$p=5$, and will assume that $p\ne 5$.
Recall that $e\ge 2$.
We first
consider the case where $p^e\in \{3^2,7^2,11^2\}$.
Since $\iota_k(m)$ only depends on the congruence class of $k$ modulo $m$, it 
is a finite computation to verify that
$\iota_k(3^2)\le 4$,
$\iota_k(7^2)\le 7\cdot 3<7^2/2$ if $7\nmid k(k+1)$ and $\iota_k(11^2)\le 11\cdot 5<11^2/2$ if $11\nmid k(k+1)$. By Corollary \ref{incongruence3mod4} we infer from this that, under the above assumptions on $k$, 
$\iota_k(p^e)<p^e/2$ for 
$p\in \{3,7,11\}$. 
We note that $m_1$ is
odd in this three cases as
$p\equiv 3\pmod*{4}$ and hence
$z(m)=\lcm(z(p^e),z(m_1))\le m/2$ otherwise. We can thus
apply the final assertion of Corollary \ref{incongruence3mod4} to conclude that $\iota_{k}(m)<m/2$, which shows that our assumption that $m$ is a discriminator value was wrong to begin with. Thus $p\ge 13$ and so by Proposition 
\ref{prop:nowildpowersgeneral}
there exist integers $a\ge 1$ and 
$b\ge 0$ such that
\begin{equation}
\label{eq:importantinterval}    
\frac{5(p+1)}{6p}p^e<2^a\cdot 5^b<p^e.
\end{equation}
We write $m_1=2^c\cdot m_2$, with $m_2$
odd. 
We now
infer that 
$$z(2^a\cdot 5^b\cdot m_1)=\lcm(z(2^{a+c}),z(5^b),z(m_2))
=\lcm(2^{a+c},3\cdot 5^{b-1},m_2)=3m/5,$$
where we used that 
$(30,m_2)=1$ and
$z(5^b)=5^{b-1}\cdot 3$, which is a consequence of 
$z_k(25)=15$. 
Then $2^a\cdot 5^b\cdot m_1<p^e\cdot m_1$
discriminates the integers up to $3\cdot 2^a\cdot 5^{b-1}\cdot m_1$.
The lower bound part of inequality 
\eqref{eq:importantinterval} now guarantees that $2^a\cdot 5^{b-1}\cdot m_1>(p+1)p^{e-1}m_1/2$, showing that 
$2^a\cdot 5^b\cdot m_1$ is a better discriminator than
$m$. We
conclude that $p=5$. 
By Lemma \ref{wildtopotentiallywild} it follows that
$5^e$ is potentially wild and hence $e\in \cM_5$ by Definition \ref{defi:pot}. 
\end{proof}
\begin{cor}
\label{wildfive}
Suppose that $5^e$ is a wild prime power for an integer $k$
satisfying $k\equiv 1\pmod*{3}$.
Then 
$$e\in \cM_5=\{3,6,9,12,15,18,21,\ldots\}
\quad\text{and}\quad k \equiv 1, 3, 8, 11, 13, 16, 21, 23\pmod*{25}.$$
\end{cor}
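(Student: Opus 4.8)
The plan is to derive the corollary from Theorem \ref{pequals5}; the only thing that must be checked is that the hypothesis $z_k(25)=15$ of that theorem is automatically satisfied once $5^e$ is wild for $k$.

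I would begin with the bookkeeping. Since $k\equiv 1\pmod*{3}$ we have $3\nmid k(k+1)$, and the very definition of a wild prime power gives $5\nmid k(k+1)$; hence $\gcd(k(k+1),15)=1$ and $k\equiv 1,2,3\pmod*{5}$. By Lemma \ref{neverwild} no prime is wild, so $e\ge 2$. Now pick a discriminator value $m=\cD_k(n)$ such that $5^e$ exactly divides $m$. By Lemma \ref{zfactor2} we have $z(m)>m/2$. Because $5\mid m$ while $5\nmid k(k+1)$, the integer $m$ is not in $\cP(k(k+1))$, so by Corollary \ref{ktimeskplus1} (valid as $k\equiv 1\pmod*{3}$) we get $z(m)\neq m$. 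Therefore $m/2<z(m)<m$ and Lemma \ref{lem:withp} applies to $m$: it writes $m=a\cdot p^b$ with $a\in\cP(k(k+1))$ and $p\nmid k(k+1)$, and since $5^e$ is the only part of $m$ built from primes not dividing $k(k+1)$, we are forced to have $p=5$ and $b=e$. As $b\ge 2$, the final clause of Lemma \ref{lem:withp} yields $z(5^2)=5\cdot 6/2=15$, that is, $z_k(25)=15$.

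With $z_k(25)=15$ established, Theorem \ref{pequals5} (or, directly, Lemma \ref{wildtopotentiallywild} combined with Definition \ref{defi:pot}) gives $e\in\cM_5$; the explicit list $\cM_5=\{3,6,9,12,15,18,21,\ldots\}$ in the statement is read off from Proposition \ref{allowedexponent} by checking, for each small $e$, whether $3\cdot 5^{e-1}\le 2^a<5^e$ admits a solution $a$. Finally, $z_k(25)=15$ determines the residue of $k$ modulo $25$: by Proposition \ref{Florianapril2023} and the block $p=5$ of Tab.\,\ref{zkp2}, the residue classes modulo $25$ with $z_k(5^2)=5(5+1)/2=15$ are exactly $1,3,8,11,13,16,21,23$ (the classes $6,18\pmod*{25}$ have $z_k(25)=3$, and $k\equiv 2\pmod*{5}$ has $z_k(5)<3$), and this is the asserted congruence condition.

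There is no genuine difficulty here; the one place to be careful is verifying that the prime produced by Lemma \ref{lem:withp} must equal $5$ and that the bound $e\ge 2$ (from Lemma \ref{neverwild}) is in force, so that the last sentence of Lemma \ref{lem:withp} can legitimately be invoked.
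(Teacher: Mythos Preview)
Your proof is correct and supplies precisely the details the paper leaves implicit (the corollary is stated without its own proof). The key step you identify---that wildness of $5^e$ with $e\ge 2$ forces $z_k(25)=15$ via the final clause of Lemma~\ref{lem:withp}, after which the congruence classes are read off Tab.\,\ref{zkp2} and $e\in\cM_5$ comes from Lemma~\ref{wildtopotentiallywild}---is exactly the intended route.
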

\begin{rem}
If we would restrict to wild prime powers of \emph{even} discriminator values, then necessarily $p\equiv 1\pmod*{4}$
and there is no need to consider 
the primes $3,7$ and $11$ separately. In this case
Corollary \ref{incongruence3mod4} is not needed.
\end{rem}
\begin{rem}
Our proof of Theorem 
\ref{pequals5} eventually depends on 
quite a number of numerical coincidences and we are doubtful whether there exists a more conceptual proof. 
\end{rem}
\subsection{Some specific cases}
In this section we will demonstrate Corollary \ref{wildfive}. 
If $m$ has an odd prime divisor, we denote the smallest such by $P_{\text{odd}}(m)$, otherwise we
put $P_{\text{odd}}(m)=1$.
\begin{prop}
\label{actualthere}
\hfil\break
{\rm a)} Suppose that $k\equiv 3\pmod*{5}$ and 
$k\not\equiv 18\pmod*{25}$.
Then
$\cD_{k}(3\cdot 5^{e-1})=5^e$ for every $e$ in $\cM_5$  with $5^e<P_{\text{odd}}(k(k+1))$.\hfil\break
{\rm b)} Suppose that $k\equiv 1,3\pmod*{5}$ and 
$k\not\equiv 6,18\pmod*{25}$.
Then
$\cD_{k}(6\cdot 5^{e-1})=2\cdot 5^e$ for every $e$ in $\cM_5$  with $2\cdot 5^e<P_{\text{odd}}(k(k+1))$.
\end{prop}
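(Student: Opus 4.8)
The plan is, in both parts, to establish the two opposite inequalities $\cD_k(N)\le M$ and $\cD_k(N)\ge M$, where $(N,M)=(3\cdot 5^{e-1},5^e)$ in part a) and $(N,M)=(6\cdot 5^{e-1},2\cdot 5^e)$ in part b); note $M/N=5/3$ in both cases. A preliminary observation I would record first: since $e\in\cM_5$ forces $e\ge 3$, the hypothesis $M<P_{\text{odd}}(k(k+1))$ makes every odd prime divisor of $k(k+1)$ exceed $M\ge 125$; in particular $3\nmid k(k+1)$, so $k\equiv 1\pmod*{3}$, $e_3(k)=-1$ and $z(3)=2$, while $5\nmid k(k+1)$ and the exclusion $k\not\equiv 6,18\pmod*{25}$ give $z(5^e)=3\cdot 5^{e-1}$ by Lemma \ref{lemma2FLM}.

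For the upper bound $\cD_k(N)\le M$ I would simply show that $M$ discriminates $U_0(k),\ldots,U_{N-1}(k)$. In part a) this is Lemma \ref{iota5b}, which gives $\iota_k(5^e)=3\cdot 5^{e-1}=N$. In part b) I would apply Lemma \ref{lem:last} with $p=5$ (legitimate, as $e_5(k)=-1$, $5\equiv 1\pmod*{4}$, $z(5)=3$): then $U_i(k)\equiv U_j(k)\pmod*{2\cdot 5^e}$ if and only if $i\equiv j\pmod*{z(2\cdot 5^e)}$, and $z(2\cdot 5^e)=\lcm(z(2),z(5^e))=\lcm(2,3\cdot 5^{e-1})=6\cdot 5^{e-1}=N$, whence $\iota_k(2\cdot 5^e)=N$.

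For the lower bound, suppose $m:=\cD_k(N)<M$. Then $m\ge N$ (trivial bound), $z(m)>m/2$ and $\iota_k(m)>m/2$ by Lemmas \ref{zfactor2} and \ref{ifactor}, and Corollary \ref{cor:wppmotivation} leaves two cases. If $m\in\cP(k(k+1))$, write $m=2^c m'$ with $m'$ odd; then $m'\in\cP(k(k+1))$ and $m'\le m<M<P_{\text{odd}}(k(k+1))$ force $m'=1$, so $m=2^c$ with $N\le 2^c<M$. As $M/N=5/3<2$ this means $2^c\in[3\cdot 5^{e-1},5^e)$ in part a) and (on dividing by $2$) $2^{c-1}\in[3\cdot 5^{e-1},5^e)$ in part b), each contradicting $e\in\cM_5$ via Proposition \ref{allowedexponent}. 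Otherwise $m$ has a prime factor $p\nmid k(k+1)$, so by Corollary \ref{cor:wppmotivation} $m=a_0p^{b_0}$ with $a_0\in\cP(k(k+1))$, $e_p(k)=-1$, $b_0\ge 1$; then $z(m)\le z(a_0)z(p^{b_0})\le \frac{p+1}{2p}m<m$ by Lemma \ref{lemma2FLM}, so $m/2<z(m)<m$ and Lemma \ref{lem:withp} applies, giving $m=a\,p^b$ with $a\in\cP(k(k+1))$, $z(a)=a$, $(a,p(p+1)/2)=1$, $b\ge 1$, $z(p)=(p+1)/2$ and $z(m)=m(p+1)/(2p)$. From $z(m)\ge N$ and $m<M$ we get $\frac{2p}{p+1}<M/N=\frac53$, i.e.\ $p<5$, so $p=3$ (all conditions on $3$ are met since $3\nmid k(k+1)$). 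Now $(a,6)=1$, $a\in\cP(k(k+1))$ and $a\le m<M<P_{\text{odd}}(k(k+1))$ force $a=1$ (any prime dividing $a$ is $\ge 5$ and divides $k(k+1)$, hence is $\ge P_{\text{odd}}(k(k+1))>m\ge a$), so $m=3^b$. If $b=1$ then $z(m)=z(3)=2<N$, contradicting $z(m)\ge N$; if $b\ge 2$ then $\iota_k(3^b)<3^b/2=m/2$ by Lemma \ref{incongruence1} (applicable since $3\nmid k(k+1)$ and $(\frac{k+1}{3})=-1$ for $k\equiv 1\pmod*{3}$), contradicting $\iota_k(m)>m/2$. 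Thus no $m<M$ discriminates $U_0(k),\ldots,U_{N-1}(k)$, so $\cD_k(N)=M$.

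The step I expect to be the crux is the subcase $p=3$ of the second case: the index of appearance is too blunt there, since $z(3^b)=\frac23\cdot 3^b>\frac12\cdot 3^b$ and a power of $3$ may genuinely lie in the admissible window $[\frac32 N,M)$, so one is forced onto the finer incongruence index and Lemma \ref{incongruence1}. A second place needing care is checking that the standing hypothesis $M<P_{\text{odd}}(k(k+1))$ simultaneously pins down $k\bmod 3$, makes $3$ eligible as the wild prime $p$, and strips off the cofactor $a$. Everything else is bookkeeping, once one recalls that ``$e\in\cM_5$'' is precisely the statement (Proposition \ref{allowedexponent}) that $[3\cdot 5^{e-1},5^e)$ contains no power of $2$.
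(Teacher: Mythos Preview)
Your proof is correct and takes a genuinely different route from the paper's. The paper proves only part a) (saying b) is similar) and does so by invoking Theorem~\ref{main2}: once one knows that every discriminator value with $k\equiv 1\pmod*{3}$, $k\equiv 3\pmod*{5}$, $k\not\equiv 18\pmod*{25}$ lies in $\cS_{k,n}\cup\cA_{5,k}\cup\cB_{5,k}$, the hypothesis $5^e<P_{\text{odd}}(k(k+1))$ forces the candidate to be of the form $2^a5^b\le 5^e$, and then a two-line count using $\iota_k(2^a5^b)\le 3\cdot 2^a5^{b-1}$ pins it down to $5^e$. Your argument bypasses Theorem~\ref{main2} entirely and works directly from Corollary~\ref{cor:wppmotivation} and Lemma~\ref{lem:withp}: the inequality $z(m)\ge N>\tfrac35 M>\tfrac35 m$ forces the wild prime to be $3$, the size bound $a<P_{\text{odd}}(k(k+1))$ strips the cofactor, and the incongruence index kills $m=3^b$. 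This is more elementary and self-contained---in particular it does not rest on Theorem~\ref{pequals5} or the interval lemma~\ref{twointervals}---at the cost of a longer case analysis.

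One small point to tighten: Lemma~\ref{incongruence1} as \emph{stated} carries the hypothesis ``$\iota_k(p^a)<p^a/2$ for some $a\ge 1$'', and for $p=3$ this fails at $a=1$ (one has $\iota_k(3)=2>3/2$ when $k\equiv 1\pmod*{3}$). You should therefore record the one-line check $\iota_k(9)\le 4<9/2$ (which is also noted in the proof of Theorem~\ref{pequals5}) before concluding $\iota_k(3^b)<3^b/2$ for all $b\ge 2$. With that insertion the argument is complete.
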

\begin{proof}
We only prove part a, the proof of b being similar.
By Lemma \ref{iota5b} we have $\cD_{k}(3\cdot 5^{e-1})\le 5^e$.
The assumption on 
$P_{\text{odd}}$ ensures that up to $5^e$ only powers
of two occur in $\cA_k\cup \cB_k$. It then follows by Theorem \ref{main2}
that $\cD_{k}(3\cdot 5^{e-1})=2^a\cdot 5^b$, with $a,b\ge 0$ and
$2^a\cdot 5^b\le 5^e$. If $b=0$, then we must have
$3\cdot 5^{e-1}\le 2^a<5^e$, contradicting our assumption that
$e\in \cM_5$. We have 
$\iota_k(2^a\cdot 5^b)\le 3\cdot 2^a\cdot 5^{b-1}\le z_k(2^a\cdot 5^b)$.
We require that $3\cdot 2^{a}\cdot 5^{b-1}\ge 3\cdot 5^{e-1}$.
In combination with $2^a\cdot 5^b\le 5^e$, this gives $2^a\cdot 5^b=5^e$,
completing the proof.
\end{proof}
Two simple ways to obtain a $k$ with 
$P_{\text{odd}}(k(k+1))$ large  
are to take $k$ to be a 
power of two such that $k+1$ is a prime, or to take $k$ a prime
such that $k+1$ is a power of two. This 
then leads to the
\emph{Fermat}, respectively \emph{Mersenne primes}. Conjecturally there are only finitely many Fermat primes, but infinitely many Mersenne primes. The largest known Fermat primes is $65537$, in contrast huge Mersenne primes are known. We will thus restrict to the case where $k$ is a Mersenne prime. Proposition \ref{actualthere} then 
has the following corollary.
\begin{cor}
Suppose that $p\equiv 1 \pmod*{4}$ is a prime $>5$ such that
$q:=2^p-1$ is also a prime. Then
$\cD_{q}(6\cdot 5^{e-1})=2\cdot 5^e$,
for all those $e$ in $\cM_5$  for which $2\cdot 5^e<q$. 
\end{cor}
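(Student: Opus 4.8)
The plan is to read this off from Proposition \ref{actualthere}b applied with $k=q$, so the only real work is to check that $q=2^p-1$ satisfies the congruence hypotheses there and to evaluate $P_{\text{odd}}(q(q+1))$. First I would verify the assumption $k\equiv 1\pmod*{3}$ that silently underlies the $5^e$-wildness results of this section: since $p$ is odd, $2^p\equiv(-1)^p\equiv-1\pmod*{3}$, so $q\equiv 1\pmod*{3}$. Next I would pin down $q$ modulo $25$. Because $2$ has order $4$ modulo $5$ and $p\equiv 1\pmod*{4}$, we get $2^p\equiv 2\pmod*{5}$, hence $q\equiv 1\pmod*{5}$; this already excludes $q\equiv 18\pmod*{25}$, since $18\equiv 3\pmod*{5}$. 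To exclude $q\equiv 6\pmod*{25}$ I would use that $2$ has order $20$ modulo $25$ and that $2^5=32\equiv 7\pmod*{25}$: thus $2^p\equiv 7\pmod*{25}$ forces $p\equiv 5\pmod*{20}$, which is impossible for a prime $p>5$. Hence $q\equiv 1\pmod*{5}$ and $q\not\equiv 6,18\pmod*{25}$, so $k=q$ meets all hypotheses of Proposition \ref{actualthere}b.

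The remaining point is to identify the quantity $P_{\text{odd}}(q(q+1))$ occurring in that proposition. Writing $q(q+1)=(2^p-1)\cdot 2^p$, the factor $2^p$ contributes only the prime $2$, while $2^p-1=q$ is prime by hypothesis; therefore $q$ is the unique odd prime divisor of $q(q+1)$ and $P_{\text{odd}}(q(q+1))=q$. Substituting this into the conclusion of Proposition \ref{actualthere}b gives $\cD_q(6\cdot 5^{e-1})=2\cdot 5^e$ for every $e\in\cM_5$ with $2\cdot 5^e<q$, which is exactly the assertion.

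There is no genuine obstacle here: the statement is a corollary, and its entire content is the elementary congruence bookkeeping above. The only mild traps to avoid are using the order of $2$ modulo $25$ (which is $20$) rather than reading off $\varphi(25)$ uncritically, and noticing that the hypothesis $p\equiv 1\pmod*{4}$ is precisely what places $q$ in the residue class $1\pmod*{5}$, so that Proposition \ref{actualthere}b applies directly without any separate treatment of the case $q\equiv 3\pmod*{5}$.
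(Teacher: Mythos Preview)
Your proposal is correct and follows exactly the paper's approach: apply Proposition~\ref{actualthere}b with $k=q$, verifying the congruence conditions $q\equiv 1\pmod*{5}$ and $q\not\equiv 6\pmod*{25}$ (the paper states these without the detailed justification you give) and observing that $P_{\text{odd}}(q(q+1))=q$. Your additional check that $q\equiv 1\pmod*{3}$ is a reasonable sanity remark, though not formally needed to invoke the proposition since $P_{\text{odd}}(q(q+1))=q$ already follows directly from the factorisation $q(q+1)=q\cdot 2^p$.
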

\begin{proof}
This follows from part b, on noting that if $p\equiv 1\pmod*{4}$ and $p>5$,
then $2^p-1\equiv 1\pmod*{5}$ and 
$2^p-1\not\equiv 6\pmod*{25}$.
\end{proof}
We note that $2^{82589933}-1$, the largest known prime number as of Oct.\,2022,  satisfies the conditions of the corollary.
A similar corollary of part a is not possible as any
Mersenne prime $>3$ is $\not\equiv 3\pmod*{5}$.
\par For $n$ large enough the behavior of $\cD_q(n)$ with
$q$ a Mersenne prime is particularly easy as the
following corollary of 
Theorems \ref{oldmaincorrected} and \ref{main2} shows.
\begin{cor}
Let $q=2^p-1>3$ be a Mersenne prime. Then for
$n\ge n_q^o(3/2)$ we have $$\cD_q(n)=\min\{m\ge n:m=2^a\cdot q^b\text{~and~}a,b\ge 0\}.$$
Equality already
holds for $n\ge n_q^o(5/3)$ if $p\equiv 1\pmod*{4}$ and 
$p>5$.
\end{cor}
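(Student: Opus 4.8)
The plan is to make the sets $\cA_q$ and $\cB_q$ of Theorem \ref{oldmaincorrected} completely explicit and then simply quote the two main theorems. Since $q=2^p-1>3$ is prime, $p$ is a prime $\ge 3$, and $q$ is an odd prime with $q\neq 3$ and $q+1=2^p$. Hence the prime divisors of $q$ are just $\{q\}$, those of $q(q+1)$ are $\{2,q\}$, and the ``$9\nmid m$'' clauses in the definitions of $\cA_q$ and $\cB_q$ are vacuous because no power of $q$, and no $2^aq^b$, is divisible by $9$. First I would record that this yields $\cA_q=\{q^b:b\ge 0\}$ and $\cB_q=\{2^aq^b:a\ge 1,\,b\ge 0\}$, so that $\cA_q\cup\cB_q=\{2^aq^b:a,b\ge 0\}$ and, for every $n\ge 1$, the quantity $\min\{m\ge n:m\in\cA_q\cup\cB_q\}$ coincides with the right-hand side of the corollary.

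For the first assertion I would invoke Theorem \ref{oldmaincorrected}, which applies since $q>2$: it gives $\cD_q(n)\le\min\{m\ge n:m\in\cA_q\cup\cB_q\}$, with equality whenever the interval $[n,3n/2)$ contains an element of $\cA_q\cup\cB_q$. By the definition of $n_q^o(3/2)$, for every $n\ge n_q^o(3/2)$ the interval $[n,3n/2)$ contains an integer of the form $2^aq^b\in\cA_q\cup\cB_q$, so equality holds, which is exactly the first claim.

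For the second assertion, assuming $p\equiv 1\pmod 4$ and $p>5$, I would check the congruence hypotheses of the last clause of Theorem \ref{main2}. As $p$ is odd, $2^p\equiv 2\pmod 3$, so $q\equiv 1\pmod 3$; as $p\equiv 1\pmod 4$, $2^p\equiv 2\pmod 5$, so $q\equiv 1\pmod 5$; and $q\not\equiv 6,18\pmod{25}$ because $q\equiv 6\pmod{25}$ would force $p\equiv 5\pmod{20}$, hence $5\mid p$, while $q\equiv 18\pmod{25}$ would force $p\equiv 18\pmod{20}$, impossible for odd $p$ (the same computation as in the preceding corollary). Thus $q$ is an odd prime dividing $k=q$ and meets all the hypotheses $q\equiv 1\pmod 3$, $q\equiv 1,3\pmod 5$, $q\not\equiv 6,18\pmod{25}$, $q>1$ of that clause, which therefore gives $\cD_q(n)=\min\cS_{q,n}$ for every $n\ge n_q^o(5/3)$. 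Since $\cS_{q,n}=\{m\in\cA_q\cup\cB_q:m\ge n\}$, this is precisely the stated identity.

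There is no genuine obstacle here: the corollary is a direct consequence of Theorems \ref{oldmaincorrected} and \ref{main2}, all of the real work having been done in those results and in the analysis of intervals containing numbers of the form $2^a\cdot 5^b$. The only point requiring care is the congruence bookkeeping for $q=2^p-1$ modulo $3$, $5$ and $25$; the hypotheses $p\equiv 1\pmod 4$ and $p>5$ are needed exactly to place $q$ in a residue class modulo $25$ that is \emph{not} among the excluded classes of \eqref{excludedcongruenceclasses}, so that Theorem \ref{main2} is applicable.
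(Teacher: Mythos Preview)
Your proof is correct and follows exactly the route the paper intends: the corollary is stated there without proof, merely as ``the following corollary of Theorems \ref{oldmaincorrected} and \ref{main2}'', and you have filled in precisely the details one would expect---making $\cA_q\cup\cB_q=\{2^aq^b:a,b\ge0\}$ explicit, invoking Theorem~\ref{oldmaincorrected} with the interval $[n,3n/2)$ for the first assertion, and verifying the congruence hypotheses on $q$ modulo $3$, $5$ and $25$ so that the final clause of Theorem~\ref{main2} (with $k=q$ odd and $p=q\mid k$) yields the second assertion.
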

One finds that $n_7^{o}(3/2)=131$ and some computation leads, for every $n\ge 1$, to  
$$\cD_7(n)=\min\{m\ge n:m=2^a\cdot 7^b\text{~and~}a,b\ge 0\}.$$
For Mersenne primes $q>7$ the numbers
$n_q^o(3/2)$ and $n_q^o(5/3)$  seem to be 
huge \cite{LLMT}, and hence a complete
characterization infeasible.
For example,
$n_{131071}^o(5/3) 
=
\lceil 2^{16} \cdot 131071^{23897}\cdot 3/5 \rceil
$,
a number having $122\,298$ digits!!

\section{Proofs of Theorems \ref{main2} and \ref{thm2refined}}
\label{sec:mainproof}
We first prove Theorem 
\ref{thm2refined}, since it will be used in our proof of Theorem \ref{main2}.
\begin{proof}[Proof of Theorem \ref{thm2refined}]
Recall that $\cS_{k,n}:= \{m \in \cA_k \cup \cB_k : m \ge n\}.$ 
Let $m=\cD_k(n)$ be a discriminator value with $m\not\in \cS_{k,n}$.
If $z(m)=m$, then $m\in \cS_{k,n}$ by \eqref{Dkn=n}and so
we may assume that $z(m)<m$.

By Lemma \ref{neverwild} it follows that $m\in \cM$, with $\cM$ as in Lemma \ref{appearance}.
This entails by Lemma \ref{appearance} that $n\le z(m)\le \sigma_km$ and hence 
$m\ge n/\sigma_k$.
By assumption there is an integer $m_1\in \cS_{k,n}$ with $m<n/\sigma_k$, showing that
$m_1$ is a better discriminator than $m$. We conclude that 
$m\in \cS_{k,n}$. In case $m$ is even, we repeat this proof, but this time using the inequality $z(m)\le \tau_k m$.
\end{proof}

\begin{proof}[Proof of Theorem \ref{main2}]
By Theorem \ref{1+2} we may assume that
$k>1$.
Suppose that $k\not\equiv 1\pmod*{3}$. 
Thus $6 \mid k(k+1)$, and so all integers of the form $2^a\cdot 3^b$ 
with $a \ge 1$ and $b \in \{0, 1\}$ belong to $\cB_k$. 
We have $\sigma_k\le 3/5$. 
Now by  Theorem \ref{thm2refined} it
suffices to check that 
for every $n \ge 2$ there is an integer $m_1$ of the form $2^a\cdot 3^b$  with $a \ge 1$ and $b \in \{0,1\}$ in
the interval $[n,5n/3)\subseteq [n,n/\sigma_k)$. This can be done 
with help of the substitutions $3 \to 2^2$ and $2 \to 3$, 
which can be applied starting from $n=2$.
\par Next suppose that $k\equiv 1\pmod*{3}$ and $k \equiv 0, 4\pmod*{5}$. Now $10 \mid k(k+1),$ and so all 
integers of the form $2^a\cdot 5^b$ with  $a \ge 1$ and $b \ge 0$ belong to $\cB_k$.
We apply Theorem \ref{oldmaincorrected}.
It is easy to see that for every $n\ge 22$ the interval $[n,2n/3)$ contains
an integer of the form $2^a\cdot 5^b$ with $a\ge 1$ and $b\ge 0$.
Thus if $m\not\in \cS_{k,n}$, then $n\le 21$.
The odd primes $\le 21$ are not wild
by Lemma \ref{wildtopotentiallywild}.
This leaves us only with $9$. However, $9$ is not 
potentially wild (see Tab.\,\ref{exponent3mod4}) and so certainly not wild.
\par Put $m=\cD_k(n)$. Either $z(m)=m$ or $z(m)<m$, in which case we
have $m=p^e\cdot m_1$ with $z(m_1)=m_1$ and $p^e$ a wild prime power.
By Theorem \ref{pequals5} we conclude that $p=5$. It follows that either
$z(m)=m$ or $m=5^bm_1$ with $z(m)=3m/5$ and $z(m_1)=m_1$. In part I we
established that the only discriminator
values $m$ with $z(m)=m$ satisfy $m\in \cA_k\cup \cB_k$.
These discriminate $U_0(k),\ldots,U_{m-1}(k)$
and hence $\cD_k(n)\le \min\{\cS_{k,n}\}$. It remains to deal with the case where
$m=5^b\cdot m_1$ with $b\ge 1$, $z(m)=3m/5$ and $z(m_1)=m_1$.
On invoking Lemma \ref{laatsteloodje} the
proof of \eqref{mainres} is now completed.

\par Let $p>2$ be a prime divisor of 
$k(k+1)$.
If $n\ge n_p(5/3)$, then the interval
$[n,5n/3)$ contains an even integer of the 
form $2^a\cdot p^b$. This number is
in $\cB_k$ and $\ge n$ and so in $\cS_{k,n}$.
As it is less than $5n/3$, it follows by \eqref{mainres} that
$\cD_k(n)=\min \cS_{k,n}$.
\par By assumption $k$ has an odd 
prime divisor $p$. If $n\ge n_p^o(5/3)$, then the interval
$[n,5n/3)$ contains an integer of the 
form $2^a\cdot p^b$. This number is
in $\cA_k\cup \cB_k$ and $\ge n$ and so in $\cS_{k,n}$.
As it is less than $5n/3$, it follows by \eqref{mainres} that
$\cD_k(n)=\min \cS_{k,n}$.
\end{proof}

\section{Effective bounds 
for wild prime powers and elements in $\cF_k$} 
\label{effective}
It was proved in part I that the set $\cF_k$ is
finite for $k>1$. Here, we precise our proof by showing that this set
can be effectively determined and establish Theorem \ref{largefkbound}.
\begin{definition}[prime types]
We say that a prime $p$ is of
\begin{itemize}
\item type I\phantom{V} if $p\mid k$;
\item type II\phantom{I} if $p\mid k+1$;
\item type III if $p\nmid k(k+1)$, $e_p(k)=1$;
\item type IV if $p\nmid k(k+1)$, $e_p(k)=-1$,
\end{itemize}
with $e_p(k)$ as defined in 
\eqref{epk}.
\end{definition}

\begin{lemma}
\label{finitelymanydiscs}
Let $k\ge 2$.
There are only finitely many odd discriminators which are not made up of
primes $p$ dividing $k$.
\end{lemma}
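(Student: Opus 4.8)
The plan is to reduce everything to the finiteness of the exceptional set $\cF_k$. By \eqref{ABF} the set of values assumed by $\cD_k$ equals $\cA_k\cup\cB_k\cup\cF_k$. The first move is to discard $\cB_k$: it consists of even integers, so any \emph{odd} discriminator lies in $\cA_k\cup\cF_k$. The second move is to recall that, by the definition of $\cA_k$ in Theorem \ref{oldmaincorrected}, one has $\cA_k\subseteq\cP(k)$, the set of integers all of whose prime factors divide $k$. Hence an odd discriminator that is \emph{not} composed only of primes dividing $k$ is forced into $\cF_k$, and since $k\ge 2$, Lemma \ref{fktrivial}a says $\cF_k$ is finite; this would complete the proof.

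If instead one wants an argument that does not quote the qualitative finiteness of $\cF_k$ (which is the point of view that is made effective later in this section), I would split by Corollary \ref{cor:wppmotivation}: an odd discriminator $m=\cD_k(n)$ is either in $\cP(k(k+1))$ or of the shape $a\cdot p^b$ with $a\in\cP(k(k+1))$, $b\ge 1$ and $p\nmid k(k+1)$. In the latter case $p^b$ is a wild prime power and $m\notin\cP(k)$ is automatic; finiteness of this family is precisely what the wild-prime-power machinery (Lemmas \ref{neverwild} and \ref{wildtopotentiallywild}, Proposition \ref{prop:nowildpowersgeneral}, and the quantitative bounds proved below) delivers. In the former case, if $m\notin\cP(k)$ then $m$ has an odd prime factor $q\mid k+1$ with $q\nmid k$; writing $m=q^c m'$ with $(q,m')=1$, one checks via Lemma \ref{15+16} (using $m'\in\cP(k(k+1))$ and $z(m')\mid m'$) that $U_i(k)\equiv U_j(k)\pmod*{m}$ for $i=m'(q^c-1)/2$ and $j=m'(q^c+1)/2$, whence $\iota_k(m)\le j<m$. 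Since the interval $[(q^c+1)/2,q^c)$ contains a power of two $2^s$, the even integer $2^s m'<m$ is an equally good discriminator, contradicting the minimality of $\cD_k(n)$, so this case does not arise at all.

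The slick route has essentially no obstacle. In the self-contained route the delicate point is the wild-prime-power case: there are infinitely many \emph{potentially} wild prime powers, so one cannot stop at Lemma \ref{wildtopotentiallywild} and must invoke the interval estimates of Section \ref{sec:interval} to replace a wild prime power $p^b$ by a number $2^a\cdot 5^b$ lying in a slightly shorter interval, exactly as in the proof of Theorem \ref{pequals5}. I would therefore present the slick proof here and leave the constructive bookkeeping to the subsequent lemmas of Section \ref{effective}, where it is carried out in any case.
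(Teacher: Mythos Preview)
Your slick route is logically correct: any odd discriminator not composed solely of primes dividing $k$ lies outside $\cA_k$ (since $\cA_k\subseteq\cP(k)$) and outside $\cB_k$ (being odd), hence in $\cF_k$, and Lemma~\ref{fktrivial}a applies. However, this route is quite different from what the paper does, and the difference matters for the surrounding section.

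The paper's proof is constructive: it writes $m=p_1^{a_1}m_1$ with $p_1$ of type~IV and $m_1\in\cP(k(k+1))$, then bounds $p_1^{a_1}$ by finding a pair $(u,v)$ with $\frac{2}{3}\cdot 2^{u+1}<p^v<\frac{3}{4}\cdot 2^{u+1}$ for some odd $p\mid k(k+1)$ and showing that $2^{a-u}p^v m_1$ is a better discriminator whenever $a>u$; it then iterates the same interval trick to bound each prime power $q^{e_q}\| m_1$. This substitution scheme is precisely what Lemma~\ref{lem:10} makes effective and what Lemma~\ref{lem:disc} quantifies (its proof opens with ``We keep the notation from the proof of Lemma~\ref{finitelymanydiscs}''). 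Your slick proof, while valid, does not produce this machinery, so the subsequent lemmas would have nothing to build on; the ``constructive bookkeeping'' you propose to defer \emph{is} the content of this lemma.

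Your self-contained Case~2 argument (an odd $m\in\cP(k(k+1))\setminus\cP(k)$ is never a discriminator, via $i=m'(q^c-1)/2$, $j=m'(q^c+1)/2$ and a power of two in $[(q^c+1)/2,q^c)$) is correct and is actually more explicit than the paper, which silently imports from Part~I that odd discriminators with $z(m)=m$ lie in $\cA_k$. Your Case~1, by contrast, is not a proof: Lemmas~\ref{neverwild} and~\ref{wildtopotentiallywild} and Proposition~\ref{prop:nowildpowersgeneral} do not by themselves bound the set of discriminators containing a wild prime power (there are infinitely many potentially wild prime powers, and the cofactor $m_1$ is also unbounded a~priori), and ``the quantitative bounds proved below'' rest on the very argument you are omitting.
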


\begin{proof}
Let $m$ be an odd discriminator value not made up only of primes of type
I. Then by Corollary \ref{cor:wppmotivation} we can write $m=p_1^{a_1}\cdot m_1$,
where $p_1$ is of type IV and unique, and $m_1$ is only made up of primes of type
I or II. We will show that both $p_1^{a_1}$ and $m_1$ are bounded. Note that
$$
z(m)={\text{\rm lcm}}[z(p_1^{a_1}),z(m_1)].
$$
We may assume  that $z(p_1^{a_1})=p_1^{a_1-1}(p_1+1)/2,$ for otherwise
$z(m)<m/2$, contradicting our assumption that $m$ is a discriminator value. 
If there is a power of $2$, say $2^b$, in the 
the interval $[p_1^{a_1-1}(p_1+1)/2,
p_1^{a_1}]$, 
then $2^b\cdot m_1<m$ is a better discriminator than $m$.
We thus may assume there is no power of $2$ in this
interval, which guarantees the existence of
an integer $a$ such that
$$
p_1^{a_1}< 2^{a+1}<p_1^{a_1}\left(1+1/p_1\right).
$$
Thus, $p_1^{a_1}>(p_1/(p_1+1))2^{a+1}$. Since $p_1\ge 3$, it follows
that $p_1^{a_1}>(3/4)2^{a+1}.$ Further,
$$
z(p_1^{a_1})=p_1^{a_1}\left(\frac{p_1+1}{2p_1}\right)\le
  \frac{2p_1^{a_1}}3 < \frac{2^{a+2}}3.
$$
Now let $p$ be any odd prime factor dividing $k(k+1)$. 
Since $k(k+1)$ cannot be a power of $2$ for $k>1$ 
such a prime $p$ exists. We search for
a pair of positive integers $(u,v)$ such that
$$
\frac{2}{3}\cdot 2^{u+1}<p^v<\frac{3}{4}\cdot 2^{u+1}.
$$
This we find quickly, since the above condition is equivalent to
\begin{equation}
\label{eq:u}
\left\{(u+1)\frac{\log 2}{\log p}\right\}\in
\left(\frac{\log(4/3)}{\log p},\frac{\log(3/2)}{\log p}\right),
\end{equation}
and the sequence of fractional parts $\{nx\}$ is dense (even uniformly
distributed) for irrational $x$. Let $u$ be the minimal positive 
integer with this
property. 
Note that the corresponding $v$ is uniquely determined.
By contradiction we will now show that
$a\le u$, leading to the bound
\begin{equation}
\label{eq:2}    
\ell:=p_1^{a_1}<2^{u+1}.
\end{equation}
Assume that $a>u$ is any integer. 
We note that
$$p_1^{a_1-1}(p_1+1)/2<(2/3)2^{a+1}<2^{a-u}\cdot p^v<(3/4)2^{a+1}<p_1^{a_1}.$$
Thus,
$$
2^{a}<\frac{p_1^{a_1-1}(p_1+1)}{2}<2^{a-u} 
\cdot p^v<p_1^{a_1}<2^{a+1},
$$
and we conclude that $m_2:=m_1 2^{a-u}p^v$
has the property that $z(m_2)=m_2$. 
If $n$ satisfies $\cD_k(n)=m$, then
$n\le m_1\cdot z(p_1^{a_1})$.
The even integer $m_2$ 
satisfies $m_1\cdot z(p_1^{a_1})<m_2<m=m_1\cdot p_1^{a_1}$ and
discriminates the integers $U_0(k),\ldots,U_{m_1z(p_1^{a_1})-1}(k)$, contradicting 
the (discriminatory) minimality of $m$.
This shows that, if
$p_1^{a_1}$ is such that $p_1^{a_1}<2^{a+1}<p_1^{a_1}+p_1^{a_1-1}$ and
$m$ is actually a discriminator, then $a\le u$ and
\eqref{eq:2} is satisfied. 

Fix $p_1^{a_1}=\ell$ and let $t=z(\ell)<\ell$. Now we look at the
numbers $m_1 \cdot t<m_1\cdot \ell$ with $m_1>1$. Let $q$ be any odd prime dividing $m_1$. Let
$(e_q,f_q)$ be the first pair of indices such that $q^{e_q}
\cdot t<2^{f_q}<q^{e_q}\cdot \ell$ (it exists because of an argument with fractional parts as above). Then,
if $q^e$ divides $m_1$ with $e\ge e_q$, we can replace $q^e$ by $q^{e-e_q}
\cdot 2^{f_q}$. This has the effect of replacing $m_1$ by $m_1 \cdot 2^{f_q}\cdot q^{-e_q},$
which is a better discriminator for the numbers $n\le m_1\cdot t$ than the number $m_1\cdot \ell$ is.
This can be done for each $q$ dividing $m_1$. Since there are only finitely
many $q$ (namely, odd primes of type I and II), we see that $m$ is
bounded.
\end{proof}

To make the argument effective we need to find
$N$ so that the containment condition \eqref{eq:u} holds  
for some positive
integer $u\le N$ and bound $\ell$ in \eqref{eq:2}.
\par Let $\theta=\log 2/\log p$. Note that $\theta\not\in \Q$.  Recall now
that the \textit{discrepancy} $D_N$ of a sequence $\{a_m\}_{m=1}^N$
of real numbers (not necessarily distinct) is defined as
$$
D_N=\sup_{0\le \gamma\le 1}\left|\frac{\#\{m\le N~:~\{a_m\}<\gamma\}}{N}-
\gamma\right|.
$$
  From the above definition we see that the inequality
$$
\#\{m\le N~:~\alpha\le \{a_m\}<\beta\}\ge (\beta-\alpha)N-2D_N N
$$
holds for all $0\le \alpha\le \beta\le 1$.
Thus, setting $a_m=m\theta$ for all $m=1,\ldots,N$,  and letting
$$
I=\left(\frac{\log(4/3)}{\log p},\frac{\log(3/2)}{\log p}\right),
$$
which is an interval of length $\log(9/8)/\log p$, we have
\begin{equation}
\label{eq:ineqforN}
\#\{m\le N: \{a_m\}\in I\}  \ge   | I |  N
-2D_N N=   \left(\frac{\log
(9/8)}{\log p}\right)N-2D_N N.
\end{equation}
In particular, if the right-hand side is positive, then there is $u\le
N$ with $\{a_u\}\in I$.
We now upper bound $D_N$. The Koksma-Erd\H os-Tur\'an
inequality (see Lemma 3.2 in~\cite{KuiNied})
bounds the discrepancy $D_N$ by
\begin{equation}
\label{eq:ErdosTuran}
D_N\le \frac{3}{H} + \frac{3}{N}\sum_{m=1}^{H}\frac{1}{m\|a_m\|},
\end{equation}
where $\|x\|$ is the distance from $x$ to the nearest
integer and $H\le N$ is an arbitrary positive integer.
\medskip

To bound $\|a_m\|$, note that
$$
\|a_m\|=\left|m\frac{\log 2}{\log p}-t\right|= \frac{1}{\log
p}|m\log 2-t\log p|,
$$
where $t$ is an integer such that $t\le m(\log 2)/(\log p)+1<2m$. 
Note that $\|a_m\|\ne 0,$ since $\theta\in\R\setminus \Q$. Thus,
$|m\log 2-t\log p| \neq 0$ and a lower bound for it can be
obtained by using the theory of linear forms in logarithms.

Let us recall Matveev's main theorem \cite{Mat}. It applies to algebraic
numbers,
but we
recall it here only for rational numbers.
For a rational number $\gamma=r/s$ with coprime integers $r$ and $s>0$,
let $h(\gamma):=\max\{\log |r|, \log s\}$.

\begin{theorem}[Matveev \cite{Mat}]
\label{Matveev11} Let $\gamma_1,\ldots,\gamma_k$ be positive rational
numbers, let $b_1,\ldots,b_k$ be non-zero integers, and assume that
\begin{equation}
\label{eq:Lambda}
\Lambda:=\gamma_1^{b_1}\cdots\gamma_k^{b_k} - 1,
\end{equation}
is non-zero. Then for every 
$$
B\geq\max\{|b_1|, \ldots, |b_k|\}
$$
we have
$$
\log |\Lambda| > -1.4\cdot 30^{k+3}\cdot k^{4.5}\,(1+\log B)\,
h(\gamma_1)\cdots h(\gamma_k).
$$
\end{theorem}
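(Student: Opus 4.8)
Theorem \ref{Matveev11} is quoted verbatim from Matveev's paper \cite{Mat} and is used in this article as a black box; its proof belongs squarely to the Gel'fond--Baker theory of linear forms in logarithms and will not be reproduced here. Still, it is worth recording the shape of the argument that lies behind such a statement, specialised to the rational case we need. The plan would be to pass first from $\Lambda=\gamma_1^{b_1}\cdots\gamma_k^{b_k}-1$ to the additive form $L=b_1\log\gamma_1+\cdots+b_k\log\gamma_k$, observing that $|\Lambda|$ is small exactly when $|L|$ is, so that it suffices to bound $|L|$ from below. One then argues by contradiction, assuming $|L|$ to be smaller than the asserted bound while keeping all data (the degrees, the heights $h(\gamma_j)$, and $B$) fixed.

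The next step is the construction of an \emph{auxiliary function}. Using Siegel's lemma (the Thue--Siegel pigeonhole principle), one would produce a nonzero polynomial $P(X_1,\ldots,X_k)$ with rational integer coefficients, of carefully chosen multidegree and of controlled height, so that the analytic function $F(z)=P(\gamma_1^{\,z},\ldots,\gamma_k^{\,z})$, with $\gamma_j^{\,z}=\exp(z\log\gamma_j)$, vanishes together with several of its derivatives at all integers $z=0,1,\ldots,S$ for a suitable $S$. The point is that the number of coefficients of $P$ can be arranged to exceed the number of linear conditions imposed, so such a $P$ exists with an explicitly bounded height.

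The decisive step is the \emph{extrapolation}. Because $|L|$ is assumed tiny, the quantities $\gamma_j^{\,z}$ are extremely close to powers of a single fixed number, with an error governed by $|L|$; combining this with the maximum modulus principle to estimate $|F|$ on larger disks, one shows that $F$ must in fact vanish to high order at many \emph{more} integer points than were built in. Iterating this forces $P$ to vanish at so many points, to such high multiplicity, that it must be identically zero, contradicting a \emph{zero estimate} (a multiplicity estimate on the algebraic group $\mathbb{G}_m^k$, in the Philippon--Masser--W\"ustholz circle of ideas) bounding how often such a $P$ can vanish. This contradiction yields the lower bound for $|L|$, hence for $|\Lambda|$.

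The hard part — and the reason Matveev's theorem is a landmark rather than a routine application of Baker's method — is not obtaining \emph{some} bound of this type, which has been available since the 1960s, but obtaining the \emph{fully explicit} constant with the remarkably mild dependence $1.4\cdot 30^{k+3}k^{4.5}$, polynomial rather than exponential in the number $k$ of logarithms. This requires working over cyclotomic extensions, an inductive reduction in $k$, and very careful bookkeeping of all heights and degrees through the Siegel's-lemma and zero-estimate steps; it is precisely this quantitative refinement that cannot be shortcut, and it is why in the present paper we simply invoke \cite{Mat}.
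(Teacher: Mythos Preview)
Your proposal is correct and matches the paper's treatment: Theorem \ref{Matveev11} is stated without proof, cited from \cite{Mat}, and used as a black box. Your additional sketch of the Gel'fond--Baker method is accurate and informative, but the paper itself offers no such discussion.
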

In our case, we take
$$
\Lambda=2^m\cdot p^{-t}-1,
$$
which is non-zero, since $p$ is an odd prime.  Note that
$\Lambda=e^{\Gamma}-1$, where $\Gamma=m\log 2-t\log p$.
So, either $|\Gamma|\ge 1/2$, or $|\Gamma|<1/2$. If $|\Gamma|<1/2$,
then
$$
2|\Gamma|>|e^{\Gamma}-1|=|\Lambda|,
$$
and we can apply Matveev's theorem to get a lower bound 
on $|\Lambda|$ and hence on $|\Gamma|.$ Either way, we take in Matveev's theorem
$$
k=2,\quad \gamma_1=2,\quad \gamma_2=p,\quad b_1=m,\quad b_2=-t,
$$
and, noting that we can set $B:=2m$, we get
\begin{equation}
\label{eq:Matv}
2|m\log 2-t\log p|  > \exp\left(-C_1(\log 2) (1+\log (2m))  \log p\right),
\end{equation}
where $C_1=1.4\cdot 30^5\cdot 2^{4.5}$. Since
$
1.4\cdot 30^5\cdot 2^{4.5}\cdot \log 2<6\cdot 10^8-\log 2,
$
we get
$$
|m\log 2-t\log p|>\exp(-6\cdot 10^8(1+\log(2m))\log p)=
p^{-6\cdot 10^8
(1+\log (2m))}\quad {\text{\rm for}}\quad m\ge  1.
$$
We thus obtain that, if $H\ge 15$ and $2m\le H,$ then
$$
1+\log (2m)\le 1+\log H\le 1.63\log H\qquad (H\ge 15),
$$
and so the inequality \eqref{eq:Matv} leads to
$$
\frac{1}{\|a_m\|}\le (\log p)\,p^{(6\cdot 1.63) \cdot 10^8 \log
H}<p^{(10^9-2)\log H}=H^{(10^9-2)\log p}<H^{10^9\log p-2}.
$$
Thus,
$$
D_N\le 3\left(\frac{1}{H}+\frac{H^{10^9\log p-2}}{N} \sum_{m=1}^H
\frac{1}{m}
\right)<3\left(\frac{1}{H}+\frac{H^{10^9\log p-1}}{N}\right).
$$
where we trivially bounded the sum by $H$.
Choosing $H:=\left\lfloor N^{10^{-9}/\log p}\right\rfloor$ we get,
assuming
still that $H\ge 15$ and therefore that
\begin{equation}
\label{eq:hyp}
N^{10^{-9}/\log p}\ge 15,\quad {\text{\rm which is equivalent
to}}\quad N\ge 15^{10^9\log p},
\end{equation}
that
$$
D_N\le 3\Big(\frac{1}{H}+\frac{H^{10^9\log p-1}}{N}\Big)\le
3\left(\Big\lfloor N^{10^{-9}/\log
p}\Big\rfloor^{-1}+N^{-10^{-9}/\log p}\right)\le
7N^{-10^{-9}/\log p},
$$
where we use the 
trivial observation that if $x\ge 15$, then
$$
\frac{1}{\lfloor x\rfloor}+\frac{1}{x}\le 
\frac{1}{x}\Big(\frac{1}{1-\frac{1}{x}}+1\Big)\le \frac{29}{14}\cdot\frac{1}{x}<\frac{7}{3}\cdot 
\frac{1}{x}.$$
Turning now our attention to the inequality \eqref{eq:2}, we see that
\begin{equation}
\label{koksma}    
N\left(\frac{\log(9/8)}{\log p}-2D_N\right)>N\left(\frac{\log(9/8)}{\log
p}-14N^{-10^{-9}/\log p}\right).
\end{equation}
Thus, if $N\ge N_0$ with
\begin{equation}
\label{eq:N0}
N_0:= \left(\frac{15\log p}{\log(9/8)}\right)^{10^9\log p},
\end{equation}
the right--hand side of 
\eqref{koksma} is at 
least
\begin{equation}
\label{N15}
\frac{N}{15}\frac{\log(9/8)}{\log p}
\end{equation}
and hence positive.
Note that the inequality $N\ge N_0$, with $N_0$ as in \eqref{eq:N0},
ensures that the inequality \eqref{eq:hyp} is satisfied. 
Hence, we have established the following
result.
\begin{lemma}
\label{lem:10}
Let $p$ be an odd prime factor of $k(k+1)$. There is a positive integer $u$ such that
\begin{equation}
\label{eq:containment}
\left\{(u+1)\frac{\log 2}{\log p}\right\}\in \left(\frac{\log(4/3)}{\log
p},\frac{\log(3/2)}{\log p}\right)
\end{equation}
and 
$$
u+1<\left(\frac{15\log p}{\log(9/8)}\right)^{10^9\log p}.
$$
\end{lemma}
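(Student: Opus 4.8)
The plan is to observe that the entire argument has in effect already been carried out in the discussion preceding the statement, so that the proof is simply a matter of assembling the pieces and fixing the free parameter $N$. I would begin by setting $\theta:=\log 2/\log p$, which is irrational since $p$ is odd, and by feeding the sequence $a_m:=m\theta$, $1\le m\le N$, into the discrepancy inequality \eqref{eq:ineqforN}. With $I:=(\log(4/3)/\log p,\log(3/2)/\log p)$ (of length $\log(9/8)/\log p$) this yields
$$\#\{m\le N:\{a_m\}\in I\}\ \ge\ N\Big(\frac{\log(9/8)}{\log p}-2D_N\Big),$$
so it suffices to force the right-hand side to be positive, i.e.\ to show $D_N<\tfrac12\cdot\log(9/8)/\log p$ for a suitable $N$, and then take for $u+1$ any index $m$ landing in $I$.

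Next I would bound $D_N$. Applying the Koksma--Erd\H os--Tur\'an inequality \eqref{eq:ErdosTuran} with cutoff $H:=\lfloor N^{10^{-9}/\log p}\rfloor$, the only substantive input needed is a lower bound for $\|a_m\|=\frac{1}{\log p}|m\log 2-t\log p|$, where $t$ is the nearest integer and $t<2m$. Writing $\Lambda:=2^m p^{-t}-1\neq 0$ and applying Matveev's theorem (Theorem \ref{Matveev11}) with $k=2$, $\gamma_1=2$, $\gamma_2=p$, $b_1=m$, $b_2=-t$, $B=2m$ gives $\|a_m\|^{-1}<H^{10^9\log p-2}$ whenever $2m\le H$ and $H\ge 15$. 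Bounding the harmonic sum trivially by $H$ then gives $D_N<3(H^{-1}+N^{-1}H^{10^9\log p-1})$, and the choice of $H$ collapses this to $D_N\le 7N^{-10^{-9}/\log p}$, valid once $N\ge 15^{10^9\log p}$ (this is exactly condition \eqref{eq:hyp}).

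Finally I would combine the two estimates: if $N\ge N_0$ with $N_0$ as in \eqref{eq:N0}, then in particular $N\ge 15^{10^9\log p}$, and \eqref{koksma}--\eqref{N15} show that the count above is $\ge \frac{N}{15}\cdot\frac{\log(9/8)}{\log p}\ge 1$. Choosing $N$ to be the least integer with $N\ge N_0$ therefore produces an $m$ with $1\le m\le N$ and $\{m\theta\}\in I$; putting $u:=m-1$ gives \eqref{eq:containment}, and $u+1=m\le N<N_0$ (the integer $m$ cannot equal the non-integral $N_0$; alternatively one absorbs the slack into the constant $15$, since $N_0$ was chosen with room to spare).

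The main — and essentially the only — obstacle is the application of Matveev's bound to control $\|a_m\|$ effectively; everything else is bookkeeping with the two discrepancy inequalities and the harmonic sum. The one point that needs mild care is tracking the numerical constant $1.4\cdot 30^5\cdot 2^{4.5}\log 2<6\cdot 10^8$ coming out of Matveev so that the final exponent is the clean $10^9\log p$, and checking that the hypotheses $H\ge 15$ and $2m\le H\le N$ remain consistent with the choice $H=\lfloor N^{10^{-9}/\log p}\rfloor$ once $N\ge N_0$.
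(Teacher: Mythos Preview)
Your proposal is correct and follows exactly the paper's own route: the paper derives \eqref{eq:ineqforN}--\eqref{N15} in the discussion immediately preceding the lemma and then simply declares ``Hence, we have established the following result,'' so Lemma~\ref{lem:10} has no separate proof beyond assembling those estimates with the choice $N=N_0$, which is precisely what you do. Your acknowledgment of the minor endpoint issue ($u+1\le N$ versus $u+1<N_0$) is even slightly more careful than the paper, which glosses over it entirely.
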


The argument can be iterated to give an upper bound on the largest
element of $\cF_k$.

\begin{lemma}
\label{lem:disc}
If $m$ is an odd discriminator not entirely made up of primes dividing
$k$, then
$$
m<2^{(k+1)^{10^{10}\log\log (k+1)}}.
$$
\end{lemma}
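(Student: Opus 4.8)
The plan is to make the proof of Lemma \ref{finitelymanydiscs} quantitative, by inserting the effective estimate of Lemma \ref{lem:10} and then iterating that same estimate once more. As in the proof of Lemma \ref{finitelymanydiscs}, write $m=\ell\cdot m_1$ with $\ell=p_1^{a_1}$, where $p_1$ is the unique prime of type IV dividing $m$ and $m_1$ is built only from primes of type I and II; since $m$ is odd, so are $\ell$ and $m_1$, and we may assume $z(\ell)=p_1^{a_1-1}(p_1+1)/2=:t$, for otherwise $z(m)<m/2$ and $m$ is not a discriminator value at all.

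First I would bound $\ell$. Since $k\ge 2$, the integer $k(k+1)$ is not a power of two, so it has an odd prime factor $p$, and every such $p$ satisfies $p\le k+1$. Applying Lemma \ref{lem:10} to this $p$ produces an integer $u$ with $u+1<\big(15\log p/\log(9/8)\big)^{10^{9}\log p}$, and the elementary substitution argument of the proof of Lemma \ref{finitelymanydiscs} (inequality \eqref{eq:2}) gives $\ell<2^{u+1}$. As the bound for $u+1$ is increasing in $p$ and $p\le k+1$, we obtain
$$\log_2\ell< N_0,\qquad N_0:=\Big(\frac{15\log(k+1)}{\log(9/8)}\Big)^{10^{9}\log(k+1)}.$$

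Next I would bound the exponents occurring in $m_1$. For each odd prime $q\mid m_1$, the argument in the last paragraph of the proof of Lemma \ref{finitelymanydiscs} locates the first pair $(e_q,f_q)$ with $q^{e_q}t<2^{f_q}<q^{e_q}\ell$, and then the exponent of $q$ in $m_1$ is forced to be $<e_q$: otherwise, replacing the factor $q^{e_q}$ by $2^{f_q}$ yields an even integer $m_1'<m$ with $z(m_1')=m_1'>m_1t\ge n$ (using Lemma \ref{appearance}, noting that $z(m_1)=m_1$ already forces $9\nmid m_1$ in the special case), contradicting the minimality of $m=\cD_k(n)$. To bound $e_q$ effectively I would rerun the Koksma--Erd\H os--Tur\'an plus Matveev computation of Section \ref{effective} verbatim, but with the roles of $2$ and $q$ interchanged and with the target interval $(\log_2 t,\log_2\ell)$, which has length $\log_2(\ell/t)=\log_2\!\big(2p_1/(p_1+1)\big)\ge\log_2(3/2)$ --- a fixed positive constant, in place of the shrinking interval of length $\log(9/8)/\log p$ used before. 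The discrepancy bound $D_N\le 7N^{-10^{-9}/\log q}$ then forces $e_q\le\big(14/\log_2(3/2)\big)^{10^{9}\log q}$, and after coarsening the constants this is $\le N_0$ for every odd prime $q\le k+1$.

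Finally I would assemble the pieces. The number of odd primes dividing $k(k+1)$ is at most $\omega(k(k+1))\le\log_2\!\big(k(k+1)\big)\le 2\log_2(k+1)$, each such prime is $\le k+1$, and each occurs in $m_1$ to an exponent $<N_0$; hence $m_1<(k+1)^{2N_0\log_2(k+1)}$ and
$$\log_2 m=\log_2\ell+\log_2 m_1< N_0+2\big(\log_2(k+1)\big)^2N_0=\big(1+2(\log_2(k+1))^2\big)N_0.$$
Writing $\lambda:=\log(k+1)$, one has $\log N_0=10^{9}\lambda\log\!\big(15\lambda/\log(9/8)\big)\le 2\cdot 10^{9}\lambda\log\log(k+1)$ once $k$ is large, so $N_0\le(k+1)^{2\cdot10^{9}\log\log(k+1)}$; the extra polynomial-in-$\lambda$ factor is absorbed by enlarging the exponent to $10^{10}\log\log(k+1)$, giving $m<2^{(k+1)^{10^{10}\log\log(k+1)}}$ for all sufficiently large $k$, the finitely many remaining values being covered by the generous constant $10^{10}$ (or by a direct cruder estimate). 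The only genuine analytic input is the linear-forms-in-logarithms estimate, and that is already packaged in Lemma \ref{lem:10}; I expect the main obstacle to be pure bookkeeping --- checking that the second application of Koksma--Erd\H os--Tur\'an still goes through when the target interval has constant length, and that the various coarsened constants are mutually consistent so that the two nested exponents really collapse into the single exponent $10^{10}\log\log(k+1)$.
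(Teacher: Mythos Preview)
Your proposal is correct and follows essentially the same two-stage scheme as the paper: bound $\ell=p_1^{a_1}$ via Lemma~\ref{lem:10}, then bound each prime-power factor of $m_1$ by a second Koksma--Erd\H os--Tur\'an/Matveev application, and finally multiply over the $O(\log k)$ odd primes dividing $k(k+1)$.

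There is one genuine variation worth flagging. In the second stage the paper iterates over the exponent $f$ of~$2$, i.e.\ it studies $\{f\log 2/\log q\}$ and seeks a hit in an interval of length $\asymp 1/\log q$; this forces it to separately rule out the degenerate solutions with $e=0$ (those with $t<2^f<\ell$), which it does by a counting argument that inflates the exponent from $10^9$ to $1.1\cdot 10^9$ at that step. You instead iterate over the exponent $e$ of $q$, looking at $\{e\log_2 q+\log_2 t\}$ with target interval of \emph{constant} length $\log_2(\ell/t)\ge\log_2(3/2)$. Since no power of $2$ lies in $(t,\ell)$ by construction, the case $e=0$ never arises, so your route is slightly cleaner; the price is that $B$ in Matveev becomes $\approx e\log_2 q$ rather than $2e$, introducing an extra $\log\log q$ in the exponent, but this is harmlessly absorbed by the constant $10^{10}$. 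Your remark that $z(m_1)=m_1$ already forces $9\nmid m_1$ when $3$ is special (so the replacement $m_1\cdot 2^{f_q}q^{-e_q}$ really has $z$ equal to itself) is exactly the check needed and matches Lemma~\ref{lem:withp}. The final bookkeeping---absorbing $(\log_2(k+1))^2$ and $\omega(k(k+1))$ into the exponent and handling small $k$ by the slack in $10^{10}$---is the same in both proofs.
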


\begin{proof}
We keep the notation from the proof of Lemma \ref{finitelymanydiscs}. As
such, we write $m=p_1^{a_1}\cdot m_1$, where $m_1$ is made up
of primes $p$ dividing $k(k+1)$. 
By the argument from that proof, we conclude that
$$
p_1^{a_1}< 2^{a+1}\le 2^{u+1},
$$
with $u$ the smallest integer satisfying \eqref{eq:u}.
By Lemma \ref{lem:10} and 
since $15/\log(9/8)<130$, it follows that
$$
u+1\le (130\log p)^{10^9\log p}=p^{10^9\log(130\log p)}.
$$
Therefore, with the notation of Lemma \ref{finitelymanydiscs}, we have
\begin{equation}
\label{eq:ell}
\ell=p_1^{a_1}< 2^{u+1}\le 2^{p^{10^9\log(130\log p)}}.
\end{equation}
Now let $q\mid m_1$, which implies $q\mid k(k+1)$. We need to estimate
the smallest pair of positive integers $(e_q,f_q)$ such that, if we put
$$
t:=z(\ell)=p_1^{a_1-1}(p_1+1)/2,
$$ 
then
\begin{equation}
\label{eq:101}
q^{e_q}\cdot t<2^{f_q}<q^{e_q}\cdot \ell.
\end{equation}
Taking logarithms, we have
$$
e_q+\frac{\log \ell}{\log q}-\frac{\log(\ell/t)}{\log q}<f_q\frac{\log
2}{\log q}<e_q+\frac{\log \ell}{\log q}.
$$
Since $\ell/t=2p_1/(p_1+1)\ge 3/2$, the above condition places
$\{f_q(\log 2)/(\log q)\}$ in one (or two) intervals of total length
$\log(\ell/t)/(\log q)\ge \log(3/2)/(\log q)$.
More precisely, if $\{\log\ell/\log q\}>\log(3/2)/(\log q)$, it then
follows that it suffices that
$$
\left\{f_q\frac{\log 2}{\log q}\right\}\in \left(\left\{\frac{\log
\ell}{\log q}\right\}-\frac{\log(3/2)}{\log q}, \left\{\frac{\log
\ell}{\log q}\right\}\right),
$$
whereas if $\left\{\log \ell/\log q\right\}<\log(3/2)/\log q$, it
suffices that
$$
\left\{f_q\frac{\log 2}{\log q}\right\}\in \left(1+\left\{\frac{\log
\ell}{\log q}\right\}-\frac{\log(3/2)}{\log q}, 1\right)\cup
\left(0,\left\{\frac{\log \ell}{\log q}\right\}\right).
$$
In any case, there is an interval $J$ of length $0.5\log(3/2)/\log q$
such that, if $\{f_q\log 2/\log q\}\in J$, then the estimate
\eqref{eq:101} holds with some appropriate positive integer $e_q$.
Note that $0.5\log(3/2)>\log(9/8)$, so by the arguments from the proof
of Lemma \ref{lem:10}, it follows that if $N\ge N_0,$ where $N_0$
satisfies \eqref{eq:N0}, then there are at least
$$
\frac{N\log (9/8)}{15\log p}
$$
values of $f\le N$ such that $\{f\log 2/\log q\}\in J$. This in turn
implies the inequalities $q^et<2^f<q^e\ell$. The only situation in which we are in
trouble is when $e=0,$ in which case $t<2^f<\ell$. Assume this happens. By inequality \eqref{eq:ell}, we get
$$
f<p^{10^9\log(130\log p)}.
$$
If this were so for all the acceptable values for $f$, we would get 
by \eqref{N15} that 
$$\frac{N\log (9/8)}{15\log p}<p^{10^9\log(130\log p)},$$
and therefore
\begin{equation}
\label{eq:ttt}
 N<\left(\frac{15\log p}{\log(9/8)}\right)p^{10^9\log(130\log p)}<p^{(10^9+1)\log(130\log p)}.
\end{equation}
To ensure that this doesn't happen we ask that $N\ge N_1$, where
\begin{equation}
\label{eq:N1}
N_1=\left(\frac{15\log p}{\log(9/8)}\right)^{1.1\cdot 10^9\log p}.
\end{equation}
Indeed, since $15/\log(9/8)>127$, the above inequality forces 
$$
N>p^{1.1\cdot 10^9 \log(127\log p)}.
$$
To see that \eqref{eq:ttt} fails for such $N$, assume it doesn't and we get
$$
p^{1.1\cdot 10^9 \log(127\log p)}<N<p^{(10^9+1)\log(130\log p)},
$$
and so 
$$
\frac{\log(130\log p)}{\log(127\log p)}>\frac{1.1\cdot 10^9}{10^9+1}>1.09.
$$
However, this is false since the function $(\log(130)+x)/(\log(127)+x)$ on the left with  $x=\log\log p$ is decreasing for $x\ge 0$ with the maximum $\log(130)/\log(127)=1.04\ldots$ 
at $x=0$, which is not larger than $1.09$. It then follows that by choosing $N$ as in \eqref{eq:N1} then there is some $f$ such that $q^e\cdot t<2^f<q^e \cdot \ell$ and $e>0$. 
Since $15/\log(9/8)<130$, it follows that, in particular,
$$
f_q\le \left(130\log p\right)^{1.1\cdot 10^9\log p},
$$
and since $p\le k+1$, we get
$$
q^{e_q}\le 2^{f_q}\le 2^{(k+1)^{1.1\cdot 10^9\log(130\log(k+1))}}.
$$
Thus,
\begin{equation}
\label{hugebounds}    
m_1 \le  \prod_{\substack{q\mid k(k+1)\\ q~{\text{\rm odd}}}}
q^{e_q}\le 2^{\omega(k(k+1)) (k+1)^{1.1\cdot
10^9\log(130\log(k+1))}} < 2^{(k+1)^{10^{10}\log\log(k+1)}}.
\end{equation}
The right-most inequality follows because of the trivial estimate
$$
\omega(k(k+1))=\omega(k)+\omega(k+1)\le \frac{2\log(k+1)}{\log
2}<4\log(k+1)\le (k+1)^3,
$$
and, furthermore,
$$
3+1.1\cdot 10^9\log(130\log(k+1))<10^{10}\log\log(k+1),
$$
which holds for $k\ge 6$. One may check by hand that this is also true
for $k\in \{2,3,4,5\}$. Indeed, in these cases $p\in \{3,5\}$, and one checks that in each of the cases one may choose 
$u\le 20$ satisfying the containment \eqref{eq:containment} of Lemma \ref{lem:10}. 
\end{proof}

\begin{lemma}\label{lemma:finitelydiscseven}
Let $k\ge 2$.
There are only finitely many discriminators which are
even  and not
divisible only by primes $p$ dividing $k(k+1)$.
\end{lemma}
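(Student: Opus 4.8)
The plan is to combine the description of the integers $m$ with $m/2<z(m)<m$ from Lemma~\ref{lem:withp} with the existence of multiplicatively dense intervals (Definition~\ref{def:xpalpha}). Let $m=\cD_k(n)$ be an even discriminator value not composed solely of primes dividing $k(k+1)$. By Lemma~\ref{zfactor2}, $z(m)>m/2$. By Corollary~\ref{cor:wppmotivation}, $m=m_1 p_1^{b}$ with $m_1\in\cP(k(k+1))$, $p_1\nmid k(k+1)$ a prime with $e_{p_1}(k)=-1$, and $b\ge1$; since $z(m)=\operatorname{lcm}(z(m_1),z(p_1^{b}))\le z(m_1)z(p_1^{b})\le m_1\cdot p_1^{b-1}(p_1+1)/2<m$ by Lemmas~\ref{lemma2FLM} and~\ref{Lemma2.8}, we are in the range $m/2<z(m)<m$ of Lemma~\ref{lem:withp}. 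That lemma then also gives $z(p_1)=(p_1+1)/2$, $z(m_1)=m_1$, $\gcd(m_1,p_1(p_1+1)/2)=1$, and $z(m)=\tfrac{(p_1+1)m}{2p_1}$. Since $m$ is even, Corollary~\ref{cor:wppmotivation} moreover forces $p_1\equiv1\pmod*4$, so $p_1\ge5$ and therefore $m/z(m)=\tfrac{2p_1}{p_1+1}\ge\tfrac53$.

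The key step is now short. Fix an odd prime $p\mid k(k+1)$; this exists because $k(k+1)$ is not a power of $2$ for $k\ge2$, and for $k\ge3$ it may be chosen so that every even integer of the form $2^{a}p^{c}$ lies in $\cB_k$ (take $p\ne3$, or $p=3$ with $3$ not special; the only situation in which $k(k+1)=2^{s}3^{u}$ and $3$ is special is $k=2$, for which the lemma is vacuous as $\cD_2$ contains no wild prime power by Theorem~\ref{1+2}(b)). I claim $z(m)<n_p(5/3)$. If not, then since $m/z(m)\ge\tfrac53$ the interval $[z(m),\tfrac53 z(m))\subseteq[z(m),m)$ contains, by Definition~\ref{def:xpalpha}, an even integer $R=2^{a}p^{c}\in\cB_k$; hence $z(R)=R$ by \eqref{setequality} and $\iota_k(R)=R$ by \eqref{Dkn=n}, so $R$ keeps $U_0(k),\dots,U_{R-1}(k)$ pairwise incongruent. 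Since $R\ge z(m)\ge n$ (using $z(m)\ge n$ from Lemma~\ref{zfactor2}) and $R<m$, this contradicts $m=\cD_k(n)$. Therefore $z(m)<n_p(5/3)$, whence $m=\tfrac{2p_1}{p_1+1}z(m)<2\,n_p(5/3)$, a fixed finite number (its existence being \cite[Lemma~19]{FLM}); so only finitely many such $m$ occur.

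The one step I expect to be non-routine is exactly this reduction: unlike the wild prime $p_1$ and the odd primes dividing $k(k+1)$, whose contributions can be bounded by replacing a low-rank block by a nearby power of $2$ (as in Lemma~\ref{finitelymanydiscs}), the $2$-part of an even discriminator cannot be controlled that way, because powers of $2$ already have full index of appearance; one is forced to use that intervals of ratio $\ge\tfrac53$ contain full-rank even integers $2^{a}p^{c}$. If one instead wants an explicit effective bound --- the even analogue of Lemma~\ref{lem:disc}, needed for the effective determination of $\cF_k$ --- one bounds $p_1^{b}$ and the odd part of $m_1$ by the Koksma--Erd\H{o}s--Tur\'an inequality together with Matveev's theorem as in Lemma~\ref{lem:10}, and then appends precisely this interval argument to bound $\nu_2(m_1)$; this is the ``similar extra step'' alluded to in the outline.
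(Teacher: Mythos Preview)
Your argument is correct and is genuinely different from the paper's. The key observation you exploit --- absent from the paper's proof --- is that for an \emph{even} discriminator $m$ the wild prime $p_1$ must satisfy $p_1\equiv 1\pmod 4$, hence $p_1\ge 5$, which forces $m/z(m)=2p_1/(p_1+1)\ge 5/3$. This lets you invoke $n_p(5/3)$ once and bound $m$ in a single stroke: if $z(m)\ge n_p(5/3)$ there is an even $R=2^ap^c\in\cB_k$ with $z(m)\le R<\tfrac53 z(m)\le m$, contradicting minimality. Your handling of the edge case (when the only odd prime dividing $k(k+1)$ is $3$ and $3$ is special) is also fine; the Catalan-type enumeration of $k$ with $k(k+1)=2^s3^u$ indeed leaves only $k=2$, which is vacuous by Theorem~\ref{1+2}(b).

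By contrast, the paper does not use the $5/3$ ratio. It proceeds piece by piece: writing $m=2^z m_1$, it first bounds the wild factor $p_1^{a_1}$ and the odd tame part $m_2$ exactly as in Lemma~\ref{finitelymanydiscs}, and then --- the genuinely new step --- bounds the $2$-exponent $z$ by finding $(x,y)$ with $2^x z(m_1)<q^y<2^x m_1$ via a further Koksma--Erd\H os--Tur\'an/Matveev argument. Your route is shorter and more conceptual for the bare finiteness statement; the paper's route is longer but directly produces an explicit bound of the shape $m<2^{(k+1)^{10^{10}\log\log(k+1)}}$, which is what feeds into Theorem~\ref{largefkbound}. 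You correctly anticipate this trade-off in your last paragraph: to upgrade your proof to an effective one you would need to make $n_p(5/3)$ explicit in $p$, which again reduces to linear forms in logarithms.
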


\begin{proof}
We write $m=2^z\cdot m_1$ with $z\ge 1$ and 
$m_1$ odd. The previous arguments showed that
$m_1$ has at most one prime factor not of type I or II. If it has one, it
is of type IV. Assume $m_1=p_1^{a_1} \cdot m_2$, where $p_1$ is of type IV.
Then $z(p_1)\mid (p_1+1)/2$. If $z(p_1)\mid (p_1+1)/4$, then $z(m)\le
2^{a} m_2p_1^{a_1-1}(p_1+1)/4<m/2$, and we get a contradiction. A
similar contradiction is obtained if $z(p_1^2)\mid (p_1+1)/2$, so we may
assume that $z(p_1^{a_1})=p_1^{a_1-1}(p_1+1)/2$. As in previous
occasions, there exists $a$ with
$$
p_1^{a_1}<2^{a+1}<p_1^{a_1}(1+1/p_1).
$$
Thus, $p_1^{a_1}>(3/4) 2^{a+1}$. As in the previous application, we pick
an odd prime $q$ dividing $k(k+1)$ and 
let $u$ be
minimal such that
$$
\frac{2}{3}\cdot 2^{u+1}<q^v<\frac{3}{4}\cdot 2^{u+1}$$
for some (unique) $v$.
Then, if $a>u$, it follows that
$2^{a-u} q^v m_2$ is a
better discriminator than $m_1$. This shows that $p_1^{a_1}<2^{a+1}\le
2^{u+1}$ is bounded. The bound is the same as in Lemma
\ref{lem:10}. Next, for each odd prime $q\mid m_2$ (of type I or
II) we find $(e_q,f_q)$ such that $q^{e_q}\cdot t< 2^{f_q}<q^{e_q}\cdot \ell$, with
$(t,\ell)=(z(p_1^{a_1}),p_1^{a_1}),$ for all
finitely many choices $p_1^{a_1}$. Then, if the exponent of $q$ in $m$
exceeds $e_q$, we can replace $m$ by $m\cdot 2^{f_q}\cdot q^{-e_q}$, which yields a
better discriminator. This holds for all prime factors $q$ of $m_2$, so
also $m_2$ is bounded. 
The bounds given in \eqref{hugebounds} apply
to $m_1$.
\par It remains to bound the exponent $z$ of $2$ in the factorization of $m$.
We know by now that $m=2^z m_1$
and that $m_1$ is odd and bounded, 
cf.\,\eqref{hugebounds}, as
$$
m_1<2^{\omega(k(k+1)) (k+1)^{1.1\cdot 10^9\log(130\log(k+1))}}.
$$
Further, $z(m_1)<m_1$. Put $(t,\ell)=(z(m_1),m_1)$. Again, we pick
some odd prime $q$ dividing $k(k+1)$ and search for
integers $x,y$ such that the inequality $2^x\cdot t<q^y<2^x\cdot \ell$ holds, where $\ell=m_1$ and
$t=z(m_1)$. This is equivalent to
$$
x+\frac{\log \ell}{\log 2}-\frac{\log(\ell/t)}{\log 2}<y\frac{\log
q}{\log 2}<x+\frac{\log \ell}{\log 2}.
$$
Note that this is again satisfied if $\{y\log q/\log 2\}$ is in one or
two intervals of length $\log(\ell/t)/\log 2>\log(3/2)/\log 2$. The
argument with linear forms in logarithms works and
gives a bound on $y$ as in Lemma \ref{lem:10}. This shows that
$$
2^x<q^{y}<(k+1)^{(k+1)^{10^9\log(130\log(k+1)}}<2^{(\log(k+1)/\log 2)
(k+1)^{10^9\log(130\log(k+1))}}.
$$
If $x>0$, and $z>x$ then we replace $2^zm_1$ by $2^{z-x} m_1q^y$, which
is a better discriminator. This shows that $z\le x$ when $x$ is positive. To ensure that $x$ is positive we argue as in the previous lemma to conclude that 
if we replace the exponent $10^9\log(130\log(k+1))$ by $1.1\cdot 10^9 \log(130\log(k+1))$, then there is a choice of $(x,y)$ with $x>0$. For such $x$ we have $z\le x$ is also bounded, so
$$
m<2^{(\omega(k(k+1))+(\log(k+1)/\log 2))(k+1)^{1.1\cdot
10^9\log(130\log k)}}<2^{(k+1)^{10^{10}\log\log(k+1)}},
$$
which is what we wanted to show. Again the last inequality holds for $k\ge 6$ and for smaller values of $k$ can be checked by hand. 
\end{proof}

\section*{Acknowledgments}
Work on this article was started during
a February-June 2017 stay of the second author
at the Max Planck Institute for Mathematics (MPIM)
and continued during further stays in 
September 2019-February 2020, June 2021 and a few
days in April and July 2023. (We stress
that these stays were only very partially devoted to work on
this paper.) The second and third author thank the MPIM for making
these stays possible.
Alessandro Languasco provided a lot of 
help with
computing Table \ref{tab:np} and $n_p(\alpha)$
in general (see \cite{LLMT} for a description of his algorithm). This required a big investment of both his time and that of his CPU's.
Thanks are also due to Alexandru Ciolan for
his input in some early versions.

\end{document}